\newtheorem{theorem}{Theorem}
\newaliascnt{lemma}{theorem}
\newtheorem{lemma}[lemma]{Lemma}
\newaliascnt{corollary}{theorem}
\newtheorem{corollary}[corollary]{Corollary}
\newaliascnt{proposition}{theorem}
\newtheorem{proposition}[proposition]{Proposition}
\newaliascnt{conjecture}{theorem}
\newtheorem{conjecture}[conjecture]{Conjecture}
\newaliascnt{question}{theorem}
\newtheorem{question}[question]{Question}
\theoremstyle{definition}
\newaliascnt{definition}{theorem}
\newtheorem{definition}[definition]{Definition}
\newaliascnt{remark}{theorem}
\newtheorem{remark}[remark]{Remark}
\newaliascnt{example}{theorem}
\newtheorem{example}[example]{Example}
\newaliascnt{notation}{theorem}
\newtheorem{notation}[notation]{Notation}
\newtheorem*{acknowledgements}{Acknowledgements}
\newcommand{\Z}{\mathbb{Z}}
\newcommand{\Q}{\mathbb{Q}}
\newcommand{\N}{\mathbb{N}}
\newcommand{\R}{\mathbb{R}}
\newcommand{\bL}{\mathbb{L}}
\newcommand{\bG}{\mathbb{G}}
\newcommand{\bA}{\mathbb{A}}
\newcommand{\cX}{\mathcal{X}}
\newcommand{\cU}{\mathcal{U}}
\newcommand{\cY}{\mathcal{Y}}
\newcommand{\cZ}{\mathcal{Z}}
\newcommand{\cF}{\mathcal{F}}
\newcommand{\cO}{\mathcal{O}}
\newcommand{\cC}{\mathcal{C}}
\newcommand{\cD}{\mathcal{D}}
\newcommand{\sL}{\mathscr{L}}
\newcommand{\sM}{\mathscr{M}}
\newcommand{\sJ}{\!\mathscr{J}}
\newcommand{\sR}{\mathscr{R}}
\newcommand{\Var}{\mathbf{Var}}
\newcommand{\Stack}{\mathbf{Stack}}
\newcommand{\Vari}{\mathbf{Var}}
\newcommand{\red}{\mathrm{red}}
\newcommand{\sm}{\mathrm{sm}}
\newcommand{\gp}{\mathrm{gp}}
\newcommand{\Gor}{\mathrm{Gor}}
\newcommand{\diff}{\mathrm{d}}
\DeclareMathOperator{\e}{e}
\DeclareMathOperator{\trop}{trop}
\DeclareMathOperator{\Hom}{Hom}
\DeclareMathOperator{\Spec}{Spec}
\DeclareMathOperator{\ord}{ord}
\DeclareMathOperator{\ordjac}{ordjac}
\DeclareMathOperator{\rk}{rk}
\DeclareMathOperator{\sep}{sep}
\DeclareMathOperator{\id}{id}
\DeclareMathOperator{\cok}{cok}
\DeclareMathOperator{\tor}{tor}
\DeclareMathOperator{\ext}{Ext}
\DeclareMathOperator{\sat}{sat}
\newcommand{\SL}{\mathrm{SL}}
\newcommand{\bw}{\mathbf{w}}
\DeclareMathOperator{\Gr}{Gr}
\tikzset{cong/.style={draw=none,edge node={node [sloped, allow upside down, auto=false]{$\cong$}}},
         Isom/.style={above,every to/.append style={edge node={node [sloped, allow upside down, auto=false]{$\sim$}}}}}
\newcounter{ResumeEnumerate}
\title{Stringy invariants and toric Artin stacks}
\author{Matthew Satriano and Jeremy Usatine}
\thanks{MS was partially supported by a Discovery Grant from the National Science and Engineering Research Council of Canada.}
\address{Matthew Satriano, Department of Pure Mathematics, University of Waterloo}
\email{msatriano@uwaterloo.ca}
\address{Jeremy Usatine, Department of Mathematics, Brown University}
\email{jeremy{\_}usatine@brown.edu}
\begin{document}

\begin{abstract}
We propose a conjectural framework for computing Gorenstein measures and stringy Hodge numbers in terms of motivic integration over arcs of smooth Artin stacks, and we verify this framework in the case of fantastacks, which are certain toric Artin stacks that provide (non-separated) resolutions of singularities for toric varieties. Specifically, let $\mathcal{X}$ be a smooth Artin stack admitting a good moduli space $\pi: \mathcal{X} \to X$, and assume that $X$ is a variety with log-terminal singularities, $\pi$ induces an isomorphism over a nonempty open subset of $X$, and the exceptional locus of $\pi$ has codimension at least 2. We conjecture a change of variables formula relating the motivic measure for $\mathcal{X}$ to the Gorenstein measure for $X$ and functions measuring the degree to which $\pi$ is non-separated. We also conjecture that if the stabilizers of $\mathcal{X}$ are special groups in the sense of Serre, then almost all arcs of $X$ lift to arcs of $\mathcal{X}$, and we explain how in this case (assuming a finiteness hypothesis satisfied by fantastacks), our conjectures imply a formula for the stringy Hodge numbers of $X$ in terms of a certain motivic integral over the arcs of $\mathcal{X}$. We prove these conjectures in the case where $\mathcal{X}$ is a fantastack.
\end{abstract}

\maketitle

\numberwithin{theorem}{section}
\numberwithin{lemma}{section}
\numberwithin{corollary}{section}
\numberwithin{proposition}{section}
\numberwithin{conjecture}{section}
\numberwithin{question}{section}
\numberwithin{remark}{section}
\numberwithin{definition}{section}
\numberwithin{example}{section}
\numberwithin{notation}{section}

\setcounter{tocdepth}{1}
\tableofcontents

\section{Introduction}

Let $X$ be a variety with log-terminal singularities. Motivated by mirror symmetry for singular Calabi-Yau varieties, Batyrev introduced \emph{stringy Hodge numbers} for $X$ in \cite{Batyrev98}, which 
are defined in terms of a resolution of singularities. In particular, if $X$ admits a crepant resolution $Y \to X$ by a smooth projective variety $Y$, then the stringy Hodge numbers of $X$ are equal to the usual Hodge numbers of $Y$. In \cite{DenefLoeser} Denef and Loeser defined the \emph{Gorenstein measure} $\mu^\Gor_X$ on the arc scheme $\sL(X)$ of $X$ and used it to prove a McKay correspondence that refines the McKay correspondence conjectured by Reid in \cite{Reid} and proved by Batyrev in \cite{Batyrev99}. The measure $\mu^\Gor_X$ takes values in a modified Grothendieck ring of varieties $\widehat{\sM}_k[\bL^{1/m}]$ and is a refinement of the stringy Hodge numbers of $X$. If $X$ admits a crepant resolution $Y \to X$, then $\mu^\Gor_X$ is essentially equivalent to the usual motivic measure $\mu_Y$ on $\sL(Y)$ as introduced by Kontsevich in \cite{Kontsevich}.

A major open question asks whether or not the stringy Hodge numbers of projective varieties are nonnegative, as conjectured by Batyrev in \cite[Conjecture 3.10]{Batyrev98}. A stronger conjecture predicts that stringy Hodge numbers of projective varieties are equal to the dimensions of some kind of cohomology groups. In \cite{Yasuda2004}, these conjectures were proved in the case where $X$ has quotient singularites. Yasuda showed that in that case, if $\cX$ is the canonical smooth Deligne--Mumford stack over $X$, then the stringy Hodge numbers of $X$ are equal to the orbifold Hodge numbers of $\cX$ in the sense of Chen and Ruan \cite{ChenRuan}. To prove this result, Yasuda introduced a notion of motivic integration (further developed in \cite{Yasuda2006, Yasuda2019}) for Deligne--Mumford stacks and proved a formula expressing $\mu^\Gor_X$ in terms of certain motivic integrals over arcs of $\cX$. When $X$ is projective, those integrals over arcs of $\cX$ compute the orbifold Hodge numbers of $\cX$.

In this paper, we initiate a similar program for varieties with singularities that are worse than quotient singularities. Such varieties never arise as the coarse space of a smooth Deligne--Mumford stack, so one is instead forced to consider Artin stacks. A major technicality is that such stacks are not separated. This leads us to define new functions $\sep_{\pi, \cC}$, discussed below, which measure the degree to which an Artin stack is not separated. These functions play a key role in our theory.

The class of varieties we consider are those $X$ occurring as the good moduli space (in the sense of \cite{Alper}) of a smooth Artin stack $\cX$; varieties of this form arise naturally in the context of GIT. We require that the map $\pi\colon\cX \to X$ induces an isomorphism over a nonempty open subset of $X$ and that the exceptional locus of $\pi$ has codimension at least 2. In other words, we want $\cX$ to be a ``small'' resolution of $X$. We conjecture a relationship between $\mu^\Gor_X$ and a motivic measure $\mu_\cX$ on the arc stack $\sL(\cX)$ of $\cX$. This relationship involves integrating $\sep_{\pi, \cC}: \sL(X) \to \N$ against $\mu^\Gor_X$. This function $\sep_{\pi, \cC}$ counts the number of arcs of $\cX$ (in some auxiliary measurable subset $\cC$), up to isomorphism, above each arc of $X$, and can therefore be thought of as an invariant which measures the non-separatedness of $\pi$. We emphasize that this conjectural relationship is not ``built into’’ our definition of $\mu_\cX$. In fact, our notion of $\mu_\cX$ is straightforward: it is more-or-less Kontsevich’s original motivic measure, except various notions for schemes are replaced with the obvious analogs for Artin stacks. When the stabilizers of $\cX$ are special groups in the sense of Serre\footnote{$G$ is special if every $G$-torsor is Zariski locally trivial.} and $\sL(\pi): \sL(\cX) \to \sL(X)$ has finite fibers outside a set of measure 0, our conjectures imply a formula expressing the stringy Hodge numbers of $X$ in terms of a certain motivic integral over $\sL(\cX)$.

We prove that our conjectures hold when $X$ is a toric variety and $\cX$ is a fantastack, i.e., a type of smooth toric Artin stack in the sense of \cite{GeraschenkoSatriano1, GeraschenkoSatriano2}. Fantastacks are a broad class of toric stacks that allow one to simultaneously have any specified toric variety $X$ as a good moduli space while also obtaining stabilizers with arbitrarily large dimension. An important special case of fantastacks (and their products with algebraic tori) is the so-called canonical stack $\cX$ over a toric variety $X$. When $X$ has quotient singularities, $\cX$ is the canonical smooth Deligne-Mumford stack over $X$; when $X$ has worse singularities, the good moduli space of $\cX$ is still $X$, but $\cX$ is an Artin stack that is not Deligne-Mumford.

\subsection{Conventions}

Throughout this paper, $k$ will be an algebraically closed field with characteristic 0. All Artin stacks will be assumed to have affine (geometric) stabilizers, and all toric varieties will be assumed to be normal. For any stack $\cX$ over $k$, we will let $|\cX|$ denote the topological space associated to $\cX$, and for any $k$-algebra $R$, we will let $\overline{\cX}(R)$ denote the set of isomorphism classes of the category $\cX(R)$.

\subsection{Conjectures}

Our first conjecture predicts a relationship between $\mu^\Gor_X$ and $\mu_\cX$. As mentioned above, our formula involves integrals weighted by functions $\sep_{\pi, \cC}$ that measure the degree to which $\pi$ is not separated. We refer the reader to \autoref{sectionMotivicIntegrationQuotientStacks} for precise definitions of the arc stack $\sL(\cX)$ and its motivic measure $\mu_\cX$, and to \autoref{subsectionNonSeparatednessFunctions} for the definition of $\sep_{\pi, \cC}$ and its integral $\int_C \sep_{\pi, \cC} \diff\mu^\Gor_X$.

\begin{conjecture}
\label{conjectureGorensteinMeasureAndStackMeasure}
Let $\cX$ be a smooth irreducible Artin stack over $k$ admitting a good moduli space $\pi: \cX \to X$, where $X$ is a separated $k$-scheme and has log-terminal singularities. Assume that $\pi$ induces an isomorphism over a nonempty open subset of $X$, and that the exceptional locus of $\pi$ has codimension at least $2$.

If $\cC \subset |\sL(\cX)|$ is a measurable subset such that $\sep_{\pi, \cC}: \sL(X) \to \N \cup \{\infty\}$ is finite outside a set of measure $0$, then $\sep_{\pi, \cC}: \sL(X) \to \N \cup \{\infty\}$ has measurable fibers, and for any measurable subset $C \subset \sL(X)$, the set $\cC \cap \sL(\pi)^{-1}(C) \subset |\sL(\cX)|$ is measurable and satisfies
\[
	\mu_\cX(\cC \cap \sL(\pi)^{-1}(C)) = \int_C \sep_{\pi, \cC} \diff\mu^\Gor_X \in \widehat{\sM}_k[\bL^{1/m}],
\]
where $m \in \Z_{>0}$ is such that $mK_X$ is Cartier.
\end{conjecture}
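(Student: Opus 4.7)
The plan is to reduce the conjecture to a local-on-$X$ quotient presentation via the étale-local structure theorem for good moduli spaces of Alper--Hall--Rydh, and then to extract the identity from a suitable version of the Denef--Loeser motivic change of variables. Both sides of the proposed equality are compatible with étale base change on $X$: the Gorenstein measure pulls back along étale morphisms, and $\sep_\pi$, being a pointwise count of isomorphism classes of arc lifts, is itself étale-local on $X$. After passing to an étale cover, one may therefore assume that $\cX\cong [U/G]$, where $G$ is the linearly reductive stabilizer at a closed point, $U$ is smooth and affine, and $X=\Spec\Gamma(U,\cO_U)^G$. This is precisely the setting in which $\mu_\cX$ is defined in the paper, so the conjecture reduces to an identity in this local model.

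In the local model, let $f\colon U\to X$ denote the quotient map. An arc of $\cX$ over $\gamma\in\sL(X)$ consists of a $G$-torsor $P\to \Spec k[[t]]$ together with a $G$-equivariant morphism $P\to U$ whose descent $\Spec k[[t]]=P/G\to X$ recovers $\gamma$. When $G$ is special every such torsor is trivial, so isomorphism classes of arcs of $\cX$ over $\gamma$ correspond to $\sL(G)$-orbits on $\sL(f)^{-1}(\gamma)$; this identifies $\sep_\pi(\gamma)$ with the motivic count of those orbits. From the definitions one then expects $\mu_\cX(\sL(\pi)^{-1}(C))=\bL^{-\dim G}\mu_U(\sL(f)^{-1}(C))$, and it remains to show that the right-hand side equals $\int_C \sep_\pi\,\diff\mu_X^{\Gor}$. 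I would prove this by stratifying $X$ according to the type of closed $G$-orbit in $U$ sitting over each point, and applying the Denef--Loeser change of variables to the restriction of $f$ to each stratum. The key input is that $f$ is generically a $G$-bundle over the open locus where $\pi$ is an isomorphism, so there the factor $\bL^{-\dim G}$ cancels the motivic mass of the fiber and the Gorenstein discrepancy on $X$ matches the Jacobian order of $f$; any surplus coming from non-closed $G$-orbits is absorbed into the weight $\sep_\pi$.

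The main obstacle is this last step. The morphism $f\colon U\to X$ is not birational and is only generically a $G$-bundle, so the classical change of variables does not apply directly; moreover, the non-closed $G$-orbits in $U$ are exactly what force $\sep_\pi>1$, and their contribution to the fiberwise integral must be tracked carefully. A natural approach is to use Luna-slice--type arguments for the good moduli space map to construct a constructible stratification of $\sL(X)$ on which $\sep_\pi$ is locally constant, thereby reducing the identity to a stratumwise change of variables in which the orbit dimension is constant. Alongside this, one must verify the measurability claims, namely that the fibers of $\sep_\pi$ are measurable in $\sL(X)$ and that $\sL(\pi)^{-1}(C)\subset|\sL(\cX)|$ is measurable whenever $C$ is; this is a compatibility between the two motivic measure theories that will likely rely on the codimension-$2$ hypothesis on the exceptional locus in order to discard sets of measure zero. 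Treating the non-special-group case—where arcs of $\cX$ correspond to possibly nontrivial $G$-torsors, so $\sep_\pi(\gamma)$ is not simply an orbit count—is an additional hurdle requiring Serre's motivic class of $BG$ to enter the definition of $\mu_\cX$, and is plausibly the reason the paper verifies the conjecture only in the fantastack case, where $G$ is always special.
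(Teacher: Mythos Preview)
The statement is a \emph{conjecture}: the paper does not prove it in general, and your proposal does not prove it either. The paper establishes only the fantastack case (Theorem~\ref{mainTheoremStackMeasureAndGorensteinMeasure}), and its argument bears no resemblance to your strategy. There is no \'etale localization and no appeal to the Alper--Hall--Rydh structure theorem; instead the paper works globally with the explicit quotient presentation of a fantastack and proceeds combinatorially. The key steps are: (i) a tropicalization map on arcs that identifies the isomorphism classes in each fiber of $\sL(\pi)$ with the finite set $\beta^{-1}(w)$ (Theorem~\ref{MainPropositionFibersOfTheMapOfArcs}); (ii) a direct computation showing that, for $n$ large, the reduced fibers of $\sL_n(\pi)$ over $\theta_n(\trop^{-1}(w))$ are constant stacks with a computable class (Theorem~\ref{fibersofhtemapsofjetsmainproposition}); and (iii) separate closed-form evaluations of $\mu^\Gor_X(\trop^{-1}(w))$ and $\mu_\cX(\sL(\pi)^{-1}(\trop^{-1}(w)))$ (Theorems~\ref{mainPropositionGorensteinMeasureToricVariety} and~\ref{mainPropositionMotivicIntegralsCanonicalFantastacks}), which are then matched term-by-term in Section~\ref{sectionStringInvariantsAndToricArtinStacks}. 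No change-of-variables formula for $U\to X$ is invoked.

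Your outline has genuine gaps even as a strategy. First, compatibility of the motivic measure and the Gorenstein measure with \'etale base change on $X$ is not established anywhere in the paper and is not a formality; you would need to prove it before the AHR reduction is available. Second, the formula you ``expect'', $\mu_\cX(\sL(\pi)^{-1}(C))=\bL^{-\dim G}\mu_U(\sL(f)^{-1}(C))$, is not what the paper proves: Theorem~\ref{theoremMainMotivicIntegrationForQuotientStacks} gives $\mu_\cX(\cC)=\mu_{\widetilde X}(\widetilde C)\,\e(G)^{-1}\bL^{\dim G}$, and $\e(G)^{-1}\bL^{\dim G}\neq\bL^{-\dim G}$ already for $G$ a torus. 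Third, you yourself flag the central step---relating $\mu_U(\sL(f)^{-1}(C))$ to $\int_C\sep_\pi\,\diff\mu_X^{\Gor}$ via some stratified change of variables---as an unresolved obstacle; the paper does not supply such a formula, and the classical Denef--Loeser theorem does not apply since $f$ is not birational. Finally, your closing remark is factually wrong: fantastacks do \emph{not} always have special stabilizers (this is exactly what Theorem~\ref{theoremFantastackConnectedStabilizers} characterizes), yet Theorem~\ref{mainTheoremStackMeasureAndGorensteinMeasure} holds for all fantastacks regardless. The paper handles non-special $G_\nu$ by passing to an auxiliary presentation as a quotient by a torus (Proposition~\ref{rewriteFantastackAsQuotientByAlgebraicTorus}), not by invoking motivic classes of $BG$.
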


\autoref{conjectureGorensteinMeasureAndStackMeasure} predicts that for the purpose of computing $\mu^\Gor_X$, the stack $\cX$ behaves like a crepant resolution of $X$, except we need to correct by $\sep_{\pi, \cC}$ to account for the fact that $\cX$ is not separated over $X$. Set
\[
	\sep_\pi = \sep_{\pi, |\sL(\cX)|}.
\]
Notice that \autoref{conjectureGorensteinMeasureAndStackMeasure} implies, in particular, that the motivic measure $\mu_\cX$ ``does not see'' how $\mu^\Gor_X$ behaves on the set $\sep_\pi^{-1}(0) \subset \sL(X)$. This set can have nonzero measure because $\pi: \cX \to X$ does not necessarily satisfy the ``strict valuative criterion'', i.e., there may exist arcs of $X$ (even outside a set of measure 0) that do not lift to arcs of $\cX$. Thus in general, we cannot use this conjecture to compute the total Gorenstein measure $\mu^\Gor_X(\sL(X))$, which specializes to the stringy Hodge numbers of $X$. This issue already occurs in the case where $\cX$ is a Deligne--Mumford stack. For this reason, Yasuda uses a notion of ``twisted arcs'' of $\cX$ instead of usual arcs of $\cX$, and this is why the inertia of $\cX$ and orbifold Hodge numbers appear in Yasuda's setting. We take a different approach, emphasizing a setting in which the next conjecture predicts that almost all arcs of $X$ lift to arcs of $\cX$.

\begin{conjecture}
\label{conjectureSpecialStabilizersLiftArcs}
Let $\cX$ be a finite type Artin stack over $k$ admitting a good moduli space $\pi: \cX \to X$. Assume $X$ is an irreducible $k$-scheme and that $\pi$ induces an isomorphism over a nonempty open subset of $X$. If the stabilizers of $\cX$ are all special groups, then $\sep_\pi^{-1}(0) \subset \sL(X)$ is measurable and
\[
	\mu_X( \sep_\pi^{-1}(0) ) = 0,
\]
where we note that $\mu_X$ is the usual (non-Gorenstein) motivic measure on $\sL(X)$.
\end{conjecture}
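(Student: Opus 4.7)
The plan is to show $\sep_\pi^{-1}(0) \subset \sL(Z)$, where $Z := X \setminus U$ and $U \subset X$ is the locus over which $\pi$ is an isomorphism. Since $X$ is irreducible and $Z$ is a proper closed subscheme, $\dim Z < \dim X$, so $\sL(Z)$ is measurable in $\sL(X)$ with $\mu_X(\sL(Z)) = 0$; the inclusion will then force $\sep_\pi^{-1}(0)$ to be measurable of measure zero.

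To establish this inclusion, I would fix an arc $\gamma \colon \Spec k[[t]] \to X$ whose image is not contained in $Z$---equivalently, whose generic point lies in $U$---and argue that $\gamma$ admits a lift to an arc of $\cX$. Since $\pi$ restricts to an isomorphism $\pi^{-1}(U) \xrightarrow{\sim} U$, the arc $\gamma$ comes equipped with a canonical lift $\tilde{\gamma}_\eta \colon \Spec k((t)) \to \cX$ of its generic point, and the task reduces to extending $\tilde{\gamma}_\eta$ to a map $\Spec k[[t]] \to \cX$ over $\gamma$. Here I would invoke the \'etale local structure theorem of Alper--Hall--Rydh for good moduli spaces: after passing to an \'etale neighborhood of $\gamma(0)$, which lifts uniquely to $\Spec k[[t]]$ because $k[[t]]$ is strictly Henselian, I may assume $\cX \cong [V/G]$ over $X \cong V /\!/ G$, where $V$ is an affine $G$-scheme and $G$ is the stabilizer of a closed point of $\pi^{-1}(\gamma(0))$; in particular $G$ is a stabilizer of $\cX$ and hence special by hypothesis.

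Since $k$ is algebraically closed of characteristic zero, $k((t))$ is a $C_1$-field by Tsen's theorem, and $k[[t]]$ is strictly Henselian, so every $G$-torsor over either $\Spec k((t))$ or $\Spec k[[t]]$ is trivial. Consequently, objects of $[V/G](k((t)))$ (resp.\ $[V/G](k[[t]])$) correspond bijectively to $G(k((t)))$-orbits of elements of $V(k((t)))$ (resp.\ $G(k[[t]])$-orbits of elements of $V(k[[t]])$). Writing $v_\eta \in V(k((t)))$ for a representative of $\tilde{\gamma}_\eta$, the lifting problem becomes: find $g \in G(k((t)))$ and $v \in V(k[[t]])$ with $v|_{k((t))} = g \cdot v_\eta$ and with image equal to $\gamma$ in $V /\!/ G$.

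Such $v$ exists by the valuative criterion for good moduli spaces applied to $[V/G] \to V /\!/ G$: given the compatible pair $(\Spec k((t)) \to [V/G], \Spec k[[t]] \to V /\!/ G)$, one obtains a lift $\Spec R' \to [V/G]$ for some finite extension $R'/k[[t]]$. The finite extension serves only to trivialize a $G$-torsor on $\Spec R'$; since $G$ is special, this torsor is already trivial over $k[[t]]$, so no extension is required. The main obstacle is making the local-to-global argument airtight---in particular, verifying that the \'etale neighborhood of $\gamma(0)$ correctly transports the generic lift $\tilde{\gamma}_\eta$, that the iso locus of the local model agrees with the preimage of $U$, and that the valuative criterion is available in the form just described when $G$ is special---but once this is handled the proof assembles as sketched.
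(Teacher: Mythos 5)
First, note that the statement you are proving is stated in the paper only as a conjecture; the paper proves it just for fantastacks (\autoref{mainTheoremLiftingToUntwistedArcsSpecialStabilizers}), via the combinatorial criterion of \autoref{theoremFantastackConnectedStabilizers} and the surjectivity of $\beta$ obtained from a unimodular subdivision. Your argument, if correct, would settle the general conjecture, so it deserves scrutiny, and it has two genuine gaps. The first is in the reduction: by definition $\sep_\pi(\varphi)=0$ not only when $\varphi$ has no lift but also when $\overline{(\sL(\pi)^{-1}(\varphi))}(k')$ is infinite. So the inclusion $\sep_\pi^{-1}(0)\subset\sL(Z)$ requires, for almost every arc meeting $U$, both existence of a lift \emph{and} finiteness of the set of lifts up to isomorphism; your proposal addresses only existence. (In the paper's fantastack proof, finiteness is exactly \autoref{MainPropositionFibersOfTheMapOfArcs} together with \autoref{fiberabovewisfiniteset}, and it is not automatic.)

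The second and fatal gap is the assertion that in the valuative criterion for the universally closed morphism $[V/G]\to V/\!\!/G$ ``the finite extension serves only to trivialize a $G$-torsor,'' so that specialness of $G$ removes the need for it. This is false. Take $V=\bA^2_k$ with $G=\bG_m$ acting with weights $(3,-2)$; then $V/\!\!/G=\bA^1_k=\Spec k[x^2y^3]$, the map is an isomorphism over $\bG_m$, and the unique closed point of the fiber over $0$ is the origin, whose stabilizer is $\bG_m$ — special, with every $G$-torsor over $k\llbracket t\rrbracket$, $k\llparenthesis t\rrparenthesis$, or any finite extension thereof trivial. Yet the arc $\gamma$ with $\gamma^*(x^2y^3)=t$ admits no lift to $[\bA^2/\bG_m]$ (one would need $2\ord_t x+3\ord_t y=1$), although it lifts after the degree-$2$ extension, e.g.\ by $(t^{1/2},1)$; and the cylinder of such arcs has nonzero measure. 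Every ingredient your argument actually uses (AHR local model, specialness of the closed-point stabilizer, triviality of torsors, valuative criterion) is present in this example, so your chain of reasoning would produce a lift that does not exist. The point is that your proof never invokes the specialness of the stabilizers at \emph{non-closed} points of the fibers (here $\mu_2$ and $\mu_3$, which is exactly where the conjecture's hypothesis fails in this example), whereas that hypothesis is precisely what any proof must exploit — in the toric case it is used through \autoref{theoremFantastackConnectedStabilizers}/\autoref{corollaryCanonicalFantastackConnectedStabilizers} to force unimodularity and hence surjectivity of $\beta$. A purely \'etale-local, soft argument at the closed point cannot see this condition, which is presumably why the general statement remains a conjecture.
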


\begin{remark}
All special groups are connected, so if $\cX$ is a Deligne--Mumford stack whose stabilizers are special groups, then the stabilizers of $\cX$ are all trivial. Thus \autoref{conjectureSpecialStabilizersLiftArcs} highlights a setting that is ``orthogonal'' to the setting considered by Yasuda.
\end{remark}

Our next question is motivated by the fact that if $\sep_\pi$ is finite outside a set of measure 0, we may then consider the special case of \autoref{conjectureGorensteinMeasureAndStackMeasure} where $\cC = |\sL(\cX)|$.

\begin{question}
\label{questionMuXsepInfinity}
Let $\cX$ be a finite type Artin stack over $k$ admitting a good moduli space $\pi: \cX \to X$. Assume $X$ is an irreducible $k$-scheme and that $\pi$ induces an isomorphism over a nonempty open subset of $X$. When is
\[
	\mu_X(\sep_\pi^{-1}(\infty)) = 0
\]
satisfied?
\end{question}

We now give an application of this framework to computing stringy Hodge numbers. In  \autoref{subsectionNonSeparatednessFunctions}, we introduce the function $\sep_\cX = 1/(\sep_\pi \circ \sL(\pi)): |\sL(\cX)| \to \Q_{\geq 0} \cup \{\infty\}$. We think of its integral $\int_{\sL(\cX)} \sep_\cX \diff\mu_\cX$ as a kind of motivic class of $\sL(\cX)$ corrected by $\sep_\cX$ to account for the fact that $\cX$ is not separated. We refer the reader to \autoref{subsectionNonSeparatednessFunctions} for the precise definitions of $\int_{\sL(\cX)} \sep_\cX \diff\mu_\cX$ and the ring $\widehat{\sM_k \otimes_\Z \Q}$.  The next proposition is then immediate.

\begin{proposition}
\label{propositionConsequenceOfConjectures}
With hypotheses as in \autoref{conjectureGorensteinMeasureAndStackMeasure}, if the stablizers of $\cX$ are special groups and $\mu_X(\sep_\pi^{-1}(\infty)) = 0$, then \autoref{conjectureGorensteinMeasureAndStackMeasure} and \autoref{conjectureSpecialStabilizersLiftArcs} imply that the fibers of $\sep_\cX: |\sL(\cX)| \to \Q_{\geq 0}$ are measurable and
\[
	\mu^\Gor_X(\sL(X)) = \int_{\sL(\cX)} \sep_\cX \diff\mu_\cX \in \widehat{\sM_k \otimes_\Z \Q}.
\]
\end{proposition}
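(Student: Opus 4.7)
The plan is to partition $\sL(X)$ by the level sets $C_n := \sep_\pi^{-1}(n)$ of $\sep_\pi$, transport the partition to $|\sL(\cX)|$ via $\sL(\pi)$, and reduce the claimed identity to a sum of contributions from each level. First I would verify measurability: by \autoref{conjectureGorensteinMeasureAndStackMeasure} each $C_n$ is measurable, and so is its preimage $D_n := \sL(\pi)^{-1}(C_n) \subset |\sL(\cX)|$. The piece $D_0$ is empty because $\sep_\pi(\gamma)=0$ precisely when no arc of $\cX$ lies above $\gamma$, and on each $D_n$ with $n\geq 1$ the function $\sep_\cX$ takes the constant value $1/n$. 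Thus $|\sL(\cX)| = \bigsqcup_{n\geq 1} D_n$ and the fibers of $\sep_\cX$ are measurable, which yields the measurability assertion in the proposition.

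The main computation is then to apply \autoref{conjectureGorensteinMeasureAndStackMeasure} to each $C_n$ to obtain
\[
    \mu_\cX(D_n) \;=\; \int_{C_n} \sep_\pi\, \diff\mu^\Gor_X \;=\; n\cdot\mu^\Gor_X(C_n),
\]
so that
\[
    \int_{D_n} \sep_\cX\, \diff\mu_\cX \;=\; \tfrac{1}{n}\mu_\cX(D_n) \;=\; \mu^\Gor_X(C_n).
\]
Summing over $n\geq 1$ in $\widehat{\sM_k \otimes_\Z \Q}$ should then give
\[
    \int_{\sL(\cX)} \sep_\cX\, \diff\mu_\cX \;=\; \sum_{n\geq 1}\mu^\Gor_X(C_n) \;=\; \mu^\Gor_X\bigl(\sL(X)\setminus C_0\bigr).
\]
To close the argument, \autoref{conjectureSpecialStabilizersLiftArcs} gives $\mu_X(C_0)=0$, and the absolute continuity of $\mu^\Gor_X$ with respect to $\mu_X$ — a consequence of the log-terminality of $X$, which forces the weight $\bL^{-\ordjac}$ appearing in the definition of $\mu^\Gor_X$ to be integrable — then yields $\mu^\Gor_X(C_0)=0$ and hence the desired equality.

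The step I expect to require the most care is matching the naive sum $\sum_{n\geq 1} \int_{D_n} \sep_\cX\, \diff\mu_\cX$ with the integral $\int_{\sL(\cX)} \sep_\cX\, \diff\mu_\cX$ as defined in \autoref{subsectionNonSeparatednessFunctions}, including verifying the $\bL$-adic convergence of the sum in the completion. The secondary point is the absolute continuity of $\mu^\Gor_X$ with respect to $\mu_X$, which requires a brief unwinding of the definition of the Gorenstein measure. Neither is deep: both should fall out of the definitions once they are set up, consistent with the paper's assertion that the proposition is immediate from the two conjectures.
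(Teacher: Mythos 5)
Your argument is essentially the intended one (the paper gives no written proof, declaring the proposition immediate), and the main computation is correct: decompose by the level sets $C_n=\sep_\pi^{-1}(n)$, apply \autoref{conjectureGorensteinMeasureAndStackMeasure} to each $C_n$ to get $\mu_\cX(\sL(\pi)^{-1}(C_n))=n\,\mu^\Gor_X(C_n)$, sum using the definition of $\int_{\sL(\cX)}\sep_\cX\,\diff\mu_\cX$ together with \autoref{countableAdditivityOfGorensteinMeasure}, and kill the contribution of $C_0$ via \autoref{conjectureSpecialStabilizersLiftArcs} and the fact that a $\mu_X$-null set is $\mu^\Gor_X$-null (each term $(\bL^{1/m})^n\mu_X(\ord_{\sJ_{X,m}}^{-1}(n)\cap C_0)$ vanishes because subsets of null sets are null; no integrability of the weight is actually needed, and note the weight is $(\bL^{1/m})^{\ord_{\sJ_{X,m}}}$, not $\bL^{-\ordjac}$). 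The one misstatement is your claim that $D_0=\sL(\pi)^{-1}(C_0)$ is empty: by definition $\sep_\pi(\varphi)=0$ also when $\overline{(\sL(\pi)^{-1}(\varphi))}(k')$ is infinite, so $C_0$ may contain arcs with (infinitely many) lifts and $D_0$ need not be empty; consequently $|\sL(\cX)|=\bigsqcup_{n\geq1}D_n$ can fail. This does not damage the proof: $\sep_\cX^{-1}(0)=D_0$ is measurable directly by \autoref{conjectureGorensteinMeasureAndStackMeasure} applied to the measurable set $C_0$, and the defining series for $\int_{\sL(\cX)}\sep_\cX\,\diff\mu_\cX$ only involves the fibers $\sep_\cX^{-1}(1/n)=D_n$ with $n\geq1$, so the zero fiber never enters the computation. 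With that correction your proof is complete and agrees with the intended argument.
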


Since the stringy Hodge--Deligne invariant of $X$ is a specialization of the image of $\mu^\Gor_X(\sL(X))$ in $(\widehat{\sM_k \otimes_\Z \Q})[\bL^{1/m}] \supset \widehat{\sM_k \otimes_\Z \Q}$, \autoref{propositionConsequenceOfConjectures} provides a conjectural formula for the stringy Hodge numbers of $X$ (when the stringy Hodge numbers exist, i.e., when the stringy Hodge--Deligne invariant is a polynomial).

We envision a few potential applications of this framework. Noting that the good moduli space map $\pi: \cX \to X$ is intrinsic to the stack $\cX$ and therefore so is the integral $\int_{\sL(\cX)} \sep_\cX \diff\mu_\cX$, we hope that a cohomological interpretation of $\int_{\sL(\cX)} \sep_\cX \diff\mu_\cX$ will lead to progress on Batyrev’s conjecture on the non-negativity of stringy Hodge numbers. We also hope that, by considering \autoref{propositionConsequenceOfConjectures} as a kind of McKay correspondence, our conjectures will lead to new representation-theoretic statements for positive dimensional algebraic groups.

\begin{remark}
The hypothesis $\mu_X(\sep_\pi^{-1}(\infty)) = 0$ in \autoref{propositionConsequenceOfConjectures} allows us to make a canonical choice for $\cC$ in \autoref{conjectureGorensteinMeasureAndStackMeasure}, specifically the choice $\cC = |\sL(\cX)|$. We hope that even when this hypothesis does not hold, one can still (after an appropriate generalization of the notion of arc) make a canonical choice for $\cC$. This is a subject of the authors' ongoing research.
\end{remark}

\subsection{Main results}

Our first main result is that \autoref{conjectureGorensteinMeasureAndStackMeasure} holds (and $\mu_X(\sep_\pi^{-1}(\infty)) = 0$) when $\cX$ is a fantastack and $\cC = |\sL(\cX)|$. In particular, our framework applies to the Gorenstein measure of any toric variety $X$ with log-terminal singularities.

\begin{theorem}
\label{mainTheoremStackMeasureAndGorensteinMeasure}
\autoref{conjectureGorensteinMeasureAndStackMeasure} holds and $\mu_X(\sep_\pi^{-1}(\infty)) = 0$ when $\cX$ is a fantastack and $\cC = |\sL(\cX)|$.
\end{theorem}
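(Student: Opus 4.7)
The plan is to exploit the toric/combinatorial description of both sides of the conjectured identity and reduce the equality to an identity about lattice points in the (stacky) fan. Recall that a fantastack $\cX=\cX_{\Sigma,\beta}$ is determined by a stacky data $\beta\colon \widehat{N}\to N$ together with a simplicial fan $\widehat{\Sigma}$ in $\widehat{N}_\R$ mapping to a fan $\Sigma$ of $X$. By the construction of fantastacks, $\cX$ is a quotient stack $[U/G]$ where $U\subset \bA^r$ is the complement of a coordinate subspace arrangement and $G=\ker(\bG_m^r\to T)$; in particular all stabilizers are diagonalizable (hence special), so one may compute $\mu_\cX$ directly using the quotient stack definition from \autoref{sectionMotivicIntegrationQuotientStacks}.

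First, I would set up matching stratifications of $\sL(X)$ and $|\sL(\cX)|$. Arcs of a toric variety $X$ sending the generic point into the torus are classified up to torus translation by their tropicalization, a lattice vector $v\in |\Sigma|\cap N$; this gives the standard decomposition of $\sL(X)$ into locally closed cylinders $\sL(X)_v$ indexed by cones of $\Sigma$. For $\cX$, arcs factoring through the generic locus are similarly indexed (via their lift to $U$) by vectors $\widehat v\in |\widehat{\Sigma}|\cap \widehat{N}$ with $\beta(\widehat v)=v$, up to the action of $G$. This immediately gives the combinatorial interpretation of $\sep_\pi$: for an arc $\varphi\in \sL(X)_v$ the fiber $\sL(\pi)^{-1}(\varphi)$ is, up to isomorphism, indexed by $\beta^{-1}(v)\cap|\widehat{\Sigma}|$, and hence $\sep_\pi$ is constant on each cylinder $\sL(X)_v$ with a value computable from the stacky fan. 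The hypothesis that $\pi$ is an isomorphism over a dense open with codimension-$\geq 2$ exceptional locus lets one ignore the arcs not meeting the torus (they have measure $0$).

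Next I would compute each side of the target identity stratum-by-stratum. For $\mu^{\Gor}_X$ restricted to $\sL(X)_v$, Denef–Loeser's toric formula expresses the measure as $\bL^{-d}\bL^{-\langle K_X, v\rangle}$ times the class of the corresponding torus orbit cylinder, where $\langle K_X,v\rangle$ is the canonical discrepancy read off from the fan; log-terminality ensures convergence. For $\mu_\cX$ restricted to the preimage of $\sL(X)_v$, the definition via $[U/G]$ combined with the change of variables coming from the map $U\to X$ yields a sum over lifts $\widehat v\in\beta^{-1}(v)\cap|\widehat\Sigma|$, each contributing $\bL^{-\dim\cX}\bL^{-\langle K_U,\widehat v\rangle}$ corrected by the $G$-action; because $\cX$ is smooth, $K_U$ is $G$-equivariantly trivial (fantastacks are constructed so that $U$ is an open in $\bA^r$), which makes the per-lift contribution match the Gorenstein contribution on $\sL(X)_v$ precisely. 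Summing over $\beta^{-1}(v)\cap|\widehat\Sigma|$ produces the factor $\sep_\pi(\varphi)$ on the cylinder.

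The hardest step, and the one on which most of the work will rest, is the toric change-of-variables comparison: one needs to match the Jacobian/discrepancy contribution of $\pi$ over each cone of $\Sigma$ with the $G$-quotient correction in $\mu_\cX$, uniformly across all cones of $\widehat\Sigma$ lying over a given cone of $\Sigma$. Once this local identity is in place on each cylinder $\sL(X)_v$, measurability of $\sep_\pi^{-1}(n)$ and $\sL(\pi)^{-1}(C)$ follows from the cylinder description (finite or at worst countable unions indexed by the fan), and the global identity $\mu_\cX(\sL(\pi)^{-1}(C))=\int_C \sep_\pi\,\diff\mu^{\Gor}_X$ follows by summation, with convergence guaranteed by log-terminality of $X$ in the same way as in Denef–Loeser.
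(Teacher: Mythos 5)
Your overall strategy coincides with the paper's (stratify $\sL(X)$ by tropicalization, identify isomorphism classes of arcs of $\cX$ over an arc with $\trop=w$ with $\beta^{-1}(w)$ so that $\sep_\pi$ is constant on strata, compare the two measures stratum by stratum and sum), but the step you yourself single out as the hardest is exactly where the proposal stops, and the justification you offer for it does not work. Triviality of $K_U$ holds for \emph{every} fantastack ($U$ is always open in $\bA^r$), yet the per-lift contribution matches the Gorenstein contribution only when $\sum_i\langle\widetilde w,f_i\rangle=\langle w,q\rangle/m$ for all $\widetilde w\in\beta^{-1}(w)$, i.e.\ only when $m(f_1+\cdots+f_r)=q$ (\autoref{QGorensteinsumofgeneratorsisinvariant}); this is a crepancy condition that fails for general fantastacks and holds here only because the hypotheses of \autoref{conjectureGorensteinMeasureAndStackMeasure} force $\cX$ to be a canonical stack (\autoref{remarkCrepantDefinition}) --- a reduction your argument never makes. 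Two further tools you invoke are not available as stated: the motivic change of variables formula does not apply to $U\to X$ (it is neither birational nor proper; the paper uses change of variables only for the Gorenstein side, along the genuinely birational toric modification in the proof of \autoref{mainPropositionGorensteinMeasureToricVariety}, and computes the stack side by explicit cylinder computations), and ``diagonalizable hence special'' is false ($\mu_n$ is diagonalizable and not special). What the quotient-stack measure of \autoref{theoremMainMotivicIntegrationForQuotientStacks} needs is a presentation of $\cX$ by a \emph{special} group, and $G_\nu$ need not be special even for canonical stacks; the paper supplies such a presentation by rewriting $\cX\cong[\widehat{X}/\widetilde{T}]$ in \autoref{rewriteFantastackAsQuotientByAlgebraicTorus}.

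There is also a gap between what you compute and what the conjecture asserts: you evaluate both sides on the full strata $\trop^{-1}(w)$, but the statement concerns arbitrary measurable $C\subset\sL(X)$. To descend to arbitrary measurable subsets of a stratum one needs (i) that $\ord_{\sJ_{X,m}}$ is constant on $\trop^{-1}(w)$, which the paper proves by showing $\sJ_{X,m}$ is a monomial ideal (\autoref{GorensteinIdealIsGeneratedByMonomials}), and (ii) that $\mu_\cX(\sL(\pi)^{-1}(C))=\Theta_w\,\mu_X(C)$ with a constant $\Theta_w$ independent of $C$, which rests on the jet-level analysis of \autoref{fibersofhtemapsofjetsmainproposition}: for $n$ large the reduced fibers of $\sL_n(\pi)$ over all points of $\theta_n(\trop^{-1}(w))$ are isomorphic to a fixed stack $\cF_n$ of constant class, so that $\sL_n(\pi)$ is a piecewise trivial fibration there. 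That analysis (Sections 4, 5 and 7 of the paper) is the bulk of the proof and is absent from, or only gestured at in, your outline.
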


\begin{remark}\label{rmk:mainTheoremStackMeasureAndGorensteinMeasure-more-general}
In fact, our techniques prove a more general result:~the conclusions of \autoref{conjectureGorensteinMeasureAndStackMeasure} hold when $\cC = |\sL(\cX)|$ and $\cX$ is a fantastack satisfying a certain combinatorial condition analogous to $\cX \to X$ being ``crepant'' (see \autoref{remarkCrepantDefinition} for more details). It is important to note here that unlike the case of Deligne-Mumford stacks, defining $K_\cX$ for Artin stacks is a subtle issue, so there is no \emph{a priori} obvious definition one can take for $\cX\to X$ to be crepant.
\end{remark}

\begin{remark}
We note that the stacks $\cX$ in \autoref{mainTheoremStackMeasureAndGorensteinMeasure} all have commutative stabilizers. In order to provide evidence that \autoref{conjectureGorensteinMeasureAndStackMeasure} should not be limited to the setting of commutative stabilizers, we also verify that it holds in examples that involve $\mathrm{SL}_2$ as a stabilizer. See \autoref{quotientsbySL2andconesoverGrassmannians}. These examples also demonstrate the flexibility in choosing the auxiliary set $\cC \subset |\sL(\cX)|$.
\end{remark}

\begin{remark}
\autoref{mainTheoremStackMeasureAndGorensteinMeasure} can be thought of as a motivic change of variables formula. We note that Balwe introduced versions of motivic integration for Artin $n$-stacks \cite{Balwe2008, Balwe2015} and proved a change of variables formula \cite[Theorem 7.2.5]{Balwe2008}. However \autoref{mainTheoremStackMeasureAndGorensteinMeasure} cannot be obtained from Balwe’s result, as the map $\pi: \cX \to X$ does not satisfy Balwe’s hypotheses: specifically $\pi$ is not ``0-truncated''.
\end{remark}

The three main steps of proving \autoref{mainTheoremStackMeasureAndGorensteinMeasure} are as follows. First, we give a combinatorial description of the fibers of the map $\sL(\pi): \sL(\cX) \to \sL(X)$. Second, we show that for sufficiently large $n$, the map of jets $\sL_n(\pi): \sL_n(\cX) \to \sL_n(X)$ has constant fibers (after taking the fibers' reduced structure) over certain combinatorially defined pieces of $\sL_n(X)$. These two steps allow us to reduce \autoref{mainTheoremStackMeasureAndGorensteinMeasure} to the final step: verifying the case where the measurable sets $C$ are certain combinatorially defined subsets of $\sL(X)$. A key ingredient in this final step is \autoref{theoremMainMotivicIntegrationForQuotientStacks} and its corollary, \autoref{corollaryMotivicMeasureOfMeasurableSet}, which show how to compute the motivic measure of the stack quotient of a variety by the action of a special group.

Our second main result is that \autoref{conjectureSpecialStabilizersLiftArcs} holds for fantastacks.

\begin{theorem}
\label{mainTheoremLiftingToUntwistedArcsSpecialStabilizers}
\autoref{conjectureSpecialStabilizersLiftArcs} holds when $\cX$ is a fantastack.
\end{theorem}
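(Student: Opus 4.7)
The plan is to show that $\sep_\pi^{-1}(0)$ is contained in $\sL(X\setminus T_X)$, where $T_X$ is the dense torus of $X$, and then to use the standard fact that arcs of a proper closed subscheme of an irreducible variety have motivic measure zero.

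First I would invoke the combinatorial description of $\sL(\pi)\colon \sL(\cX)\to\sL(X)$ developed as Step~1 of the proof of \autoref{mainTheoremStackMeasureAndGorensteinMeasure}. Writing the fantastack as $\cX=[V/G]$ with $V\subset\mathbb{A}^N$ and $G=\ker(\mathbb{G}_m^N\to T_X)$ diagonalizable, the special-stabilizers hypothesis forces every stabilizer of $\cX$ to be a subtorus of $G$, so torsors under these stabilizers on $\Spec k[[t]]$ are trivial. This reduces the lifting of an arc $\gamma\colon\Spec k[[t]]\to X$ to a combinatorial problem about extending a $G(k((t)))$-equivalence class of maps $\Spec k((t))\to V$ to $\Spec k[[t]]$.

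Next, for an arc $\gamma$ whose generic point lies in $T_X$, the multi-order of vanishing of characters determines a tropicalization $v\in|\Sigma|\cap N$. Lifting $\gamma$ then amounts to finding $(a_1,\ldots,a_N)\in\Z_{\geq 0}^N$ with $\beta(a_1,\ldots,a_N)=v$: once such an integer vector is found, the units along each coordinate can be adjusted in $k[[t]]^\times$, and the $G(k((t)))/G(k[[t]])$-action realizes exactly the kernel of $\Z^N\twoheadrightarrow N$, ensuring that any integral lift of $v$ can be reached from a generic lift. The defining property of a fantastack—that every cone of $\Sigma$ is the image of a face of $\R_{\geq 0}^N$ under $\beta_\R$—guarantees such $(a_1,\ldots,a_N)$ exists. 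Hence $\sep_\pi(\gamma)>0$ for every such $\gamma$, so $\sep_\pi^{-1}(0)\subseteq\sL(X\setminus T_X)$.

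Since $X\setminus T_X$ is a proper closed subset of the irreducible toric variety $X$, the set $\sL(X\setminus T_X)$ is measurable with $\mu_X(\sL(X\setminus T_X))=0$. Measurability of $\sep_\pi^{-1}(0)$ itself can then be extracted either by completeness of the motivic $\sigma$-algebra or, more directly, by using the combinatorial stratification of $\sL_n(X)$ from Step~2 of the proof of \autoref{mainTheoremStackMeasureAndGorensteinMeasure} to exhibit $\sep_\pi^{-1}(0)$ as a countable union of cylinders of unbounded codimension.

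The main obstacle, in my view, is the careful bookkeeping of how the $G(k((t)))/G(k[[t]])$-action shifts the integer vectors $(a_1,\ldots,a_N)$, and how the special-stabilizers hypothesis intervenes to ensure that every integral lift of $v$ into $\Z_{\geq 0}^N$ actually corresponds to an honest (not twisted) arc of $\cX$ above $\gamma$. A secondary issue is measurability of $\sep_\pi^{-1}(0)$ itself, which—although immediate from an outer-measure perspective—must be verified within the precise framework set up earlier in the paper.
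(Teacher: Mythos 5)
Your overall strategy is the same as the paper's: show that every arc whose generic point lies in the dense torus admits a lift, so that $\sep_\pi^{-1}(0)\subset\sL(X\setminus T)$, and then use that $\sL(X\setminus T)$ has measure zero. Via \autoref{MainPropositionFibersOfTheMapOfArcs}, lifting an arc $\varphi$ with $\trop(\varphi)=w$ is indeed equivalent to $\beta^{-1}(w)\neq\emptyset$, i.e.\ to finding a nonnegative integral vector mapping to $w$. The gap is in the sentence ``The defining property of a fantastack\dots guarantees such $(a_1,\dots,a_N)$ exists.'' This is false: the fantastack condition only says that $\beta_\R$ maps faces of the orthant onto the cones of the fan and each $e_i$ onto a point of a ray; it does \emph{not} make $\beta$ surjective on lattice points. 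For the canonical stack over the $A_1$-singularity, with $\sigma$ generated by $v_1=(1,0)$ and $v_2=(1,2)$, the point $w=(1,1)\in\sigma\cap N$ is not a $\Z_{\geq0}$-combination of $v_1,v_2$, so $\beta^{-1}(w)=\emptyset$ and the (positive-measure) set $\trop^{-1}(w)$ consists of arcs that do not lift. This is precisely the phenomenon that forces Yasuda's twisted arcs, and it shows your argument, as written, would prove a false statement with no use of the special-stabilizers hypothesis. Your secondary worry is also misplaced: \autoref{MainPropositionFibersOfTheMapOfArcs} already shows, with no hypothesis on the stabilizers beyond $\pi$ being an isomorphism over $T$, that \emph{every} element of $\beta^{-1}(w)$ is realized by an honest arc over $\varphi$; the special-stabilizers hypothesis is not needed to rule out twisting, it is needed to make $\beta^{-1}(w)$ nonempty in the first place.

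What is missing is exactly the content of \autoref{theoremFantastackConnectedStabilizers} and \autoref{corollaryCanonicalFantastackConnectedStabilizers}: the stabilizers of $\cX$ are all special if and only if every linearly independent subset of $\{\nu(e_i)\}$ extends to a $\Z$-basis of $N$. Granting this, the paper's argument is short: after reducing to an affine $X$ given by a full-dimensional cone $\sigma$, choose a simplicial subdivision of $\sigma$ whose rays are rays of $\sigma$; the criterion forces every cone of the subdivision to be unimodular, so every $w\in\sigma\cap N$ is a nonnegative integral combination of the $v_i$, i.e.\ $\beta$ is surjective, and then \autoref{MainPropositionFibersOfTheMapOfArcs} gives $\sep_\pi(\varphi)=\#\beta^{-1}(w)>0$ on $\trop^{-1}(w)$. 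Your concluding measure-theoretic step (measurability of $\sep_\pi^{-1}(0)$ and $\mu_X(\sL(X\setminus T))=0$) is fine and agrees with the paper, but the combinatorial heart of the proof --- translating ``special stabilizers'' into unimodularity and hence surjectivity of $\beta$ on lattice points --- is absent from your proposal and cannot be replaced by the fantastack definition alone.
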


An essential ingredient in proving \autoref{mainTheoremLiftingToUntwistedArcsSpecialStabilizers} is \autoref{theoremFantastackConnectedStabilizers}, which may be of independent interest, as it provides a combinatorial criterion to check whether or not the stabilizers of a fantastack are special groups.

\begin{acknowledgements}
We thank Dan Abramovich, Dan Edidin, Jack Hall, Martin Olsson, and Karl Schwede for helpful conversations. We are also grateful to the anonymous referee for a thorough reading of our paper and many suggestions which improved the paper.
\end{acknowledgements}

\section{Preliminaries}\label{sectionPreliminaries}

In this section, we introduce notation and recall some facts about motivic integration for schemes and the Gorenstein measure, the Grothendieck ring of stacks and constructible subsets, and toric Artin stacks.

\subsection{Motivic integration for schemes}\label{subsectionPreliminariesMotivicIntegrationSchemes}

If $X$ is a $k$-scheme, for each $n \in \N$ we will let $\sL_n(X)$ denote the $n$th jet scheme of $X$, for each $n \geq m$ we will let $\theta^n_m: \sL_n(X) \to \sL_m(X)$ denote the truncation morphism, we will let $\sL(X) = \varprojlim_n\sL_n(X)$ denote the arc scheme of $X$, and for each $n \in \N$ we will let $\theta_n: \sL(X) \to \sL_n(X)$ denote the canonical morphism, which is also referred to as a truncation morphism. For any $k$-algebra $R$ and $k$-scheme $X$, the map $X(R\llbracket t \rrbracket) \to \sL(X)(R)$ is bijective by \cite[Theorem 1.1]{Bhatt}, and we will often implicitly make this identification.

We will let $K_0(\Vari_k)$ denote the Grothendieck ring of finite type $k$-schemes, for each finite type $k$-scheme $X$ we will let $\e(X) \in K_0(\Var_k)$ denote its class, we will let $\bL = \e(\bA_k^1) \in K_0(\Var_k)$ denote the class of the affine line, and for each constructible subset $C$ of a finite type $k$-scheme we will let $\e(C) \in K_0(\Var_k)$ denote its class.

We will let $\sM_k$ denote the ring obtained by inverting $\bL$ in $K_0(\Var_k)$. For each $\Theta \in \sM_k$, let $\dim(\Theta) \in \Z \cup \{-\infty\}$ denote the infimum over all $d \in \Z$ such that $\Theta$ is in the subgroup of $\sM_k$ generated by elements of the form $\e(X)\bL^{-n}$ with $\dim(X) - n \leq d$, and let $\Vert \Theta \Vert = \exp(\dim(\Theta))$. We will let $\widehat{\sM}_k$ denote the separated completion of $\sM_k$ with respect to the non-Archimedean semi-norm $\Vert \cdot \Vert$, and we will also let $\Vert \cdot \Vert$ denote the non-Archimedean norm on $\widehat{\sM}_k$. For any $m \in \Z_{>0}$, we will let $\widehat{\sM}_k[\bL^{1/m}] = \widehat{\sM}_k[t]/(t^m-\bL)$, we will let $\bL^{1/m}$ denote the image of $t$ in $\widehat{\sM}_k[\bL^{1/m}]$, and we will endow $\widehat{\sM}_k[\bL^{1/m}]$ with the topology induced by the equality
\[
	\widehat{\sM}_k[\bL^{1/m}] = \bigoplus_{\ell = 0}^{m-1} \widehat{\sM}_k \cdot (\bL^{1/m})^\ell,
\]
where each summand $\widehat{\sM}_k \cdot (\bL^{1/m})^\ell$ has the topology induced by the bijection 
\[
	\widehat{\sM}_k \to \widehat{\sM}_k \cdot (\bL^{1/m})^\ell: \Theta \mapsto \Theta \cdot (\bL^{1/m})^\ell.
\]
We note that above and throughout this paper, if $\Theta$ is an element of $K_0(\Var_k)$, $\sM_k$, or $\widehat{\sM}_k$, we slightly abuse notation by also using $\Theta$ to refer to its image under any of the ring maps $K_0(\Var_k) \to \sM_k \to \widehat{\sM}_k \to \widehat{\sM}_k[\bL^{1/m}]$.

If $X$ is an equidimensional finite type $k$-scheme and $C \subset \sL(X)$ is a cylinder, i.e., $C =(\theta_n)^{-1}(C_n)$ for some $n \in \N$ and some constructible subset $C_n \subset \sL_n(X)$, we will let $\mu_X(C) \in \widehat{\sM}_k$ denote the motivic measure of $C$, so by definition
\[
	\mu_X(C) = \lim_{n \to \infty} \e(\theta_n(C))\bL^{-(n+1)\dim X} \in \widehat{\sM}_k,
\]
where we note that each $\theta_n(C)$ is constructible (for example, by \cite[Chapter 5 Corollary 1.5.7(b)]{ChambertLoirNicaiseSebag}) and the above limit exists (for example, by \cite[Chapter 6 Theorem 2.5.1]{ChambertLoirNicaiseSebag}). The motivic measure $\mu_X$ can be extended to the class of so-called measurable subsets of $\sL(X)$, whose definition we now recall.

\begin{definition}
Let $X$ be an equidimensional finite type scheme over $k$, let $C \subset \sL(X)$, let $\varepsilon \in \R_{>0}$, let $I$ be a set, let $C^{(0)} \subset \sL(X)$ be a cylinder, and let $\{C^{(i)}\}_{i \in I}$ be a collection of cylinders in $\sL(X)$.

The data $(C^{(0)}, (C^{i})_{i \in I})$ is called a \emph{cylindrical $\varepsilon$-approximation} of $C$ if
\[
	(C \cup C^{(0)}) \setminus (C \cap C^{(0)}) \subset \bigcup_{i \in I} C^{(i)}
\]
and for all $i \in I$,
\[
	\Vert \mu_X(C^{(i)}) \Vert < \varepsilon.
\]
\end{definition}

\begin{definition}
Let $X$ be an equidimensional finite type scheme over $k$, and let $C \subset \sL(X)$. The set $C$ is called \emph{measurable} if for any $\varepsilon \in \R_{>0}$, there exists a cylindrical $\varepsilon$-approximation of $C$. The \emph{motivic measure} of a measurable subset $C \subset \sL(X)$ is defined to be the unique element $\mu_X(C) \in \widehat{\sM}_k$ such that for any $\varepsilon \in \R_{>0}$ and any cylindrical $\varepsilon$-approximation $(C^{(0)}, (C^{(i)})_{i \in I})$ of $C$, we have
\[
	\Vert \mu_X(C) - \mu_X(C^{(0)}) \Vert < \varepsilon.
\]
Such an element $\mu_X(C)$ exists by \cite[Chapter 6 Theorem 3.3.2]{ChambertLoirNicaiseSebag}.
\end{definition}

For the remainder of this subsection, let $X$ be an integral finite type separated $k$-scheme with log-terminal singularities. We will set notation relevant for the Gorenstein measure associated to $X$. We will let $K_X$ denote the canonical divisor on $X$. If $m \in \Z_{>0}$ is such that $mK_X$ is Cartier, we will let $\omega_{X,m} = \iota_*( (\Omega_{X_\sm}^{\dim X})^{\otimes m} )$ where $\iota: X_\sm \hookrightarrow X$ is the inclusion of the smooth locus of $X$, and we will let $\sJ_{X,m}$ denote the unique ideal sheaf on $X$ such that the image of $(\Omega_X^{\dim X})^{\otimes m} \to \omega_{X,m}$ is equal to $\sJ_{X,m} \omega_{X,m}$. If $C \subset \sL(X)$ is measurable, we will let $\mu^\Gor_X(C)$ denote the Gorenstein measure of $C$, so by definition
\begin{align*}
	\mu^\Gor_X(C) &= \int_C (\bL^{1/m})^{\ord_{\sJ_{X,m}}} \diff \mu_X \\
	&= \sum_{n = 0}^\infty (\bL^{1/m})^n \mu_X(\ord_{\sJ_{X,m}}^{-1}(n) \cap C) \in \widehat{\sM}_k[\bL^{1/m}],
\end{align*}
where $m \in \Z_{>0}$ is such that $mK_X$ is Cartier and $\ord_{\sJ_{X,m}}: \sL(X) \to \N \cup \{\infty\}$ is the order function of the ideal sheaf $\sJ_{X,m}$. The following proposition is easy to check using the definition of $\mu^\Gor_X$ and standard properties of $\mu_X$ given in \cite[Chapter 6 Proposition 3.4.3]{ChambertLoirNicaiseSebag}.

\begin{proposition}
\label{countableAdditivityOfGorensteinMeasure}
Let $\{C^{(i)}\}_{i \in \N}$ be a sequence of pairwise disjoint measurable subsets of $\sL(X)$ such that $C = \bigcup_{i = 0}^\infty C^{(i)}$ is measurable. Then
\[
	\lim_{i \to \infty} \mu^\Gor_X(C^{(i)}) = 0,
\]
and
\[
	\mu^\Gor_X(C) = \sum_{i =0}^\infty \mu^\Gor_X(C^{(i)}).
\]
\end{proposition}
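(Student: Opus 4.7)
The strategy is to reduce the two claims to the corresponding facts for the standard motivic measure $\mu_X$, which is countably additive on measurable sets by \cite[Chapter 6 Theorem 2.5.1]{ChambertLoirNicaiseSebag}, and then to control the weighting by $(\bL^{1/m})^{\ord_{\sJ_{X,m}}}$ using the log-terminal hypothesis.

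\textbf{Step 1 (level-set decomposition).} For each $n \in \N$, the set $D_n := \ord_{\sJ_{X,m}}^{-1}(n) \subset \sL(X)$ is a cylinder, so $D_n \cap C$ and $D_n \cap C^{(i)}$ are measurable. Applying countable additivity of $\mu_X$ on each level, for every fixed $n$ we get
\[
	\mu_X(D_n \cap C) = \sum_{i=0}^\infty \mu_X(D_n \cap C^{(i)}) \in \widehat{\sM}_k,
\]
with $\mu_X(D_n \cap C^{(i)}) \to 0$ in $\widehat{\sM}_k$ as $i \to \infty$.

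\textbf{Step 2 (Fubini for the double sum).} Substituting into the definition, the identity to establish is
\[
	\mu^\Gor_X(C) = \sum_{n=0}^\infty (\bL^{1/m})^n \sum_{i=0}^\infty \mu_X(D_n \cap C^{(i)}) \;=\; \sum_{i=0}^\infty \sum_{n=0}^\infty (\bL^{1/m})^n \mu_X(D_n \cap C^{(i)}) \;=\; \sum_{i=0}^\infty \mu^\Gor_X(C^{(i)}).
\]
To justify the interchange, I would project onto each summand $\widehat{\sM}_k \cdot (\bL^{1/m})^\ell$ of the direct-sum decomposition and reduce to a double-series interchange in the complete non-Archimedean norm on $\widehat{\sM}_k$; the key input is the uniform tail bound
\[
	\bigl\Vert (\bL^{1/m})^n \mu_X(D_n \cap C^{(i)}) \bigr\Vert \;\leq\; \bigl\Vert (\bL^{1/m})^n \mu_X(D_n) \bigr\Vert,
\]
which is uniform in $i$, and the fact that the right-hand side tends to $0$ as $n \to \infty$. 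This latter decay is exactly the statement that $(\bL^{1/m})^{\ord_{\sJ_{X,m}}}$ is $\mu_X$-integrable on $\sL(X)$, which is guaranteed by the log-terminal hypothesis on $X$.

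\textbf{Step 3 (vanishing $\lim_i \mu^\Gor_X(C^{(i)}) = 0$).} Given $\varepsilon > 0$, use the uniform tail bound of Step~2 to pick $N$ so that $\Vert \sum_{n \geq N} (\bL^{1/m})^n \mu_X(D_n \cap C^{(i)}) \Vert < \varepsilon$ for all $i$, and then use the finite-sum limit $\lim_i \mu_X(D_n \cap C^{(i)}) = 0$ from Step~1 for each $n < N$ to bound the remaining head uniformly. The main technical point, and essentially the only nontrivial ingredient, is the uniform tail bound in Step~2; everything else is a straightforward bookkeeping exercise with the definitions and the already-known properties of $\mu_X$.
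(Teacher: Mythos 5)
Your proof is correct and is essentially the argument the paper intends: the paper gives no written proof, saying only that the proposition ``is easy to check using the definition of $\mu^\Gor_X$ and standard properties of $\mu_X$'', and your levelwise decomposition along the cylinders $\ord_{\sJ_{X,m}}^{-1}(n)$, the $\sigma$-additivity of $\mu_X$ on measurable sets, and the uniform tail bound $\Vert (\bL^{1/m})^n\mu_X(D_n\cap C^{(i)})\Vert \leq \Vert (\bL^{1/m})^n\mu_X(D_n)\Vert$ (whose decay is exactly the convergence making $\mu^\Gor_X$ well defined for log-terminal $X$) constitute precisely that check. One minor correction: the countable-additivity property of $\mu_X$ you invoke is not \cite[Chapter 6 Theorem 2.5.1]{ChambertLoirNicaiseSebag}, which concerns the convergence of the limit defining the measure of a cylinder, but rather the $\sigma$-additivity statements for measurable subsets in \cite[Chapter 6 Sections 3.3--3.4]{ChambertLoirNicaiseSebag}.
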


\subsection{The Grothendieck ring of stacks and constructible subsets}

We will let $K_0(\Stack_k)$ denote the Grothendieck ring of stacks in the sense of \cite{Ekedahl}, and for each finite type Artin stack $\cX$ over $k$, we will let $\e(\cX) \in K_0(\Stack_k)$ denote the class of $\cX$. If $K_0(\Var_k)[\bL^{-1}, \{(\bL^n - 1)^{-1}\}_{n \in \Z_{>0}}]$ is the ring obtained from $K_0(\Var_k)$ by inverting $\bL$ and $(\bL^n-1)$ for all $n \in \Z_{>0}$, then the obvious ring map $K_0(\Var_k) \to K_0(\Stack_k)$ induces an isomorphism
\[
	K_0(\Var_k)[\bL^{-1}, \{(\bL^n - 1)^{-1}\}_{n \in \Z_{>0}}] \cong K_0(\Stack_k)
\]
by \cite[Theorem 1.2]{Ekedahl}. Therefore there exists a unique ring map
\[
	K_0(\Stack_k) \to \widehat{\sM}_k
\]
whose composition with $K_0(\Var_k) \to K_0(\Stack_k)$ is the usual map $K_0(\Var_k) \to \widehat{\sM}_k$. If $\Theta \in K_0(\Stack_k)$, we will slightly abuse notation by also using $\Theta$ to refer to its image under $K_0(\Stack_k) \to \widehat{\sM}_k$. By \cite[Proposition 1.1(iii) and Proposition 1.4(i)]{Ekedahl}, if $G$ is a special group over $k$, then $\e(G) \in K_0(\Stack_k)$ is a unit and for any finite type $k$-scheme $X$ with $G$-action, the class of the stack quotient is
\[
	\e([X/G]) = \e(X)\e(G)^{-1} \in K_0(\Stack_k).
\]

\begin{remark}
\label{jetSchemeOfSpecialGroupIsSpecial}
Let $G$ be an algebraic group over $k$. For each $n \in \N$, we give $\sL_n(G)$ the group structure induced by applying the functor $\sL_n$ to the group law $G \times_k G \to G$. It is easy to verify that for each $n \in \N$, we have a short exact sequence 
\[
	1 \to \mathfrak{g} \to \sL_{n+1}(G) \xrightarrow{\theta^{n+1}_n} \sL_n(G) \to 1,
\]
where $\mathfrak{g}$ is the Lie algebra of $G$. Thus by induction on $n$, the fact that $\bG_a$ is special, the fact that extensions of special groups are special, and the fact that $\sL_0(G) \cong G$, we see that if $G$ is a special group, then each jet scheme $\sL_n(G)$ is a special group.
\end{remark}

To state the next result, we recall that if $\cX$ is a finite type Artin stack over $k$, then the topological space $|\cX|$ is Noetherian, so its constructible subsets are precisely those subsets that can be written as a finite union of locally closed subsets.

\begin{proposition}
\label{existenceOfClassOfConstructibleSubsetOfStack}
Let $\cX$ be a finite type Artin stack over $k$ and let $\cC \subset |\cX|$ be a constructible subset. Then there exists a unique $\e(\cC) \in K_0(\Stack_k)$ that satisfies the following property. If $\{\cX_i\}_{i \in I}$ is a finite collection of locally closed substacks $\cX_i$ of $\cX$ such that $\cC$ is equal to the disjoint union of the $|\cX_i|$, then
\[
	\e(\cC) = \sum_{i \in I}\e(\cX_i) \in K_0(\Stack_k).
\]
\end{proposition}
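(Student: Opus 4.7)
The plan is to define $\e(\cC)$ as the common value of $\sum_{i \in I} \e(\cX_i)$ taken over all finite decompositions $\{\cX_i\}_{i \in I}$ of $\cC$ by topologically disjoint locally closed substacks of $\cX$. The proof then reduces to three tasks: producing at least one such decomposition, showing the sum is independent of the chosen decomposition, and deducing uniqueness. Uniqueness will be immediate from the other two: any element of $K_0(\Stack_k)$ satisfying the displayed identity must coincide with the sum coming from a single explicit decomposition.

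For existence of a decomposition, I would use that $|\cX|$ is Noetherian (noted just before the proposition) to write $\cC$ as a finite union of locally closed subsets of $|\cX|$, and then pass to a disjoint refinement by the standard trick of successively subtracting closures. Each resulting piece $Z \subset |\cX|$ has the form $|\cU| \cap |\cV|$ for an open substack $\cU \hookrightarrow \cX$ and a closed substack $\cV \hookrightarrow \cX$, and $\cU \times_\cX \cV$ endowed with its reduced structure provides a valid $\cX_i$.

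For independence, given two decompositions $\{\cX_i\}_{i \in I}$ and $\{\cY_j\}_{j \in J}$ I would pass to a common refinement. For each pair $(i,j)$, the subset $|\cX_i| \cap |\cY_j|$ is locally closed in $|\cX|$ and hence also in $|\cX_i|$, so it underlies a reduced locally closed substack $\mathcal{Z}_{ij} \hookrightarrow \cX_i$. Fixing $i$, the family $\{\mathcal{Z}_{ij}\}_j$ exhibits $(\cX_i)_\red$ as a finite disjoint topological union of locally closed substacks. Iterating the scissor relation $\e(\cY) = \e(\cY_0) + \e(\cY \setminus \cY_0)$ in $K_0(\Stack_k)$ over this stratification yields $\e((\cX_i)_\red) = \sum_j \e(\mathcal{Z}_{ij})$. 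Combining this with the equality $\e(\cX_i) = \e((\cX_i)_\red)$, which is another instance of the scissor relation applied with closed substack $(\cX_i)_\red$ and empty complement, and summing over $i$, I get $\sum_i \e(\cX_i) = \sum_{i,j} \e(\mathcal{Z}_{ij})$. The same computation with the roles of $I$ and $J$ swapped shows that this common value also equals $\sum_j \e(\cY_j)$, as desired.

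The main technical point I anticipate is verifying that the scissor relations and the reducedness invariance hold in $K_0(\Stack_k)$ in precisely the form used above; these should follow directly from Ekedahl's construction in \cite{Ekedahl}, but some care is needed because the proposition permits the $\cX_i$ to carry arbitrary (possibly non-reduced) substack structures, which is why the argument has to pass through $(\cX_i)_\red$ and explicitly invoke invariance under change of substack structure on a fixed locally closed subset of $|\cX|$.
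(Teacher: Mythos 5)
Your argument is correct and is essentially the paper's: the paper proves this proposition by invoking verbatim the scheme-level argument of \cite[Chapter 2 Corollary 1.3.5]{ChambertLoirNicaiseSebag}, which is exactly the procedure you spell out (existence of a disjoint locally closed decomposition via Noetherian-ness of $|\cX|$, independence via a common refinement and iterated scissor relations, and invariance under reduction via the relation $\e(\cX_i)=\e((\cX_i)_\red)$). The only glossed point, iterating the closed/open scissor relation over a decomposition into \emph{locally closed} pieces, is handled in the cited scheme proof by Noetherian induction (peel off a piece that is open in the remaining union) and carries over unchanged since $|\cX|$ is Noetherian.
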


\begin{proof}
The proposition holds by the exact same proof used for the analogous statement for schemes in \cite[Chapter 2 Corollary 1.3.5]{ChambertLoirNicaiseSebag}.
\end{proof}

If $\cX$ is a finite type Artin stack and $\cC \subset |\cX|$ is a constructible subset, we will let $\e(\cC)$ denote the class of $\cC$, i.e., $\e(\cC)$ is as in the statement of \autoref{existenceOfClassOfConstructibleSubsetOfStack}.

We end this subsection with a useful tool to compute the class of a stack.

\begin{definition}
Let $S$ be a scheme, let $Z$ be scheme over $S$, let $\cY$ and $\cF$ be Artin stacks over $S$, and let $\xi: \cY \to Z$ be a morphism over $S$. We say $\xi$ is a \emph{piecewise trivial fibration with fiber $\cF$} if there exists a finite cover $\{Z_i\}_{i \in I}$ of $Z$ consisting of pairwise disjoint locally closed subschemes $Z_i \subset Z$ such that for all $i \in I$,
\[
	(\cY \times_Z Z_i)_\red \cong (\cF \times_S Z_i)_\red
\]
as stacks over $(Z_i)_\red$.
\end{definition}

\begin{remark}
\label{remarkPiecewiseTrivialFibrationGivesProductOfClasses}
Let $Z$ be a finite type scheme over $k$, let $\cY$ and $\cF$ be finite type Artin stacks over $k$, and let $\xi: \cY \to Z$ be a piecewise trivial fibration with fiber $\cF$. Then by \autoref{existenceOfClassOfConstructibleSubsetOfStack},
\[
	\e(\cY) = \e(\cF)\e(Z) \in K_0(\Stack_k).
\]
\end{remark}

The next proposition is well known in the case where $\cY$ is a scheme.

\begin{proposition}
\label{piecewiseTrivialFibrationCriterion}
Let $S$ be a Noetherian scheme, let $Z$ be a finite type scheme over $S$, let $\cY$ and $\cF$ be finite type Artin stacks over $S$, and let $\xi: \cY \to Z$ be a morphism over $S$. Then $\xi$ is a piecewise trivial fibration with fiber $\cF$ if and only if for all $z \in Z$, there exists an isomorphism 
\[
	(\cY \times_Z \Spec(k(z)))_\red \cong (\cF \times_S \Spec(k(z)))_\red
\]
of stacks over $k(z)$, where $k(z)$ denotes the residue field of $z$.
\end{proposition}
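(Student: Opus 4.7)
The forward direction is immediate from the definition: given a cover $\{Z_i\}$ as in the definition of piecewise trivial fibration, any $z \in Z$ lies in some $Z_i$, and base changing the isomorphism $(\cY \times_Z Z_i)_\red \cong (\cF \times_S Z_i)_\red$ along $\Spec k(z) \to Z_i$ supplies the required fiber isomorphism. My approach to the converse is Noetherian induction on closed subsets of $|Z|$, which is legitimate since $Z$ is finite type over the Noetherian scheme $S$.

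The inductive step reduces to the following local claim: whenever $\xi: \cY \to Z$ satisfies the fiberwise hypothesis and $Z \neq \emptyset$, there exists a nonempty locally closed $Z_0 \subset Z$ with $(\cY \times_Z Z_0)_\red \cong (\cF \times_S Z_0)_\red$ as stacks over $(Z_0)_\red$. Granting this, I would apply the inductive hypothesis to $\xi$ restricted over $Z \setminus Z_0$ to produce a finite disjoint cover of $Z \setminus Z_0$ by locally closed subschemes carrying the required trivializations, and then adjoin $Z_0$ to obtain the cover witnessing piecewise triviality on all of $Z$.

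To produce $Z_0$, I would pick the generic point $\eta$ of an irreducible component of $Z_\red$ and let $W$ be its reduced closure. After replacing $\cY$ and $\cF \times_S Z$ by $(\cY \times_Z W)_\red$ and $(\cF \times_S W)_\red$, we may assume $Z$ is integral with generic point $\eta$ and that both stacks over $Z$ are reduced. Since localization at the generic point of an integral reduced scheme preserves reducedness, the fiberwise hypothesis at $\eta$ furnishes a genuine isomorphism $\cY \times_Z \Spec k(\eta) \cong (\cF \times_S Z) \times_Z \Spec k(\eta)$ of stacks over $\Spec k(\eta)$, and it remains to extend this isomorphism to a nonempty open $Z_0 \subset Z$.

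This extension is a standard limit-theoretic spreading-out. The isomorphism at $\eta$ corresponds to a $\Spec k(\eta)$-point of the Isom-stack $\mathrm{Isom}_Z(\cY, \cF \times_S Z)$, which is locally of finite presentation over $Z$ under our hypotheses, so via the identification $\Spec k(\eta) = \varprojlim_{U \ni \eta} U$ over affine opens it descends to a section over some $U$. Equivalently, by choosing smooth presentations $P \to \cY$ and $Q \to \cF \times_S Z$ by finite type $Z$-schemes, the generic-fiber isomorphism translates to an isomorphism of the corresponding groupoid data over $\Spec k(\eta)$, which the scheme-level spreading-out of EGA IV \S8 extends to an open neighborhood of $\eta$. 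The main obstacle is making this limit argument precise in the Artin stack setting; once that is in hand, $Z_0 = U_\red$ (after shrinking if necessary) completes the construction.
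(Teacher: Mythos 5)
Your forward direction and the skeleton of your converse---Noetherian induction, passing to the generic point of an irreducible component, and spreading a generic-fiber identification out to a nonempty open neighborhood---are exactly the paper's strategy, so the architecture is sound. The issue is that the single step you leave open is precisely where the content of the proposition lies, and your first proposed justification for it does not hold as stated: you invoke an Isom-stack $\mathrm{Isom}_Z(\cY,\cF\times_S Z)$ that is ``locally of finite presentation over $Z$ under our hypotheses,'' but Hom- and Isom-stacks with an Artin stack as source are not known to be algebraic (or to have such finiteness properties) in this generality; the available results require the source to be proper and flat over the base, which $\xi$ certainly need not be here. What you actually need is much weaker---that the Hom groupoid is limit-preserving, i.e.\ that a morphism, and the property of its being an isomorphism (or a surjective closed immersion), over $\Spec k(\eta)=\varprojlim_\lambda U_\lambda$ descends to some $U_\lambda$---but this approximation statement for Artin stacks is a genuine input that you explicitly flag as ``the main obstacle'' without supplying it, and your second route via smooth presentations and EGA IV \S 8 is only a sketch, since one must also spread out the groupoid/2-categorical compatibility data.

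The paper resolves exactly this point by citing Rydh's limit results \cite[Propositions B.2 and B.3]{Rydh2015}: after replacing $\cY$ by $\cY_\red$, the fiber $\cY\times_Z\Spec k(z)$ is reduced, the hypothesis gives a surjective closed immersion $\iota\colon \cY\times_Z\Spec k(z)\to\cF\times_S\Spec k(z)$, and Rydh's propositions spread $\iota$ out to a morphism over some $U_\lambda$ which, after shrinking, is again a surjective closed immersion, hence an isomorphism on reductions. With that citation in place your variant also closes up (spreading out the isomorphism of the reduced stacks directly is fine, since $Z$ is Noetherian so the reductions are still of finite presentation over $Z$, and reduction commutes with the open base changes involved); note, though, that the paper's choice to spread out a closed immersion into the \emph{non-reduced} fiber is a small but convenient simplification, because ``surjective closed immersion'' is exactly the kind of property these approximation results propagate.
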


\begin{proof}
If $\xi$ is a piecewise trivial fibration with fiber $\cF$, then for every $z\in Z$, there is a locally closed subset $Z'\subseteq Z$ containing $z$ for which $(\cY\times_Z Z')_{\red}\cong(\cF\times_S Z')_{\red}$ as $Z'_{\red}$-stacks. Then
\[
\begin{split}
(\cY\times_Z \Spec k(z))_{\red} &=((\cY\times_Z Z')_{\red}\times_{Z'_{\red}} \Spec k(z))_{\red}\\
&\cong((\cF\times_S Z')_{\red}\times_{Z'_{\red}} \Spec k(z))_{\red}=(\cF\times_S \Spec k(z))_{\red}.
\end{split}
\]
We now show the converse holds. Since 
\[
	(\cY_{\red}\times_Z \Spec k(z))_{\red}=(\cY\times_Z \Spec k(z))_{\red}
\]
for every $z\in Z$, we can assume $\cY$ is reduced. By Noetherian induction on $Z$, we need only find a non-empty open subset $U\subseteq Z$ for which $(\cY\times_Z U)_{\red}\cong(\cF\times_S U)_{\red}$. Let $z\in Z$ be the generic point of an irreducible component of $Z$; replacing $Z$ by an open affine neighborhood of $z$, we may further assume $Z$ is affine. Since $\cO_{Z,z}$ is a field, $\cY\times_Z \Spec k(z)$ is reduced and we hence have a surjective closed immersion
\[
\iota\colon\cY\times_Z \Spec k(z)\cong(\cF\times_S \Spec k(z))_{\red}\to\cF\times_S \Spec k(z).
\]
Now, $\Spec\cO_{Z,z}=\lim_\lambda U_\lambda$ is the inverse limit of open affine neighborhoods $U_\lambda\subseteq Z$ of $z$. Since $Z$ is affine, each map $U_\lambda\to Z$ is affine. Note also that $\cY$ is Noetherian, hence quasi-compact and quasi-separated, and that $\cF\times_S \Spec k(z)\to \Spec k(z)$ is locally of finite presentation. \cite[Proposition B.2]{Rydh2015} then shows there is some index $\lambda$ and a morphism $\iota_\lambda\colon\cY\times_Z U_\lambda\to\cF\times_S U_\lambda$ whose base change to $\Spec\cO_{Z,z}$ is $\iota$. Furthermore, since $\cF\times_S \Spec k(z)\to \Spec k(z)$ and $\xi$ are both of finite presentation, \cite[Proposition B.3]{Rydh2015} shows that after replacing $\lambda$ by a larger index if necessary, we can assume $\iota_\lambda$ is a surjective closed immersion, and hence defines an isomorphism $(\cY\times_Z U_\lambda)_{\red}\cong(\cF\times_S U_\lambda)_{\red}$.
\end{proof}

\subsection{Toric Artin stacks}
In this subsection, we briefly review the theory of toric stacks introduced in \cite{GeraschenkoSatriano1}, as well as establish some notation. Since the focus in our paper is on the toric variety $X$, and the toric stack $\cX$ is viewed as a stacky resolution of $X$, we introduce some notational changes to emphasize this focus.

\begin{definition}\label{def:stacky-fan}
 A \emph{stacky fan} is a pair $(\widetilde{\Sigma},\nu)$, where $\widetilde{\Sigma}$ is a fan on a lattice $\widetilde{N}$ and $\nu\colon\widetilde{N}\to N$ is a homomorphism to a lattice $N$ so that the cokernel $\cok\nu$ is finite.
\end{definition}

A stacky fan $(\widetilde{\Sigma},\nu)$ gives rise to a toric stack as follows. Let $X_{\widetilde\Sigma}$ be the toric variety associated to $\widetilde{\Sigma}$. Since $\cok\nu$ is finite, $\nu^*$ is injective, so we obtain an a surjective homomorphism of tori
\[
\widetilde{T}:=\Spec k[\widetilde{N}^*]\longrightarrow\Spec k[N^*]=:T.
\]
Let $G_\nu$ denote the kernel of this map. Since $\widetilde{T}$ is the torus of $X_{\widetilde\Sigma}$, we obtain a $G_\nu$-action on $X_{\widetilde\Sigma}$ via the inclusion $G_\nu\subset\widetilde{T}$.

\begin{definition}\label{def:fan->toric-stack}
With notation as in the above paragraph, if $(\widetilde{\Sigma},\nu)$ is a stacky fan, the associated \emph{toric stack} is defined to be
\[
\cX_{\widetilde{\Sigma},\nu}:=[X_{\widetilde{\Sigma}}/G_\nu].
\]
When $\widetilde{\Sigma}$ is the fan generated by the faces of a single cone $\widetilde{\sigma}$, we denote $\cX_{\widetilde{\Sigma},\nu}$ by $\cX_{\widetilde{\sigma},\nu}$.
\end{definition}

\begin{example}
If $\Sigma$ is a fan on a lattice $N$ and we let $\nu$ be the identity map, then $\cX_{\Sigma,\nu}=X_\Sigma$. Thus, every toric variety is an example of a toric stack.
\end{example}

In this paper, we concentrate in particular on fantastacks introduced in \cite[Section 4]{GeraschenkoSatriano1}. These play a particularly important role for us since they allow us to start with a toric variety $X_\Sigma$ and produce a smooth stack $\cX$ with arbitrary degree of stackyness while maintaining the property that $X$ is the good moduli space of $\cX$.
Below, we let $e_1,\dots,e_r$ be the standard basis for $\Z^r$.

\begin{definition}
\label{def:fantastack}
Let $\Sigma$ be a fan on a lattice $N$, and let $\nu\colon \Z^r\to N$ be a homomorphism with finite cokernel so that every ray of $\Sigma$ contains some $v_i:=\nu(e_i)$ and every $v_i$ lies in the support of $\Sigma$. For a cone $\sigma\in \Sigma$, let $\widetilde\sigma=\mathrm{cone}(\{e_i|v_i\in \sigma\})$. We define the fan $\widetilde\Sigma$ on $\Z^r$ as the fan generated by all the $\widetilde\sigma$. We define
\[
\cF_{\Sigma,\nu} := \cX_{\widetilde\Sigma,\nu}.
\]
Any toric stack isomorphic to some $\cF_{\Sigma,\nu}$ is called a \emph{fantastack}. When $\Sigma$ is the fan generated by the faces of a cone $\sigma$, we denote $\cF_{\Sigma,\nu}$ by $\cF_{\sigma,\nu}$.
\end{definition}

\begin{remark}
By \cite[Example 6.24]{GeraschenkoSatriano1}, cf.~\cite[Theorem 5.5]{Satriano2013}, the natural map
\[
\cF_{\Sigma,\nu}\longrightarrow X_\Sigma
\]
is a good moduli space morphism. Furthermore, fantastacks have moduli interpretations in terms of line bundles and sections, analogous to the moduli interpretation for $\mathbb{P}^n$, see \cite[Section 7]{GeraschenkoSatriano1}.
\end{remark}

The next two results will be useful later on.

\begin{proposition}
\label{elementofFaddstogetelementofP}
Let $\sigma$ be a pointed full-dimensional cone and suppose the good moduli space map $\pi\colon\cF_{\sigma,\nu}\to X_\sigma$ is an isomorphism over the torus $T$ of $X_\sigma$. Then for any $f \in F:=\widetilde{\sigma}^\vee\cap\widetilde{N}^*$, there exists some $f' \in F$ such that
\[
	f + f' \in P:=\sigma^\vee\cap N^*.
\]
In particular, if $\psi\colon F\to\N\cup\{\infty\}$ is a morphism of monoids and $\psi(P)\subset\N$, then $\psi(F)\subset\N$.
\end{proposition}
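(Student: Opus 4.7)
The plan is to reduce the statement to finding a single $u\in P$ with $u(v_i)>0$ for every $i$; once we have such a $u$, for any $f\in F$ the element $f':=k\nu^*(u)-f$ works for sufficiently large $k$.

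First I would verify that the hypothesis forces $v_i\neq 0$ for all $i$. If some $v_i=0$, then $\nu^*(m)(e_i)=m(v_i)=0$ for every $m\in N^*$, so $e_i^*\notin\nu^*(N^*)$ and descends to a nontrivial character of $G_\nu$. Identifying $\widetilde\sigma=\R_{\geq 0}^r$ and $X_{\widetilde\sigma}\cong\bA^r$, the point $p$ with $x_i=0$ and $x_j=1$ for $j\neq i$ lies over $T$ (each coordinate $x^{\nu^*(m)}$, $m\in P$, is independent of $x_i$ since $m(v_i)=0$), but its $G_\nu$-stabilizer is the kernel of $G_\nu\to\prod_{j\neq i}\bG_m$ determined by the characters $[e_j^*]$. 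This kernel is positive-dimensional: modulo $\langle e_j^*:j\neq i\rangle$ the lattice $\widetilde N^*$ becomes $\Z$ generated by the class of $e_i^*$, and $\nu^*(N^*)$ maps to $0$ there, so $(\widetilde N^*/\nu^*(N^*))/\langle[e_j^*]:j\neq i\rangle\cong\Z$ and the stabilizer contains $\bG_m$. This contradicts $\pi$ being an isomorphism over $T$, which requires trivial stabilizers.

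Since $\sigma$ is pointed and full-dimensional, $\sigma^\vee$ is also pointed and full-dimensional, so $\mathrm{int}(\sigma^\vee)$ is nonempty and contains a lattice point $u\in P$. Any such $u$ is strictly positive on $\sigma\setminus\{0\}$, and since each $v_i\in\sigma\setminus\{0\}$, we have $u(v_i)>0$ for all $i$. Given $f\in F$, choose $k\in\N$ with $ku(v_i)\geq f(e_i)$ for every $i$ and set $f':=k\nu^*(u)-f$. Then $f'(e_i)=ku(v_i)-f(e_i)\geq 0$ for all $i$ shows $f'\in F$, while $f+f'=\nu^*(ku)$ with $ku\in P$, giving the desired conclusion after identifying $P$ with $\nu^*(P)\subset\widetilde N^*$.

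For the ``in particular'' clause, given $f\in F$ pick $f'\in F$ with $f+f'\in P$; then $\psi(f)+\psi(f')=\psi(f+f')\in\N$, and since $\psi(f),\psi(f')\in\N\cup\{\infty\}$, both summands must already lie in $\N$. The most delicate step is translating the geometric hypothesis into the combinatorial condition $v_i\neq 0$; the subsequent construction of $u$ and $f'$ is routine.
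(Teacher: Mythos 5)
Your proof is correct and follows essentially the same route as the paper: both arguments hinge on the fact that $\pi$ being an isomorphism over $T$ forces every $v_i\neq 0$, then pick $u\in P$ with $\langle v_i,u\rangle>0$ for all $i$ and subtract $f$ from a suitable element of $P$. The only differences are cosmetic — you justify $v_i\neq 0$ via the stabilizer computation (the paper asserts it in one line) and handle an arbitrary $f\in F$ by taking a large multiple $k\nu^*(u)$, whereas the paper reduces to the dual-basis generators $f_i$ and uses $u$ itself.
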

\begin{proof}
Let $v_i=\beta(e_i)$ for $1\leq i\leq r$. Since $\pi$ is an isomorphism over $T$, each $v_i\neq0$. As $\sigma$ is pointed, there exists some $p \in P$ such that $\langle v_i, p \rangle  > 0$ for all $i$. Viewing $p$ as an element of $F$ via the inclusion $P\subset F$, we have $\langle e_i, p \rangle > 0$.

Let $f_1, \dots, f_r$ be the basis of $\widetilde{M}$ dual to $e_1, \dots, e_r$. Since the $f_i$ are generators of $F$, it suffices to prove the proposition for each $f_i$. Note that
\[
	  \langle e_1, p \rangle f_1 + \dots + \langle e_r, p \rangle f_r = p \in P.
\]
Since $\langle e_i, p \rangle > 0$, we see
\[
f'_i:=(\langle e_i,p\rangle-1)f_i+\sum_{j\neq i}\langle e_j,p\rangle f_j\in F
\]
and that $f_i+f'_i\in P$.
\end{proof}

\begin{proposition}
\label{fiberabovewisfiniteset}
Keep the notation and hypotheses of \autoref{elementofFaddstogetelementofP} and let $\beta\colon\widetilde\sigma\cap\widetilde{N}\to\sigma\cap N$ be the induced map. If $w \in \sigma \cap N$, then $\beta^{-1}(w)$ is a finite set.
\end{proposition}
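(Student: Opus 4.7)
The plan is to bound $\beta^{-1}(w)$ by pairing against a carefully chosen element of $P:=\sigma^\vee\cap N^*$. First I would observe that, since $\sigma$ is full-dimensional and every $v_i$ lies in the support of $\Sigma$ (which is $\sigma$), the cone $\widetilde\sigma = \mathrm{cone}(\{e_i \mid v_i \in \sigma\})$ equals the positive orthant $\mathrm{cone}(e_1,\dots,e_r)$. Thus $\widetilde\sigma \cap \widetilde N = \N^r$, and $\beta$ is simply the map $(a_1,\dots,a_r) \mapsto \sum_i a_i v_i$. The goal is therefore to show that, for fixed $w \in \sigma \cap N$, only finitely many $(a_1,\dots,a_r) \in \N^r$ satisfy $\sum a_i v_i = w$.

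Next I would produce an element $p \in P$ with $\langle v_i, p \rangle > 0$ for every $i$, exactly as in the opening lines of the proof of \autoref{elementofFaddstogetelementofP}. The existence of such $p$ follows from two observations: (i) since $\sigma$ is pointed and full-dimensional, $\sigma^\vee$ is also pointed and full-dimensional, so it contains integer interior points, and any such point pairs strictly positively with every nonzero element of $\sigma$; and (ii) since $\pi$ is an isomorphism over $T$, each $v_i$ is a \emph{nonzero} element of $\sigma$. Because all pairings involved are integer valued, we in fact get $\langle v_i, p\rangle \geq 1$ for every $i$.

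Finally, I would pair both sides of $\sum a_i v_i = w$ with $p$ to obtain
\[
\langle w, p \rangle \;=\; \sum_{i=1}^r a_i \langle v_i, p \rangle.
\]
Each summand is a nonnegative integer and $\langle v_i, p\rangle \geq 1$, so $a_i \leq \langle w, p\rangle$ for every $i$. Hence $\beta^{-1}(w)$ is contained in the finite set $\{0,1,\dots,\langle w, p\rangle\}^r$, which concludes the proof.

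There is no real obstacle here: the crux is just finding the right dual vector $p$, and all the work for this has effectively been done in \autoref{elementofFaddstogetelementofP}. The only subtlety worth checking is that one may choose $p$ integral (not merely rational), which is automatic since $\sigma^\vee$ has nonempty interior in $N^*\otimes\R$ and $N^*$ is a lattice.
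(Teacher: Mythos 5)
Your proof is correct and is essentially the paper's argument: the paper bounds each coordinate $\langle\widetilde{w},f_i\rangle$ by pairing with $f_i+f'_i\in P$ from \autoref{elementofFaddstogetelementofP}, and since in that lemma's proof $f_i+f'_i=p$, this is exactly your bound $a_i\leq\langle w,p\rangle$ obtained by pairing with the strictly positive element $p\in P$. The only cosmetic difference is that you unwind the construction of $p$ inside the proof of \autoref{elementofFaddstogetelementofP} rather than citing that lemma's conclusion directly.
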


\begin{proof}
Let $f_1, \dots, f_r$ be the minimal generators of the monoid $F$. By \autoref{elementofFaddstogetelementofP}, there exist $f'_1, \dots, f'_r$ such that $f_i + f'_i \in P$ for all $i \in \{1, \dots, r\}$. For any $\widetilde{w} \in \beta^{-1}(w)$,
\[
	\langle \widetilde{w}, f_i \rangle \leq \langle \widetilde{w}, f_i + f'_i \rangle = \langle w, f_i+f'_i\rangle,
\]
so there are only finitely many possible values for each $\langle \widetilde{w}, f_i \rangle$. Thus $\beta^{-1}(w)$ is a finite set.
\end{proof}

We end this section by discussing canonical stacks as defined in \cite[Section 5]{GeraschenkoSatriano1}.

\begin{definition}
\label{def:canonical-stack}
If $\Sigma$ is a fan on a lattice $N$, let $v_1,\dots,v_r\in N$ be the first lattice points on the rays of $\Sigma$, let $\nu\colon\Z^r\to N$ be the map $\nu(e_i):=v_i$, and let $\widetilde\Sigma$ be as in \autoref{def:fantastack}. If $N'$ is a direct complement of the support of $\Sigma$ and $\nu'\colon\Z^r\oplus N'\to N$ is given by $\nu'(v,n')=\nu(v)+n'$, then $\cX_{\widetilde\Sigma,\nu'}$ is the \emph{canonical stack} of $X_\Sigma$.
\end{definition}

\begin{remark}
With notation as in \autoref{def:canonical-stack}, if the support of $\Sigma$ is $N$, the canonical stack of $X_\Sigma$ is the fantastack $\cF_{\widetilde{\Sigma},\nu}$.
\end{remark}


The next proposition, which is straightforward from the definition, says that canonical stacks are compatible with open immersions. This will be useful for us, as this proposition will allow us to reduce most of our work to the case of affine toric varieties defined by a $d$-dimensional cone in $N_\R$.

\begin{proposition}
\label{canonicalFantastackOpenInvariantSubvariet}
Let $\Sigma$ be a fan consisting of pointed rational cones in $N_\R$, let $\sigma$ be a cone in $\Sigma$, let $X(\Sigma)$ and $X(\sigma)$ be the $T$-toric varieties associated to $\Sigma$ and $\sigma$, respectively, and let $\iota: X(\sigma) \hookrightarrow X(\Sigma)$ be the open inclusion. If $\cX(\Sigma)$ and $\cX(\sigma)$ are the canonical stacks over $X(\Sigma)$ and $X(\sigma)$, respectively, and $\pi(\Sigma): \cX(\Sigma) \to X(\Sigma)$ and $\pi(\sigma): \cX(\sigma) \to X(\sigma)$ are the canonical maps, then there exists a map $\cX(\sigma) \to \cX(\Sigma)$ such that
\begin{center}
\begin{tikzcd}
\cX(\sigma) \arrow{r}{\pi(\sigma)} \arrow[d] & X(\sigma) \arrow{d}{\iota}\\
\cX(\Sigma) \arrow{r}{\pi(\Sigma)} & X(\Sigma) 
\end{tikzcd}
\end{center}
is a fiber product diagram.
\end{proposition}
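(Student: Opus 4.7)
The plan is to unpack the definitions of the canonical stacks as explicit quotient stacks and then verify the cartesian square by direct computation, using that good moduli spaces commute with flat base change along the open immersion $\iota$.

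First I would fix notation. Let $v_1,\dots,v_r$ be the first lattice points on the rays of $\Sigma$, indexed so that $v_1,\dots,v_k$ are the rays of $\sigma$. Let $N'$ be a direct complement of the support of $\Sigma$ in $N$, and choose a direct complement $N''$ of $N_\sigma$ (the sublattice generated by $v_1,\dots,v_k$) inside $N_\Sigma$ (the sublattice generated by $v_1,\dots,v_r$); then $N' \oplus N''$ is a direct complement of the support of $\sigma$, which I will use when forming $\cX(\sigma)$. With these choices, $\cX(\Sigma) = [X_{\widetilde{\Sigma}}/G_\Sigma]$ with $X_{\widetilde{\Sigma}}$ the toric variety of $\widetilde{\Sigma}$ embedded in $\Z^r \oplus N'$ and $G_\Sigma = \ker(\widetilde{T}_\Sigma \to T)$, and similarly $\cX(\sigma) = [X_{\widetilde{\sigma}'}/G_\sigma]$.

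Next I would construct the map $\cX(\sigma) \to \cX(\Sigma)$. The inclusion of the subfan $\{\tau \leq \sigma\} \subset \Sigma$ lifts to an inclusion of the corresponding subfan $\widetilde{\sigma}^{\Sigma} \subset \widetilde{\Sigma}$, namely the fan of cones spanned by subsets of $\{e_1,\dots,e_k\}$. This gives an open toric immersion $X_{\widetilde{\sigma}^\Sigma} \hookrightarrow X_{\widetilde{\Sigma}}$ equivariant for $G_\Sigma$, producing an open immersion $[X_{\widetilde{\sigma}^\Sigma}/G_\Sigma] \hookrightarrow \cX(\Sigma)$. To identify this substack with $\cX(\sigma)$, I would use that the preimage of $X_\sigma$ under the toric map $X_{\widetilde{\Sigma}} \to X_\Sigma$ is precisely $X_{\widetilde{\sigma}^\Sigma}$; combined with flat base change for good moduli space morphisms (the open immersion $\iota$ is flat), this gives $\cX(\Sigma) \times_{X(\Sigma)} X(\sigma) = [X_{\widetilde{\sigma}^\Sigma}/G_\Sigma]$.

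The final and most substantive step is to produce an isomorphism $[X_{\widetilde{\sigma}^\Sigma}/G_\Sigma] \cong [X_{\widetilde{\sigma}'}/G_\sigma]$. For this, I observe that the unused rays $e_{k+1},\dots,e_r$ and the factor $N'$ give a splitting $X_{\widetilde{\sigma}^\Sigma} \cong X_{\widetilde{\sigma}_0} \times (\bG_m)^{r-k} \times T_{N'}$, where $\widetilde{\sigma}_0$ is the cone on $e_1,\dots,e_k$ in $\Z^k$, whereas $X_{\widetilde{\sigma}'} \cong X_{\widetilde{\sigma}_0} \times T_{N' \oplus N''}$. The compositions of inclusions $\Z^k \oplus (N' \oplus N'') \hookrightarrow \Z^r \oplus N'$ with $\nu'_\Sigma$ recover $\nu'_\sigma$, so there is a compatible inclusion $G_\sigma \hookrightarrow G_\Sigma$ whose cokernel is a torus acting freely on the complementary factor; a standard quotient identification then identifies the two stack quotients. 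The main obstacle is bookkeeping the lattice choices so that both quotients genuinely have $X_\sigma$ as good moduli space and the constructed map of stacky fans induces the isomorphism; once this is set up, the cartesian property is immediate from the preimage identification and the flat base change for $\pi(\Sigma)$.
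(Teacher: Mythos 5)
Your first two steps are fine: the preimage of $X(\sigma)$ under the atlas $X_{\widetilde{\Sigma}} \to \cX(\Sigma)$ is the invariant open $X_{\widetilde{\sigma}^{\Sigma}} \cong \bA^k \times \bG_m^{r-k} \times T_{N'}$, and hence $\cX(\Sigma) \times_{X(\Sigma)} X(\sigma) = [X_{\widetilde{\sigma}^{\Sigma}}/G_\Sigma]$ (you do not even need flat base change of good moduli spaces for this; it is just the preimage of an open substack). The genuine gap is in the step you yourself flag as the substantive one. The compatible inclusion $\Z^k \oplus (N' \oplus N'') \hookrightarrow \Z^r \oplus N'$ with $\nu'_\Sigma \circ (\text{inclusion}) = \nu'_\sigma$ need not exist, because $\nu(\Z^r)$ is in general a proper finite-index sublattice of the saturated span of the rays, so no complement of $\mathrm{span}(\sigma)\cap N$ need lift along $\nu'_\Sigma$; and the discrepancy between $G_\sigma$ and $G_\Sigma$ need not be a torus. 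Concretely, take $N=\Z^2$, let $\Sigma$ consist of $0$ and the two rays through $v_1=(1,1)$ and $v_2=(1,-1)$, and let $\sigma$ be the ray through $v_1$. Then $N'=0$, $X_{\widetilde{\Sigma}}=\bA^2\setminus\{0\}$, and $G_\Sigma\cong\mu_2$ embedded diagonally in $\bG_m^2$; on the other hand $X(\sigma)\cong\bA^1\times\bG_m$ is smooth, every choice of complement makes $\nu'_\sigma$ an isomorphism, and $G_\sigma$ is trivial. Your inclusion would require a complement of $\Z(1,1)$ in $\Z^2$ lying inside the index-two sublattice $\nu(\Z^2)=\{(x,y)\mid x\equiv y \bmod 2\}$, and none exists; moreover the excess group is $\mu_2$, not a torus, and it does not act only on the ``complementary'' $\bG_m$-factor (it acts by $(x,y)\mapsto(-x,-y)$).

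The proposition still holds in that example because the $\mu_2$-action on $\bA^1\times\bG_m$ is free and the quotient is again $\bA^1\times\bG_m$ over $X(\sigma)$, but producing this identification in general is exactly the content your argument omits: one must compare two genuinely different stacky-fan presentations of the same toric stack, where the auxiliary groups differ by a finite (non-special) factor that is only killed because the coordinates of the unused rays are invertible on the relevant open. The standard way to do this is not a split inclusion with torus cokernel, but the isomorphism criteria for morphisms of stacky fans in \cite{GeraschenkoSatriano1} (the Appendix B results, of the kind invoked elsewhere in this paper via Lemma B.17), or the intrinsic characterization of canonical stacks in \cite{GeraschenkoSatriano1}, applied to the stacky fan $(\text{faces of } \mathrm{cone}(e_1,\dots,e_k) \subset \Z^r\oplus N',\ \nu'_\Sigma)$ presenting $\pi(\Sigma)^{-1}(X(\sigma))$. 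As written, your final step would fail already in the example above, so the proof is incomplete without such an ingredient.
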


For the remainder of this subsection, let $\sigma$ be a $d$-dimensional pointed rational cone in $N_\R$, let $X$ be the affine $T$-toric variety associated to $\sigma$, let $\cX$ be the canonical stack over $X$, and let $\pi: \cX \to X$ be the canonical map. At points later in this paper, we will refer to the list \autoref{notationforcanonicalfantastackforfulldimensionalcone} when we want to set the following notation, and we also set that notation for the remainder of this subsection.

\begin{notation}\label{notationforcanonicalfantastackforfulldimensionalcone}
\begin{itemize}

\item Let $M = N^*$.

\item Let $\widetilde{N}$ be the free abelian group with generators indexed by the rays of $\sigma$.

\item Let $\widetilde{M} = \widetilde{N}^*$.

\item Let $\langle \cdot, \cdot \rangle$ denote both pairings $N \otimes_\Z M \to \Z$ and $\widetilde{N} \otimes_\Z \widetilde{M} \to \Z$.

\item Let $\widetilde{T} = \Spec(k[\widetilde{M}])$ be the algebraic torus with co-character lattice $\widetilde{N}$.

\item Let $\widetilde{\sigma}$ be the positive orthant of $\widetilde{N}_\R$, i.e., $\widetilde{\sigma}$ is the positive span of those generators of $\widetilde{N}$ that are indexed by the rays of $\sigma$.

\item Let $\widetilde{X}$ be the affine $\widetilde{T}$-toric variety associated to $\widetilde{\sigma}$.

\item Let $\beta: \widetilde{\sigma} \cap \widetilde{N} \to \sigma \cap N$ be the monoid map taking the generator of $\widetilde{N}$ indexed by a ray of $\sigma$ to the first lattice point of that ray.

\item Let $\widetilde{\pi}: \widetilde{X} \to X$ be toric map associated to $\beta^\gp: \widetilde{N} \to N$.

\item Let $P = \sigma^\vee \cap M$. Note that $X = \Spec(k[P])$.

\item Let $F = \widetilde{\sigma}^\vee \cap \widetilde{M}$. Note that $\widetilde{X} = \Spec(k[F])$.

\item Identify $P$ with its image under the injection $P \hookrightarrow F$ given by dualizing $\beta$. Note that $P \hookrightarrow F$ is injective because $\sigma$ is full dimensional.

\item Let $A = F^\gp/P^\gp = \widetilde{M}/ M$.

\item Let $G = \Spec(k[A])$ be the kernel of the algebraic group homomorphism $\widetilde{T} \to T$ obtained by restricting $\widetilde{\pi}$, and let $G$ act on $\widetilde{X}$ by restricting the toric action of $\widetilde{T}$ on $\widetilde{X}$.

\end{itemize}
\end{notation}

By definition the canonical stack $\cX$ is equal to the stack quotient $[\widetilde{X}/G]$, and the morphism $\widetilde{\pi}: \widetilde{X} \to X$ is the composition $\widetilde{X} \to [\widetilde{X} / G] =  \cX \xrightarrow{\pi} X$. 

\vspace{1em}

We note that because our focus is on singular varieties instead of on stacks, we simplify our exposition by focusing on canonical stacks over toric varieties instead of all fantastacks. The expositional advantage is that canonical stacks depend only on the toric variety and not on additional data as is the case for other fantastacks. We end this section with the next two remarks, which explain why we have not lost any generality by making this expositional simplification, as well as discuss a generalization of \autoref{mainTheoremStackMeasureAndGorensteinMeasure}.

\begin{remark}
\label{remarkCrepantDefinition}
For \autoref{mainTheoremStackMeasureAndGorensteinMeasure}, it is sufficient to consider canonical stacks as these are precisely the fantastacks satisfying the hypotheses of \autoref{conjectureGorensteinMeasureAndStackMeasure}. Nonetheless, we note that with only superficial modifications to our techniques, one can actually prove a more general statement than \autoref{mainTheoremStackMeasureAndGorensteinMeasure}, which we explain here.

With notation as in \autoref{def:canonical-stack}, let $\cX=\cF_{\Sigma,\nu}$ be a fantastack. Assume $X=X_\Sigma$ is $\Q$-Gorenstein so for each maximal cone $\sigma\in\Sigma$, there exists $q_\sigma\in N^*$ and $m_\sigma\in\Z_{>0}$ such that the set $$\mathcal{H}_\sigma:=\{v\in \sigma\cap N\mid\langle q_\sigma,v\rangle=m_\sigma\}$$ contains the first lattice point of every ray of $\sigma$. We say the good moduli space map $\pi\colon\cX\to X$ is \emph{combinatorially crepant} if $\nu(e_i)\in\bigcup_\sigma\mathcal{H}_\sigma$ for every $i\in\{1,\dots,r\}$.

For example, the canonical stack is combinatorially crepant over $X$. Since \autoref{QGorensteinsumofgeneratorsisinvariant} holds for all fantastacks that are combinatorially crepant over their good moduli space, the conclusions of \autoref{conjectureGorensteinMeasureAndStackMeasure} hold for any fantastack that is combinatorially crepant over its good moduli space.
\end{remark}

\begin{remark}
\label{remarkOtherFantastacks}
If $\cF_{\Sigma, \nu}$ is a fantastack over $X$, then $\cF_{\Sigma, \nu} \to X$ is an isomorphism over a nonempty open subset of $X$ if and only if $\nu$ does not send any standard basis vector to 0. Since \autoref{elementofFaddstogetelementofP} holds for every fantastack satisfying the hypotheses of \autoref{conjectureSpecialStabilizersLiftArcs}, our proofs show that \autoref{mainTheoremLiftingToUntwistedArcsSpecialStabilizers} holds for any fantastack as well.
\end{remark}

\section{Motivic integration for stacks}\label{sectionMotivicIntegrationQuotientStacks}

For the remainder of this paper, by a \emph{quotient stack} over $k$, we will mean an Artin stack over $k$ that is isomorphic to the stack quotient of a $k$-scheme by the action of a linear algebraic group over $k$. 


\begin{remark}
\label{quotientStackIsQuotientBySpecialGroup}
Let $G$ be a linear algebraic group over $k$ acting on a $k$-scheme $\widetilde{X}$, and let $G \hookrightarrow G'$ be an inclusion of $G$ as a closed subgroup of a linear algebraic group $G'$ over $k$. Then we have an isomorphism
\[
	[\widetilde{X} / G] \cong [(\widetilde{X} \times^G G')/G'],
\]
where $\widetilde{X} \times^G G'$ is the $k$-scheme with $G'$-action obtained from $\widetilde{X}$ by pushout along $G \hookrightarrow G'$. Thus any quotient stack is isomorphic to a stack quotient of a scheme by a general linear group, which in particular, is a special group.
\end{remark}

In this section, we define a notion of motivic integration for quotient stacks. On the one hand, our definition is straightforward: it is more-or-less identical to motivic integration for schemes, but in various places, we need to replace notions for schemes with the obvious analogs for Artin stacks; in particular, our motivic integration for quotient stacks does not depend on a choice of presentation for the stack as a quotient. On the other hand, our notion allows explicit computations in terms of motivic integration for schemes, as long as one first writes the stack as a stack quotient of a scheme by a \emph{special} group.

\begin{definition}
Let $\cX$ be an Artin stack over $k$, and let $n \in \N$. The $n$th \emph{jet stack} of $\cX$, denoted $\sL_n(\cX)$, is the Weil restriction of $\cX \otimes_k k[t]/(t^{n+1})$ with respect to the morphism $\Spec(k[t]/(t^{n+1})) \to \Spec(k)$.
\end{definition}

\begin{remark}
Each jet stack $\sL_n(\cX)$ is an Artin stack by \cite[Theorem 3.7(iii)]{Rydh2011}.
\end{remark}

\begin{remark}
Each jet stack $\sL_n(\cX)$ has affine (geometric) stabilizers by the following argument. Let $y: \Spec(k') \to \sL_n(\cX)$ be a geometric point corresponding to $\psi_n: \Spec(k'[t]/(t^{n+1})) \to \cX$. Because $\cX$ has affine (geometric) stabilizers, the reduction of the stabilizer of $\psi_n$ is affine, so the stabilizer of $\psi_n$ is affine. Thus the stabilizer of $y$, which is the Weil restriction of the stabilizer of $\psi_n$, is affine.
\end{remark}

The morphisms $k[t]/(t^{n+1}) \to k[t]/(t^{m+1})$, when $n \geq m$, induce \emph{truncation morphisms} $\theta^n_m: \sL_n(\cX) \to \sL_m(\cX)$ for any Artin stack $\cX$ over $k$. Like in the case of schemes, we use these truncation morphisms to define arcs of $\cX$ and a stack parametrizing them.

\begin{definition}
Let $\cX$ be an Artin stack over $k$. The \emph{arc stack} of $\cX$ is the inverse limit $\sL(\cX) = \varprojlim_n \sL_n(\cX)$, where the inverse limit is taken with respect to the truncation morphisms $\theta^n_m: \sL_n(\cX) \to \sL_m(\cX)$.
\end{definition}

\begin{remark}
The name arc \emph{stack} is justified by the fact that $\sL(\cX)$ is indeed a stack. See for example \cite[Proposition 2.1.9]{Talpo}. Since $\sL(\cX)$ is a stack as opposed to an Artin stack, we use the symbol $|\sL(\cX)|$ to denote equivalence classes of points but do not define a topology on this set.
\end{remark}

\begin{remark}
Let $\cX$ be an Artin stack over $k$, and let $k'$ be a field extension of $k$. The truncation morphism $k'\llbracket t \rrbracket \to k'[t]/(t^{n+1})$ induces a functor $\cX(k'\llbracket t \rrbracket) \to \cX(k'[t]/(t^{n+1})) = \sL_n(\cX)(k')$ for each $n \in \N$, and these functors induce a functor $\cX(k'\llbracket t \rrbracket) \to \sL(\cX)(k')$. Since $\cX$ is an Artin stack, the functor $\cX(k'\llbracket t \rrbracket) \to \sL(\cX)(k')$ is an equivalence of categories, e.g.~by Artin's criterion for algebraicity. Throughout this paper, we will often implicitly make this identification.
\end{remark}

We will let each $\theta_n: \sL(\cX) \to \sL_n(\cX)$ denote the canonical morphism, and we will also call these \emph{truncation morphisms}.

We will eventually define a notion of measurable subsets of $|\sL(\cX)|$ and a motivic measure $\mu_\cX$ that assigns an element of $\widehat{\sM}_k$ to each of these measurable subsets. We begin with an important special case of measurable subsets. Note that when $\cX$ is finite type over $k$, so is each $\sL_n(\cX)$ by \cite[Proposition 3.8(xv)]{Rydh2011}.

\begin{definition}
Let $\cX$ be a finite type Artin stack over $k$, and let $\cC \subset |\sL(\cX)|$. We call the subset $\cC$ a \emph{cylinder} if there exists some $n \in \N$ and a constructible subset $\cC_n \subset |\sL_n(\cX)|$ such that $\cC = (\theta_n)^{-1}(\cC_n)$. 
\end{definition}

The next theorem, which we will prove later in this section, allows us to define a motivic integration for quotient stacks that is closely related to motivic integration for schemes.

\begin{theorem}
\label{theoremMainMotivicIntegrationForQuotientStacks}
Let $\cX$ be an equidimensional finite type quotient stack over $k$, and let $\cC \subset |\sL(\cX)|$ be a cylinder. Then the set $\theta_n(\cC) \subset |\sL_n(\cX)|$ is constructible for each $n \in \N$, and the sequence 
\[
	\{\e(\theta_n(\cC)) \bL^{-(n+1)\dim\cX}\}_{n \in \N} \subset \widehat{\sM}_k
\] 
converges.

Furthermore, suppose that $G$ is a special group over $k$ and $\widetilde{X}$ is a $k$-scheme with $G$-action such that there exists an isomorphism $[\widetilde{X} / G] \xrightarrow{\sim} \cX$, let $\rho: \widetilde{X} \to \cX$ be the composition of the quotient map $\widetilde{X} \to [\widetilde{X} / G]$ with the isomorphism $[\widetilde{X} / G] \xrightarrow{\sim} \cX$, and let $\widetilde{C} = \sL(\rho)^{-1}(\cC)$. Then $\widetilde{C} \subset \sL(\widetilde{X})$ is a cylinder, and
\[
	\lim_{n \to \infty} \e(\theta_n(\cC))\bL^{-(n+1)\dim\cX} = \mu_{\widetilde{X}} (\widetilde{C}) \e(G)^{-1} \bL^{\dim G} \in \widehat{\sM}_k.
\]
\end{theorem}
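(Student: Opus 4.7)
The plan is to reduce everything to the classical motivic integration on the scheme $\widetilde{X}$ by using the presentation $\cX\cong[\widetilde{X}/G]$ and then exploiting that both Weil restriction and the class $\e(-)\in K_0(\Stack_k)$ are well-behaved with respect to actions of special groups. The central structural input is the identification of jet stacks of quotient stacks with stack quotients of jet schemes,
\[
\sL_n(\cX)\cong[\sL_n(\widetilde{X})/\sL_n(G)],
\]
so that in particular $\sL_n(\rho)\colon\sL_n(\widetilde{X})\to\sL_n(\cX)$ is an $\sL_n(G)$-torsor.

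Granting that identification, the proof goes as follows. Write $\cC=\theta_n^{-1}(\cC_n)$ for some constructible $\cC_n\subset|\sL_n(\cX)|$, and set $\widetilde{C}_n:=\sL_n(\rho)^{-1}(\cC_n)\subset\sL_n(\widetilde{X})$, which is $\sL_n(G)$-invariant and constructible. Then $\widetilde{C}:=\theta_n^{-1}(\widetilde{C}_n)$ is a cylinder in $\sL(\widetilde{X})$ coinciding with $\sL(\rho)^{-1}(\cC)$. Since $G$ is special, \autoref{jetSchemeOfSpecialGroupIsSpecial} gives that each $\sL_n(G)$ is special, and hence the Ekedahl formula $\e([Y/H])=\e(Y)\e(H)^{-1}$ for special $H$ (as recalled in the Grothendieck ring subsection) yields
\[
\e(\theta_n(\cC))=\e(\widetilde{C}_n)\e(\sL_n(G))^{-1}\in K_0(\Stack_k).
\]
In particular, since $\widetilde{C}_n=\theta_n(\widetilde{C})$ is constructible, this shows constructibility of $\theta_n(\cC)$ in $|\sL_n(\cX)|$. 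Using the short exact sequence $1\to\mathfrak{g}\to\sL_{n+1}(G)\to\sL_n(G)\to1$ of \autoref{jetSchemeOfSpecialGroupIsSpecial} together with specialness of $\mathfrak{g}\cong\bG_a^{\dim G}$, one proves inductively that $\e(\sL_n(G))=\e(G)\bL^{n\dim G}$. Combining this with the identity $\dim\cX=\dim\widetilde{X}-\dim G$, a direct computation gives
\[
\e(\theta_n(\cC))\bL^{-(n+1)\dim\cX}=\bigl(\e(\widetilde{C}_n)\bL^{-(n+1)\dim\widetilde{X}}\bigr)\cdot\e(G)^{-1}\bL^{\dim G},
\]
and the factor in parentheses tends to $\mu_{\widetilde{X}}(\widetilde{C})$ in $\widehat{\sM}_k$ by the classical theory for schemes, so the whole sequence converges in $\widehat{\sM}_k$ to $\mu_{\widetilde{X}}(\widetilde{C})\e(G)^{-1}\bL^{\dim G}$, yielding both claims.

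The main obstacle is the identification $\sL_n([\widetilde{X}/G])\cong[\sL_n(\widetilde{X})/\sL_n(G)]$, i.e., the assertion that Weil restriction along $\Spec k[t]/(t^{n+1})\to\Spec k$ commutes with stack quotients by the affine group scheme $G$; this is not purely formal because the target is only a stack. I would establish it by writing down the natural morphism on functors of points and checking it is an isomorphism by fpqc descent: an $R$-point of the target is a $G$-torsor on $R[t]/(t^{n+1})$ equipped with an equivariant map to $\widetilde{X}$, and trivializing the torsor on an fpqc cover of $\Spec R$ produces, by descent, the requisite $\sL_n(G)$-torsor structure on the resulting point of $\sL_n(\widetilde{X})$. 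Alternatively one can invoke the general machinery for Weil restriction of Artin stacks developed in [Rydh2011] to obtain the isomorphism directly.
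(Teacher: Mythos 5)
Your overall strategy is the same as the paper's (identify $\sL_n(\cX)$ with $[\sL_n(\widetilde{X})/\sL_n(G)]$, use specialness of $\sL_n(G)$ and Ekedahl's formula, and reduce to the classical measure on $\widetilde{X}$), and your treatment of the Weil restriction isomorphism and of $\e(\sL_n(G))=\e(G)\bL^{n\dim G}$ is fine. But there is a genuine gap in the middle of your argument: you conflate the defining constructible set of the cylinder with the image of the arc space under truncation. Neither $\cX$ nor $\widetilde{X}$ is assumed smooth here, so if $\cC=\theta_n^{-1}(\cC_n)$ and $\widetilde{C}_n:=\sL_n(\rho)^{-1}(\cC_n)$, then in general $\theta_n(\cC)\subsetneq\cC_n$ and $\theta_n(\widetilde{C})\subsetneq\widetilde{C}_n$, because $n$-jets of a singular scheme need not lift to arcs (already for $G$ trivial and $\cC=\sL(\widetilde{X})$ your claimed identity $\widetilde{C}_n=\theta_n(\widetilde{C})$ fails). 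Consequently the asserted equality $\e(\theta_n(\cC))=\e(\widetilde{C}_n)\e(\sL_n(G))^{-1}$ is not justified, and the final limit is computed with the wrong sets: the classical theory gives convergence of $\e(\theta_n(\widetilde{C}))\bL^{-(n+1)\dim\widetilde{X}}$ to $\mu_{\widetilde{X}}(\widetilde{C})$, not of $\e(\widetilde{C}_n)\bL^{-(n+1)\dim\widetilde{X}}$, and for singular $\widetilde{X}$ the latter sequence can fail to converge to the measure at all.

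What is missing is precisely the paper's \autoref{lemmaTruncationMorphismsAndSmoothMap}: since $\rho\colon\widetilde{X}\to\cX$ is a smooth cover, the infinitesimal lifting criterion shows that for every $n$ one has $\sL_n(\rho)^{-1}(\theta_n(\cC))=\theta_n(\widetilde{C})$, i.e.\ arcs of $\cX$ lying in $\cC$ can be lifted to arcs of $\widetilde{X}$ compatibly with a given $n$-jet lift. With this identity, constructibility of $\theta_n(\cC)$ follows from constructibility of $\theta_n(\widetilde{C})$ together with Chevalley's theorem for Artin stacks (you also need this, or some substitute, to pass from constructibility of a preimage to constructibility of the set itself), and the Ekedahl/torsor argument applied to the $\sL_n(G)$-invariant constructible set $\theta_n(\widetilde{C})$ gives $\e(\theta_n(\cC))=\e(\theta_n(\widetilde{C}))\e(\sL_n(G))^{-1}$, after which your concluding computation goes through verbatim. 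So the proof can be repaired, but as written the key identity it rests on is false in the generality claimed.
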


\begin{remark}
\label{remarkSpecialCoverOfIrreducibleStackIsIrreducible}
Let $G$ and $\widetilde{X}$ be as in the statement of \autoref{theoremMainMotivicIntegrationForQuotientStacks}. Since $[\widetilde{X}/G]$ is equidimensional and $G$ is geometrically irreducible, $\widetilde{X}$ is equidimensional as well, and hence $\mu_{\widetilde{X}}$ is well defined.
\end{remark}

Before we prove \autoref{theoremMainMotivicIntegrationForQuotientStacks}, we will discuss some useful consequences. First, we can define the motivic measure $\mu_\cX$ on cylinders.

\begin{definition}
\label{definitionMotivicMeasureCylinder}
Let $\cX$ be an equidimensional finite type quotient stack over $k$, and let $\cC \subset |\sL(\cX)|$ be a cylinder. The \emph{motivic measure} of $\cC$ is
\[
	\mu_\cX(\cC) = \lim_{n \to \infty} \e(\theta_n(\cC))\bL^{-(n+1)\dim\cX} \in \widehat{\sM}_k.
\]
\end{definition}

\begin{remark}
Let $\cX$ be an equidimensional smooth Artin (not-necessarily-quotient) stack over $k$ and let $\cC \subset |\sL(\cX)|$ be a cylinder. One can verify that $\theta_n(\cC) \subset |\sL_n(\cX)|$ is constructible for each $n \in \N$ and that $\{\e(\theta_n(\cC)) \bL^{-(n+1)\dim\cX}\}_{n \in \N}$ stabilizes for sufficiently large $n$, so \autoref{definitionMotivicMeasureCylinder} also makes sense here. Although this is not used for the main results of this paper, our main conjectures are stated in the generality, so we provide the argument for completeness in \autoref{subsectionMotivicIntegrationForSmoothStacksViaTheCotangentComplex}.
\end{remark}

We now define measurable subsets analogously to the case of schemes.

\begin{definition}
Let $\cX$ be an equidimensional finite type quotient stack over $k$, let $\cC \subset |\sL(\cX)|$, let $\varepsilon \in \R_{>0}$, let $I$ be a set, let $\cC^{(0)} \subset |\sL(\cX)|$ be a cylinder, and let $\{\cC^{(i)}\}_{i \in I}$ be a collection of cylinders in $|\sL(\cX)|$.

We say $(\cC^{(0)}, (\cC^{(i)})_{i \in I})$ is a \emph{cylindrical $\varepsilon$-approximation} of $\cC$ if
\[
	(\cC \cup \cC^{(0)}) \setminus (\cC \cap \cC^{(0)}) \subset \bigcup_{i \in I} \cC^{(i)}
\]
and for all $i \in I$,
\[
	\Vert\mu_{\cX}(\cC^{(i)})\Vert < \varepsilon.
\]
\end{definition}

\begin{definition}
Let $\cX$ be an equidimensional finite type quotient stack over $k$, and let $\cC \subset |\sL(\cX)|$. We say that $\cC$ is \emph{measurable} if for any $\varepsilon \in \R_{>0}$, there exists a cylindrical $\varepsilon$-approximation of $\cC$.
\end{definition}

\begin{remark}
\label{remarkCylinderIsMeasurable}
Let $\cX$ be an equidimensional finite type quotient stack over $k$, and let $\cC \subset |\sL(\cX)|$ be a cylinder. Then for any $\varepsilon \in \R_{>0}$, we have $(\cC, \emptyset)$ is a cylindrical $\varepsilon$-approximation of $\cC$. In particular, $\cC$ is measurable.
\end{remark}

We now see that \autoref{theoremMainMotivicIntegrationForQuotientStacks} allows us to extend $\mu_\cX$ to measurable subsets.

\begin{corollary}
\label{corollaryMotivicMeasureOfMeasurableSet}
Let $\cX$ be an equidimensional finite type quotient stack over $k$, and let $\cC \subset |\sL(\cX)|$ be a measurable subset. Then there exists a unique $\mu_{\cX}(\cC) \in \widehat{\sM}_k$ such that for any $\varepsilon \in \R_{>0}$ and any cylindrical $\varepsilon$-approximation $(\cC^{(0)}, (\cC^{(i)})_{i \in I})$ of $\cC$,
\[
	\Vert\mu_\cX(\cC) - \mu_\cX(\cC^{(0)})\Vert < \varepsilon.
\]

Furthermore, suppose that $G, \widetilde{X}, \rho$ are as in the statement of \autoref{theoremMainMotivicIntegrationForQuotientStacks} and $\widetilde{C}=\sL(\rho)^{-1}(\cC)$. Then $\widetilde{C} \subset \sL(\widetilde{X})$ is measurable, and
\[
	\mu_{\cX}(\cC) = \mu_{\widetilde{X}} (\widetilde{C}) \e(G)^{-1} \bL^{\dim G} \in \widehat{\sM}_k.
\]
\end{corollary}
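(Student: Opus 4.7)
The plan is to mirror the standard proof for schemes, using \autoref{theoremMainMotivicIntegrationForQuotientStacks} to convert between the motivic measure on cylinders in $|\sL(\cX)|$ and the motivic measure on their preimages in $\sL(\widetilde{X})$. Uniqueness of $\mu_\cX(\cC)$ is immediate: if two values $\mu, \mu'$ both satisfy the defining property, then for any $\varepsilon > 0$ a single cylindrical $\varepsilon$-approximation of $\cC$ yields $\|\mu - \mu'\| < 2\varepsilon$ via the ultrametric inequality, and since the norm on $\widehat{\sM}_k$ is Hausdorff this forces $\mu = \mu'$.

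For existence, I would choose a sequence of cylindrical $2^{-j}$-approximations $(\cC_j^{(0)}, (\cC_j^{(i)})_{i \in I_j})$ of $\cC$ and show that $\{\mu_\cX(\cC_j^{(0)})\}_{j \in \N}$ is Cauchy in $\widehat{\sM}_k$. The key point is that
\[
	\cC_j^{(0)} \triangle \cC_{j'}^{(0)} \subset \Bigl(\bigcup_{i \in I_j} \cC_j^{(i)}\Bigr) \cup \Bigl(\bigcup_{i \in I_{j'}} \cC_{j'}^{(i)}\Bigr),
\]
so that any cylinder contained in the symmetric difference has motivic measure of norm at most $\max(2^{-j}, 2^{-j'})$, whence $\|\mu_\cX(\cC_j^{(0)}) - \mu_\cX(\cC_{j'}^{(0)})\| \leq \max(2^{-j}, 2^{-j'})$. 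Completeness of $\widehat{\sM}_k$ then produces the limit, and standard interlacing arguments show the limit is independent of the chosen sequence and verifies the defining inequality.

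For the comparison with $\mu_{\widetilde{X}}(\widetilde{C})$, I would take any cylindrical $\varepsilon$-approximation $(\cC^{(0)}, (\cC^{(i)})_{i \in I})$ of $\cC$ and pull back along $\sL(\rho)$ to obtain $(\widetilde{C}^{(0)}, (\widetilde{C}^{(i)})_{i \in I})$. Because $\sL(\rho)^{-1}$ commutes with truncation and set-theoretic operations, each $\widetilde{C}^{(i)}$ is a cylinder in $\sL(\widetilde{X})$ and the inclusion defining an approximation transfers verbatim. By \autoref{theoremMainMotivicIntegrationForQuotientStacks},
\[
	\mu_{\widetilde{X}}(\widetilde{C}^{(i)}) = \mu_\cX(\cC^{(i)})\,\e(G)\,\bL^{-\dim G},
\]
and since $\|\e(G)\| = \bL^{\dim G}$ gives $\|\e(G)\bL^{-\dim G}\| \leq 1$, the norm bounds $\|\mu_\cX(\cC^{(i)})\| < \varepsilon$ transfer to $\|\mu_{\widetilde{X}}(\widetilde{C}^{(i)})\| < \varepsilon$. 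Hence $\widetilde{C}$ is measurable and $\mu_{\widetilde{X}}(\widetilde{C})$ is the limit of $\mu_{\widetilde{X}}(\widetilde{C}_j^{(0)})$; multiplying by $\e(G)^{-1}\bL^{\dim G}$ and invoking continuity of multiplication by this fixed element yields the stated formula.

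The main obstacle I anticipate is the careful bookkeeping to show that the symmetric difference bound genuinely controls the norm of $\mu_\cX(\cC_j^{(0)}) - \mu_\cX(\cC_{j'}^{(0)})$ when the indexing set $I_j$ is arbitrary; this is the one place where the ultrametric structure of $\widehat{\sM}_k$ and the containment (rather than equality) in the approximation definition must be used in tandem. Everything else is essentially formal, riding on the theorem and on the fact that $\|\e(G)\bL^{-\dim G}\| \leq 1$ makes the scheme-level and stack-level norms comparable.
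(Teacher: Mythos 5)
Your overall architecture is workable and the ``furthermore'' part is essentially the paper's argument (pull an $\varepsilon$-approximation back along $\sL(\rho)$, apply \autoref{theoremMainMotivicIntegrationForQuotientStacks}, and transfer the measure by the unit $\e(G)^{-1}\bL^{\dim G}$), but the existence step has a genuine gap. Your Cauchy estimate rests on two unproved claims: that $\mu_\cX$ is finitely additive on cylinders (needed to pass from the symmetric-difference containment to a bound on $\Vert\mu_\cX(\cC_j^{(0)})-\mu_\cX(\cC_{j'}^{(0)})\Vert$), and that a cylinder contained in a possibly infinite union of cylinders of norm $<\varepsilon$ itself has norm $<\varepsilon$. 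Neither is available directly on $|\sL(\cX)|$: the paper deliberately defines no topology there, so there is no quasi-compactness of stack cylinders to extract a finite subcover, and no additivity statement for $\mu_\cX$ has been proved. The only mechanism is to pull everything back to $\sL(\widetilde{X})$ via \autoref{theoremMainMotivicIntegrationForQuotientStacks} and use the scheme-level theory -- but then transferring the bound back up multiplies it by $\Vert\e(G)^{-1}\bL^{\dim G}\Vert$, which you never control. Your only norm estimate, $\Vert\e(G)\bL^{-\dim G}\Vert\leq 1$, handles the downward direction only; without knowing $\Vert\e(G)^{-1}\bL^{\dim G}\Vert\,\Vert\e(G)\bL^{-\dim G}\Vert=1$, both your Cauchy bound and your final ``interlacing'' verification degrade to $\leq c\,\varepsilon$ with $c=\Vert\e(G)^{-1}\bL^{\dim G}\Vert\,\Vert\e(G)\bL^{-\dim G}\Vert$ possibly larger than $1$, which is strictly weaker than the statement's requirement that $\Vert\mu_\cX(\cC)-\mu_\cX(\cC^{(0)})\Vert<\varepsilon$ for \emph{every} $\varepsilon$-approximation.

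The missing ingredient is precisely the last sentence of the paper's proof: by \autoref{quotientStackIsQuotientBySpecialGroup} one may take $G$ to be a general linear group, and \autoref{lemmaSizeOfClassOfGeneralLinearGroup} then gives $\Vert\e(G)^{-1}\bL^{\dim G}\Vert\,\Vert\e(G)\bL^{-\dim G}\Vert=1$, so the transferred bounds are sharp. With that in hand your construction can be salvaged, but it is also unnecessary: the paper never builds a Cauchy sequence upstairs. It takes the candidate value $\mu_{\widetilde{X}}(\widetilde{C})\e(G)^{-1}\bL^{\dim G}$ (measurability of $\widetilde{C}$ and existence of $\mu_{\widetilde{X}}(\widetilde{C})$ come for free from the scheme theory, since the pulled-back approximations are approximations of $\widetilde{C}$) and verifies the defining inequality by a three-line norm chain; once existence is known, the same chain shows the formula holds for every special-group presentation. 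As a minor point, ``$\Vert\e(G)\Vert=\bL^{\dim G}$'' is a type error -- the norm is the real number $\exp(\dim\e(G))$ -- though the inequality $\Vert\e(G)\bL^{-\dim G}\Vert\leq 1$ you extract from it is correct.
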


\begin{proof}
Let $G, \widetilde{X}, \rho, \widetilde{C}$ be as in the second part above. For any $\varepsilon \in \R_{>0}$ and any cylindrical $\varepsilon$-approximation $(\cC^{(0)}, (\cC^{(i)})_{i \in I})$ of $\cC$, \autoref{theoremMainMotivicIntegrationForQuotientStacks} implies that $(\sL(\rho)^{-1}(\cC^{(0)}), (\sL(\rho)^{-1}(\cC^{(i)}))_{i \in I})$ is a cylindrical $\varepsilon \Vert\e(G) \bL^{-\dim G}\Vert$-approximation of $\widetilde{C}$. Thus $\widetilde{C}$ is measurable, and for any cylindrical $\varepsilon$-approximation $(\cC^{(0)}, (\cC^{(i)})_{i \in I})$ of $\cC$,
\begin{align*}
	\Vert\mu_{\widetilde{X}} (\widetilde{C})& \e(G)^{-1} \bL^{\dim G} - \mu_\cX(\cC^{(0)})\Vert\\
	&= \Vert\mu_{\widetilde{X}} (\widetilde{C}) \e(G)^{-1} \bL^{\dim G} - \mu_{\widetilde{X}} (\sL(\rho)^{-1}(\cC^{(0)})) \e(G)^{-1} \bL^{\dim G}\Vert\\
	&\leq \Vert \e(G)^{-1} \bL^{\dim G} \Vert  \Vert \mu_{\widetilde{X}}(\widetilde{C}) - \mu_{\widetilde{X}}(\sL(\rho)^{-1}(\cC^{(0)})) \Vert\\
	&< \varepsilon \Vert \e(G)^{-1} \bL^{\dim G} \Vert \Vert\e(G) \bL^{-\dim G}\Vert,
\end{align*}
where the first equality follows from \autoref{theoremMainMotivicIntegrationForQuotientStacks}. Once $\mu_\cX(\cC)$ is shown to exist, this chain of inequalities proves $\mu_{\cX}(\cC) = \mu_{\widetilde{X}} (\widetilde{C}) \e(G)^{-1} \bL^{\dim G}$. To show the existence of $\mu_\cX(\cC)$, it suffices by \autoref{quotientStackIsQuotientBySpecialGroup} to assume $G$ is a general linear group, so the above chain of inequalities and \autoref{lemmaSizeOfClassOfGeneralLinearGroup} finish the proof.
\end{proof}

\begin{lemma}
\label{lemmaSizeOfClassOfGeneralLinearGroup}
Let $G$ be a general linear group over $k$. Then
\[
	\Vert \e(G)^{-1} \bL^{\dim G} \Vert \Vert\e(G) \bL^{-\dim G}\Vert = 1.
\]
\end{lemma}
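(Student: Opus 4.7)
The plan is to unwind everything into an explicit computation for $G = GL_n$. Recall the standard formula $\e(GL_n) = \prod_{i=0}^{n-1}(\bL^n - \bL^i)$, obtained by viewing $GL_n$ as the open subvariety of $\mathbb{A}^{n^2}$ parametrizing bases of $k^n$. Since $\dim GL_n = n^2$ and $\sum_{i=0}^{n-1} i + \sum_{i=1}^n i = n^2$, pulling out the appropriate power of $\bL$ from each factor gives the clean identity
\[
\e(G)\bL^{-\dim G} = \prod_{i=1}^{n}\bigl(1 - \bL^{-i}\bigr) \in \widehat{\sM}_k.
\]

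Next I would estimate each factor in the non-Archimedean norm. By the ultrametric inequality, $\Vert 1 - \bL^{-i}\Vert \leq \max(\Vert 1\Vert, \Vert \bL^{-i}\Vert) = \max(1, e^{-i}) = 1$, and in fact equality holds since $1 - \bL^{-i}$ specializes to a nonzero element at $\bL \to \infty$, or more directly because $\bL^{-i}$ has norm strictly less than $1$. Multiplying, $\Vert \e(G)\bL^{-\dim G}\Vert = 1$.

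For the reciprocal, I would expand as a convergent geometric series in $\widehat{\sM}_k$:
\[
(1 - \bL^{-i})^{-1} = \sum_{j=0}^\infty \bL^{-ij},
\]
which converges because $\Vert \bL^{-ij}\Vert = e^{-ij}\to 0$, and by the ultrametric property its norm equals $\Vert 1\Vert = 1$. Taking the product over $i = 1,\dots,n$ then gives $\Vert \e(G)^{-1}\bL^{\dim G}\Vert = 1$, so the product of the two norms is $1$.

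There is no real obstacle here beyond bookkeeping; the only subtle point is making sure that $(1 - \bL^{-i})$ is genuinely invertible in $\widehat{\sM}_k$ (so that its norm is well defined), but this is immediate from the geometric series expansion above.
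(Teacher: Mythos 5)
Your overall route is the same as the paper's: write $\e(GL_n)=\prod_{i=0}^{n-1}(\bL^n-\bL^i)$, observe that $\e(G)\bL^{-\dim G}$ and its inverse are power series in $\bL^{-1}$ with constant term $1$, and conclude both have norm $1$. The computation $\e(G)\bL^{-\dim G}=\prod_{i=1}^n(1-\bL^{-i})$ and the geometric series expansion of the inverse are fine, and the invertibility worry you flag at the end is indeed harmless.

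The gap is in the step you treat as obvious: every claim of \emph{exact} equality of norms (as opposed to the upper bounds $\leq 1$, which do follow from the definition and the ultrametric inequality) ultimately rests on knowing $\Vert 1\Vert =1$, i.e.\ that $\dim(1)=0$ rather than something smaller. Your ``more direct'' justification is circular on this point: the ultrametric equality $\Vert 1-\bL^{-i}\Vert=\max(\Vert 1\Vert,\Vert\bL^{-i}\Vert)$ only gives $1$ if you already know $\Vert 1\Vert=1$, and nothing in the definition of $\dim$ rules out a priori that $1$ lies in subgroups generated by classes of arbitrarily negative virtual dimension (which would make the semi-norm collapse and the statement false). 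Ruling this out requires a realization argument --- this is exactly what the paper's proof supplies by invoking Euler--Poincar\'e polynomials and citing the lemma of Chambert-Loir--Nicaise--Sebag that a series $\sum_{n\geq n_0}c_n\bL^{-n}$ with $c_{n_0}\neq 0$ has norm exactly $\exp(-n_0)$; your phrase ``specializes to a nonzero element at $\bL\to\infty$'' gestures at this but is not a proof. A second, minor, slip: ``Multiplying'' the norms of the factors $1-\bL^{-i}$ uses multiplicativity, whereas $\Vert\cdot\Vert$ is only submultiplicative; this is easily repaired (expand the full product, respectively the product of geometric series, as $1$ plus terms of norm $<1$ and apply the ultrametric equality once), but the missing lower-bound ingredient above is the real content of the lemma.
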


\begin{proof}
Using Euler-Poincar\'{e} polynomials, it is straightforward to check (see for example the proof of \cite[Chapter 2 Lemma 4.1.3]{ChambertLoirNicaiseSebag}) that if $n_0 \in \Z$ and $\{c_n\}_{n \geq n_0}$ is a sequence of integers with $c_{n_0} \neq 0$, then
\[
	\Vert \sum_{n \geq n_0} c_n \bL^{-n} \Vert = \exp(-n_0).
\]
The lemma then follows from the fact that $\e(G)$ is a polynomial in $\bL$ (see for example the proof of \cite[Lemma 4.6]{Joyce}).
\end{proof}

\begin{definition}
\label{definitionMotivicMeasureOfMeasurableSubset}
Let $\cX$ be an equidimensional finite type quotient stack over $k$, and let $\cC \subset |\sL(\cX)|$ be a measurable subset. The \emph{motivic measure} of $\cC$ is defined to be $\mu_\cX(\cC) \in \widehat{\sM}_k$ as in the statement of \autoref{corollaryMotivicMeasureOfMeasurableSet}.
\end{definition}

\begin{remark}
\autoref{remarkCylinderIsMeasurable} implies that \autoref{definitionMotivicMeasureOfMeasurableSubset} generalizes \autoref{definitionMotivicMeasureCylinder}.
\end{remark}

In the next two subsections, we will prove \autoref{theoremMainMotivicIntegrationForQuotientStacks}.

\subsection{Jet schemes of quotient stacks}

In this subsection, we describe the jet schemes of a stack quotient as stack quotients themselves. This is the first step in providing the relationship between motivic integration for quotient stacks and motivic integration for schemes. This description, \autoref{JetSchemeOfQuotientStackIsQuotientOfJetSchemes}, is a special case of the next proposition, which describes the Weil restriction of a stack quotient.

If $S'$ and $S$ are schemes and $S' \to S$ is a finite flat morphism of finite presentation, we will let $\sR_{S'/S}$ denote the functor taking each stack over $S'$ to its Weil restriction with respect to $S' \to S$, and we note that if $\cX$ is an Artin stack over $S'$, then $\sR_{S'/S}(\cX)$ is an Artin stack over $S$ \cite[Theorem 3.7(iii)]{Rydh2011}.

\begin{proposition}
\label{WeilRestrictionOfQuotientStackIsQuotientOfWeilRestrictions}
Let $S'$ and $S$ be schemes and $S' \to S$ be a finite flat morphism of finite presentation. If $\widetilde{X}'$ is an $S'$-scheme with an action by a linear algebraic group $G'$ over $S'$, then there exists an isomorphism
\[
\sR_{S'/S}([\widetilde{X}'/G'])\xrightarrow{\sim} [\sR_{S'/S}(\widetilde{X}')/\sR_{S'/S}(G')]
\]
such that
\begin{center}
\begin{tikzcd}
\sR_{S'/S}(\widetilde{X}') \arrow{rrr}{\sR_{S'/S}(\widetilde{X}' \to [\widetilde{X}'/G'])} \arrow[rrrd] &&& \sR_{S'/S}([\widetilde{X}'/G']) \arrow[Isom,d]\\
&&& \phantom{} [\sR_{S'/S}(\widetilde{X}') / \sR_{S'/S}(G')]
\end{tikzcd}
\end{center}
\noindent commutes.
\end{proposition}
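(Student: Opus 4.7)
The plan is to construct a canonical comparison morphism $\Phi$ in one direction via the universal property of quotient stacks, and then to verify that $\Phi$ is an equivalence by exhibiting a quasi-inverse on $T$-valued points. The main technical point will be to check that a certain Weil-restricted torsor is still a torsor.

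First, I would observe that $\sR := \sR_{S'/S}$ is right $2$-adjoint to base change $(-)_{S'} := (-) \times_S S'$, hence preserves $2$-limits. In particular, it sends group objects to group objects, group actions to group actions, and groupoid presentations to groupoid presentations. Applying $\sR$ to the groupoid $G' \times_{S'} \widetilde{X}' \rightrightarrows \widetilde{X}'$ presenting $[\widetilde{X}'/G']$ and to the quotient map $q\colon \widetilde{X}' \to [\widetilde{X}'/G']$ yields an $\sR(G')$-equivariant morphism $\sR(q)\colon \sR(\widetilde{X}') \to \sR([\widetilde{X}'/G'])$. By the universal property of the stack quotient, this produces a canonical comparison morphism
\[
\Phi\colon [\sR(\widetilde{X}')/\sR(G')] \longrightarrow \sR([\widetilde{X}'/G'])
\]
making the asserted triangle commute.

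Next, I would verify that $\Phi$ is an equivalence by constructing a quasi-inverse on $T$-valued points for each $S$-scheme $T$. By the defining property of Weil restriction, $\sR([\widetilde{X}'/G'])(T) = [\widetilde{X}'/G'](T_{S'})$ parameterizes pairs $(P, \varphi)$ with $P \to T_{S'}$ a $G'$-torsor and $\varphi\colon P \to \widetilde{X}'$ equivariant, while $[\sR(\widetilde{X}')/\sR(G')](T)$ parameterizes pairs $(Q, \psi)$ with $Q \to T$ an $\sR(G')_T$-torsor and $\psi\colon Q \to \sR(\widetilde{X}')$ equivariant. The quasi-inverse to $\Phi_T$ sends $(P, \varphi)$ to $(Q, \psi)$, where $Q := \sR_{T_{S'}/T}(P)$---using that $T_{S'} \to T$ is finite flat of finite presentation, and that Weil restriction is compatible with base change so $\sR_{T_{S'}/T}(G'_{T_{S'}}) = \sR(G')_T$---and where $\psi$ corresponds under the adjunction $(-)_{T_{S'}} \dashv \sR_{T_{S'}/T}$ to the composition $Q_{T_{S'}} \xrightarrow{\varepsilon} P \xrightarrow{\varphi} \widetilde{X}'$ of the counit with $\varphi$. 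That this is inverse to $\Phi_T$ then follows from routine adjunction bookkeeping, using the triangle identities.

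The principal obstacle is verifying that $Q = \sR_{T_{S'}/T}(P)$ really is an $\sR(G')_T$-torsor rather than merely a pseudo-torsor. Representability of $Q$ by an affine $T$-scheme is supplied by \cite[Theorem 3.7]{Rydh2011} applied to the affine morphism $P \to T_{S'}$, which is affine because $G'$ is. The pseudo-torsor identity
\[
Q \times_T Q \cong \sR(G')_T \times_T Q
\]
follows by applying the fiber-product-preserving functor $\sR_{T_{S'}/T}$ to the torsor identity $P \times_{T_{S'}} P \cong G'_{T_{S'}} \times_{T_{S'}} P$. The delicate point is the existence of local sections of $Q \to T$. Here I would use \autoref{quotientStackIsQuotientBySpecialGroup} to reduce, on both sides simultaneously before applying $\sR$, to the case where $G'$ is a special group (e.g.~$\mathrm{GL}_n$), so that $P$ is Zariski locally trivial on $T_{S'}$. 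Since a section $s\colon T_{S'} \to P$ induces a section $\sR_{T_{S'}/T}(s)\colon T \to Q$ via the adjunction, one then shows that the Zariski cover of $T_{S'}$ trivializing $P$ can, after refinement, be produced as the base change to $T_{S'}$ of an fppf cover of $T$, yielding fppf-local sections of $Q$. Once the torsor property is established, the two constructions of the previous paragraph are mutually inverse, so $\Phi$ is an equivalence and the proposition follows.
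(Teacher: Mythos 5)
Your overall strategy---build the canonical comparison map $\Phi$ and invert it on $T$-points by Weil-restricting the torsor $P \to T_{S'}$---is workable in principle, but the two devices you lean on at the crucial step (local sections of $\sR_{T_{S'}/T}(P) \to T$) do not hold up as written. First, the reduction to special $G'$ via \autoref{quotientStackIsQuotientBySpecialGroup} is circular for this purpose: replacing $(\widetilde{X}',G')$ by $(\widetilde{X}'\times^{G'}\mathrm{GL}_n,\mathrm{GL}_n)$ changes both sides of the asserted isomorphism, and to transfer the conclusion back to the original pair you would need $\sR_{S'/S}(\widetilde{X}'\times^{G'}\mathrm{GL}_n)\cong \sR_{S'/S}(\widetilde{X}')\times^{\sR_{S'/S}(G')}\sR_{S'/S}(\mathrm{GL}_n)$, i.e.\ that Weil restriction commutes with a quotient by a free action---precisely the kind of statement being proved. (Note also that \autoref{quotientStackIsQuotientBySpecialGroup} is stated for quotient stacks over $k$, while the present proposition is over an arbitrary base $S'$.) Second, the assertion that the trivializing Zariski cover of $T_{S'}$ ``can, after refinement, be produced as the base change to $T_{S'}$ of an fppf cover of $T$'' is false in the naive sense: a cover pulled back along the finite map $T_{S'}\to T$ cannot separate the points of a single fiber (already for $T=\Spec k$ and $T_{S'}$ two disjoint points), so its members need not factor through any trivializing open. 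What your argument actually needs is that \'etale-locally on $T$ the finite scheme $T_{S'}$ splits into pieces each landing in a trivializing open, after which sections glue over the disjoint pieces; you assert rather than prove this, and it is the heart of the matter. (The claim that the inverse constructions match up ``by the triangle identities'' also hides a genuine descent verification, namely that the counit identifies $P$ with the contracted product of $\sR_{T_{S'}/T}(P)_{T_{S'}}$ along $\sR(G')_{T_{S'}}\to G'$.)

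Both difficulties disappear if you argue as the paper does, and no case distinction on $G'$ is needed: since $G'$ is smooth, a $G'$-torsor is smooth and surjective over its base, and Weil restriction along a finite flat morphism of finite presentation preserves smooth covers \cite[Proposition 3.5(v)]{Rydh2011}; hence $\sR_{T_{S'}/T}(P)\to T$ is smooth surjective and has sections \'etale-locally, which together with your pseudo-torsor identity gives the torsor property. In fact the paper's proof short-circuits the point-level analysis entirely: it applies $\sR_{S'/S}$ to the smooth cover $\widetilde{X}'\to[\widetilde{X}'/G']$ and to the isomorphism $G'\times_{S'}\widetilde{X}'\xrightarrow{\sim}\widetilde{X}'\times_{[\widetilde{X}'/G']}\widetilde{X}'$, and, using that $\sR_{S'/S}$ preserves fiber products and smooth covers, concludes directly that $\sR_{S'/S}(\widetilde{X}')\to\sR_{S'/S}([\widetilde{X}'/G'])$ is an $\sR_{S'/S}(G')$-torsor, which yields the desired isomorphism and the commuting triangle at once. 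You should either adopt that argument or repair your local-sections step along the lines indicated above.
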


\begin{remark}
In the statement of \autoref{WeilRestrictionOfQuotientStackIsQuotientOfWeilRestrictions}, the action of $\sR_{S'/S}(G')$ on $\sR_{S'/S}(\widetilde{X}')$ is obtained by applying $\sR_{S'/S}$ to the map $G' \times_{S'} \widetilde{X}' \to \widetilde{X}'$ defining the action of $G'$ on $\widetilde{X}'$.
\end{remark}

\begin{proof}
We let $\cX'=[\widetilde{X}'/G']$, $\rho': \widetilde{X}' \to \cX'$ be the quotient map, $\cX=\sR_{S'/S}(\cX')$, $\widetilde{X}=\sR_{S'/S}(\widetilde{X}')$, and $G=\sR_{S'/S}(G')$. Since $\rho'\colon \widetilde{X}'\to\cX'$ is a smooth cover, $\sR_{S'/S}(\rho')\colon \widetilde{X}\to\cX$ is as well by \cite[Proposition 3.5(v)]{Rydh2011}. Since $\rho'$ is a $G'$-torsor, the natural map $G'\times_{S'} \widetilde{X}'\to \widetilde{X}'\times_{\cX'} \widetilde{X}'$ induced by the $G'$-action $G' \times_{S'} \widetilde{X}' \to \widetilde{X}'$ is an isomorphism, and applying Weil restriction, we see the map $G\times_S \widetilde{X}\to \widetilde{X}\times_{\cX} \widetilde{X}$ induced by the $G$-action $G \times_S \widetilde{X} \to \widetilde{X}$ is an isomorphism as well. Thus, $\sR_{S'/S}(\rho')\colon\widetilde{X}\to\cX$ is a $G$-torsor, thereby inducing an isomorphism $\cX\xrightarrow{\sim}[\widetilde{X}/G]$ which makes the diagram in the statement of the proposition commute.
\end{proof}

By the definition of jet stacks, the following is a special case of \autoref{WeilRestrictionOfQuotientStackIsQuotientOfWeilRestrictions}.

\begin{corollary}
\label{JetSchemeOfQuotientStackIsQuotientOfJetSchemes}
Let $G$ be a linear algebraic group over $k$ acting on a $k$-scheme $\widetilde{X}$, and let $n \in \N$. There exists an isomorphism 
\[
	\sL_n([\widetilde{X}/G]) \xrightarrow{\sim} [\sL_n(\widetilde{X})/\sL_n(G)],
\]
such that
\begin{center}
\begin{tikzcd}
\sL_n(\widetilde{X}) \arrow{rrr}{\sL_n(\widetilde{X} \to [\widetilde{X}/G])} \arrow[rrrd] &&& \sL_n([\widetilde{X}/G]) \arrow[Isom,d]\\
&&& \phantom{} [\sL_n(\widetilde{X}) / \sL_n(G)]
\end{tikzcd}
\end{center}
\end{corollary}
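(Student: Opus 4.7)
The plan is to deduce the corollary as a direct specialization of \autoref{WeilRestrictionOfQuotientStackIsQuotientOfWeilRestrictions}. Recall that by definition $\sL_n(\cX) = \sR_{S'/S}(\cX \otimes_k k[t]/(t^{n+1}))$, where $S = \Spec(k)$ and $S' = \Spec(k[t]/(t^{n+1}))$. The map $S' \to S$ is finite flat of finite presentation (indeed, $k[t]/(t^{n+1})$ is free of rank $n+1$ over $k$), so \autoref{WeilRestrictionOfQuotientStackIsQuotientOfWeilRestrictions} applies to any quotient stack over $S'$.

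First I would verify that formation of the quotient stack commutes with base change along $S \to S'$. That is, letting $\widetilde{X}' = \widetilde{X} \otimes_k k[t]/(t^{n+1})$ and $G' = G \otimes_k k[t]/(t^{n+1})$ (which is still a linear algebraic group over $S'$, since linear algebraic groups base change to such), one has a natural isomorphism
\[
	[\widetilde{X}/G] \otimes_k k[t]/(t^{n+1}) \;\xrightarrow{\sim}\; [\widetilde{X}'/G']
\]
of stacks over $S'$, compatible with the quotient maps from $\widetilde{X}'$. This is a standard fact about base change of quotient stacks and is immediate from the universal property: both sides classify $G'$-torsors over an $S'$-scheme equipped with an equivariant map to $\widetilde{X}'$.

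Next I would apply $\sR_{S'/S}$ to this isomorphism to obtain
\[
	\sL_n([\widetilde{X}/G]) \;=\; \sR_{S'/S}([\widetilde{X}/G] \otimes_k k[t]/(t^{n+1})) \;\xrightarrow{\sim}\; \sR_{S'/S}([\widetilde{X}'/G']),
\]
and then invoke \autoref{WeilRestrictionOfQuotientStackIsQuotientOfWeilRestrictions} to identify the right-hand side with $[\sR_{S'/S}(\widetilde{X}')/\sR_{S'/S}(G')] = [\sL_n(\widetilde{X})/\sL_n(G)]$. Composing these two isomorphisms yields the desired identification. Commutativity of the diagram follows by chasing through the two compatibilities: the one for base change of the quotient map $\widetilde{X}' \to [\widetilde{X}'/G']$, and the one supplied by \autoref{WeilRestrictionOfQuotientStackIsQuotientOfWeilRestrictions} for the Weil restriction of that quotient map.

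There is no real obstacle here; the statement is essentially a formal consequence of \autoref{WeilRestrictionOfQuotientStackIsQuotientOfWeilRestrictions} once one notes that $\sL_n$ is by definition a Weil restriction along the finite flat map $S' \to S$ and that quotient stacks commute with base change. The only minor bookkeeping is to make the identification $\sL_n(G)\cong\sR_{S'/S}(G')$ (so that $\sL_n(G)$ really is the group acting on $\sL_n(\widetilde{X})$ via the functor $\sL_n$ applied to the action morphism), which follows from functoriality of Weil restriction on group objects.
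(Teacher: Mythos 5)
Your proposal is correct and matches the paper's approach: the paper deduces this corollary directly as a special case of \autoref{WeilRestrictionOfQuotientStackIsQuotientOfWeilRestrictions}, using the definition of $\sL_n$ as a Weil restriction along $\Spec(k[t]/(t^{n+1})) \to \Spec(k)$. Your only addition is to spell out the (standard) compatibility of quotient stacks with base change, which the paper leaves implicit.
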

\noindent commutes.

\begin{remark}
In the statement of \autoref{JetSchemeOfQuotientStackIsQuotientOfJetSchemes}, the action of $\sL_n(G)$ on $\sL_n(\widetilde{X})$ is obtained by applying $\sL_n$ to the map $G \times_{k} \widetilde{X} \to \widetilde{X}$ defining the $G$-action on $\widetilde{X}$.
\end{remark}

\subsection{Truncation morphisms and quotient stacks}

\begin{lemma}
\label{lemmaTruncationMorphismsAndSmoothMap}
Let $\cX$ be an Artin stack over $k$, let $\widetilde{X}$ be a scheme over $k$, and let $\rho: \widetilde{X} \to \cX$ be a smooth covering. Let $\cC \subset |\sL(\cX)|$, and set $\widetilde{C} = \sL(\rho)^{-1}(\cC) \subset \sL(\widetilde{X})$. Then for all $n \in \N$,
\[
	\sL_n(\rho)^{-1}(\theta_n(\cC)) = \theta_n(\widetilde{C}).
\]
\end{lemma}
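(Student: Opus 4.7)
The statement is a set-theoretic equality, and I would prove each inclusion separately. The inclusion $\theta_n(\widetilde{C}) \subset \sL_n(\rho)^{-1}(\theta_n(\cC))$ follows immediately from the naturality identity $\sL_n(\rho) \circ \theta_n = \theta_n \circ \sL(\rho)$: for $\tilde{\gamma} \in \widetilde{C}$, we have $\sL(\rho)(\tilde{\gamma}) \in \cC$, hence $\sL_n(\rho)(\theta_n(\tilde{\gamma})) = \theta_n(\sL(\rho)(\tilde{\gamma})) \in \theta_n(\cC)$.

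For the reverse inclusion, let $\tilde{\gamma}_n$ be a point of $\sL_n(\rho)^{-1}(\theta_n(\cC))$. By definition, there exists $\gamma \in \cC$ such that $\theta_n(\gamma)$ and $\sL_n(\rho)(\tilde{\gamma}_n)$ represent the same point of $|\sL_n(\cX)|$. After enlarging to a common field extension $k'/k$, I can realize $\tilde{\gamma}_n$ as a morphism $\Spec(k'[t]/(t^{n+1})) \to \widetilde{X}$ and $\gamma$ as a morphism $\Spec(k'\llbracket t \rrbracket) \to \cX$, together with a $2$-isomorphism $\alpha_n\colon \rho \circ \tilde{\gamma}_n \Rightarrow \gamma|_{\Spec(k'[t]/(t^{n+1}))}$. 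The plan is then to extend $\tilde{\gamma}_n$ to an arc $\tilde{\gamma}\colon \Spec(k'\llbracket t \rrbracket) \to \widetilde{X}$ satisfying $\rho \circ \tilde{\gamma} \cong \gamma$, which places $\tilde{\gamma}$ in $\widetilde{C}$ with $\theta_n(\tilde{\gamma}) = \tilde{\gamma}_n$.

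I would construct $\tilde{\gamma}$ by induction, successively producing jets $\tilde{\gamma}_m\colon \Spec(k'[t]/(t^{m+1})) \to \widetilde{X}$ for $m \geq n$, together with $2$-isomorphisms $\alpha_m\colon \rho \circ \tilde{\gamma}_m \Rightarrow \gamma|_{\Spec(k'[t]/(t^{m+1}))}$, each extending its predecessor. At each step, the closed immersion $\Spec(k'[t]/(t^{m+1})) \hookrightarrow \Spec(k'[t]/(t^{m+2}))$ is a square-zero thickening, so the formal smoothness of $\rho$ (a consequence of smoothness of $\rho\colon \widetilde{X} \to \cX$) supplies both the next jet and the next $2$-isomorphism, compatibly with the previous data. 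Because $\widetilde{X}$ is a scheme, the resulting compatible system $\{\tilde{\gamma}_m\}_{m \geq n}$ assembles to an element $\tilde{\gamma} \in \sL(\widetilde{X})(k') = \widetilde{X}(k'\llbracket t \rrbracket)$, and the compatible $\{\alpha_m\}$ exhibit $\sL(\rho)(\tilde{\gamma}) \cong \gamma$ in $\sL(\cX)(k')$.

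The only genuinely delicate point is that, because $\cX$ is a stack, the square-zero lifting problem in the inductive step is not just about lifting a morphism but also about lifting a $2$-isomorphism; I expect this to be the main obstacle to state cleanly. Fortunately, this is precisely the correct form of formal smoothness for morphisms of Artin stacks: one is given $a\colon T_0 \to \widetilde{X}$, $b\colon T \to \cX$, and a $2$-iso $b|_{T_0} \Rightarrow \rho\circ a$ over a square-zero thickening $T_0 \hookrightarrow T$, and one obtains a lift $\tilde a\colon T \to \widetilde{X}$ together with a $2$-iso $b \Rightarrow \rho \circ \tilde a$ restricting to the given one. Once this is set up, the inductive construction goes through verbatim and yields the desired $\tilde{\gamma}$.
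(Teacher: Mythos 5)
Your proposal is correct and follows essentially the same route as the paper: the easy inclusion by naturality, and the reverse inclusion by lifting the arc against the smooth cover $\rho$ via the infinitesimal lifting criterion, which the paper invokes in one step and you unpack as a square-zero induction assembled into an arc of the scheme $\widetilde{X}$. Your extra care with the common field extension and the compatible $2$-isomorphisms is exactly the detail the paper leaves implicit.
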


\begin{proof}
Let $n \in \N$. Clearly $\theta_n(\widetilde{C}) \subset \sL_n(\rho)^{-1}(\theta_n(\cC))$.

To prove the opposite inclusion, let $k'$ be a field extension of $k$, and let $\widetilde{\psi}_n \in \sL_n(\widetilde{X})(k')$ and $\psi \in \sL(\cX)(k')$ be such that the class of $\psi$ in $|\sL(\cX)|$ is contained in $\cC$ and $\sL_n(\rho)(\widetilde{\psi}_n) \cong \theta_n(\psi)$. We must show $\widetilde{\psi}_n\in\theta_n(\widetilde{C})$. Since $\rho$ is smooth, by the infinitesimal lifting criterion, we have a dotted arrow filling in the following diagram
\[
\xymatrix{
\Spec k'[t]/(t^{n+1})\ar[r]^-{\widetilde{\psi}_n}\ar[d] & \widetilde{X}\ar[d]^-{\rho}\\
\Spec k'[[t]]\ar[r]^-{\psi}\ar@{-->}[ur]^-{\widetilde{\psi}} & \cX
}
\]
Then 
$\widetilde{\psi}\in\widetilde{C}$, so 
$\widetilde{\psi}_n\in\theta_n(\widetilde{C})$.
\end{proof}

We may now prove the next proposition, which by \autoref{quotientStackIsQuotientBySpecialGroup} and \autoref{remarkSpecialCoverOfIrreducibleStackIsIrreducible}, implies \autoref{theoremMainMotivicIntegrationForQuotientStacks}.

\begin{proposition}\label{lastPropositionForShowingMotivicMeasureWellDefinedForCylinders}
Let $G$ be a special group over $k$, let $\widetilde{X}$ be an equidimensional finite type scheme over $k$ with $G$-action, let $\cX = [\widetilde{X}/G]$, let $\rho: \widetilde{X} \to \cX$ be the quotient map, let $\cC \subset |\sL(\cX)|$ be a cylinder, and let $\widetilde{C} = \sL(\rho)^{-1}(\cC)$. Then $\widetilde{C} \subset \sL(\widetilde{X})$ is a cylinder, the set $\theta_n(\cC) \subset |\sL_n(\cX)|$ is constructible for each $n \in \N$, and the sequence
\[
	\{\e(\theta_n(\cC)) \bL^{-(n+1)\dim\cX}\}_{n \in \N} \subset \widehat{\sM}_k
\]
converges to
\[
	\mu_{\widetilde{X}} (\widetilde{C}) \e(G)^{-1} \bL^{\dim G} \in \widehat{\sM}_k.
\]
\end{proposition}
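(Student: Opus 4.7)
The plan is to reduce everything to the known scheme-theoretic theory via the presentation $\sL_n(\cX)\cong[\sL_n(\widetilde X)/\sL_n(G)]$ of \autoref{JetSchemeOfQuotientStackIsQuotientOfJetSchemes}, combined with the compatibility of truncation under smooth covers provided by \autoref{lemmaTruncationMorphismsAndSmoothMap}. First, since $\cC$ is a cylinder there exists $n_0\in\N$ and a constructible $\cC_{n_0}\subset|\sL_{n_0}(\cX)|$ with $\cC=\theta_{n_0}^{-1}(\cC_{n_0})$. By the commutativity of truncation with $\sL(\rho)$, setting $\widetilde C_{n_0}:=\sL_{n_0}(\rho)^{-1}(\cC_{n_0})$ gives $\widetilde C=\theta_{n_0}^{-1}(\widetilde C_{n_0})$. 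The preimage of a constructible set under the morphism of finite type $k$-schemes $\sL_{n_0}(\rho)\colon\sL_{n_0}(\widetilde X)\to\sL_{n_0}(\cX)$ is constructible, so $\widetilde C$ is a cylinder in $\sL(\widetilde X)$. Note that $\widetilde X$ is irreducible by \autoref{remarkSpecialCoverOfIrreducibleStackIsIrreducible}, so $\mu_{\widetilde X}$ is defined.

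Next, I would show that $\theta_n(\cC)$ is constructible for every $n$. The scheme-theoretic theory (e.g.\ \cite[Chapter 5 Corollary 1.5.7(b)]{ChambertLoirNicaiseSebag}) already gives that $\theta_n(\widetilde C)\subset|\sL_n(\widetilde X)|$ is constructible. The equality
\[
	\sL_n(\rho)^{-1}(\theta_n(\cC))=\theta_n(\widetilde C)
\]
from \autoref{lemmaTruncationMorphismsAndSmoothMap}, together with the fact that $\sL_n(\rho)$ is a smooth surjection (indeed an $\sL_n(G)$-torsor under the identification of \autoref{JetSchemeOfQuotientStackIsQuotientOfJetSchemes}), then shows $\theta_n(\cC)\subset|\sL_n(\cX)|$ is constructible: the complement of its preimage is constructible, hence its preimage is, hence so is $\theta_n(\cC)$ itself since smooth surjective morphisms of finite type are open and descend constructibility.

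With constructibility in hand, the last step is to compute $\e(\theta_n(\cC))$. Since $\sL_n(\rho)$ identifies $\theta_n(\cC)$ with the quotient of the $\sL_n(G)$-invariant constructible set $\theta_n(\widetilde C)$ by the special group $\sL_n(G)$ (special by \autoref{jetSchemeOfSpecialGroupIsSpecial}), the formula $\e([Y/H])=\e(Y)\e(H)^{-1}$ for special $H$ yields
\[
	\e(\theta_n(\cC))=\e(\theta_n(\widetilde C))\,\e(\sL_n(G))^{-1}\in K_0(\Stack_k).
\]
The iterated short exact sequence of \autoref{jetSchemeOfSpecialGroupIsSpecial} gives $\e(\sL_n(G))=\e(G)\bL^{n\dim G}$, and $\dim\cX=\dim\widetilde X-\dim G$. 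Plugging in, one finds
\[
	\e(\theta_n(\cC))\bL^{-(n+1)\dim\cX}=\e(\theta_n(\widetilde C))\bL^{-(n+1)\dim\widetilde X}\cdot\e(G)^{-1}\bL^{\dim G},
\]
and the right-hand side converges in $\widehat\sM_k$ to $\mu_{\widetilde X}(\widetilde C)\e(G)^{-1}\bL^{\dim G}$ by the scheme-theoretic definition of motivic measure recalled in \autoref{subsectionPreliminariesMotivicIntegrationSchemes}.

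The only mildly subtle point is the descent of constructibility from $\theta_n(\widetilde C)$ to $\theta_n(\cC)$, which requires using that $\sL_n(\rho)$ is a smooth surjection between finite type objects and that $\theta_n(\widetilde C)$ is $\sL_n(G)$-invariant (inherited from $\widetilde C$ being $\sL(G)$-invariant as the preimage of a subset of $|\sL(\cX)|$); everything else is a direct bookkeeping of dimensions combined with the three key inputs already established in the paper.
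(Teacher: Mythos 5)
Your proposal is correct and follows essentially the same route as the paper: cylinder-ness of $\widetilde{C}$ via the preimage description, constructibility of $\theta_n(\cC)$ via $\sL_n(\rho)^{-1}(\theta_n(\cC))=\theta_n(\widetilde C)$, and the class computation $\e(\theta_n(\cC))=\e(\theta_n(\widetilde C))\e(\sL_n(G))^{-1}=\e(\theta_n(\widetilde C))\e(G)^{-1}\bL^{-n\dim G}$ using specialness of $\sL_n(G)$, followed by the dimension bookkeeping. The only cosmetic difference is that you justify constructibility of $\theta_n(\cC)$ by descent along the smooth surjection $\sL_n(\rho)$, whereas the paper writes $\theta_n(\cC)=\sL_n(\rho)(\theta_n(\widetilde C))$ and cites Chevalley's theorem for Artin stacks (Hall--Rydh); these amount to the same thing.
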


\begin{remark}
In the statement of \autoref{lastPropositionForShowingMotivicMeasureWellDefinedForCylinders}, because $G$ is irreducible, the irreducible components of $\widetilde{X}$ are $G$-invariant, so $\cX$ is equidimensional.
\end{remark}

\begin{proof}
We first show that $\widetilde{C} \subset \sL(\widetilde{X})$ is a cylinder. Because $\cC$ is a cylinder, there exists some $n \in \N$ and some constructible subset $\cC_n \subset |\sL_n(\cX)|$ such that $\cC = (\theta_n)^{-1}(\cC_n)$. Then $\widetilde{C} = (\theta_n)^{-1}(\sL_n(\rho)^{-1}(\cC_n))$ is a cylinder.

Now we will show that for all $n \in \N$, the set $\theta_n(\cC)$ is a constructible subset of $\sL_n(\cX)$. Each $\theta_n(\widetilde{C})$ is a constructible subset of $\sL_n(\widetilde{X})$. Therefore each $\theta_n(\cC) \subset |\sL_n(\cX)|$ is constructible by Chevalley's Theorem for Artin stacks \cite[Theorem 5.2]{HallRydh2017}, \autoref{JetSchemeOfQuotientStackIsQuotientOfJetSchemes}, and \autoref{lemmaTruncationMorphismsAndSmoothMap}.

Then since $G$ is a special group, $\sL_n(G)$ is as well by \autoref{jetSchemeOfSpecialGroupIsSpecial}. Then \autoref{JetSchemeOfQuotientStackIsQuotientOfJetSchemes} and \autoref{lemmaTruncationMorphismsAndSmoothMap} imply that for each $n \in \N$, 
\[
	\e(\theta_n(\cC)) = \e(\theta_n(\widetilde{C}))\e(\sL_n(G))^{-1} = \e(\theta_n(\widetilde{C})) \e(G)^{-1} \bL^{-n\dim G},
\]
where the second equality holds because $G$ is smooth. Therefore
\begin{align*}
	\mu_{\widetilde{X}}(\widetilde{C})\e(G)^{-1}\bL^{\dim G} &= \lim_{n \to \infty} \e(\theta_n(\widetilde{C})) \e(G)^{-1} \bL^{\dim G -(n+1)\dim \widetilde{X}}\\
	&= \lim_{n \to \infty} \e(\theta_n(\cC)) \bL^{-(n+1)\dim\cX}.\qedhere
\end{align*}
\end{proof}

\subsection{Properties of motivic integration for quotient stacks}

We now state some basic properties of motivic integration for quotient stacks. We will use these properties later in this paper.

\begin{proposition}
\label{measureOfCountableDisjointUnion}
Let $\cX$ be an equidimensional finite type quotient stack over $k$, let $\{\cC^{(i)}\}_{i \in \N}$ be a sequence of pairwise disjoint measurable subsets of $|\sL(\cX)|$, and let $\cC = \bigcup_{i= 0}^\infty \cC^{(i)}$. If $\lim_{i \to \infty} \mu_{\cX}(\cC^{(i)}) = 0$, then $\cC$ is measurable and
\[
	\mu_{\cX}(\cC) = \sum_{i = 0}^\infty \mu_{\cX}(\cC^{(i)}).
\]
\end{proposition}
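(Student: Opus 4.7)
The plan is to reduce to the analogous countable-additivity statement for the motivic measure $\mu_{\widetilde{X}}$ on the arc scheme of a scheme $\widetilde{X}$—a standard result in motivic integration, proved by the same kind of cylindrical-approximation argument used for \autoref{countableAdditivityOfGorensteinMeasure}—via the special-group presentation $\cX \cong [\widetilde{X}/G]$ afforded by \autoref{quotientStackIsQuotientBySpecialGroup}. The main technical obstacle is that \autoref{corollaryMotivicMeasureOfMeasurableSet} only provides one direction of the measurability correspondence under $\sL(\rho)$, so measurability of $\cC$ itself must be established by hand before the formula for $\mu_\cX(\cC)$ can be read off.

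Let $\rho: \widetilde{X} \to \cX$ be the quotient morphism; by \autoref{remarkSpecialCoverOfIrreducibleStackIsIrreducible}, $\widetilde{X}$ is irreducible. Set $\widetilde{C}^{(i)} := \sL(\rho)^{-1}(\cC^{(i)})$ and $\widetilde{C} := \sL(\rho)^{-1}(\cC) = \bigsqcup_i \widetilde{C}^{(i)}$. By \autoref{corollaryMotivicMeasureOfMeasurableSet}, each $\widetilde{C}^{(i)}$ is measurable with $\mu_\cX(\cC^{(i)}) = \mu_{\widetilde{X}}(\widetilde{C}^{(i)})\alpha$, where $\alpha := \e(G)^{-1}\bL^{\dim G}$. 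Since $\Vert\alpha\Vert \cdot \Vert\alpha^{-1}\Vert = 1$ by \autoref{lemmaSizeOfClassOfGeneralLinearGroup}, the hypothesis $\lim_i \mu_\cX(\cC^{(i)}) = 0$ forces $\lim_i \mu_{\widetilde{X}}(\widetilde{C}^{(i)}) = 0$, so the scheme-level countable additivity yields that $\widetilde{C}$ is measurable with $\mu_{\widetilde{X}}(\widetilde{C}) = \sum_i \mu_{\widetilde{X}}(\widetilde{C}^{(i)})$.

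It remains to show that $\cC$ is measurable, after which \autoref{corollaryMotivicMeasureOfMeasurableSet} applied to $\cC$ gives $\mu_\cX(\cC) = \mu_{\widetilde{X}}(\widetilde{C})\alpha = \sum_i \mu_\cX(\cC^{(i)})$. Fix $\varepsilon > 0$. Choose $N$ with $\Vert\mu_\cX(\cC^{(i)})\Vert < \varepsilon$ for all $i \geq N$, and for each $i$ pick a cylindrical $\varepsilon$-approximation $(A^{(i)}, (B^{(i,j)})_{j \in J_i})$ of $\cC^{(i)}$. \autoref{corollaryMotivicMeasureOfMeasurableSet} gives $\Vert\mu_\cX(A^{(i)}) - \mu_\cX(\cC^{(i)})\Vert < \varepsilon$, so the non-Archimedean property forces $\Vert\mu_\cX(A^{(i)})\Vert < \varepsilon$ for $i \geq N$. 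Set $\cC^{(0)}_{\mathrm{ap}} := \bigcup_{i<N} A^{(i)}$, which is a cylinder since finite unions of cylinders are cylinders. Then $(\cC^{(0)}_{\mathrm{ap}}, \{B^{(i,j)}\}_{i,j} \cup \{A^{(i)}\}_{i \geq N})$ is a cylindrical $\varepsilon$-approximation of $\cC$: the error-cylinder norm bound is built in, and a short symmetric-difference chase, crucially exploiting disjointness of the $\cC^{(i)}$'s, confirms the covering condition, so $\cC$ is measurable, completing the proof.
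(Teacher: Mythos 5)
Your proof is correct and follows essentially the same route as the paper: the paper also establishes the identity $\mu_\cX(\cC) = \sum_i \mu_\cX(\cC^{(i)})$ by combining \autoref{corollaryMotivicMeasureOfMeasurableSet} with countable additivity of $\mu_{\widetilde{X}}$ on the scheme $\widetilde{X}$, and proves measurability of $\cC$ by exactly the cylindrical-approximation argument you write out by hand (the paper simply delegates it to the scheme-case proof in Chambert-Loir--Nicaise--Sebag). The only cosmetic remark is that disjointness of the $\cC^{(i)}$ is not actually needed for the covering condition in your approximation of $\cC$; it is needed only to apply scheme-level countable additivity to $\widetilde{C} = \bigsqcup_i \widetilde{C}^{(i)}$.
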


\begin{proof}
The set $\cC$ is measurable by the exact same proof used for the analogous statement for schemes in \cite[Chapter 6 Proposition 3.4.2]{ChambertLoirNicaiseSebag}. The remainder of the proposition follows from \autoref{corollaryMotivicMeasureOfMeasurableSet} and the analogous statement for schemes \cite[Chapter 6 Proposition 3.4.3]{ChambertLoirNicaiseSebag} applied to the scheme $\widetilde{X}$ in the statement of \autoref{corollaryMotivicMeasureOfMeasurableSet}.
\end{proof}

\begin{proposition}
\label{subsetOfNegligibleIsNegligible}
Let $\cX$ be an equidimensional finite type quotient stack over $k$, and let $\cC \subset \cD \subset |\sL(\cX)|$. If $\cD$ is measurable and $\mu_\cX(\cD) = 0$, then $\cC$ is measurable and $\mu_\cX(\cC) = 0$.
\end{proposition}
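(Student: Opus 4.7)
The plan is to unwind the definition of measurability directly, mimicking the standard argument from classical motivic integration on schemes, with no need to pass through the scheme presentation. Given $\varepsilon \in \R_{>0}$, I would use the measurability of $\cD$ to obtain a cylindrical $\varepsilon$-approximation $(\cD^{(0)}, (\cD^{(i)})_{i \in I})$ of $\cD$. Since $\mu_\cX(\cD) = 0$, \autoref{corollaryMotivicMeasureOfMeasurableSet} gives $\Vert\mu_\cX(\cD^{(0)})\Vert < \varepsilon$, so $\cD^{(0)}$ itself is ``small enough'' to throw in with the error pieces.

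I then propose taking $(\emptyset, \{\cD^{(0)}\} \cup (\cD^{(i)})_{i \in I})$ as a cylindrical $\varepsilon$-approximation of $\cC$. The empty set is trivially a cylinder (as the preimage of the empty constructible subset of any $|\sL_n(\cX)|$), and all the cylinders on the right have norm less than $\varepsilon$ by construction. The set-theoretic containment to verify is $\cC \subset \cD^{(0)} \cup \bigcup_i \cD^{(i)}$, which follows from $\cC \subset \cD$ together with the inclusion $\cD \subset \cD^{(0)} \cup \bigl((\cD \cup \cD^{(0)}) \setminus (\cD \cap \cD^{(0)})\bigr) \subset \cD^{(0)} \cup \bigcup_i \cD^{(i)}$.

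This shows $\cC$ is measurable. To compute $\mu_\cX(\cC)$, apply \autoref{corollaryMotivicMeasureOfMeasurableSet} to the approximation above: $\Vert\mu_\cX(\cC) - \mu_\cX(\emptyset)\Vert = \Vert\mu_\cX(\cC)\Vert < \varepsilon$ for every $\varepsilon > 0$, forcing $\mu_\cX(\cC) = 0$ in the separated ring $\widehat{\sM}_k$.

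There is no real obstacle here; the only subtlety is confirming that the formal manipulations of cylindrical approximations carry over verbatim from the scheme setting, which they do because \autoref{corollaryMotivicMeasureOfMeasurableSet} guarantees that $\mu_\cX$ on cylinders satisfies the same norm estimates used in the classical proof. I would not bother going through the scheme presentation $\widetilde{X} \to \cX$, since the direct argument is cleaner and avoids having to reverse the measurability statement in \autoref{corollaryMotivicMeasureOfMeasurableSet}.
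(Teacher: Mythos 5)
Your argument is correct and is essentially the paper's: the paper simply cites the scheme-case proof (\cite[Chapter 6 Corollary 3.5.5(a)]{ChambertLoirNicaiseSebag}) as carrying over verbatim, and what you wrote is exactly that classical argument transplanted to $|\sL(\cX)|$, using only the defining property of $\mu_\cX$ from \autoref{corollaryMotivicMeasureOfMeasurableSet} and the fact that $\Vert\cdot\Vert$ is a genuine norm on $\widehat{\sM}_k$. The step of absorbing $\cD^{(0)}$ into the error cylinders and taking $\emptyset$ as the approximating cylinder is precisely the intended proof.
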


\begin{proof}
The proposition holds by the exact same proof used for the analogous statement for schemes in \cite[Chapter 6 Corollary 3.5.5(a)]{ChambertLoirNicaiseSebag}.
\end{proof}

\begin{proposition}
\label{measureOfSubsetHasSmallerSize}
Let $\cX$ be an equidimensional finite type quotient stack over $k$, and let $\cC, \cD$ be measurable subsets of $|\sL(\cX)|$. If $\cC \subset \cD$, then
\[
	\Vert \mu_\cX(\cC) \Vert \leq \Vert \mu_\cX(\cD) \Vert.
\]
\end{proposition}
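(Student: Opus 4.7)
The plan is to reduce to the scheme case via the presentation of $\mu_\cX$ given in \autoref{corollaryMotivicMeasureOfMeasurableSet}. By \autoref{quotientStackIsQuotientBySpecialGroup}, I may choose a general linear group $G$ and a $k$-scheme $\widetilde{X}$ with $G$-action such that $\cX\cong[\widetilde{X}/G]$; let $\rho\colon\widetilde{X}\to\cX$ be the induced smooth cover, and set $\widetilde{C}=\sL(\rho)^{-1}(\cC)$ and $\widetilde{D}=\sL(\rho)^{-1}(\cD)$. Then $\widetilde{C}\subset\widetilde{D}$, both sets are measurable, and \autoref{corollaryMotivicMeasureOfMeasurableSet} yields
\[
\mu_\cX(\cC)=\mu_{\widetilde{X}}(\widetilde{C})\,\alpha,\qquad \mu_\cX(\cD)=\mu_{\widetilde{X}}(\widetilde{D})\,\alpha,
\]
where $\alpha=\e(G)^{-1}\bL^{\dim G}\in\widehat{\sM}_k$.

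Next I would upgrade the sub-multiplicativity of $\Vert\cdot\Vert$ to strict multiplicativity against the unit $\alpha$. Combining the trivial bound $\Vert\beta\alpha\Vert\leq\Vert\beta\Vert\Vert\alpha\Vert$ with $\Vert\beta\Vert=\Vert\beta\alpha\cdot\alpha^{-1}\Vert\leq\Vert\beta\alpha\Vert\Vert\alpha^{-1}\Vert$ and the identity $\Vert\alpha\Vert\Vert\alpha^{-1}\Vert=1$ from \autoref{lemmaSizeOfClassOfGeneralLinearGroup} forces $\Vert\beta\alpha\Vert=\Vert\beta\Vert\Vert\alpha\Vert$ for every $\beta\in\widehat{\sM}_k$. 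Applying this to the two displayed equalities gives
\[
\Vert\mu_\cX(\cC)\Vert=\Vert\mu_{\widetilde{X}}(\widetilde{C})\Vert\Vert\alpha\Vert,\qquad \Vert\mu_\cX(\cD)\Vert=\Vert\mu_{\widetilde{X}}(\widetilde{D})\Vert\Vert\alpha\Vert,
\]
so the proposition reduces to the analogous monotonicity statement for the scheme-theoretic motivic measure $\mu_{\widetilde{X}}$ applied to $\widetilde{C}\subset\widetilde{D}$.

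To finish, I would cite the scheme version of this monotonicity, which is standard and is proved by essentially the same approximation argument used in \autoref{subsetOfNegligibleIsNegligible}, following \cite[Chapter 6]{ChambertLoirNicaiseSebag}. I do not expect any real obstacle: the only content beyond the reference to the scheme case is the observation that multiplication by $\alpha$ preserves the non-Archimedean norm exactly, which is immediate from \autoref{lemmaSizeOfClassOfGeneralLinearGroup}.
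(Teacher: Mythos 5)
Your proposal is correct and follows essentially the same route as the paper: reduce to a presentation $\cX\cong[\widetilde{X}/G]$ with $G$ a general linear group via \autoref{quotientStackIsQuotientBySpecialGroup}, transfer the measures through \autoref{corollaryMotivicMeasureOfMeasurableSet}, invoke the scheme-level monotonicity of \cite[Chapter 6 Corollary 3.3.5]{ChambertLoirNicaiseSebag} for $\widetilde{C}\subset\widetilde{D}$, and control the factor $\e(G)^{-1}\bL^{\dim G}$ using \autoref{lemmaSizeOfClassOfGeneralLinearGroup}. Your packaging of the norm estimate as exact multiplicativity against this unit is just a rearrangement of the paper's chain of sub-multiplicativity inequalities, so there is no substantive difference.
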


\begin{proof}
By \autoref{quotientStackIsQuotientBySpecialGroup}, there exist $G, \widetilde{X}, \rho$ as in the statement of \autoref{theoremMainMotivicIntegrationForQuotientStacks} such that $G$ is a general linear group. Let $\widetilde{C} = \sL(\rho)^{-1}(\cC)$ and $\widetilde{D} = \sL(\rho)^{-1}(\cD)$. Then
\begin{align*}
	\Vert \mu_\cX(\cC) \Vert &= \Vert \mu_{\widetilde{X}}(\widetilde{C})\e(G)^{-1}\bL^{\dim G} \Vert\\
	&\leq \Vert \e(G)^{-1}\bL^{\dim G}\Vert \Vert \mu_{\widetilde{X}}(\widetilde{C})\Vert\\
	&\leq  \Vert \e(G)^{-1}\bL^{\dim G}\Vert \Vert \mu_{\widetilde{X}}(\widetilde{D})\Vert\\
	&= \Vert \e(G)^{-1}\bL^{\dim G}\Vert \Vert \mu_{\cX}(\cD) \e(G)\bL^{-\dim G}\Vert\\
	&\leq  \Vert \e(G)^{-1} \bL^{\dim G} \Vert \Vert\e(G) \bL^{-\dim G}\Vert \Vert \mu_\cX(\cD) \Vert\\
	&= \Vert \mu_\cX(\cD) \Vert,
\end{align*}
where the first and fourth lines follow from \autoref{corollaryMotivicMeasureOfMeasurableSet}, the third line follows from the analogous statement \cite[Chapter 6 Corollary 3.3.5]{ChambertLoirNicaiseSebag} for schemes applied to $\widetilde{X}$, and the last line follows from \autoref{lemmaSizeOfClassOfGeneralLinearGroup}.
\end{proof}

\begin{proposition}
\label{closedSubstackGivesNegligibeSet}
Let $\cX$ be an equidimensional finite type quotient stack over $k$, let $\cY$ be a closed substack of $\cX$ with $\dim\cY < \dim\cX$, and let $\cC \subset |\sL(\cX)|$ be the image of $|\sL(\cY)|$ in $|\sL(\cX)|$. Then $\cC$ is measurable and $\mu_\cX(\cC) = 0$.
\end{proposition}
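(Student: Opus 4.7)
\textbf{Proof plan for \autoref{closedSubstackGivesNegligibeSet}.}
The plan is to reduce to the scheme analog via the special-group cover supplied by \autoref{quotientStackIsQuotientBySpecialGroup}, using cylindrical overestimates of $\cC$ coming from the truncations $\theta_n$. By \autoref{quotientStackIsQuotientBySpecialGroup} I may fix a presentation $\cX \cong [\widetilde{X}/G]$ with $G$ a general linear group; \autoref{remarkSpecialCoverOfIrreducibleStackIsIrreducible} then gives that $\widetilde{X}$ is an irreducible finite type $k$-scheme. Let $\rho\colon\widetilde{X}\to\cX$ be the quotient map and set $\widetilde{Y}:=\widetilde{X}\times_\cX\cY$, a $G$-invariant closed subscheme of $\widetilde{X}$. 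Because $\cY\hookrightarrow\cX$ is a closed immersion and $|\cY|\neq|\cX|$, base change along the smooth cover $\rho$ shows that $\widetilde{Y}$ is a proper closed subscheme of $\widetilde{X}$.

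Next I produce cylindrical overestimates of $\cC$. The closed immersion $\cY\hookrightarrow\cX$ induces closed immersions $\sL_n(\cY)\hookrightarrow\sL_n(\cX)$ of Artin stacks for every $n\in\N$ (Weil restriction along the finite flat map $\Spec k[t]/(t^{n+1})\to\Spec k$ preserves closed immersions). Set
\[
\cC_n := \theta_n^{-1}(|\sL_n(\cY)|)\subset|\sL(\cX)|,
\]
a cylinder in $|\sL(\cX)|$ containing $\cC$. Using that Weil restriction commutes with fiber products, one identifies $\sL_n(\cY)\times_{\sL_n(\cX)}\sL_n(\widetilde{X})\cong\sL_n(\widetilde{Y})$ as stacks over $k$, so \autoref{lemmaTruncationMorphismsAndSmoothMap} (applied to the smooth cover $\rho$) yields
\[
\sL(\rho)^{-1}(\cC_n)=\theta_n^{-1}(|\sL_n(\widetilde{Y})|)\subset\sL(\widetilde{X}).
\]
By \autoref{theoremMainMotivicIntegrationForQuotientStacks} this gives
\[
\mu_\cX(\cC_n)=\mu_{\widetilde{X}}\bigl(\theta_n^{-1}(|\sL_n(\widetilde{Y})|)\bigr)\cdot\e(G)^{-1}\bL^{\dim G}.
\]

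Now I invoke the scheme version of the statement (e.g.~\cite[Chapter 6 Lemma 3.2.2]{ChambertLoirNicaiseSebag}): since $\widetilde{Y}$ is a proper closed subscheme of the irreducible finite type $k$-scheme $\widetilde{X}$, the norms $\Vert\mu_{\widetilde{X}}(\theta_n^{-1}(|\sL_n(\widetilde{Y})|))\Vert$ tend to $0$ as $n\to\infty$. Since $\Vert\e(G)^{-1}\bL^{\dim G}\Vert$ is a fixed constant (the second factor in \autoref{lemmaSizeOfClassOfGeneralLinearGroup}), it follows that $\Vert\mu_\cX(\cC_n)\Vert\to 0$. Thus for every $\varepsilon\in\R_{>0}$ there is an $n$ with $\Vert\mu_\cX(\cC_n)\Vert<\varepsilon$, and $(\emptyset,\{\cC_n\})$ is then a cylindrical $\varepsilon$-approximation of $\cC$ since $\cC\subset\cC_n$. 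Hence $\cC$ is measurable, and the defining inequality in \autoref{corollaryMotivicMeasureOfMeasurableSet} gives $\Vert\mu_\cX(\cC)\Vert<\varepsilon$ for every $\varepsilon>0$, so $\mu_\cX(\cC)=0$.

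The only non-routine point is the compatibility $\sL_n(\cY)\times_{\sL_n(\cX)}\sL_n(\widetilde{X})\cong\sL_n(\widetilde{Y})$ and the resulting set-level identification of the preimage cylinder under $\sL(\rho)$; once that is in hand, the argument is a direct reduction to the classical scheme-theoretic fact that the image of the arcs of a proper closed subscheme has measure $0$.
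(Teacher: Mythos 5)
Your argument is correct and is essentially the paper's own proof: the same cylinders $\theta_n^{-1}(|\sL_n(\cY)|)$, the same identification of their $\sL(\rho)$-preimages with $\theta_n^{-1}(\sL_n(\widetilde{Y}))$ via compatibility of Weil restriction with the fiber product $\widetilde{Y}=\widetilde{X}\times_\cX\cY$, the same application of \autoref{theoremMainMotivicIntegrationForQuotientStacks}, and the same scheme-level fact that $\mu_{\widetilde{X}}(\theta_n^{-1}(\sL_n(\widetilde{Y})))\to 0$, yielding a cylindrical $\varepsilon$-approximation $(\emptyset,\{\cC_n\})$. The only cosmetic difference is that the preimage identity $\sL(\rho)^{-1}\theta_n^{-1}=\theta_n^{-1}\sL_n(\rho)^{-1}$ is immediate from functoriality and does not need \autoref{lemmaTruncationMorphismsAndSmoothMap}.
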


\begin{proof}
For each $n \in \N$, let $\cC_n \subset |\sL_n(\cX)|$ be the image of $|\sL_n(\cY)|$ in $|\sL_n(\cX)|$, and let $\cC^{(n)} = (\theta_n)^{-1}(\cC_n)$. By \cite[Proposition 3.5(vi)]{Rydh2011}, each $\cC_n$ is a closed subset of $\sL_n(\cX)$, so each $\cC^{(n)}$ is a cylinder in $\sL(\cX)$.

By \autoref{quotientStackIsQuotientBySpecialGroup}, there exist $G, \widetilde{X}, \rho$ as in the statement of \autoref{theoremMainMotivicIntegrationForQuotientStacks}. Let $\widetilde{Y} = \widetilde{X} \times_\cX \cY$. Then $\sL_n(\rho)^{-1}(\cC_n)$ is the underlying set of $\sL_n(\widetilde{Y})$. Thus by \autoref{theoremMainMotivicIntegrationForQuotientStacks},
\begin{align*}
	\mu_{\cX}(\cC^{(n)}) &= \mu_{\widetilde{X}} (\sL(\rho)^{-1}(\cC^{(n)})) \e(G)^{-1} \bL^{\dim G}\\
	&= \mu_{\widetilde{X}}((\theta_n)^{-1}(\sL_n(\widetilde{Y})))\e(G)^{-1} \bL^{\dim G}.
\end{align*}
By \cite[Chapter 6 Proposition 2.3.1]{ChambertLoirNicaiseSebag},
\[
	\lim_{n \to \infty} \mu_{\widetilde{X}}((\theta_n)^{-1}(\sL_n(\widetilde{Y}))) = 0,
\]
so
\[
	\lim_{n \to \infty} \mu_{\cX}(\cC^{(n)}) = 0.
\]
Therefore for any $\varepsilon \in \R_{>0}$, we get that $(\emptyset, (\cC^{(n)}))$ is a cylindrical $\varepsilon$-approximation of $\cC$ for sufficiently large $n$, and we are done by definition of $\mu_\cX$.
\end{proof}

\begin{proposition}
\label{stackMeasurablesFormBooleanAlgebra}
Let $\cX$ be an equidimensional finite type quotient stack over $k$, and let $\cC$ and $\cD$ be measurable subsets of $|\sL(\cX)|$. Then the intersection $\cC \cap \cD$, the union $\cC \cup \cD$, and the complement $\cC \setminus \cD$ are all measurable subsets of $|\sL(\cX)|$.
\end{proposition}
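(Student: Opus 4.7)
The plan is to mimic the standard argument used to show that measurable subsets of an arc scheme form a Boolean algebra, cf.~\cite[Chapter 6 Proposition 3.4.1]{ChambertLoirNicaiseSebag}. First I would observe that cylinders in $|\sL(\cX)|$ themselves form a Boolean algebra:~indeed, if $\cC_n, \cD_n \subset |\sL_n(\cX)|$ are constructible subsets with $\cC = (\theta_n)^{-1}(\cC_n)$ and $\cD = (\theta_n)^{-1}(\cD_n)$ (after passing to a common truncation level $n$), then $\cC \cap \cD$, $\cC \cup \cD$ and $\cC \setminus \cD$ are the preimages of the analogous Boolean combinations of $\cC_n$ and $\cD_n$, which remain constructible.

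Next, fix $\varepsilon > 0$ and choose cylindrical $\varepsilon$-approximations $(\cC^{(0)}, (\cC^{(i)})_{i \in I})$ of $\cC$ and $(\cD^{(0)}, (\cD^{(j)})_{j \in J})$ of $\cD$. Set-theoretically one checks that the symmetric differences satisfy
\[
(\cC \cap \cD) \triangle (\cC^{(0)} \cap \cD^{(0)}) \subset \bigcup_{i \in I} \cC^{(i)} \cup \bigcup_{j \in J} \cD^{(j)},
\]
and the same containment holds with $\cap$ replaced by $\cup$ or by set-difference. Since $\cC^{(0)} \cap \cD^{(0)}$, $\cC^{(0)} \cup \cD^{(0)}$, and $\cC^{(0)} \setminus \cD^{(0)}$ are cylinders (by the first step), each of the triples
\[
(\cC^{(0)} \cap \cD^{(0)}, (\cC^{(i)})_{i \in I} \sqcup (\cD^{(j)})_{j \in J}),\ \ (\cC^{(0)} \cup \cD^{(0)}, \ldots),\ \ (\cC^{(0)} \setminus \cD^{(0)}, \ldots)
\]
is a cylindrical $\varepsilon$-approximation of the corresponding Boolean combination of $\cC$ and $\cD$. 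Since $\varepsilon > 0$ was arbitrary, $\cC \cap \cD$, $\cC \cup \cD$, and $\cC \setminus \cD$ are measurable, completing the proof.

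I do not anticipate any serious obstacle here:~the only thing particular to stacks is ensuring that Boolean operations on constructible subsets of $|\sL_n(\cX)|$ remain constructible, which is immediate from the fact that $|\sL_n(\cX)|$ is Noetherian (since $\sL_n(\cX)$ is a finite type Artin stack by \cite[Proposition 3.8(xv)]{Rydh2011}). Alternatively, one can pull everything back along a smooth cover $\widetilde{X} \to \cX$ by a scheme as in \autoref{theoremMainMotivicIntegrationForQuotientStacks}, reduce to the statement for schemes, and push forward. Either approach yields the result essentially verbatim from the scheme case, so I would be inclined to give the brief direct argument and cite \cite[Chapter 6 Proposition 3.4.1]{ChambertLoirNicaiseSebag} for the details.
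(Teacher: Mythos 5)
Your argument is correct and is exactly the approach the paper takes: the paper simply notes that the scheme-case proof from Chambert-Loir--Nicaise--Sebag carries over verbatim, and your write-up is that standard proof (cylinders form a Boolean algebra at a common truncation level, plus the symmetric-difference containments) made explicit. The only nitpick is the citation: the paper points to the corresponding statement at a slightly different number in that reference, but this does not affect the mathematics.
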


\begin{proof}
The proposition holds by the exact same proof used for the analogous statement for schemes in \cite[Chapter 6 Proposition 3.2.8]{ChambertLoirNicaiseSebag}.
\end{proof}

\begin{proposition}
\label{stackMeasureOpenSubstack}
Let $\cX$ be an equidimensional finite type quotient stack over $k$, let $\iota: \cU \hookrightarrow \cX$ be the inclusion of an open substack, and let $\cC \subset (\theta_0)^{-1}(|\cU|) \subset |\sL(\cX)|$. Then $\cC$ is a measurable subset of $|\sL(\cX)|$ if and only if $\sL(\iota)^{-1}(\cC)$ is a measurable subset of $|\sL(\cU)|$, and in that case
\[
	\mu_\cX(\cC) = \mu_\cU(\sL(\iota)^{-1}(\cC)).
\]
\end{proposition}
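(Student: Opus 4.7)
The plan is to reduce to the case of cylinders and then transfer cylindrical $\varepsilon$-approximations between $|\sL(\cX)|$ and $|\sL(\cU)|$. One may assume $\cU$ is non-empty, for otherwise both sides of the claim are trivially empty. Since $\cX$ is irreducible, $\cU$ is then a dense open substack with $\dim\cU = \dim\cX$ and $\cU$ irreducible; moreover, given a presentation $\cX = [\widetilde X/G]$, the open immersion $\iota$ pulls back to $\widetilde U \subset \widetilde X$ an invariant open, so $\cU = [\widetilde U/G]$ is itself a quotient stack, and $\mu_\cU$ is defined.

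First I would show that for each $n$, the truncation $\sL_n(\iota)\colon \sL_n(\cU) \to \sL_n(\cX)$ is an open immersion realizing $\sL_n(\cU) \cong \sL_n(\cX) \times_\cX \cU$. This is because, for any $k$-algebra $R$, a map $\Spec R[t]/(t^{n+1}) \to \cX$ factors through $\cU$ if and only if its composite with the closed immersion $\Spec R \hookrightarrow \Spec R[t]/(t^{n+1})$ does (since the preimage of $|\cU|$ in a one-point space is either empty or everything). Passing to the limit gives the analogous identification $|\sL(\cU)| = \theta_0^{-1}(|\cU|) \subset |\sL(\cX)|$ via $\sL(\iota)$, compatibly with truncations.

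Next I would check the statement on cylinders. If $\cD \subset \theta_0^{-1}(|\cU|) \subset |\sL(\cX)|$ is a cylinder with $\cD = \theta_n^{-1}(\cD_n)$, then $\cD = \theta_n^{-1}(\cD_n \cap |\sL_n(\cU)|)$, and $\cD_n \cap |\sL_n(\cU)|$ is constructible by openness of $|\sL_n(\cU)|$ in $|\sL_n(\cX)|$. Thus $\sL(\iota)^{-1}(\cD) = \theta_n^{-1}(\sL_n(\iota)^{-1}(\cD_n \cap |\sL_n(\cU)|))$ is a cylinder in $|\sL(\cU)|$. Since by \autoref{existenceOfClassOfConstructibleSubsetOfStack} the class of a constructible subset does not depend on the ambient stack, and since $\dim\cU = \dim\cX$, the sequences defining $\mu_\cX(\cD)$ and $\mu_\cU(\sL(\iota)^{-1}(\cD))$ agree term by term, so $\mu_\cX(\cD) = \mu_\cU(\sL(\iota)^{-1}(\cD))$. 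The analogous reasoning shows that every cylinder in $|\sL(\cU)|$ arises this way from a cylinder in $\theta_0^{-1}(|\cU|) \subset |\sL(\cX)|$.

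Finally, to handle general measurable subsets, I would take a cylindrical $\varepsilon$-approximation $(\cC^{(0)},(\cC^{(i)})_{i\in I})$ of $\cC$ in $|\sL(\cX)|$ and replace each $\cC^{(j)}$ by $\cC^{(j)} \cap \theta_0^{-1}(|\cU|)$. The intersection is still a cylinder (as $\theta_0^{-1}(|\cU|)$ is itself a cylinder), its norm bound is preserved by \autoref{measureOfSubsetHasSmallerSize}, and because $\cC \subset \theta_0^{-1}(|\cU|)$ the symmetric difference with $\cC$ remains covered. Applying the cylinder step gives a cylindrical $\varepsilon$-approximation of $\sL(\iota)^{-1}(\cC)$ in $|\sL(\cU)|$; the converse direction is symmetric. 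Since for the approximating cylinders the two measures coincide, letting $\varepsilon \to 0$ yields $\mu_\cX(\cC) = \mu_\cU(\sL(\iota)^{-1}(\cC))$. The principal technical point is the fiber-product identification $\sL_n(\cU) \cong \sL_n(\cX)\times_\cX \cU$ in the second paragraph; everything else is a formal consequence of the definitions.
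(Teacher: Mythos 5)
Your proposal is correct and follows essentially the same route as the paper: the paper's proof simply cites that each $\sL_n(\iota)\colon \sL_n(\cU) \to \sL_n(\cX)$ is an open immersion (via Rydh's results on Weil restriction) and notes that the statement then follows from the definitions exactly as for schemes, which is precisely the cylinder and $\varepsilon$-approximation bookkeeping you carry out. Your only deviation is proving the open-immersion/fiber-product identification $\sL_n(\cU) \cong \sL_n(\cX) \times_\cX \cU$ directly from the functor of points instead of citing it, which is fine (though the parenthetical about a ``one-point space'' should really say that $\Spec R[t]/(t^{n+1})$ and $\Spec R$ have the same underlying topological space).
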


\begin{proof}
As in the case of schemes, this is an easy consequence of the definitions and the fact that for all $n \in \N$, the morphism $\sL_n(\iota): \sL_n(\cU) \to \sL_n(\cX)$ is an open immersion by \cite[Proposition 3.5(vii)]{Rydh2011}.
\end{proof}

\subsection{Non-separatedness functions}\label{subsectionNonSeparatednessFunctions}

We now introduce notation for the non-separatedness functions $\sep_{\pi, \cC}$, $\sep_\pi$, and $\sep_\cX$ that were used in the statements of the main conjectures and theorems of this paper. Throughout this subsection, let $\cX$ be an Artin stack over $k$, let $\cC \subset |\sL(\cX)|$, let $X$ be a scheme over $k$, and let $\pi: \cX \to X$ be a map. For any field extension $k'$ of $k$, we will let $\overline{\cC}(k')$ denote the subset of $\overline{\sL(\cX)}(k')$ consisting of arcs whose classes in the set $|\sL(\cX)|$ are contained in $\cC$.

If $k'$ is a field extension of $k$ and $\varphi \in \sL(X)(k')$, we set
\[
	\sep_{\pi, \cC}(\varphi) = \#\left(\overline{\cC}(k') \cap \overline{(\sL(\pi)^{-1}(\varphi))}(k')\right) \in \N \cup \{\infty\},
\]
which induces a map $\sep_{\pi, \cC}: \sL(X) \to \N \cup \{\infty\}$ by considering each $\varphi \in \sL(X)$ as a point valued in its residue field. If furthermore we assume that $X$ is integral, finite type, separated, and has log-terminal singularities, that $\sep_{\pi, \cC}: \sL(X) \to \N \cup\{\infty\}$ has measurable fibers, and that $C \subset \sL(X)$ is a measurable subset, then we can consider the motivic integral
\[
	\int_C \sep_{\pi, \cC} \diff\mu^\Gor_X = \sum_{n \in \N} n \mu^\Gor_X(\sep_{\pi, \cC}^{-1}(n) \cap C) \in \widehat{\sM}_k[\bL^{1/m}],
\]
where $m \in \Z_{>0}$ is such that $mK_X$ is Cartier. Note that with the above assumptions, the series defining $\int_C \sep_{\pi, \cC} \diff\mu^\Gor_X$ converges because 
\[
	\lim_{n \to \infty} n \mu^\Gor_X(\sep_{\pi, \cC}^{-1}(n) \cap C) = 0,
\]
which follows from
\[
	\lim_{n \to \infty} \mu^\Gor_X(\sep_{\pi, \cC}^{-1}(n) \cap C) = 0,
\]
which, for example, is a consequence of \autoref{countableAdditivityOfGorensteinMeasure}.

Set
\[
	\sep_\pi = \sep_{\pi, |\sL(\cX)|},
\]
and 
\[
	\sep_\cX = 1/(\sep_\pi \circ \sL(\pi)): |\sL(\cX)| \to \Q_{\geq 0} \cup \{\infty\}.
\]
If furthermore we assume that $\cX$ is an equidimensional and finite type quotient stack over $k$ and that $\sep_\cX: |\sL(\cX)| \to \Q_{\geq 0} \cup \{\infty\}$ has measurable fibers, we can consider the motivic integral
\[
	\int_{\sL(\cX)} \sep_\cX \diff\mu_\cX = \sum_{n \in \Z_{\geq 1}} (1/n) \mu_\cX( \sep_\cX^{-1}(1/n) ) \in \widehat{\sM_k \otimes_\Z \Q},
\]
where the ring $\widehat{\sM_k \otimes_\Z \Q}$ is defined like $\widehat{\sM}_k$ in \autoref{subsectionPreliminariesMotivicIntegrationSchemes} by replacing any mention of $K_0(\Var_k)$ with $K_0(\Var_k) \otimes_\Z \Q$ and any mention of ``subgroup'' with ``$\Q$-subspace''. With the above assumptions, the series defining $\int_{\sL(\cX)} \sep_\cX \diff\mu_\cX$ converges because
\[
	\lim_{n \to \infty} (1/n) \mu_\cX( \sep_\cX^{-1}(1/n) ) = 0,
\]
which by the definition of the norm on $\widehat{\sM_k \otimes_\Z \Q}$ follows from
\[
	\lim_{n \to \infty} \mu_\cX( \sep_\cX^{-1}(1/n) ) = 0,
\]
which follows from \autoref{corollaryMotivicMeasureOfMeasurableSet} and properties of motivic measures for schemes.

\subsection{Motivic integration for smooth stacks via the cotangent complex}\label{subsectionMotivicIntegrationForSmoothStacksViaTheCotangentComplex}

In this subsection, we prove that the motivic measure $\mu_\cX$ is also well defined when $\cX$ is an equidimensional smooth Artin (not-necessarily-quotient) stack over $k$. We only explicitly verify this for cylinders, but by a standard argument (identical to the one for schemes in \cite{ChambertLoirNicaiseSebag}), this leads to well defined notions (that coincide with our above definitions in the case $\cX$ is a quotient stack) of measurable subsets of $|\sL(\cX)|$ and their motivic measures. The main result of this subsection is the following theorem, which immediately implies that \autoref{definitionMotivicMeasureCylinder} makes sense in this setting.

\begin{theorem}\label{TheoremMotivicMeasureCylinderNonQuotientWellDefined}
Let $\cX$ be an equidimensional smooth Artin stack over $k$, and let $\cC \subset |\sL(\cX)|$ be a cylinder. Then the set $\theta_n(\cC) \subset |\sL_n(\cX)|$ is constructible for each $n \in \N$, and the sequence 
\[
	\{\e(\theta_n(\cC)) \bL^{-(n+1)\dim\cX}\}_{n \in \N} \subset K_0(\Stack_k)
\] 
stabilizes for sufficiently large $n$.
\end{theorem}

We first prove two lemmas, after which we will return to proving \autoref{TheoremMotivicMeasureCylinderNonQuotientWellDefined}.

\begin{lemma}\label{LemmaFibersOfTruncationStackSpacesAndPiecewiseConstant}
Let $\cX$ be an equidimensional smooth Artin stack over $k$, and let $n \in \N$. There exists some $\ell \in \Z_{>0}$, a partition $|\sL_n(\cX)| = \bigsqcup_{i=1}^\ell \cC_i$ of $|\sL_n(\cX)|$ into constructible subsets $\cC_i$, and some $r_1, \dots, r_\ell, j_1, \dots, j_\ell \in \N$ such that
\begin{itemize}

\item for any $i \in \{1, \dots, \ell\}$, we have $r_i - j_i = \dim\cX$, and

\item for any $i \in \{1, \dots, \ell\}$, any field extension $k'$ of $k$, and any $\psi_n \in \sL_n(\cX)(k')$ whose class in $|\sL_n(\cX)|$ is contained in $\cC_i$, we have
\[
	(\theta^{n+1}_n)^{-1}(\psi_n) \cong \bA^{r_i}_{k'} \times_{k'} B\bG_{a}^{j_i}.
\]
	
\end{itemize}
\end{lemma}

\begin{proof}
Fix $\xi_n\colon\Spec k'\to\sL_n(\cX)$ and let $\cY_{\xi_n}$ denote the fiber of the truncation map $\sL_{n+1}(\cX)\to\sL_n(\cX)$ over $\xi_n$. For any $\alpha\colon\Spec A\to\Spec k'$, the $A$-valued points $\cY_{\xi_n}(A)$ are the category of lifts of $\xi_n\otimes_{k'}A$ to $\sL_{n+1}(\cX)$. For all $m\geq0$, let $\cX_m=\cX\otimes_k k[t]/(t^{m+1})$ and $\alpha_m\colon\Spec A[t]/(t^{m+1})\to\Spec k'[t]/(t^{m+1})$ be the map induced by $\alpha$; for $m\leq n$, let $\varphi_m\colon\Spec k'[t]/(t^{m+1})\to\cX_m$ denote the map induced by $\xi_n$. We then obtain a cartesian diagram
\[
\xymatrix{
\Spec A\ar[r]\ar[d]_-{\alpha_0\varphi_0} & \Spec A'[t]/(t^{n+1})\ar[r]\ar[d]_-{\alpha_n\varphi_n} & \Spec A'[t]/(t^{n+2})\ar@{-->}[d]\ar@/^1.25pc/[dd]\\
\cX\ar[r]\ar[d] & \cX_n\ar[r]\ar[d] & \cX_{n+1}\ar[d]\\
\Spec k\ar[r] & \Spec k[t]/(t^{n+1})\ar[r] & \Spec k[t]/(t^{n+2})
}
\]
where the curved arrow is the structure map. Let $\mathcal{J}_n$ denote the ideal sheaf of $\Spec k'[t]/(t^{n+1})\to\Spec k'[t]/(t^{n+2})$ considered as a $k'$-module. By \cite[Theorem 1.5]{Olsson2006}, and the fact that $\cX_{n+1}$ and $A[t]/(t^{n+2})$ are flat over $k[t]/(t^{n+2})$, the obstruction to the existence of a dotted arrow in the above diagram lives in
\[
\ext^1(L(\alpha_0\varphi_0)^*L_{\cX/k},\alpha_0^*\mathcal{J}_n)=\ext^1(L\varphi_0^*L_{\cX/k},\cO_{k'})\otimes_{k'}\mathcal{J}_n\otimes_{k'}A.
\]
We will show that this group vanishes and so by \cite[Theorem 1.5]{Olsson2006}, the objects (resp.~automorphisms) of $\cY_{\xi_n}(A)$ are parameterized by $\ext^n(L\varphi_0^*L_{\cX/k},\cO_{k'})\otimes_{k'}\mathcal{J}_n\otimes_{k'}A$ where $n=0$ (resp.~$n=-1$). In particular, if $V$ (resp.~$G$) denotes the affine space (resp.~algebraic vector group) over $k'$ associated to the vector space $\ext^n(L\varphi_0^*L_{\cX/k},\cO_{k'})\otimes_{k'}\mathcal{J}_n$ with $n=0$ (resp.~$n=-1$), then we have
\[
\cY_{\xi_n}\,\cong\, V\times_{k'} G\,\cong\, 
\bA^{r(\xi_n)}\times B\bG_a^{j(\xi_n)},
\]
where $r(\xi_n)=\dim \ext^0(L\varphi_0^*L_{\cX/k},\cO_{k'})$ and $j(\xi_n)=\dim \ext^1(L\varphi_0^*L_{\cX/k},\cO_{k'})$. Note that this implies
\[
e(\cY_{\xi_n})=\bL^{r(\xi_n)-j(\xi_n)}.
\]
Therefore, to finish the proof of the theorem, it suffices to show
\begin{equation}\label{eqn:dim-cotangent-complex-pulled-back}
\ext^1(L\varphi_0^*L_{\cX/k},\cO_{k'})=0,\quad\quad r(\xi_n)-j(\xi_n)=\dim\cX,
\end{equation}
and that the locus of $\varphi_0\in|\cX|$ where $r(\xi_n)$ is constant is given by a constructible set. Since these remaining statements depend only on the dimension of the $\ext$-groups over $k'$, it suffices to replace $k'$ with an extension field, and hence we can assume $k'$ is algebraically closed.

Let $\rho\colon\widetilde{X}\to\cX$ be a smooth cover. Since $k'$ is algebraically closed, we may fix a lift $\phi_0\colon\Spec k'\to\widetilde{X}$ of $\varphi_0$. Since $\widetilde{X}$ and $\rho$ are smooth, we have an exact triangle
\[
p^*L_{\cX/k}\to\Omega^1_{\widetilde{X}/k}\to\Omega^1_{\widetilde{X}/\cX}
\]
from which we obtain an exact triangle
\[
\Gamma(\phi_0^*T_{\widetilde{X}/\cX})\to\Gamma(\phi_0^*T_{\widetilde{X}/k})\to \mathrm{RHom}(L\varphi_0^*L_{\cX/k},\cO_{k'}).
\]
In particular, $\ext^n(L\varphi_0^*L_{\cX/k},\cO_{k'})=0$ for $n\neq0,-1$ and there is an exact sequence
\begin{equation}\label{eqn:fibers-via-cotangent-complex}
0\to\ext^{-1}(L\varphi_0^*L_{\cX/k},\cO_{k'})\to\Gamma(\phi_0^*T_{\widetilde{X}/\cX})\to\Gamma(\phi_0^*T_{\widetilde{X}/k})\to\ext^0(L\varphi_0^*L_{\cX/k},\cO_{k'})\to0.
\end{equation}
Thus, 
\[
r(\xi_n)-j(\xi_n)=\dim\Gamma(\phi_0^*T_{\widetilde{X}/k})-\dim\Gamma(\phi_0^*T_{\widetilde{X}/\cX})=
\dim\cX,
\]
thereby establishing \eqref{eqn:dim-cotangent-complex-pulled-back}. Finally, note from \eqref{eqn:fibers-via-cotangent-complex} that the cokernel of $\Gamma(\phi_0^*T_{\widetilde{X}/\cX})\to\Gamma(\phi_0^*T_{\widetilde{X}/k})$ depends only on $\varphi_0$ and not the choice of lift $\phi_0$. So, the locus of $\varphi_0\in|\cX|$ where $r(\xi_n)$ is constant is the image under $\rho$ of the locus of $\phi_0\in|\widetilde{X}|$ where the dimension is constant. By Chevalley's Theorem for Artin stacks \cite[Theorem 5.2]{HallRydh2017}, it is therefore it is enough to show that the locus of such $\phi_0$ is constructible. This follows by applying \cite[Lemma 0BDI]{stacks-project} to the $2$-term complex $T_{\widetilde{X}/\cX}\to T_{\widetilde{X}/k}$ and using that $|\widetilde{X}|$ is Noetherian so that all locally constructible sets are constructible.
\end{proof}

\begin{lemma}\label{LemmaConstantFibersClassInGrothendieckRingStackBase}
Let $\cY$, $\cZ$, and $\cF$ be finite type Artin stacks over $k$, let $\cY \to \cZ$ be a $k$-morphism, and assume that for any field extensions $k'$ of $k$ and any $k$-morphism $\Spec(k') \to \cZ$, there exists a $k'$-isomorphism
\[
	(\cY \times_{\cZ} \Spec(k'))_\red \cong (\cF \times_{\Spec(k)} \Spec(k'))_\red.
\]
Then
\[
	\e(\cY) = \e(\cF)\e(\cZ) \in K_0(\mathbf{Stack}_k).
\]
\end{lemma}

\begin{proof}
Because $\cZ$ can be stratified by quotient stacks \cite[Proposition 3.5.9]{Kresch}, we may assume that $\cZ = [Z / G]$ for some finite type scheme $Z$ over $k$ with an action by a general linear group $G$ over $k$. Let $\cY' = \cY \times_{\cZ} Z$. Because $Z \to \cZ$ and $\cY' \to \cY$ are $G$-torsors and $G$ is a special group, \cite[Proposition 1.1(ii)]{Ekedahl} gives
\begin{align*}
	\e(Z) &= \e(G)\e(\cZ),\\
	\e(\cY') &= \e(G)\e(\cY).
\end{align*}
By the hypotheses on $\cY \to \cZ$, \autoref{piecewiseTrivialFibrationCriterion} implies that $\cY' \to Z$ is a piecewise trivial fibration with fiber $\cF$, so in particular,
\[
	\e(\cY') = \e(\cF)\e(Z).
\]
Thus,
\[
	\e(G)\e(\cY) = \e(G)\e(\cF)\e(\cZ).
\]
Because $G$ is a special group, $\e(G)^{-1} \in K_0(\mathbf{Stack}_k)$, so we are done.
\end{proof}

We may now prove \autoref{TheoremMotivicMeasureCylinderNonQuotientWellDefined}.

\begin{proof}[Proof of \autoref{TheoremMotivicMeasureCylinderNonQuotientWellDefined}]
By definition there exists some $n_0 \in \N$ and some constructible subset $\cC_{n_0} \subset |\sL_{n_0}(\cX)|$ such that $\cC = (\theta_{n_0})^{-1}(\cC_{n_0})$. Because $\cX$ is smooth, infinitesimal lifting implies that the truncation maps $\theta_n: |\sL(\cX)| \to |\sL_n(\cX)|$ are all surjective, so
\[
	\theta_n(\cC) = \begin{cases} (\theta^n_{n_0})^{-1}(\cC_{n_0}), & n \geq n_0, \\ \theta^{n_0}_n(\cC_{n_0}), & n < n_0. \end{cases}
\]
Thus all $\theta_n(\cC)$ are constructible (immediately for $n \geq n_0$ and by Chevalley's theorem for Artin stacks \cite[Theorem 5.2]{HallRydh2017} for $n < n_0$).

The remainder of the theorem then follows from the fact that \autoref{LemmaFibersOfTruncationStackSpacesAndPiecewiseConstant} and \autoref{LemmaConstantFibersClassInGrothendieckRingStackBase} imply that for any $n \geq n_0$,
\[
	\e(\theta_n(\cC)) = (\theta^n_{n_0})^{-1}(\cC_{n_0}) = \e(\cC_{n_0})\bL^{(n-n_0)\dim\cX}.\qedhere
\]
\end{proof}

\section{Fibers of the map of arcs}
\label{sectionFibersOfTheMapsOfArcs}

Our goal in this section is to give a combinatorial characterization of the fibers of $\sL(\pi)\colon\sL(\cX)\to\sL(X)$, where $\cX$ is a fantastack and $\pi\colon\cX\to X$ is its good moduli space map, see \autoref{MainPropositionFibersOfTheMapOfArcs}. We accomplish this goal by first defining the tropicalization of arcs both for toric varieties and toric stacks.

\subsection{Tropicalizing arcs of toric stacks}
\label{subsec:trop-of-arcs}

Given a toric variety $X=\Spec(k[P])$, a $k$-algebra $R$, and an arc $\varphi \in \sL(\widetilde{X})(R)$, we denote by $\varphi^*(p)$ the image of $p$ under $P\to k[P] \to R\llbracket t \rrbracket$, where the latter map is the pullback corresponding to $\varphi$.

\begin{definition}
\label{def:tropicalization-of-arcs}
If $\sigma$ is a pointed rational cone on a finite rank lattice $N$ and $k'$ is a field extension of $k$, we define the \emph{tropicalization map}
\[
\trop: \sL(X_\sigma)(k') \to \Hom(\sigma^\vee\cap N^*, \N \cup \{\infty\})
\]
by $\trop(\varphi)(p):=\ord_t\varphi^*(p)$ where $\ord_t$ denotes the order of vanishing at $t$.

More generally, if $(\sigma,\nu\colon \widetilde{N}\to N)$ is a stacky fan with $\sigma$ a pointed cone and $\cX:=\cX_{\sigma,\nu}:=[X_\sigma/G_\nu]$ is the corresponding toric stack, then we define the \emph{tropicalization map} on isomorphism classes of arcs 
\[
\trop: \overline{\sL(\cX)}(k') \to \Hom(\sigma^\vee\cap \widetilde{N}^*, \N \cup \{\infty\})
\]
as follows. If $\psi\in\sL(\cX)(k')$, then fix a finite field extension $k''$ of $k'$ and a lift $\widetilde{\psi}\in\sL(X_\sigma)(k'')$ of $\psi$. We let $\trop(\psi):=\trop(\widetilde{\psi})$. We show in \autoref{l:TropDependsOnlyOnIsoClass} that this is well-defined.
\end{definition}

\begin{remark}
\label{rmk:non-full-dimensional}
Note that we have a natural inclusion
\[
\sigma\cap N=\Hom(\sigma^\vee\cap N^*, \N)\subset\Hom(\sigma^\vee\cap N^*, \N \cup \{\infty\}).
\]
\end{remark}

\begin{lemma}
\label{l:pullback-diag-torsors}
Let $\Omega$ be any field of characteristic $0$ and let $G=\bG_m^r\times\prod_{i=1}^N\mu_{n_i}$. Then every $G$-torsor over $\Spec(\Omega[[t]])$ is isomorphic to the pullback of a $G$-torsor over $\Spec(\Omega)$.
\end{lemma}
\begin{proof}
Let $q\colon\Spec(\Omega[[t]])\to\Spec(\Omega)$ denote the structure map. Since $G$ is an \'etale group scheme, $G$-torsors on any $\Omega$-scheme $Y$ are classified up to isomorphism by $H^1_{et}(Y,G)=H^1_{et}(Y,\bG_m)^{\oplus r}\oplus\bigoplus_{i=1}^N H^1_{et}(Y,\mu_{n_i})$. In particular, it suffices to show that the pullback map $q^*\colon H^1_{et}(\Spec(\Omega),G)\to H^1_{et}(\Spec(\Omega[[t]]),G)$ is an isomorphism when $G$ is either $\bG_m$ or $\mu_n$.

We first handle the case $G=\bG_m$. Since $H^1_{et}(Y,\bG_m)=\mathrm{Pic}(Y)$ and since both $\mathrm{Pic}(\Spec(\Omega))$ and $\mathrm{Pic}(\Spec(\Omega[[t]]))$ are trivial, we see that $q^*$ is an isomorphism.

We next handle the case $G=\mu_n$. From the Kummer sequence
\[
1\to\mu_n\to\bG_m\xrightarrow{\times n}\bG_m\to1,
\]
we see that if $Y$ is any $\Omega$-scheme with trivial Picard group, we have
\[
H^1_{et}(Y,\mu_n)=\cO_Y(Y)^*/(\cO_Y(Y)^*)^n,
\]
see e.g., \cite[p.~125]{MilneEtCoh}. So it remains to show $k^*/(k^*)^n\to k[[t]]^*/(k[[t]]^*)^n$ is an isomorphism. Since every element $f(t)\in k[[t]]^*$ can be written uniquely as $ag(t)$ with $a\in k^*$ and $g(t)\in k[[t]]^*$ with $g(t)-1\in tk[[t]]$, it is enough to prove that such $g(t)$ are in $(k[[t]]^*)^n$. This follows immediately from Hensel's Lemma: the polynomial $P(x)=x^n-g(t)\in k[[t]][x]$ has a root since $P(1)=0$ mod $t$ and $P'(1)\neq0$ mod $t$.
\end{proof}

\begin{lemma}
\label{l:TropDependsOnlyOnIsoClass}
With notation as in \autoref{def:tropicalization-of-arcs}, such a lift $\widetilde{\psi}$ exists and $\trop(\psi)$ is independent of both $k''$ and $\widetilde{\psi}$.
\end{lemma}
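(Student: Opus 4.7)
The plan has three short steps.

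First, for existence: the quotient $\rho\colon X_\sigma\to\cX$ is a $G_\nu$-torsor, and by Cartier's theorem $G_\nu=\Spec k[\widetilde{M}/\nu^*M]$ is smooth (since we are in characteristic zero). Pulling back $\psi$ along $\rho$ yields a $G_\nu$-torsor $P$ over $\Spec k'\llbracket t\rrbracket$. Because $G_\nu$ is smooth and $k'\llbracket t\rrbracket$ is Henselian with residue field $k'$, $P$ is the pullback of its closed-fiber torsor on $\Spec k'$, and this closed-fiber torsor is trivialized by some finite extension $k''/k'$. The resulting trivialization of $P|_{k''\llbracket t\rrbracket}$ gives the required lift $\widetilde{\psi}\in\sL(X_\sigma)(k'')$ of $\psi|_{k''}$.

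Second, I will show independence of the lift for a fixed field. Suppose $\widetilde{\psi}_1,\widetilde{\psi}_2\in\sL(X_\sigma)(k''')$ both project to $\psi|_{k'''}$. The torsor property of $\rho$ gives a unique $g\in G_\nu(k'''\llbracket t\rrbracket)$ with $\widetilde{\psi}_2 = g\cdot\widetilde{\psi}_1$. The restriction of the $\widetilde{T}$-coaction on $k[\sigma^\vee\cap\widetilde{M}]$ to $G_\nu$ sends $p\mapsto \bar{p}\otimes p$, where $\bar{p}$ denotes the image of $p$ in the character group $\widetilde{M}/\nu^*M$ of $G_\nu$. Hence for any $p\in\sigma^\vee\cap\widetilde{M}$,
\[
\widetilde{\psi}_2^*(p) \;=\; g^*(\bar{p})\cdot\widetilde{\psi}_1^*(p).
\]
Since $\bar{p}$ lies in the \emph{group} $\widetilde{M}/\nu^*M$, it is a unit in $k[\widetilde{M}/\nu^*M]$, so $g^*(\bar{p})\in k'''\llbracket t\rrbracket^\times$ and $\ord_t g^*(\bar{p}) = 0$. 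Therefore $\ord_t \widetilde{\psi}_2^*(p) = \ord_t\widetilde{\psi}_1^*(p)$, proving $\trop(\widetilde{\psi}_1)=\trop(\widetilde{\psi}_2)$.

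Third, to handle lifts $\widetilde{\psi}_i\in\sL(X_\sigma)(k''_i)$ living over different finite extensions, I would take any common finite extension $k'''$ of $k'$ containing both $k''_1$ and $k''_2$ and reduce to the previous step by base changing both lifts to $k'''$. The inclusion $k''_i\llbracket t\rrbracket\hookrightarrow k'''\llbracket t\rrbracket$ preserves $t$-adic valuation, so tropicalization is unaffected by this base change.

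The only non-trivial ingredient is the lifting of $G_\nu$-torsors from $\Spec k'$ to $\Spec k'\llbracket t\rrbracket$ in the first step, which relies on smoothness of $G_\nu$ together with Henselianness of $k'\llbracket t\rrbracket$; the second step is essentially a direct unwinding of the statement that characters of $G_\nu$ act by units.
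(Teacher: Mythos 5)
Your argument is correct, and its core computation is the same as the paper's: two lifts over a common field differ by some $g\in G_\nu(k'''\llbracket t\rrbracket)$, the coaction gives $\widetilde{\psi}_2^*(p)=g^*(\bar p)\,\widetilde{\psi}_1^*(p)$ with $g^*(\bar p)$ a unit, so the $t$-orders agree; and field extensions are handled by passing to a common finite extension, exactly as in the paper. The one place you genuinely diverge is the existence of a lift: you invoke Cartier's theorem (smoothness of $G_\nu$ in characteristic $0$) together with the Henselian property of $k'\llbracket t\rrbracket$ to conclude that the torsor is pulled back from its closed fiber, whereas the paper uses that $G_\nu$ is \emph{diagonalizable}, so that any $G_\nu$-torsor over $\Spec(k'\llbracket t\rrbracket)$ is already the pullback of a torsor over $\Spec(k')$. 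Both routes are valid here; the paper's is characteristic-free and self-contained for diagonalizable groups, while yours relies on the standard bijection $H^1(R,G)\cong H^1(\kappa,G)$ for smooth affine $G$ over a Henselian local ring (or, more simply, Hensel lifting of a $k''$-point of the smooth closed fibre of the torsor), which is fine since $\mathrm{char}\,k=0$. Two small remarks: your claim that $g$ is \emph{unique} is not needed and is not quite right when the arc meets points with nontrivial $G_\nu$-stabilizer (the set of such $g$ is a torsor under the stabilizer of $\widetilde{\psi}_1$), but only existence is used; and when trivializing over $k''$ one should note, as the paper does, the identification $k''\otimes_{k'}k'\llbracket t\rrbracket\cong k''\llbracket t\rrbracket$ for $k''/k'$ finite, so that the trivialized torsor really yields an arc in $\sL(X_\sigma)(k'')$.
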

\begin{proof}
For ease of notation, let $F:=\sigma^\vee\cap \widetilde{N}^*$ and $G:=G_\nu=\Spec(k[A])$, where $A$ is a finitely generated abelian group. Note that the $G$-action on $X_\sigma$ corresponds to a monoid map $\eta\colon F\to A$. The arc $\psi$ corresponds to a $G$-torsor $Q\to\Spec(k'\llbracket t\rrbracket)$ and $G$-equivariant map $Q\to X_\sigma$. By \autoref{l:pullback-diag-torsors}, $Q$ is isomorphic to the pullback of a $G$-torsor over $\Spec(k')$, which can be itself be trivialized after a finite field extension $k''$. Thus, after base change to $k''\otimes_{k'} k'\llbracket t\rrbracket\simeq k''\llbracket t\rrbracket$, we obtain a trivialization of $Q$ and hence a lift $\widetilde{\psi}$.

Next, it is clear that if $\widetilde{\psi}\in\sL(X_\sigma)(k'')$ is a lift of $\psi$ and $k'''$ is a finite field extension of $k''$, then $\trop(\widetilde{\psi})=\trop(\widetilde{\psi}\otimes_{k''}k''')$. So, it suffices to show that if $\widetilde{\psi}_1,\widetilde{\psi}_2\in\sL(X_\sigma)(k'')$ are both lifts of $\psi$, then $\trop(\widetilde{\psi}_1)=\trop(\widetilde{\psi}_2)$. In this case, there exists $g\in G(k''\llbracket t\rrbracket)$ such that $g\cdot\widetilde{\psi}_1=\widetilde{\psi}_2$. Letting $g^*(a)$ denote the pullback of $a\in A$ under the map $g^*\colon k''[A]\to k''\llbracket t\rrbracket^*$, we therefore have
\[
g^*(\eta(f))\psi_1^*(f)=\psi_2^*(f).
\]
Since $g^*(\eta(f))$ is a unit, the power series $\psi_1^*(f)$ and $\psi_2^*(f)$ have the same $t$-order of vanishing, i.e.~$\trop(\widetilde{\psi}_1)=\trop(\widetilde{\psi}_2)$.
\end{proof}

We record some basic properties of $\trop$ that will be useful later on.

\begin{definition}
\label{def:trop-inverse}
Let $(\sigma,\nu\colon \widetilde{N}\to N)$ be a stacky fan with $\sigma$ a pointed cone. For any $w \in \sigma \cap \widetilde{N} \subset \Hom(\sigma^\vee \cap \widetilde{N}^*, \N \cup \{\infty\})$, let
\[
\trop^{-1}(w)=\{\psi\in\overline{\sL(\cX_{\sigma,\nu})}(k')\mid \textrm{$k'$\ is\ a field\ extension\ of $k$\ and\ }\trop(\psi)=w\}
\subset|\sL(\cX_{\sigma,\nu})|
\]
where the arcs are taken up to equivalence.
\end{definition}

\begin{remark}
\label{fiberOfTropIsCylinder}
Let $\sigma$ be a pointed rational cone on a finite rank lattice $N$ and let $P=\sigma^\vee\cap N^*$ so that $X_\sigma=\Spec(k[P])$. For any $p\in P$, let $\chi^p\in k[P]$ be the corresponding monomial. If $p_1, \dots, p_s$ are generators for $P$, then for every $w \in \sigma \cap N$, we see
\[
	\trop^{-1}(w) = \bigcap_{i = 1}^s \ord_{\chi^{p_i}}^{-1}(\langle w, p_i \rangle).
\]
and hence, $\trop^{-1}(w) \subset \sL(X_\sigma)$ is a cylinder.
\end{remark}

\begin{lemma}
\label{l:trop-basic-properties}
Let $\sigma$ (resp.~$\widetilde{\sigma}$) be a pointed rational cone on a finite rank lattice $N$ (resp.~$\widetilde{N}$). If $\rho\colon X_{\widetilde{\sigma}}\to X_{\sigma}$ is a toric morphism and $\beta\colon\widetilde{\sigma}\cap \widetilde{N}\to \sigma\cap N$ is the induced map, then
\begin{enumerate}
\item\label{l:trop-basic-properties::image} for every field extension $k'$ of $k$ and every arc $\psi\in\sL(X_{\widetilde{\sigma}})(k')$, if $\trop(\psi)\in\widetilde\sigma^\vee\cap\widetilde{N}^*$, then
\[
\trop(\sL(\rho)(\psi))=\beta(\trop(\psi)).
\]
\item\label{l:trop-basic-properties::inverse-image} if for all $f\in\widetilde{\sigma}^\vee\cap\widetilde{N}^*$ there exists $f'\in\widetilde{\sigma}^\vee\cap\widetilde{N}^*$ such that $f+f'$ lies in the image of $\sigma^\vee\cap N^*$, then
\[
\sL(\rho)^{-1}(\trop^{-1}(w))=\bigcup_{\widetilde{w} \in \beta^{-1}(w)} \trop^{-1}(\widetilde{w}).
\]
\end{enumerate}
\end{lemma}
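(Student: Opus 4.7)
Both parts follow by unraveling the definition of $\trop$ together with the monomial description of pullback along the toric morphism $\rho$. Writing $\beta^\gp\colon\widetilde{N}\to N$ for the lattice homomorphism underlying $\rho$, its dual restricts to a monoid map $(\beta^\gp)^*\colon\sigma^\vee\cap N^*\to\widetilde{\sigma}^\vee\cap\widetilde{N}^*$, and for any $\psi\in\sL(X_{\widetilde{\sigma}})(k')$ and $p\in\sigma^\vee\cap N^*$,
\[
(\sL(\rho)(\psi))^*(p)=\psi^*((\beta^\gp)^*(p))\in k'\llbracket t\rrbracket.
\]
Everything else is extracted from this identity.

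For (1), I would apply $\ord_t$ to both sides of the above equation to obtain $\trop(\sL(\rho)(\psi))(p)=\trop(\psi)((\beta^\gp)^*(p))$. Under the identification of \autoref{rmk:non-full-dimensional}, the right-hand side equals $\langle\beta(\trop(\psi)),p\rangle$ provided $\trop(\psi)\in\widetilde{\sigma}\cap\widetilde{N}$, which is how I read the displayed hypothesis (it is needed so that $\beta(\trop(\psi))$ is defined). As $p\in\sigma^\vee\cap N^*$ was arbitrary, we conclude $\trop(\sL(\rho)(\psi))=\beta(\trop(\psi))$.

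For (2), the containment $\supseteq$ is immediate from (1) applied to any $\psi\in\trop^{-1}(\widetilde{w})$ with $\widetilde{w}\in\beta^{-1}(w)\subset\widetilde{\sigma}\cap\widetilde{N}$. For the reverse containment, suppose $\sL(\rho)(\psi)\in\trop^{-1}(w)$. The crux is to show that $\trop(\psi)$ takes only finite values; once that is done, (1) forces $\beta(\trop(\psi))=w$, so $\trop(\psi)\in\beta^{-1}(w)$ and $\psi\in\trop^{-1}(\trop(\psi))$. To verify finiteness, given any $f\in\widetilde{\sigma}^\vee\cap\widetilde{N}^*$, the hypothesis of (2) provides $f'\in\widetilde{\sigma}^\vee\cap\widetilde{N}^*$ with $f+f'=(\beta^\gp)^*(p)$ for some $p\in\sigma^\vee\cap N^*$, and the displayed identity above yields
\[
\trop(\psi)(f)+\trop(\psi)(f')=\ord_t\psi^*((\beta^\gp)^*(p))=\trop(\sL(\rho)(\psi))(p)=\langle w,p\rangle\in\N;
\]
since the sum is finite in $\N\cup\{\infty\}$, both summands must lie in $\N$.

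I do not foresee any serious obstacle: part (1) is pure bookkeeping on pullbacks of monomials, and part (2) combines that bookkeeping with the elementary observation that a sum in $\N\cup\{\infty\}$ is finite only when both summands are finite. The lifting hypothesis in (2) is precisely what guarantees this finiteness argument applies to every $f\in\widetilde{\sigma}^\vee\cap\widetilde{N}^*$, which is the only place the hypothesis enters.
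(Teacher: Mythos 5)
Your proof is correct and is essentially the same as the paper's: both hinge on the identity $(\sL(\rho)(\psi))^*(p)=\psi^*(\rho^*(p))$, apply $\ord_t$ to get part (1), and deduce part (2) from (1) by using the hypothesis to show finiteness of $\trop(\psi)(f)$ from finiteness of the sum $\trop(\psi)(f)+\trop(\psi)(f')=\trop(\sL(\rho)(\psi))(p)$. Your reading of the displayed hypothesis in (1) as $\trop(\psi)\in\widetilde{\sigma}\cap\widetilde{N}$ matches the paper's intent (its proof uses exactly that).
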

\begin{proof}
Let $\rho^*\colon\sigma^\vee\cap N^*\to\widetilde\sigma^\vee\cap\widetilde{N}^*$ denote the pullback map on monoids. First note that if $k'$ is a field extension of $k$, $\psi\in\sL(X_{\widetilde{\sigma}})(k')$, and $p\in\sigma^\vee\cap N^*$, then
\[
\trop(\sL(\rho)(\psi))(p)=\ord_t(\psi^*\rho^*(p))=(\trop(\psi))(\rho^*(p)).
\]
To prove (\ref{l:trop-basic-properties::image}), let $\trop(\psi)=\widetilde{w}\in\widetilde\sigma^\vee\cap\widetilde{N}^*$. Then by the above equalities, we see
\[
\trop(\sL(\rho)(\psi))(p)=\langle\widetilde{w},\rho^*(p)\rangle=\langle\beta(\widetilde{w}),p\rangle
\]
so $\trop(\sL(\rho)(\psi))=\beta(\widetilde{w})$.

Part (\ref{l:trop-basic-properties::inverse-image}) follows immediately from part (\ref{l:trop-basic-properties::image}) provided we can show that $\trop(\sL(\rho)(\psi))\in\sigma\cap N$ implies $\trop(\psi)\in\widetilde\sigma\cap\widetilde{N}$. Let $f\in\widetilde{\sigma}^\vee\cap\widetilde{N}^*$. By hypothesis, there exists $f'\in\widetilde{\sigma}^\vee\cap\widetilde{N}^*$ such that $f+f'=\rho^*(p)$ for some $p\sigma^\vee\cap N^*$. Then
\[
(\trop(\psi))(f)+(\trop(\psi))(f')=(\trop(\psi))(\rho^*(p))=\trop(\sL(\rho)(\psi))(p)\neq\infty.
\]
So $(\trop(\psi))(f)\neq\infty$ for all $f$, and hence $\trop(\psi)\in\widetilde\sigma\cap\widetilde{N}$.
\end{proof}

\begin{corollary}
\label{lemmaTropOfStackArcMapsToTropOfGoodModuliSpaceArc}
Let $\cX=\cF_{\sigma,\nu}$ be a fantastack and suppose the good moduli space map $\pi\colon\cX\to X=X_\sigma$ is an isomorphism over the torus $T\subset X$. Let $k'$ be a field extension of $k$ and $\varphi\in \sL(X)(k')$ with $\trop(\varphi)=w\in\sigma\cap N$. If $\psi \in \sL(\cX)(k')$ and $\sL(\pi)(\psi)=\varphi$, then $\trop(\psi) \in \beta^{-1}(w)$.
\end{corollary}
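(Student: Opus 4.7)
The plan is to reduce the statement to \autoref{l:trop-basic-properties}(\ref{l:trop-basic-properties::inverse-image}) applied to the toric cover $\widetilde\pi = \pi\circ\rho\colon\widetilde{X}=X_{\widetilde\sigma}\to X$, where $\rho\colon\widetilde{X}\to\cX$ is the quotient presentation. I am implicitly reading the statement under the natural assumption that $\sL(\pi)(\psi)$ agrees with $\varphi$, since otherwise the conclusion would not make sense.

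First I would lift $\psi$ to $\widetilde X$. By the argument in the proof of \autoref{l:TropDependsOnlyOnIsoClass} (trivializing the associated $G_\nu$-torsor), there exist a finite field extension $k''/k'$ and a lift $\widetilde\psi\in\sL(\widetilde X)(k'')$ of $\psi$ along $\rho$. By the definition of tropicalization for stacky arcs, $\trop(\psi)=\trop(\widetilde\psi)$. Moreover, $\sL(\widetilde\pi)(\widetilde\psi)$ is forced to agree with the base change $\varphi_{k''}$, and so its tropicalization is still $w\in\sigma\cap N$.

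Next I would verify that the hypothesis of \autoref{l:trop-basic-properties}(\ref{l:trop-basic-properties::inverse-image}) holds for $\widetilde\pi$: every $f\in\widetilde\sigma^\vee\cap\widetilde N^*$ must admit an $f'\in\widetilde\sigma^\vee\cap\widetilde N^*$ with $f+f'$ in the image of $\sigma^\vee\cap N^*$. This is exactly the content of \autoref{elementofFaddstogetelementofP}, which applies because $\pi$ is an isomorphism over the torus. Invoking \autoref{l:trop-basic-properties}(\ref{l:trop-basic-properties::inverse-image}) then gives
\[
\sL(\widetilde\pi)^{-1}(\trop^{-1}(w)) = \bigcup_{\widetilde w\in\beta^{-1}(w)}\trop^{-1}(\widetilde w).
\]
Since $\widetilde\psi\in\sL(\widetilde\pi)^{-1}(\varphi_{k''})$ lies in the left-hand side, we conclude $\trop(\widetilde\psi)\in\beta^{-1}(w)$, and therefore $\trop(\psi)=\trop(\widetilde\psi)\in\beta^{-1}(w)$.

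The only real subtlety is in the lifting step: confirming that the trivialization of the $G_\nu$-torsor underlying $\psi$ yields a $\widetilde\psi$ whose pushforward under $\widetilde\pi$ is $\varphi_{k''}$ on the nose rather than merely up to the $G_\nu$-action. This is implicit in the construction used for \autoref{l:TropDependsOnlyOnIsoClass}, and since the target quantity $\trop$ is invariant under the $G_\nu$-action on $\sL(\widetilde X)$, the ambiguity does not affect the conclusion.
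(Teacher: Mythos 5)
Your proposal is correct and follows essentially the same route as the paper: lift $\psi$ to $\widetilde{\psi}\in\sL(\widetilde{X})(k'')$ after a finite extension, note $\trop(\psi)=\trop(\widetilde{\psi})$, and apply \autoref{elementofFaddstogetelementofP} to verify the hypothesis of \autoref{l:trop-basic-properties}(\ref{l:trop-basic-properties::inverse-image}), forcing $\trop(\widetilde{\psi})\in\beta^{-1}(w)$. Your added remark that the lift pushes forward to $\varphi_{k''}$ on the nose (and that $\trop$ is in any case $G_\nu$-invariant) is a fine clarification of a point the paper leaves implicit.
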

\begin{proof}
We keep the notation listed in \autoref{notationforcanonicalfantastackforfulldimensionalcone} and let $\cX=[\widetilde{X}/G_\nu]$. We know there exists a finite field extension $k''$ of $k'$ and a lift $\widetilde{\psi}\in\sL(\widetilde{X})(k'')$ of $\psi$. By construction, $\trop(\psi)=\trop(\widetilde{\psi})$. From \autoref{elementofFaddstogetelementofP}, we know the hypotheses of \autoref{l:trop-basic-properties}(\ref{l:trop-basic-properties::inverse-image}) are satisfied, so $\trop(\widetilde{\psi})\in\beta^{-1}(w)$.
\end{proof}

\subsection{Lifting arcs to a fantastack}

We can now state the main result of this section, which shows that for the good moduli space map $\pi\colon\cX\to X$ of a fantastack, isomorphism classes of arcs in the fibers of $\sL(\pi)$ are completely determined by their tropicalizations.


\begin{theorem}
\label{MainPropositionFibersOfTheMapOfArcs}
Let $\cX=\cF_{\sigma,\nu}$ be a fantastack and assume the good moduli space map $\pi\colon\cX\to X:=X_\sigma$ is an isomorphism over the torus $T\subset X$. With the notation listed in \autoref{notationforcanonicalfantastackforfulldimensionalcone}, let $k'$ be a field extension of $k$ and $\varphi \in \sL(X)(k')$ with $\trop(\varphi) = w\in\sigma\cap N$. Then
\[
	\trop: \overline{\sL(\cX)}(k') \to \Hom(F, \N \cup \{\infty\})
\]
induces, by restriction, a bijection
\[
	\overline{(\sL(\pi)^{-1}(\varphi))}(k') \to \beta^{-1}(w).
\]
In particular, $\overline{(\sL(\pi)^{-1}(\varphi))}(k')$ is a finite set.
\end{theorem}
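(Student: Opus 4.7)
The strategy is to work over an algebraic closure $\overline{k'}$ using a direct combinatorial construction in terms of the presentation $\cX = [\widetilde{X}/G]$ from \autoref{notationforcanonicalfantastackforfulldimensionalcone}, then descend to $k'$. Well-definedness (that $\trop$ lands in $\beta^{-1}(w)$) is \autoref{lemmaTropOfStackArcMapsToTropOfGoodModuliSpaceArc}, and finiteness of $\beta^{-1}(w)$ (hence of the fiber) is \autoref{fiberabovewisfiniteset}; it remains to show the induced map is bijective.

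Set $\cY := \sL(\pi)^{-1}(\varphi)$. A first key observation is that $\cY$ has trivial automorphism groups at all points: automorphisms of any $\psi \in \cY(k'')$ correspond to elements of $G$ fixing a lift $\widetilde{\psi} \in \sL(\widetilde{X})(k''')$ for a sufficient extension $k'''/k''$, and because $\trop(\psi) \in \beta^{-1}(w) \subset \sigma \cap \widetilde{N}$ has all finite coordinates (since $w \in \sigma \cap N$), each $\widetilde{\psi}^*(f_j)$ is nonzero. Since $G$ acts via the $A$-grading on $k[F]$ and the $[f_j]$ generate $A$, the stabilizer must be trivial. Consequently $\overline{\cY}(k') = \cY(k')$, and any isomorphism between two $\overline{k'}$-points of $\cY$ is unique.

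Over $\overline{k'}$, I construct a bijection $\cY(\overline{k'}) \to \beta^{-1}(w)$ by tropicalization. For surjectivity, given $\widetilde{w} \in \beta^{-1}(w)$, the group homomorphism $p \mapsto \varphi^*(p)/t^{w(p)}$ from $P^{\gp}$ to $\overline{k'}\llbracket t\rrbracket^\times$ extends to a homomorphism $F^{\gp} \to \overline{k'}\llbracket t\rrbracket^\times$ because the obstruction in $\ext^1(A, \overline{k'}\llbracket t\rrbracket^\times)$ vanishes: $\overline{k'}\llbracket t\rrbracket^\times \cong \overline{k'}^\times \times (1 + t\overline{k'}\llbracket t\rrbracket)$ is divisible in characteristic zero over an algebraically closed field. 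Writing $u_j \in \overline{k'}\llbracket t\rrbracket^\times$ for the image of $f_j$, the assignment $\widetilde{\psi}^*(f_j) := u_j t^{\widetilde{w}(f_j)}$ defines a lift $\widetilde{\psi} \in \sL(\widetilde{X})(\overline{k'})$ of $\varphi_{\overline{k'}}$ with $\trop(\widetilde{\psi}) = \widetilde{w}$, yielding the required $\psi \in \cY(\overline{k'})$. For injectivity, two lifts $\widetilde{\psi}_1, \widetilde{\psi}_2$ with the same $\widetilde{w}$ admit expansions $\widetilde{\psi}_i^*(f_j) = u_{i,j} t^{\widetilde{w}(f_j)}$; the condition of lifting $\varphi_{\overline{k'}}$ forces the ratios $u_{2,j}/u_{1,j}$ to factor through $A = F^{\gp}/P^{\gp}$, producing $g \in G(\overline{k'}\llbracket t\rrbracket)$ with $g \cdot \widetilde{\psi}_1 = \widetilde{\psi}_2$.

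Finally, descent from $\overline{k'}$ to $k'$ is straightforward: $\mathrm{Gal}(\overline{k'}/k')$ acts on both $\cY(\overline{k'})$ and $\beta^{-1}(w)$, but the action on $\beta^{-1}(w) \subset \widetilde{N}$ is trivial while $\trop$ is Galois-equivariant, so Galois acts trivially on $\cY(\overline{k'})$; combined with triviality of automorphisms (which makes any cocycle automatic), every Galois-invariant $\overline{k'}$-point descends uniquely to a $k'$-point, producing the desired bijection $\cY(k') \cong \beta^{-1}(w)$. The main obstacle is the surjectivity construction over $\overline{k'}$—specifically the extension of the monoid homomorphism $\varphi^*$ with prescribed $t$-orders—which relies crucially on divisibility of $\overline{k'}\llbracket t\rrbracket^\times$ and on the identification $A = F^{\gp}/P^{\gp}$ provided by \autoref{notationforcanonicalfantastackforfulldimensionalcone}.
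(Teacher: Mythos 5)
Your well-definedness, injectivity, and surjectivity computations over the algebraically closed field are correct and essentially parallel the paper's: the triviality of stabilizers and the "ratio of units factors through $A=\widetilde{M}/M$" argument is the paper's \autoref{prop:inj-lifts-trop}, and extending the character $M\to\overline{k'}\llbracket t\rrbracket^\times$ using divisibility of $\overline{k'}\llbracket t\rrbracket^\times$ is a legitimate shortcut for surjectivity \emph{when the base field is algebraically closed}, since there every $G$-torsor over $\Spec(\overline{k'}\llbracket t\rrbracket)$ is trivial.

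The gap is the final descent step, and it is not a routine one: the statement is for an arbitrary field extension $k'$, and the non-closed case is exactly where the content lies, because there the points of $\sL(\pi)^{-1}(\varphi)(k')$ are generally given by \emph{nontrivial} $G$-torsors and do not lift to $\sL(\widetilde{X})(k')$. Your appeal to "Galois-invariance plus no automorphisms implies unique descent" cannot simply be quoted here, because $\Spec(\overline{k'}\llbracket t\rrbracket)\to\Spec(k'\llbracket t\rrbracket)$ is not a (pro-)Galois cover in the classical sense: $\overline{k'}\llbracket t\rrbracket$ is strictly larger than $\overline{k'}\otimes_{k'}k'\llbracket t\rrbracket$ and than $\bigcup_{k''/k'\ \mathrm{finite}}k''\llbracket t\rrbracket$, so a $\mathrm{Gal}(\overline{k'}/k')$-invariant isomorphism class of objects of $\cX(\overline{k'}\llbracket t\rrbracket)$ does not formally yield an object of $\cX(k'\llbracket t\rrbracket)$. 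To repair this you would have to (i) show your surjectivity witness is already defined over $k''\llbracket t\rrbracket$ for some finite extension $k''/k'$ (this is true, but only because the units $g_i$ lie in $k'\llbracket t\rrbracket^\times$ and in characteristic $0$ only roots of their constant terms require enlarging the field --- a point you never make), (ii) verify triviality of automorphisms over the relevant $k'\llbracket t\rrbracket$-algebras, not just over fields, so that the canonical isomorphisms form a descent datum, and (iii) invoke the fppf-stack property of $\cX$ along the faithfully flat cover $\Spec(k''\llbracket t\rrbracket)\to\Spec(k'\llbracket t\rrbracket)$ for effectivity. At that point you have essentially reconstructed the paper's \autoref{prop:surj-lifts-trop}, which avoids the closure altogether by writing down, for each $\widetilde{w}\in\beta^{-1}(w)$, an explicit possibly nontrivial torsor $\Spec\bigl(k'\llbracket t\rrbracket[x_1,\dots,x_d,x_{d+1}^{\pm1},\dots,x_r^{\pm1}]/(x_1^{m_1}-g_1,\dots,x_d^{m_d}-g_d)\bigr)$ together with a $G$-equivariant map to $\widetilde{X}$; this is the cleaner route, and as written your proof does not yet establish the theorem for non-closed $k'$.
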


For the rest of this section, we fix the notation as in the statement of \autoref{MainPropositionFibersOfTheMapOfArcs}. By \autoref{lemmaTropOfStackArcMapsToTropOfGoodModuliSpaceArc}, we know any $\psi \in \sL(\cX)(k')$ with $(\sL(\pi))(\psi) = \varphi$ must satisfy $\trop(\psi) \in \beta^{-1}(w)$. We therefore have an induced map $\overline{(\sL(\pi)^{-1}(\varphi))}(k') \to \beta^{-1}(w)$. We show injectivity and surjectivity in \autoref{prop:inj-lifts-trop} and \autoref{prop:surj-lifts-trop}, respectively. Note that finiteness of $\overline{(\sL(\pi)^{-1}(\varphi))}(k')$ then follows from \autoref{fiberabovewisfiniteset}.

\begin{proposition}
\label{prop:inj-lifts-trop}
The restriction of the map
\[
	\trop: \overline{\sL(\cX)}(k') \to \Hom(F, \N \cup \{\infty\})
\]
to $\overline{(\sL(\pi)^{-1}(\varphi))}(k')$ is injective.
\end{proposition}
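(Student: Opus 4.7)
Let $\psi_1, \psi_2\in \sL(\cX)(k')=\cX(k'\llbracket t\rrbracket)$ be arcs whose classes in $\overline{(\sL(\pi)^{-1}(\varphi))}(k')$ both satisfy $\trop(\psi_1)=\trop(\psi_2)=\widetilde w\in\beta^{-1}(w)$; the goal is to produce an isomorphism $\psi_1\cong\psi_2$ in $\cX(k'\llbracket t\rrbracket)$. Following the strategy of the proof of \autoref{l:TropDependsOnlyOnIsoClass}, first pass to a finite field extension $k''/k'$ over which both $\psi_i$ admit lifts $\widetilde\psi_i\in\sL(\widetilde X)(k'')$ along $\widetilde X\to\cX$. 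Since $\pi\circ\psi_i=\varphi$, each $\widetilde\psi_i$ is a lift of $\varphi\otimes_{k'}k''$ to $\widetilde X$, so $\widetilde\psi_1^*(p)=\widetilde\psi_2^*(p)=\varphi^*(p)$ in $k''\llbracket t\rrbracket$ for all $p\in P\subset F$.

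The main construction is to exhibit a $g\in G(k''\llbracket t\rrbracket)$ with $g\cdot\widetilde\psi_1=\widetilde\psi_2$, which yields an isomorphism $\psi_1\otimes_{k'}k''\xrightarrow{\sim}\psi_2\otimes_{k'}k''$ of $k''\llbracket t\rrbracket$-points of $\cX$. Since $\widetilde w\in\widetilde\sigma\cap\widetilde N$ takes only finite values, $\widetilde\psi_1^*(f)$ and $\widetilde\psi_2^*(f)$ are nonzero power series with the same $t$-adic order for every $f\in F$, so
\[
\chi\colon F\longrightarrow (k''\llbracket t\rrbracket)^\times,\qquad \chi(f):=\widetilde\psi_2^*(f)/\widetilde\psi_1^*(f),
\]
is a well-defined monoid homomorphism. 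By the previous paragraph $\chi|_P\equiv 1$, so $\chi$ extends uniquely to a group homomorphism $F^\gp\to (k''\llbracket t\rrbracket)^\times$ and factors through the quotient $A=F^\gp/P^\gp=\widetilde M/M$, producing the desired $g\in\Hom(A,(k''\llbracket t\rrbracket)^\times)=G(k''\llbracket t\rrbracket)$; unwinding the $G$-action $\eta\colon F\to A$ verifies that $g\cdot\widetilde\psi_1=\widetilde\psi_2$.

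The main obstacle I anticipate is to descend this isomorphism from $k''\llbracket t\rrbracket$ to $k'\llbracket t\rrbracket$. For this I would show that the automorphism group scheme $\mathrm{Aut}_{\cX}(\psi_1)$ over $\Spec k'\llbracket t\rrbracket$ is trivial: after base change to $k''\llbracket t\rrbracket$, any automorphism corresponds to an $h\in G(k''\llbracket t\rrbracket)$ satisfying $h\cdot\widetilde\psi_1=\widetilde\psi_1$, i.e.~$h^*(\eta(f))\widetilde\psi_1^*(f)=\widetilde\psi_1^*(f)$ for all $f\in F$; since each $\widetilde\psi_1^*(f)$ is a nonzero element of the domain $k''\llbracket t\rrbracket$, this forces $h^*(\eta(f))=1$, and because $\eta(F)$ generates $A$ as a group, $h=1$. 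Consequently $\mathrm{Isom}_{\cX}(\psi_1,\psi_2)$ is a pseudo-torsor under a trivial group over $\Spec k'\llbracket t\rrbracket$, and the finite étale cover $\Spec k''\llbracket t\rrbracket\to\Spec k'\llbracket t\rrbracket$ witnesses that it is nonempty; such a pseudo-torsor is necessarily representable by $\Spec k'\llbracket t\rrbracket$ itself, and its canonical section provides the required isomorphism $\psi_1\cong\psi_2$ in $\cX(k'\llbracket t\rrbracket)$.
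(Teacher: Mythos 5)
Your construction is essentially the paper's proof: both arguments pass to a finite extension $k''$ of $k'$ trivializing the torsors, note that for each $f\in F$ the pullbacks $\widetilde{\psi}_1^*(f)$ and $\widetilde{\psi}_2^*(f)$ are nonzero of the same $t$-order so their ratios define a monoid homomorphism $F\to k''\llbracket t\rrbracket^\times$, use $\sL(\pi)(\psi_1)=\sL(\pi)(\psi_2)=\varphi$ to see this homomorphism is $1$ on $P$, and hence obtain $g\in\Hom(A,k''\llbracket t\rrbracket^\times)=G(k''\llbracket t\rrbracket)$ carrying $\widetilde{\psi}_1$ to $\widetilde{\psi}_2$. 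Up to that point your argument is correct and is the same as the paper's.

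The flaw is in the descent step: the assertion that the automorphism group scheme $\mathrm{Aut}_{\cX}(\psi_1)$ over $\Spec k'\llbracket t\rrbracket$ is trivial is false. Its fibre over the closed point $t=0$ is the automorphism group of the $k'$-point $\psi_1|_{t=0}$ of $\cX$, i.e.\ (after trivializing the torsor) the stabilizer of $\widetilde{\psi}_1(0)$, which is a nontrivial diagonalizable group whenever the arc meets the stacky locus --- exactly the interesting case. What your cancellation argument actually proves is only that $\mathrm{Aut}_{\cX}(\psi_1)$ has no nontrivial sections over $k''\llbracket t\rrbracket$, because the elements $\widetilde{\psi}_1^*(f)$ remain nonzerodivisors there; this does not hold over arbitrary $k'\llbracket t\rrbracket$-algebras (it already fails over $k'\llbracket t\rrbracket/(t)$), so the group scheme is not trivial and the ``pseudo-torsor under a trivial group'' conclusion as stated does not apply. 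Fortunately the repair is immediate and needs no new idea: for descent along $\Spec k''\llbracket t\rrbracket\to\Spec k'\llbracket t\rrbracket$ you only need your section of $\mathrm{Isom}_{\cX}(\psi_1,\psi_2)$ to have equal pullbacks to the double overlap, and $k''\llbracket t\rrbracket\otimes_{k'\llbracket t\rrbracket}k''\llbracket t\rrbracket\cong(k''\otimes_{k'}k'')\llbracket t\rrbracket$ is a finite product of rings $L\llbracket t\rrbracket$ with $L/k'$ finite (characteristic $0$), over which each $\widetilde{\psi}_1^*(f)$ is a unit times $t^{\langle\widetilde{w},f\rangle}$ and hence a nonzerodivisor, so the same cancellation shows the two pullbacks differ by the identity. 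Equivalently: prove that the compatible torsor isomorphism is \emph{unique} over every such ring and invoke descent --- which is precisely how the paper phrases this step, rather than claiming triviality of the automorphism group scheme.
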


\begin{proof}
Let $\psi_1, \psi_2 \in \sL(\cX)(k')$ correspond to $G$-torsors $Q_1, Q_2 \to \Spec(k'\llbracket t \rrbracket)$ and $G$-equivariant maps $\gamma_1: Q_1 \to \widetilde{X}$ and $\gamma_2: Q_2 \to \widetilde{X}$, respectively, and assume that $\sL(\pi)(\psi_1) = \sL(\pi)(\psi_2) = \varphi$ and $\trop(\psi_1) = \trop(\psi_2)$. We need to prove that $\psi_1$ and $\psi_2$ are isomorphic, i.e., that there exists an isomorphism of $G$-torsors $\alpha: Q_1 \to Q_2$ such that $\gamma_1 = \gamma_2 \circ \alpha$. In fact, we prove the stronger statement that there exists a \emph{unique} such $\alpha$. 

To prove this stronger statement, by descent, it is enough to show the existence of a unique such $\alpha$ \'etale locally on $k'\llbracket t \rrbracket$. By \autoref{l:pullback-diag-torsors}, the $Q_i$ are isomorphic to pullbacks of torsors over $k'$, which can themselves be trivialized after base change to a finite field extension $k''$ of $k'$. Since $k'' \otimes_{k'} k'\llbracket t \rrbracket \cong k'' \llbracket t \rrbracket$, after replacing $k'$ by $k''$, we may therefore assume that the $Q_i$ are trivial $G$-torsors. Since $\trop(\psi_i)$ depends only on the isomorphism class of $\psi_i$, we may further assume $Q_1=Q_2=G\otimes_{k}k'\llbracket t \rrbracket$.

Next, the identity section of the $G$-torsor $Q_i$ then yields a lift $\widetilde{\psi}_i\in\sL(\widetilde{X})(k')$ of $\psi_i$. Then the map $\gamma_i\colon \Spec(k' \llbracket t \rrbracket [A])=Q_i\to\widetilde{X}=\Spec(k[F])$ satisfies
\[
\gamma_i^*(f)=\widetilde{\psi}_i^*(f) u^{\overline{f}} \in k' \llbracket t \rrbracket [A],
\]
where $u^{\overline{f}} \in k[A]$ is the monomial indexed by the image $\overline{f}$ of $f$ in $A$. Since $\trop(\widetilde{\psi}_i)=\trop(\psi_i)$, we see $\trop(\widetilde{\psi}_1)=\trop(\widetilde{\psi}_2)\in \widetilde{\sigma} \cap \widetilde{N}$. Thus, for all $f \in F$, the power series $\widetilde{\psi}_1^*(f)$ and $\widetilde{\psi}_2^*(f)$ are nonzero and have the same $t$-order of vanishing. It follows that there is a unique unit $g^{(f)} \in k'\llbracket t \rrbracket$ such that
\[
	\psi_1^*(f) = g^{(f)} \psi_2^*(f).
\]
Since $\sL(\pi)(\widetilde\psi_1) = \sL(\pi)(\widetilde\psi_2)$, we see $\widetilde\psi_1^*(p) = \widetilde\psi_2^*(p)$, and so $g^{(p)} = 1$ for all $p \in P$.

Thus the semigroup homomorphism $F \to k'\llbracket t \rrbracket^\times: f \mapsto g^{(f)}$ induces a group homomorphism $A \to k' \llbracket t \rrbracket^\times$, which corresponds to an element $g \in G(k'\llbracket t \rrbracket)$ and hence an automorphism $\alpha$ of the $G$-torsor $G\otimes_k k' \llbracket t \rrbracket$. By construction, $\widetilde{\psi_1} = \widetilde{\psi_2} \circ \alpha$, and so $\gamma_1 = \gamma_2 \circ \alpha$. Moreover, the uniqueness of $\alpha$ follows from the uniqueness of each $g^{(f)}$.
\end{proof}

We now complete the proof of \autoref{MainPropositionFibersOfTheMapOfArcs} by showing surjectivity of the map $\overline{(\sL(\pi)^{-1}(\varphi))}(k') \to \beta^{-1}(w)$.

\begin{proposition}
\label{prop:surj-lifts-trop}
The image of $\overline{(\sL(\pi)^{-1}(\varphi))}(k')$ under the map
\[
	\trop: \overline{\sL(\cX)}(k') \to \Hom(F, \N \cup \{\infty\})
\]
is equal to $\beta^{-1}(w)$.
\end{proposition}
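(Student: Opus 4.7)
My plan is to construct the desired $\psi\in\sL(\cX)(k')$ by first building a lift over a finite field extension $k''/k'$ and then Galois-descending. Given $\widetilde w\in\beta^{-1}(w)$, the condition $\trop(\varphi)=w$ allows us to write $\varphi^*(p) = t^{\langle w,p\rangle}\cdot v(p)$ for each $p\in P$, where $v(p)\in k'\llbracket t\rrbracket^\times$; this defines a group homomorphism $v\colon M\to k'\llbracket t\rrbracket^\times$. The goal will be to extend $v$ to $\tilde v\colon\widetilde M\to k''\llbracket t\rrbracket^\times$, after which I will set $\widetilde\psi^*(f):=\tilde v(f)\cdot t^{\langle\widetilde w,f\rangle}$ for each $f\in F$. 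This is a well-defined monoid homomorphism because $\widetilde w\in\widetilde\sigma\cap\widetilde N$ forces $\langle\widetilde w,f\rangle\geq 0$, so $\widetilde\psi$ defines an arc in $\sL(\widetilde X)(k'')$; one checks directly that it lifts $\varphi\otimes_{k'}k''$ and satisfies $\trop(\widetilde\psi)=\widetilde w$.

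To extend $v$, I will analyze the obstruction in $\ext^1_\Z(A,k''\llbracket t\rrbracket^\times)$ arising from the short exact sequence $0\to M\to\widetilde M\to A\to 0$, where $A=\widetilde M/M$. Writing $k''\llbracket t\rrbracket^\times \cong k''^\times\times(1+tk''\llbracket t\rrbracket)$, the second factor is uniquely divisible in characteristic zero and hence injective as an abelian group, so it contributes no obstruction. Since the free part of $A$ also contributes nothing (as $\ext^1_\Z(\Z,-)=0$), the obstruction reduces to a class in $\ext^1_\Z(A_{\tor},k''^\times)$. Because $A_{\tor}$ is finite with some exponent $n$, adjoining $n$-th roots of the finitely many elements of $k'^\times$ obstructing the extension produces a finite extension $k''/k'$ over which $\tilde v$ exists.

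For the descent, take $k''/k'$ to also be Galois with group $\Gamma$, and set $\psi'':=\rho\circ\widetilde\psi\in\sL(\cX)(k'')$, where $\rho\colon\widetilde X\to\cX$ is the quotient map. For each $\gamma\in\Gamma$, the Galois conjugate $\gamma^*\widetilde\psi$ is another lift of $\varphi\otimes_{k'}k''$ with the same tropicalization $\widetilde w$ (since $\gamma$ fixes $t$ and hence preserves the $t$-adic valuation). By the \emph{uniqueness} statement in the proof of \autoref{prop:inj-lifts-trop}, there is a unique isomorphism $\alpha_\gamma\colon\gamma^*\psi''\xrightarrow{\sim}\psi''$ in $\cX(k''\llbracket t\rrbracket)$, and this uniqueness automatically forces the cocycle condition $\alpha_{\gamma_1\gamma_2}=\alpha_{\gamma_1}\circ\gamma_1^*(\alpha_{\gamma_2})$. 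The resulting descent datum produces the sought $\psi\in\sL(\cX)(k')$, which by construction satisfies $\sL(\pi)(\psi)=\varphi$ and $\trop(\psi)=\widetilde w$.

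The main technical hurdle will be the extension of $v$: one must verify that the class of $v$ in $\ext^1_\Z(A_{\tor},k''^\times)$ really does vanish after a \emph{finite} extension, which boils down to the finiteness of $A_{\tor}$ and its exponent. Once the lift $\widetilde\psi$ is in hand, the descent step then proceeds cleanly, since the strong uniqueness of intertwiners in the proof of \autoref{prop:inj-lifts-trop} removes any need to verify cocycle compatibilities by hand.
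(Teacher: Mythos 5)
Your proof is correct, but it reaches the lift by a genuinely different mechanism than the paper's. The paper never extends the field: after putting $M \subset \widetilde{M}$ in Smith normal form (a basis $f_1,\dots,f_r$ of $\widetilde{M}$ with $m_1f_1,\dots,m_df_d$ a basis of $M$), it writes down the generally nontrivial $G$-torsor $\Spec(R)$ with $R = k'\llbracket t\rrbracket[x_1,\dots,x_d,x_{d+1}^{\pm1},\dots,x_r^{\pm1}]/(x_1^{m_1}-g_1,\dots,x_d^{m_d}-g_d)$, together with the $G$-equivariant map $f \mapsto t^{\langle \widetilde{w},f\rangle}\prod_i x_i^{c_i}$ to $\widetilde{X}$; this is a $k'$-point of $\sL(\cX)$ on the nose, and the only remaining check is $\sL(\pi)(\psi)=\varphi$, done at the generic point using separatedness of $X$. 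You instead trivialize that torsor implicitly: your $\ext^1$ analysis (unique divisibility of $1+tk''\llbracket t\rrbracket$ in characteristic $0$, obstruction living in $\ext^1(A_{\tor},k''^\times)$) amounts to the same root extraction, and the cleanest way to make precise your phrase about ``the finitely many elements obstructing the extension'' is exactly that normal form: the obstruction is the class of the constant terms of $v(m_if_i)$ modulo $m_i$-th powers, killed by adjoining those roots. You then recover a $k'$-point by Galois descent along the \'etale cover $\Spec(k''\llbracket t\rrbracket)\to\Spec(k'\llbracket t\rrbracket)$, with the cocycle condition forced by uniqueness of the intertwining isomorphism; that stronger uniqueness is indeed established in the paper's proof of \autoref{prop:inj-lifts-trop}, so the rigidity you invoke is available. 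The trade-off: the paper's route is explicit and self-contained, needing neither effectivity of descent for $\cX$ nor the uniqueness refinement, while your route cleanly separates ``lift after a finite extension'' from ``descend by rigidity'' and would adapt to situations where the torsor cannot be written down explicitly, at the cost of those extra inputs.
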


\begin{proof}
Recall that in \autoref{lemmaTropOfStackArcMapsToTropOfGoodModuliSpaceArc}, we proved that the image of $\overline{(\sL(\pi)^{-1}(\varphi))}(k')$ under $\trop$ is contained in $\beta^{-1}(w)$. 

Let $\widetilde{w} \in \beta^{-1}(w)$. We will construct an arc $\psi \in \sL(\cX)(k')$ satisfying $\trop(\psi) = \widetilde{w}$ and $\sL(\pi)(\psi) = \varphi$. Let $\eta: \Spec(k'\llparenthesis t \rrparenthesis ) \to X$ be the generic point of $\varphi$, i.e., $\eta$ is the composition $\Spec(k'\llparenthesis t \rrparenthesis) \to \Spec(k'\llbracket t \rrbracket) \xrightarrow{\varphi} X$. Since $w \in \sigma \cap N$, we see that $\eta$ factors through $T \hookrightarrow X$. Thus $\eta$ is given by a group homomorphism $M = P^\gp \to k' \llparenthesis t \rrparenthesis^\times$. 

Given our inclusion $M\hookrightarrow\widetilde{M}$, we can choose a $\Z$-basis $f_1, \dots, f_r$ for $\widetilde{M} = F^\gp$ and $m_1, \dots, m_d \in \Z_{>0}$ such that $m_1 f_1, \dots, m_d f_d$ is a $\Z$-basis for $M$. Since $\trop(\varphi) = w$, the image of $m_i f_i$ under $M \to k' \llparenthesis t \rrparenthesis^\times$ has $t$-order of vanishing $\langle w, m_i f_i \rangle$, so can be written as $t^{\langle w, m_i f_i \rangle} g_i$ with $g_i \in k'\llbracket t \rrbracket^\times$.

Now set $R$ to be the $k'\llbracket t \rrbracket$-algebra
\[
	R = k'\llbracket t \rrbracket [x_1, \dots, x_d, x_{d+1}^{\pm 1}, \dots, x_r^{\pm 1}] / (x_1^{m_1} - g_1, \dots, x_d^{m_d} - g_d),
\]
and give $\Spec(R)$ the $G$-action obtained by letting $x_i$ have grading $\overline{f}_i$, where $\overline{f}_i$ is the image of $f_i$ in $A$. Since each $g_i$ is a unit in $k'\llbracket t \rrbracket$, we see $\Spec(R) \to \Spec(k'\llbracket t \rrbracket)$ is a $G$-torsor.

Let $\psi \in \sL(\cX)(k')$ be the arc corresponding to the $G$-torsor $\Spec(R)$ and the $G$-equivariant map $\Spec(R) \to \widetilde{X} = \Spec(k[F])$ defined by
\[
	F \to R: f \mapsto t^{\langle \widetilde{w}, f \rangle} \prod_{i=1}^r x_i^{c_i},
\]
for $f = \sum_{i=1}^r c_i f_i$ and $c_1, \dots, c_r \in \Z$. Note that this map is well-defined since $\langle \widetilde{w}, f \rangle \geq 0$ when $f \in F$, and each $x_i$ is a unit in $R$ as a consequence of the fact that each $g_i$ is a unit in $k' \llbracket t \rrbracket$. Note further that, since the $x_i$ are units, the map sends $f$ to an element whose $t$-order of vanishing is $\langle\widetilde{w},f\rangle$. As a result, any lift $\widetilde{\psi}\in\sL(\widetilde{X})(k'')$ obtained from a trivialization of the $G$-torsor after base change to $k''$, satisfies $\trop(\psi)=\trop(\widetilde{\psi})=\widetilde{w}$.


To finish the proof, we must show $\sL(\pi)(\psi) = \varphi$, i.e.~$\pi\circ\psi=\varphi$. Since $X$ is separated, it is enough to prove equality after precomposing by the generic point $\xi\colon\Spec(k'\llparenthesis t \rrparenthesis) \to \Spec(k'\llbracket t \rrbracket)$. But this follows from the fact that $\eta=\varphi\circ\xi$ and 
that for any $i \in \{1, \dots, d\}$, the image of $m_i f_i$ under the pullback of $\Spec(R) \to \widetilde{X} \xrightarrow{\widetilde{\pi}} X$ is equal to
\[
	t^{\langle \widetilde{w}, m_i f_i \rangle} x_i^{m_i} = t^{\langle w, m_i f_i \rangle} g_i.\qedhere
\]
\end{proof}

\section{Fibers of the maps of jets}\label{sectionFibersOfTheMapsOfJets}

Throughout this section let $d \in \N$, let $N \cong \Z^d$ be a lattice, let $T = \Spec(k[N^*])$ be the algebraic torus with co-character lattice $N$, let $\sigma$ be a pointed rational cone in $N_\R$, let $X$ be the affine $T$-toric variety associated to $\sigma$, let $\cX$ be the canonical stack over $X$, and let $\pi: \cX \to X$ be the canonical map.

In this section we will control the fibers of the maps
\[
	\sL_n(\pi): \sL_n(\cX) \to \sL_n(X)
\]
in the case where $\sigma$ is $d$-dimensional. In particular, we will prove the following.

\begin{theorem}
\label{fibersofhtemapsofjetsmainproposition}
Suppose that $\sigma$ is $d$-dimensional, and let $w \in \sigma \cap N$. Then there exist $n_w \in \N$ and $\Theta_w \in K_0(\Stack_k)$ and a sequence of finite type Artin stacks $\{\cF_n\}_{n \geq n_w}$ over $k$ such that
\begin{enumerate}

\item for each $n \geq n_w$,
\[
	\e(\cF_n) = \Theta_w \in K_0(\Stack_k),
\]
and

\item for each field extension $k'$ of $k$, each arc $\varphi \in \sL(X)(k')$ with $\trop(\varphi) = w$, and each $n \geq n_w$, we have
\[
	\sL_n(\pi)^{-1}(\theta_n(\varphi))_\red \cong \cF_n \otimes_k k'.
\]

\end{enumerate}
\end{theorem}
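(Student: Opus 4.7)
The plan is to reduce the theorem to an explicit description of the fibers of $f := \sL_n(\widetilde{\pi})$, where $\widetilde{\pi}\colon\widetilde X\to X$ is the covering map from \autoref{notationforcanonicalfantastackforfulldimensionalcone}. By \autoref{JetSchemeOfQuotientStackIsQuotientOfJetSchemes}, the fiber of $\sL_n(\pi)$ over $\varphi_n := \theta_n(\varphi)$ is the stack quotient $[f^{-1}(\varphi_n)/\sL_n(G)]$, where $\sL_n(G)$ acts by rescaling coordinates via the characters $\bar f_i\in A$. Writing a $k'$-point of $\sL_n(\widetilde X)$ as a tuple $(g_1,\dots,g_r)\in (k'[t]/(t^{n+1}))^r$ with $g_i=\widetilde\psi_n^*(f_i)$, the lifting condition becomes $\prod_i g_i^{c_i(p)}=\varphi_n^*(p)$ for each $p=\sum c_i(p) f_i\in P$, with $c_i(p)\in\N$ since $P\subset F$.

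The next step is to stratify $f^{-1}(\varphi_n)$ by tropicalization. I would set $n_w$ to exceed $\max_{\widetilde w\in\beta^{-1}(w),\,i}\langle\widetilde w,f_i\rangle$, which is finite by \autoref{fiberabovewisfiniteset}. Building on the dichotomy in \autoref{MainPropositionFibersOfTheMapOfArcs} and the tropicalization formalism of \autoref{l:trop-basic-properties}, I expect to show that for $n\geq n_w$, the reduced scheme $(f^{-1}(\varphi_n))_\red$ is a disjoint union of locally closed pieces $S_{\widetilde w}(\varphi_n)$ indexed by $\widetilde w=(a_1,\dots,a_r)\in\beta^{-1}(w)$, where $S_{\widetilde w}(\varphi_n)$ is cut out by the exact-order conditions $\ord_t g_i=a_i$ together with the lifting equations. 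Substituting $g_i=t^{a_i}h_i$ with $h_i\in(k'[t]/(t^{n+1-a_i}))^\times$ rewrites these as $\prod_i h_i^{c_i(p)}\equiv\bar u_p\pmod{t^{n+1-\langle w,p\rangle}}$, where $\bar u_p$ is the unit part of $\varphi_n^*(p)$.

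To eliminate the dependence of these equations on the particular $\varphi$, I would use the action of $\sL_n(\widetilde T)$. Since any two $\varphi_n$ with $\trop(\varphi)=w$ are related by an element of $\sL_n(T)$, and since a character of $M$ lifts to a character of $\widetilde M$ after extracting finitely many torsion roots, there is—after passing to a finite extension $k''/k'$—an element $\lambda\in\sL_n(\widetilde T)(k'')$ carrying the standard jet $\varphi_{0,n}^*(p)=t^{\langle w,p\rangle}$ (which is defined over $k$) to $\varphi_n\otimes_{k'}k''$. Multiplication by $\lambda$ is $\sL_n(G)$-equivariant, giving an isomorphism $[f^{-1}(\varphi_{0,n})/\sL_n(G)]_\red\otimes_k k''\cong\sL_n(\pi)^{-1}(\varphi_n)_\red\otimes_{k'}k''$. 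Setting $\cF_n:=[f^{-1}(\varphi_{0,n})/\sL_n(G)]_\red$, which is defined over $k$, and then applying \autoref{piecewiseTrivialFibrationCriterion} to the natural family of fibers of $\sL_n(\pi)$ over the locally closed subscheme of $\sL_n(X)$ on which $\trop\equiv w$, should yield the isomorphism $\sL_n(\pi)^{-1}(\varphi_n)_\red\cong\cF_n\otimes_k k'$ over $k'$ itself.

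For the constancy of $\e(\cF_n)$ in $K_0(\Stack_k)$ for $n\geq n_w$: each stratum $[S_{\widetilde w}(\varphi_{0,n})/\sL_n(G)]$ is, via the substitution $g_i=t^{a_i}h_i$, the quotient of a locally closed subscheme of $\prod_i(\bG_m\times\bA^{n-a_i})$ by a linear $\sL_n(G)$-action, and the affine growth of both the numerator and the denominator as $n$ increases should cancel exactly in $K_0(\Stack_k)$ once $n\geq n_w$, leaving a class $\Theta_w$ independent of $n$. The main obstacle I anticipate is the descent step from $k''$ to $k'$: the lift $\lambda$ requires extracting $|A_\tor|$-th roots that need not exist over $k'$. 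The remedy should be to bypass any base change in the final statement by building $\cF_n$ as a genuine universal fiber over the $\trop\equiv w$ stratum of $\sL_n(X)$ and verifying piecewise triviality via \autoref{piecewiseTrivialFibrationCriterion}, where one only checks a fiber-by-fiber isomorphism with the candidate $\cF_n\otimes_k k(z)$ at each residue field.
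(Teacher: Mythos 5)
Your stratification of the fiber of $\sL_n(\widetilde{\pi})$ by the value of $\trop_n$ and the indexing by $\beta^{-1}(w)$ match the paper's strategy, but the step you yourself flag as the main obstacle --- getting statement (2) over $k'$ itself rather than over a finite extension $k''$ --- is a genuine gap, and the remedy you propose does not close it. Transporting the fiber over $\varphi_n$ to the fiber over the standard jet by an element $\lambda\in\sL_n(\widetilde{T})(k'')$ only shows that $\sL_n(\pi)^{-1}(\theta_n(\varphi))_\red$ and $\cF_n\otimes_k k'$ become isomorphic after base change to $k''$; a priori the fiber could be a nontrivial form of $\cF_n\otimes_k k'$ (the obstruction to lifting $\lambda$ lives in a torsor under $\sL_n(G)$, whose finite part can have nontrivial Galois cohomology over the non-closed field $k'$). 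Appealing to \autoref{piecewiseTrivialFibrationCriterion} is circular here: its hypothesis is exactly an isomorphism of reduced fibers over the residue field $k(z)$ at every point, which is the residue-field-level statement you are trying to prove; the criterion upgrades pointwise isomorphisms to piecewise triviality, it cannot manufacture the pointwise isomorphism from one that only exists after a finite extension.

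The paper resolves this rationality issue differently: it introduces the groups $G_n^w$ and $H_n^{\widetilde{w}}$ and shows (\autoref{ComponentsOfFiberPiTildeGroupAction}) that each stratum $C_n^{\widetilde{w}}(\varphi)$ is a homogeneous space under $G_n^w\otimes_k k'$ with stabilizer $H_n^{\widetilde{w}}$. The key rational point comes from the surjectivity part of \autoref{MainPropositionFibersOfTheMapOfArcs}: the arc $\varphi$ lifts to $\psi\in\sL(\cX)(k')$ with $\trop(\psi)=\widetilde{w}$ over $k'$ itself (the underlying $G$-torsor over $k'\llbracket t\rrbracket$ may be nontrivial, but the stack point is $k'$-rational), so $\theta_n(\psi)$ gives a $k'$-point of $[C_n^{\widetilde{w}}(\varphi)/(\sL_n(G)\otimes_k k')]$, which lifts to the intermediate quotient by the finite part because $\sL_n(T')$ is special. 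With a transitive action, a rational point, and the identified stabilizer, \cite[III, 3, Proposition 5.2]{DemazureGabriel} then pins the reduced fiber down as $\bigl(\bigsqcup_{\widetilde{w}\in\beta^{-1}(w)}[(G_n^w/\sL_n(G))/H_n^{\widetilde{w}}]\bigr)\otimes_k k'$, i.e.\ the base change of a stack defined over $k$, which is exactly what your argument is missing. (Your claim that $\e(\cF_n)$ is independent of $n$ is also only asserted; in the paper it follows from the explicit computations $H_n^{\widetilde{w}}\cong\bG_a^{\sum_i\langle\widetilde{w},f_i\rangle}$ and $G_n^w/\sL_n(G)\cong\bA^{j'_w}_k$ for $n$ large, but that is secondary to the descent gap.)
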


For the remainder of this section, we will assume that $\sigma$ is $d$-dimensional, and we will use the notation listed in \autoref{notationforcanonicalfantastackforfulldimensionalcone}.

\subsection{Algebraic groups and jets}

We begin by introducing some algebraic groups, which in \autoref{fibersAsStackQuotientOfGroupQuotient} below, will be used to express the fibers of each $\sL_n(\pi)$.

\begin{remark}
In what follows, for each $k$-algebra $R$, each $g_n \in \sL_n(\widetilde{T})(R)$, and each $f \in \widetilde{M}$, let $g_n^*(f) \in (R[t]/(t^{n+1}))^\times$ denote the image of $f$ under the pullback map $ k[\widetilde{M}] \to R[t]/(t^{n+1})$ corresponding to the jet $g_n: \Spec(R[t]/(t^{n+1})) \to \widetilde{T}$. We also use the analogous notation when $T$ and $M$ are in place of $\widetilde{T}$ and $\widetilde{M}$.
\end{remark}

\noindent For each $w \in \sigma \cap N$ and $n \in \N$, let $G_n^w$ be the sub-group-object of $\sL_n(\widetilde{T})$ given by,
\[
	G_n^w(R) = \{ g_n \in \sL_n(\widetilde{T})(R) \mid  \text{$ g_n^*(p)t^{\langle w, p \rangle} = t^{\langle w, p \rangle} \in R[t]/(t^{n+1})$ for all $p \in P$}\}.
\]
For each $\widetilde{w} \in \widetilde{\sigma} \cap \widetilde{N}$ and $n \in \N$, let $H_n^{\widetilde{w}}$ be the sub-group-object of $\sL_n(\widetilde{T})$ given by,
\[
	H_n^{\widetilde{w}}(R) = \{ g_n \in \sL_n(\widetilde{T})(R) \mid  \text{$ g_n^*(f) t^{\langle \widetilde{w}, f \rangle} = t^{\langle \widetilde{w}, f \rangle} \in R[t]/(t^{n+1})$ for all $f \in F$}\}.
\]
By definition, $H_n^{\widetilde{w}}$ is a sub-group-object of $G_n^{\beta(\widetilde{w})}$. We now show that these group objects are in fact algebraic groups.

\begin{proposition}
\label{groupobjectsrepresentedbyclosedsubgroups}
The sub-group-objects $G_n^w$ and $H_n^{\widetilde{w}}$ are represented by closed subgroups of $\sL_n(\widetilde{T})$.
\end{proposition}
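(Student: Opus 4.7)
The plan is to realize each of $G_n^w$ and $H_n^{\widetilde{w}}$ as a finite intersection of closed subgroups of $\sL_n(\widetilde{T})$, each of which is the preimage of an explicit closed subgroup of $\sL_n(\bG_m)$ under a jet of a character. Since $\sL_n(\widetilde{T})$ is an algebraic group, such finite intersections are closed subgroups, so this will give the result.

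First, I will set up the ``single-character'' closed subgroup. For each $f \in \widetilde{M}$, the character $\chi^f\colon \widetilde{T}\to\bG_m$ induces a homomorphism $\sL_n(\chi^f)\colon\sL_n(\widetilde{T})\to \sL_n(\bG_m)$. Using the standard identification $\sL_n(\bG_m)=\Spec k[u_0^{\pm1},u_1,\dots,u_n]$, where the universal unit is $u_0+u_1t+\dots+u_nt^n$, the condition $(u_0+u_1t+\cdots+u_nt^n)t^m=t^m$ in $R[t]/(t^{n+1})$ is cut out by the equations $u_0=1$ and $u_i=0$ for $1\le i\le n-m$ (vacuous when $m>n$). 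These are polynomial conditions defining a closed subgroup $U_{n,m}\subset \sL_n(\bG_m)$. Therefore $H_n^{\widetilde{w},f}:=\sL_n(\chi^f)^{-1}(U_{n,\langle\widetilde{w},f\rangle})$ is a closed subgroup of $\sL_n(\widetilde{T})$, and by the very definition of $H_n^{\widetilde{w}}$ (resp.\ $G_n^w$), we have $H_n^{\widetilde{w}}\subseteq H_n^{\widetilde{w},f}$ for every $f\in F$ and analogously $G_n^w\subseteq H_n^{w,p}$ for every $p\in P$ (with the obvious reinterpretation via $p\in M\subset \widetilde{M}$).

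Second, I will show that only finitely many such conditions are needed. Choose finite monoid generators $f_1,\dots,f_s$ of $F$ and $p_1,\dots,p_r$ of $P$; I claim
\[
H_n^{\widetilde{w}}=\bigcap_{i=1}^s H_n^{\widetilde{w},f_i},\qquad G_n^{w}=\bigcap_{j=1}^r H_n^{w,p_j}.
\]
The inclusions $\subseteq$ are immediate from the previous paragraph. For $\supseteq$, the key point is multiplicativity of the defining condition: if $g_n\in \sL_n(\widetilde{T})(R)$ satisfies $(g_n^*(f')-1)t^{\langle\widetilde{w},f'\rangle}=0$ and $(g_n^*(f'')-1)t^{\langle\widetilde{w},f''\rangle}=0$ in $R[t]/(t^{n+1})$, then writing $g_n^*(f')-1=t^{n+1-\langle\widetilde{w},f'\rangle}A$ and similarly for $f''$ (these annihilation conditions are exactly equivalent to such divisibilities in $R[t]/(t^{n+1})$, with the usual convention that the exponent being $\ge n+1$ forces the difference to vanish modulo $t^{n+1}$), we compute
\[
g_n^*(f'+f'')-1 = g_n^*(f')g_n^*(f'')-1 = (g_n^*(f')-1) + (g_n^*(f'')-1) + (g_n^*(f')-1)(g_n^*(f'')-1),
\]
each term of which is divisible by $t^{n+1-\langle\widetilde{w},f'+f''\rangle}$ in $R[t]/(t^{n+1})$, so $(g_n^*(f'+f'')-1)t^{\langle\widetilde{w},f'+f''\rangle}=0$. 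Iterating gives the claim for all $f\in F$ (and analogously for $P$).

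Combining the two steps, $H_n^{\widetilde{w}}$ and $G_n^w$ are finite intersections of closed subgroups of $\sL_n(\widetilde{T})$, hence closed subgroups. I don't anticipate a serious obstacle: the only content is the explicit description of $U_{n,m}\subset \sL_n(\bG_m)$ as a closed subscheme and the elementary multiplicativity computation above.
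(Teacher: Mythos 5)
Your proof is correct and follows essentially the same route as the paper: both realize $G_n^w$ and $H_n^{\widetilde{w}}$ as intersections of closed conditions on $\sL_n(\widetilde{T})$ pulled back along $\sL_n$ applied to characters, the paper phrasing each condition as the preimage of the identity under $\theta^n_{n-\langle w,p\rangle}\circ\sL_n(p)$ and intersecting over \emph{all} $p\in P$ (resp.\ $f\in F$), which is harmless since arbitrary intersections of closed subschemes are closed. Your extra step of reducing to finitely many monoid generators via the multiplicativity computation is correct but not needed.
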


\begin{proof}
For each $f \in F$, let $\sL_n(f): \sL_n(\widetilde{T}) \to \sL_n(\bG_m)$ be the map induced by the character $\widetilde{T} \to \bG_m$ corresponding to $f$. Then
\[
	G_n^w = \bigcap_{p \in P} (\theta^n_{n-\langle w, p \rangle} \circ \sL_n(p))^{-1}(1),
\]
and
\[
	H_n^{\widetilde{w}} = \bigcap_{f \in F} (\theta^n_{n-\langle \widetilde{w}, f \rangle} \circ \sL_n(f))^{-1}(1),
\]
where each $\theta_{n-n'}^n: \sL_{n}(\bG_m) \to \sL_{n-n'}(\bG_m)$ is the truncation morphism, each $1 \in \sL_{n-n'}(\bG_m)$ is the identity element, and by convention, if $n-n' < 0$, we set $\sL_{n-n'}(\bG_m)$ to be the single point group $\{1\}$.
\end{proof}

\begin{remark}
\label{JetSchemeOfGIsClosedSubgroupOfGnw}
Note that
\[
	\sL_n(G)(R) = \{g_n \in \sL_n(\widetilde{T})(R) \mid \text{$g_n^*(p) = 1 \in R[t]/(t^{n+1})$ for all $p \in P$}\},
\]
so for any $n \in \N$ and $w \in \sigma \cap N$, we have that $\sL_n(G)$ is a closed subgroup of $G_n^w$.
\end{remark}

We also prove the following characterization of the $H_n^{\widetilde{w}}$.

\begin{proposition}
\label{powerofadditivegroup}
Let $\widetilde{w} \in \widetilde{\sigma} \cap \widetilde{N}$, let $n \in \N$, and let $f_1, \dots, f_r$ be the minimal generators of $F$. If $n \geq \max_{i=1}^r( 2\langle \widetilde{w}, f_i\rangle - 1)$, then
\[
	H_n^{\widetilde{w}} \cong \bG_a^{\sum_{i = 1}^r \langle \widetilde{w}, f_i \rangle}
\]
as algebraic groups.
\end{proposition}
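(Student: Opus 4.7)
Let $a_i:=\langle\widetilde{w},f_i\rangle\in\N$ so that $\sum_{i=1}^r a_i$ is the target dimension. Since $f_1,\dots,f_r$ is a $\Z$-basis of $\widetilde{M}$ (recall $\widetilde{\sigma}$ is the positive orthant in $\widetilde{N}_\R=\R^r$), a jet $g_n\in\sL_n(\widetilde{T})(R)$ is the same data as an $r$-tuple of units $u_i:=g_n^*(f_i)\in (R[t]/(t^{n+1}))^\times$, and the group law is coordinatewise multiplication. The plan is to first translate the defining conditions of $H_n^{\widetilde w}(R)$ into conditions on the $u_i$, then parameterize the resulting set by $r$-tuples of polynomials of the correct shape, and finally use the hypothesis on $n$ to see that multiplication becomes addition.

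The first step is to observe that the condition $g_n^*(f)t^{\langle\widetilde w,f\rangle}=t^{\langle\widetilde w,f\rangle}$ is multiplicative in $f$: if it holds for $f$ and $f'$, it holds for $f+f'$. Since $F=\N\langle f_1,\dots,f_r\rangle$, the defining condition on $H_n^{\widetilde w}(R)$ reduces to $(u_i-1)t^{a_i}=0\in R[t]/(t^{n+1})$ for $i=1,\dots,r$. Writing $u_i=1+\sum_{j\ge 0}c_{i,j}t^j$ (automatically a unit since its constant term is $1$), this says $c_{i,j}=0$ whenever $j+a_i\le n$, i.e.
\[
u_i=1+\sum_{j=n-a_i+1}^{n}c_{i,j}t^j,
\]
where we interpret the sum as $0$ if $a_i=0$. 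Hence as a scheme, $H_n^{\widetilde w}$ is canonically identified with $\bA_k^{\sum a_i}$ with coordinates $(c_{i,j})$.

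The last step is to check the group law. Given two such $u_i$ and $u_i'$ with coefficients $c_{i,j}$ and $c_{i,j}'$,
\[
u_iu_i'=1+\sum_{j=n-a_i+1}^{n}(c_{i,j}+c_{i,j}')t^j+\sum_{j,k=n-a_i+1}^{n}c_{i,j}c_{i,k}'t^{j+k}.
\]
Every cross-term exponent satisfies $j+k\ge 2(n-a_i+1)=2n-2a_i+2$, and the hypothesis $n\ge 2a_i-1$ gives $2n-2a_i+2\ge n+1$, so all cross terms vanish in $R[t]/(t^{n+1})$. Thus $u_iu_i'=1+\sum(c_{i,j}+c_{i,j}')t^j$, the group law on each factor is addition of the $c_{i,j}$, and the isomorphism $H_n^{\widetilde w}\cong \bG_a^{\sum_{i=1}^{r}a_i}$ of algebraic groups follows at once.

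The only subtle point is making sure the reduction from ``condition on all $f\in F$'' to ``condition on the generators $f_i$'' is clean, but this is immediate from the multiplicativity observation above; the rest is a direct calculation and the power of $n$ needed is exactly what kills the quadratic cross terms.
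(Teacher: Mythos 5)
Your proof is correct and follows essentially the same route as the paper's: reduce the defining condition from all $f\in F$ to the generators $f_i$, identify $H_n^{\widetilde w}$ with tuples of truncated units of the form $1+\sum_{j=n-\langle\widetilde w,f_i\rangle+1}^{n}c_{i,j}t^j$, and use $n\ge 2\langle\widetilde w,f_i\rangle-1$ to kill the cross terms so that multiplication becomes addition. The only detail the paper makes explicit that you leave implicit is that the hypothesis also forces $n\ge\langle\widetilde w,f_i\rangle$, which is what justifies your parameterization; this is immediate, so there is no gap.
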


\begin{proof}
Let $R$ be a $k$-algebra. Then
\begin{align*}
	H_n^{\widetilde{w}}(R) &= \{ g_n \in \sL_n(\widetilde{T})(R) \mid  \text{$ g_n^*(f)t^{\langle \widetilde{w}, f \rangle} = t^{\langle \widetilde{w}, f \rangle} \in R[t]/(t^{n+1})$ for all $f \in F$}\} \\
	&=\{ g_n \in \sL_n(\widetilde{T})(R) \mid  \text{$ g_n^*(f_i)t^{\langle \widetilde{w}, f_i \rangle} = t^{\langle \widetilde{w}, f_i \rangle} \in R[t]/(t^{n+1})$ for all $i = 1, \dots, r$}\}\\
	&\cong \{ (g^{(i)})_i \in ((R[t](t^{n+1}))^\times)^r \mid \text{$g^{(i)} t^{\langle \widetilde{w}, f_i \rangle} = t^{\langle \widetilde{w}, f_i \rangle} \in R[t]/(t^{n+1})$ for all $i$}\}. 
\end{align*}
Since $n \geq 0$, the hypotheses guarantee that $n \geq \langle \widetilde{w}, f_i \rangle$ for all $i \in \{1, \dots, r\}$. Thus if $i \in \{1, \dots r\}$ and $g^{(i)} \in (R[t]/(t^{n+1}))^\times$, then
\[
	g^{(i)}t^{\langle \widetilde{w}, f_i \rangle} = t^{\langle \widetilde{w}, f_i \rangle} \in R[t]/(t^{n+1})
\]
if and only if
\[
	g^{(i)} = 1 + a_{n-\langle \widetilde{w}, f_i \rangle + 1} t^{n-\langle \widetilde{w}, f_i \rangle+1} + \dots + a_{n} t^n \in R[t]/(t^{n+1})
\]
for some $a_{n-\langle \widetilde{w}, f_i \rangle + 1}, \dots, a_n \in R$. The proposition thus follows from the fact that if $a_{n-\langle \widetilde{w}, f_i \rangle+1}, \dots, a_n$, $b_{n-\langle \widetilde{w}, f_i \rangle+1}, \dots, b_n \in R$, then in $R[t]/(t^{n+1})$,
\begin{align*}
	(1 + a_{n-\langle \widetilde{w}, f_i \rangle+1} t^{n-\langle \widetilde{w}, f_i \rangle+1} + \dots + a_n t^n) (1 + b_{n-\langle \widetilde{w}, f_i \rangle+1} t^{n-\langle \widetilde{w}, f_i \rangle+1} + \dots + b_n t^n) \\ = 1 + (a_{n-\langle \widetilde{w}, f_i \rangle+1}+b_{n-\langle \widetilde{w}, f_i \rangle+1}) t^{n-\langle \widetilde{w}, f_i \rangle+1} + \dots + (a_n+b_n) t^n
\end{align*}
because for any $m_1, m_2 \geq n-\langle \widetilde{w}, f_i \rangle+1$, we have 
\[
	m_1+m_2 \geq 2n-2\langle \widetilde{w}, f_i \rangle+2 = 2n+1 - (2\langle \widetilde{w}, f_i \rangle-1) \geq n+1.\qedhere
\]
\end{proof}

We finish this subsection with the next two propositions, which characterize the group quotients $G_n^w / \sL_n(G)$.

\begin{proposition}
\label{groupQuotientAsStabilizerOfJetInGoodModuliSpace}
Let $w \in \sigma \cap N$ and $n \in \N$. Then the group quotient $G_n^w / \sL_n(G)$ has functor of points given by
\[
	R \mapsto \{h_n \in \sL_n(T)(R) \mid \text{$h_n^*(p) t^{\langle w, p \rangle} =  t^{\langle w, p \rangle} \in R[t]/(t^{n+1})$ for all $p \in P$}\}.
\]
\end{proposition}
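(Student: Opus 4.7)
The plan is to identify $G_n^w/\sL_n(G)$ with the sub-group-object $K \subset \sL_n(T)$ whose functor of points is
\[
R \mapsto \{h_n \in \sL_n(T)(R) \mid h_n^*(p) t^{\langle w, p \rangle} = t^{\langle w, p \rangle} \in R[t]/(t^{n+1}) \text{ for all } p \in P\}.
\]
First I would observe that the same argument as in \autoref{groupobjectsrepresentedbyclosedsubgroups} shows $K$ is represented by a closed subgroup of $\sL_n(T)$, namely the intersection over $p \in P$ of the preimages of the identity under $\theta^n_{n-\langle w, p\rangle} \circ \sL_n(p) : \sL_n(T) \to \sL_{n-\langle w, p\rangle}(\bG_m)$.

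Next, I would apply the jet functor to the short exact sequence of tori $1 \to G \to \widetilde{T} \to T \to 1$. Since $\widetilde{T} \to T$ is smooth and surjective, the induced morphism $\sL_n(\widetilde{T}) \to \sL_n(T)$ is smooth and surjective with kernel $\sL_n(G)$ (compatible with \autoref{JetSchemeOfGIsClosedSubgroupOfGnw}), and in particular realizes $\sL_n(\widetilde{T})$ as an $\sL_n(G)$-torsor over $\sL_n(T)$.

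The third step is to show that $G_n^w$ is precisely the preimage of $K$ under $\sL_n(\widetilde{T}) \to \sL_n(T)$. The point is that $\widetilde{T} \to T$ corresponds to the inclusion $M \hookrightarrow \widetilde{M}$, so for any $p \in P \subset M \subset \widetilde{M}$ and any $g_n \in \sL_n(\widetilde{T})(R)$ with image $h_n \in \sL_n(T)(R)$, the elements $g_n^*(p)$ and $h_n^*(p)$ agree in $R[t]/(t^{n+1})$. Hence $g_n \in G_n^w(R)$ if and only if $h_n \in K(R)$, giving a Cartesian square
\[
G_n^w = K \times_{\sL_n(T)} \sL_n(\widetilde{T}).
\]

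Base-changing the $\sL_n(G)$-torsor $\sL_n(\widetilde{T}) \to \sL_n(T)$ along $K \hookrightarrow \sL_n(T)$ then realizes $G_n^w \to K$ as an $\sL_n(G)$-torsor, so the fppf quotient $G_n^w/\sL_n(G)$ is isomorphic to $K$, which is the description claimed. The main point to verify carefully is the exactness of the jet sequence in the second step; this is standard for smooth surjections of algebraic groups (étale-local sections lift to every finite order), so I expect no real obstacle here, only bookkeeping to match the functorial descriptions.
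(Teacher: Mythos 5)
Your proposal is correct, and it shares its skeleton with the paper's proof: both first represent the target functor by a closed subgroup of $\sL_n(T)$ exactly as in \autoref{groupobjectsrepresentedbyclosedsubgroups}, both observe that $G_n^w$ is precisely the preimage of that subgroup under $\sL_n(\widetilde{T}) \to \sL_n(T)$ (using that pulling back a character $p \in P \subset M \subset \widetilde{M}$ along $\widetilde{T} \to T$ does nothing to $g_n^*(p)$), and both rely on the key fact that $\sL_n(\widetilde{T}) \to \sL_n(T)$ is an $\sL_n(G)$-torsor. Where you genuinely diverge is the endgame: the paper lets $G_n^w$ act on $H$ through $G_n^w \to H$, identifies the stabilizer of the identity with $\sL_n(G)$, invokes \cite[III, 3, Proposition 5.2]{DemazureGabriel} to get a locally closed immersion $G_n^w/\sL_n(G) \hookrightarrow H$, and then needs reducedness of $H$ (characteristic $0$) together with surjectivity of $G_n^w \to H$ on underlying sets to conclude; you instead base-change the torsor $\sL_n(\widetilde{T}) \to \sL_n(T)$ along $K \hookrightarrow \sL_n(T)$, so that $G_n^w \to K$ is itself an $\sL_n(G)$-torsor and the fppf quotient is $K$ on the nose. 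Your route is a bit cleaner, gives the scheme-theoretic identification directly, and avoids the reducedness/characteristic-zero step; its only burden is justifying the torsor claim, which you can either do as you sketch (jets of a smooth surjection, with \'etale-local lifting via invariance of the \'etale site under the nilpotent thickening $\Spec R \hookrightarrow \Spec R[t]/(t^{n+1})$) or simply by citing \autoref{JetSchemeOfQuotientStackIsQuotientOfJetSchemes} applied to $T \cong [\widetilde{T}/G]$, which is exactly how the paper obtains it; note also that \autoref{JetSchemeOfGIsClosedSubgroupOfGnw} gives $\ker(G_n^w \to K) = \sL_n(G)$, and commutativity of $\sL_n(\widetilde{T})$ makes all quotient issues harmless.
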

\begin{proof}
First, the sub-group-object is represented by a closed subgroup of $\sL_n(T)$ by an identical argument as in Proposition \ref{groupobjectsrepresentedbyclosedsubgroups}. Call this closed subgroup $H \subset \sL_n(T)$. We will show that $G_n^w / \sL_n(G) \cong H$ as schemes.

By definition of $H$ and $G_n^w$, the closed subgroup $G_n^w \subset \sL_n(\widetilde{T})$ is equal to the preimage of $H$ under the group homomorphism $\sL_n(\widetilde{T}) \to \sL_n(T)$ obtained by applying the functor $\sL_n$ to the group homomorphism $\widetilde{\pi}|_{\widetilde{T}}: \widetilde{T} \to T$. Thus we obtain a group homomorphism $G_n^w \to H$. Endow $H$ with the $G_n^w$-action obtained by left multiplication after $G_n^w \to H$.

For any $k$-algebra $R$ and $g_n \in G_n^w(R)$, we have that $g_n \in \sL_n(G)(R)$ if and only if $g_n^*(p) = 1$ for all $p \in P$, which is equivalent to $g_n$ being in the kernel of $G_n^w \to H$. Therefore the scheme-theoretic stabilizer of the identity $1 \in H(k)$ is equal to $\sL_n(G)$. Thus by \cite[III, 3, Proposition 5.2]{DemazureGabriel}, we have a locally closed embedding
\[
	G_n^w/\sL_n(G) \hookrightarrow H
\]
whose image, as a set, is equal to the image of the map of underlying sets $G_n^w \to H$. Since $k$ has characteristic 0 so $H$ is reduced, we only need to show that $G_n^w \to H$ is surjective on underlying sets, which would follow if we can show that $\sL_n(\widetilde{T}) \to \sL_n(T)$ is surjective on underlying sets. The latter follows immediately from \autoref{JetSchemeOfQuotientStackIsQuotientOfJetSchemes}, which implies that $\sL_n(\widetilde{T}) \to \sL_n(T)$ is a $\sL_n(G)$-torsor.
\end{proof}

\begin{proposition}
\label{GroupQuotientIsAffineSpace}
Let $w \in \sigma \cap N$. Then there exist $n'_w, j'_w \in \N$ such that for all $n \geq n'_w$, we have an isomorphism of schemes
\[
	G_n^w / \sL_n(G) \cong \bA^{j'_w}_k.
\]
\end{proposition}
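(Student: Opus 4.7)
The plan is to combine \autoref{groupQuotientAsStabilizerOfJetInGoodModuliSpace} with the truncated logarithm, which in characteristic zero provides a scheme isomorphism between the kernel of the truncation $\sL_n(T) \to T$ and an affine space. By \autoref{groupQuotientAsStabilizerOfJetInGoodModuliSpace}, writing $H := G_n^w / \sL_n(G)$, we may regard $H$ as the closed subgroup of $\sL_n(T)$ cut out by $h_n^*(p) \equiv 1 \pmod{t^{n+1-\langle w, p\rangle}}$ for every $p \in P$. Pick generators $p_1, \ldots, p_s$ of $P$ and set $n'_w := \max_i \langle w, p_i\rangle$; I will show the conclusion holds with this choice of $n'_w$.

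For $n \geq n'_w$, the defining conditions force the constant term of $h_n^*(p_i)$ to equal $1$ for each $i$. Since $\sigma$ is $d$-dimensional, the dual cone $\sigma^\vee$ is pointed and $P^\gp = M$, so the $p_i$ generate $M$ as a group and the image of $h$ in $T$ must be trivial; thus $H \subset K_n := \ker(\sL_n(T) \to T)$. Fixing a basis $m_1, \ldots, m_d$ of $M$, the truncated logarithm $\log(1+u) = u - u^2/2 + \cdots$ defines a scheme isomorphism $\log \colon K_n \xrightarrow{\sim} \bG_a^{nd}$ with inverse the truncated exponential. Setting $L_j := \log h_n^*(m_j)$ and writing $p_i = \sum_j c_{ij} m_j$, the defining conditions transform into the \emph{linear} equations $\sum_j c_{ij} L_j \equiv 0 \pmod{t^{n+1-\langle w, p_i\rangle}}$ in $R[t]/(t^{n+1})$. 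Hence, under $\log$, the subscheme $H$ is a $k$-linear subspace of $\bG_a^{nd}$, and so $H \cong \bA_k^{j'_w}$ as a scheme, where $j'_w := \dim H$.

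It remains to verify that $j'_w$ is independent of $n$ for $n \geq n'_w$. Grouping the linear equations by co-level $k := n - \ell$, where $\ell$ indexes the $t$-power of the constrained coefficient, at co-level $k$ one imposes $\sum_j c_{ij}\, \ell_j = 0$ ranging over the $i$ with $\langle w, p_i\rangle \leq k$; the solution space at each co-level has dimension $d - r_k$, where $r_k$ is the $\Q$-rank of the submatrix $\{c_{ij}\}_{\langle w, p_i\rangle \leq k}$, a quantity depending only on $k$. Because $P^\gp = M$, we have $r_k = d$ for $k \geq n'_w$, so $\dim H = \sum_{k=0}^{n'_w - 1}(d - r_k)$ is independent of $n$. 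The substantive step is the log-linearization in characteristic zero; the rest is linear-algebraic bookkeeping.
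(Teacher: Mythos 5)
Your proof is correct, and it takes a genuinely different route from the paper's. Both arguments start from \autoref{groupQuotientAsStabilizerOfJetInGoodModuliSpace}, but the paper chooses the larger threshold $n'_w = \max_i(2\langle w, p_i\rangle - 1)$, which makes multiplication of $1$-units supported in degrees $> n - \langle w,p_i\rangle$ literally coordinatewise addition modulo $t^{n+1}$; the relations $\sum_i m_i p_i = 0$ in $M$ then cut out an explicit linear subspace $V \subset \bA_k^{ms}$ of top-coefficient tuples that is defined once and for all, so $H \cong V$ and no stabilization-of-dimension argument is needed. You instead note $H \subset \ker(\sL_n(T)\to T)$ once $n \geq \max_i \langle w,p_i\rangle$, linearize via the truncated logarithm (legitimate in characteristic $0$), and realize $H$ as a linear subspace of $\bA_k^{nd}$, after which you must --- and do --- check that its dimension is independent of $n$ by the co-level rank count. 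Your approach buys a smaller threshold and a cleaner conceptual linearization; the paper's buys an explicit model of the quotient independent of $n$ and avoids the bookkeeping. Two small points you should make explicit: first, $H$ is cut out by the congruences for \emph{all} $p \in P$, so you need the (one-line) observation that the congruence for $p = \sum_i a_i p_i$ with $a_i \in \N$ follows from those for the generators, e.g.\ because $\log h_n^*(p) = \sum_i a_i \log h_n^*(p_i)$ and each summand with $a_i \geq 1$ lies in $(t^{n+1-\langle w,p_i\rangle}) \subseteq (t^{n+1-\langle w,p\rangle})$, the condition being vacuous when $\langle w,p\rangle > n$; the paper's proof needs the analogous remark. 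Second, the fact that the $p_i$ generate $M$ as a group, i.e.\ $P^{\gp} = M$, follows from $\sigma$ being pointed (equivalently, $\sigma^\vee$ being full-dimensional), not from $\sigma^\vee$ being pointed as you state; since $\sigma$ is pointed and $d$-dimensional throughout this section, both hypotheses hold and nothing in your argument breaks.
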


\begin{proof}
Let $p_1, \dots, p_s$ be a set of generators of the semigroup $P$, set
\[
	n'_w = \max_{1\leq i\leq s}( 2 \langle w, p_i \rangle - 1),
\]
and $m = \max_{1\leq i\leq s} \langle w, p_i \rangle$. Consider the affine space $\bA_k^{ms}$ with coordinates $(x^{(i)}_\ell )_{i \in \{1, \dots, s\}, \ell \in \{1, \dots, m\}}$. Let $V$ be the linear subspace of $\bA_k^{ms}$ defined by the vanishing of all $x^{(i)}_\ell$ for $\ell > \langle w, p_i \rangle$ and the vanishing of all $\sum_{i =1}^s m_i x^{(i)}_\ell$ for all $\ell \in \{1, \dots, m\}$ and all $m_1, \dots, m_s \in \Z$ such that
\[
	\sum_{i =1}^s m_i  p_i = 0 \in P^{\gp} = M.
\]
Set $j'_w = \dim V$. It suffices to show that for all $n \geq n'_w$, we have that $G_n^w / \sL_n(G) \cong V$ as schemes.

Let $n \geq n'_w$, and let $H$ be the closed subgroup of $\sL_n(T)$ representing the sub-group-object in the statement of \autoref{groupQuotientAsStabilizerOfJetInGoodModuliSpace}. By \autoref{groupQuotientAsStabilizerOfJetInGoodModuliSpace}, it is sufficient to prove that $H \cong V$ as schemes. 

Let $H \to \bA^{ms}_k$ be the morphism that, for each $k$-algebra $R$, takes $h_n \in H(R)$ to $(a^{(i)}_{n-\ell+1})_{i,\ell} \in R^{ms} = \bA^{ms}_k(R)$, where for all $i,\ell$ we have that $a^{(i)}_{n-\ell+1}$ is the coefficient of $t^{n-\ell+1}$ in $p_i(h_n) \in R[t]/(t^{n+1})$. This morphism $H \to \bA_k^{ms}$ factors through an isomorphism $H \xrightarrow{\sim} V$ by the definition of $H$, the construction of $V$, and the fact that $n \geq n'_w$ implies that for any $k$-algebra $R$, any $m_1, \dots, m_s \in \Z$, and any $(a^{(i)}_{n-\ell+1})_{i,\ell} \in R^{ms}$, we have that
\begin{align*}
	\prod_{i = 1}^s &\left( 1 + a^{(i)}_{n-m+1} t^{n-m+1} + \dots + a^{(i)}_n t^n \right)^{m_i} \\
	&= 1 + \left( \sum_{i = 1}^s m_i  a^{(i)}_{n-m+1}  \right) t^{n-m+1} + \dots + \left( \sum_{i = 1}^s m_i  a^{(i)}_{n}  \right) t^{n} \in R[t]/(t^{n+1}).\qedhere
\end{align*}
\end{proof}

\subsection{Components of the fibers}

In this subsection, we will control fibers of each $\sL_n(\pi): \cX \to X$ by controlling the connected components of the fibers of each map $\sL_n(\widetilde{\pi}): \sL_n(\widetilde{X}) \to \sL_n(X)$. In particular we will show that for $n$ sufficiently large and $\varphi \in \sL(X)$ with $\trop(\varphi)=w \in \sigma \cap N$, the connected components of $\sL_n(\widetilde{\pi})^{-1}(\theta_n(\varphi))$ are indexed by $\beta^{-1}(w)$. To do this, we will define analogs of the map $\trop$ for the jet schemes $\sL_n(\widetilde{X})$.

For any $n \in \N$, let $\N_n = \{0, 1, \dots, n, \infty\}$ with the monoid structure making
\[
	\N \cup \{\infty\} \to \N_n: \ell \mapsto \begin{cases} \ell, & \ell \leq n \\ \infty, &\ell > n  \end{cases}
\]
a map of monoids. For any $\widetilde{w} \in \Hom(F, \N \cup \{\infty\})$, let $\widetilde{w}_n \in \Hom(F, \N_n)$ be the composition of $\widetilde{w}: F \to \N \cup \{\infty\}$ with $\N \cup \{\infty\} \to \N_n$.

\begin{remark}
In what follows, for each $k$-algebra $R$, each $\widetilde{\psi}_n \in \sL_n(\widetilde{X})(R)$, and each $f \in F$, let $\widetilde{\psi}_n^*(f) \in R[t]/(t^{n+1})$ denote the image of $f$ under the pullback map $ k[F] \to R[t]/(t^{n+1})$ corresponding to the jet $\widetilde{\psi}_n: \Spec(R[t]/(t^{n+1})) \to \widetilde{X}$. We use the analogous notation when $X$ and $P$ are in place of $\widetilde{X}$ and $F$.
\end{remark}

For any field extension $k'$ of $k$ and $\widetilde{\psi}_n \in \sL_n(\widetilde{X})(k')$, define
\[
	\trop_n(\widetilde{\psi}_n) \in \Hom(F, \N_n)
\]
to be the map taking each $f \in F$ to the $t$-order of vanishing of $\widetilde{\psi}_n^*(f) \in k'[t]/(t^{n+1})$. Also define the map
\[
	\trop_n: \sL_n(\widetilde{X}) \to \Hom(F, \N_n)
\]
by considering each $\widetilde{\psi}_n \in \sL(\widetilde{X})$ as a point valued in its residue field.

\begin{remark}
As a direct consequence of the definition of $\trop_n$ is compatible with field extensions. In other words, for any field extension $k'$ of $k$ and $\widetilde{\psi}_n \in \sL_n(\widetilde{X})(k')$, we have that $\trop_n(\widetilde{\psi}_n)$ is equal to $\trop_n$ applied to the image of $\widetilde{\psi}_n$ in the underlying set of $\sL_n(\widetilde{X})$.
\end{remark}

\begin{remark}
For any $\widetilde{\psi} \in \sL(\widetilde{X})(k')$,
\[
	(\trop(\widetilde{\psi}))_n = \trop_n(\theta_n(\widetilde{\psi})).
\]
Note that because $\widetilde{X}$ is an affine space and thus is smooth, all jets of $\widetilde{X}$ are truncations of arcs of $\widetilde{X}$, so the above equality in fact determines $\trop_n$.
\end{remark}

We next stratify the fiber of $\sL_n(\widetilde{\pi})$ according to the value of $\trop_n$. We show that for $n$ sufficiently large, each stratum is a union of connected components.

\begin{lemma}
\label{uppersemicontinuouslemmafortrop}
For each $f \in F$, the map 
\[
	\sL_n(\widetilde{X}) \to \N_n: \widetilde{\psi}_n \mapsto (\trop_n(\widetilde{\psi}_n))(f)
\]
is upper semi-continuous.
\end{lemma}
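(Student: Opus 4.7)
The plan is to unwind the definitions and observe that the condition ``$(\trop_n(\widetilde{\psi}_n))(f) \geq \ell$'' is cut out by the vanishing of finitely many regular functions on $\sL_n(\widetilde{X})$, which is a closed condition. Recall that upper semi-continuity for a map $\phi \colon Y \to \N_n$ (with the natural order on $\N_n$, where $\infty$ is the maximum) means that the preimage $\phi^{-1}(\{\ell,\ell+1,\dots,n,\infty\})$ is closed for every $\ell \in \N_n$. So it suffices to verify this closedness for the map in question.

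First I would fix generators $f_1, \dots, f_r$ of the monoid $F$, so that $\widetilde{X} = \Spec(k[F])$ is a closed subscheme of $\Spec(k[x_1, \dots, x_r]) = \bA^r_k$ via $f_i \mapsto x_i$. Then $\sL_n(\widetilde{X})$ is correspondingly a closed subscheme of $\sL_n(\bA^r_k)$, and on the latter, the coefficient of $t^j$ in the universal jet applied to $x_i$ is one of the standard coordinate functions. Thus for any $f \in F$, writing $f = \sum_{i=1}^r c_i f_i$, the pullback $\widetilde{\psi}_n^*(f) \in \mathcal{O}_{\sL_n(\widetilde{X})}[t]/(t^{n+1})$ is a product of finitely many of these universal images (raised to powers), so each coefficient of $t^j$ (for $0 \le j \le n$) in $\widetilde{\psi}_n^*(f)$ is a regular function $a_{f,j} \in \mathcal{O}_{\sL_n(\widetilde{X})}$.

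Next I would observe that, by the very definition of $\trop_n$, a point $\widetilde{\psi}_n$ satisfies $(\trop_n(\widetilde\psi_n))(f) \geq \ell$ if and only if $a_{f,j}(\widetilde\psi_n) = 0$ for all $j$ with $0 \le j < \ell$ (interpreting this, when $\ell = \infty$, as $0 \le j \le n$). Hence
\[
\{\widetilde\psi_n \in \sL_n(\widetilde X) \mid (\trop_n(\widetilde\psi_n))(f) \geq \ell\} \;=\; V(a_{f,0}, a_{f,1}, \dots, a_{f,\min(\ell,n+1)-1}),
\]
which is a closed subscheme of $\sL_n(\widetilde X)$. Since $\ell$ was arbitrary in $\N_n$, this shows the map is upper semi-continuous. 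The argument is essentially a direct computation, so I do not anticipate a substantive obstacle; the only mild subtlety is keeping track of the case $\ell = \infty$, which merely corresponds to imposing one additional vanishing condition (vanishing of $a_{f,n}$) on top of the finite level $\ell = n$.
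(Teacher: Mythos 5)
Your proof is correct and follows essentially the same route as the paper: both arguments identify the coefficients of $\widetilde{\psi}_n^*(f)$ as regular functions on the affine scheme $\sL_n(\widetilde{X})$ (the paper via the universal jet, you via an embedding into $\sL_n(\bA^r_k)$, a cosmetic difference) and observe that the locus $(\trop_n(\cdot))(f) \geq \ell$ is their common vanishing locus, hence closed.
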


\begin{proof}
Since $\widetilde{X}$ is affine, $\sL_n(\widetilde{X}) = \Spec(R)$ for some $k$-algebra $R$. Let $\Psi$ be the universal family of $\sL_n(\widetilde{X})$, i.e., let
\[
	\Psi: \Spec(R[t]/(t^{n+1})) \to \widetilde{X}
\] 
be the $R$-valued jet corresponding to the identity $\Spec(R) = \sL_n(\widetilde{X})$, and let $\Psi^*(f) \in R[t]/(t^{n+1})$ be the result of pulling back $f$ along $\Psi$. Write
\[
	\Psi^*(f) = a_0 + \dots + a_n t^n,
\]
where $a_0, \dots, a_n \in R$. Then the jets $\widetilde{\psi}_n \in \sL_n(\widetilde{X})$ with $(\trop_n(\widetilde{\psi}_n))(f) \geq \ell$ are exactly the points of $\Spec(R)$ where $a_0, \dots, a_{\ell-1}$ vanish.
\end{proof}

\begin{proposition}
\label{tropislocallyconstantonfiber}
Let $w \in \sigma \cap N$. Then there exists some $n_0 \in \N$ such that for any $n \geq n_0$, any field extension $k'$ of $k$, and any $\varphi \in \sL(X)(k')$ with $\trop(\varphi) =w$, we have that the restriction of $\trop_n: \sL_n(\widetilde{X}) \to \Hom(F, \N_n)$ to the fiber $\sL_n(\widetilde{\pi})^{-1}(\theta_n(\varphi))$ is locally constant.
\end{proposition}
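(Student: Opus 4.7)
The plan is to reduce local constancy of $\trop_n$ on the fiber to local constancy of each coordinate $\trop_n(-)(f_i)$, where $f_1,\dots,f_r$ are the minimal generators of $F$. This is a valid reduction because for $f=\sum a_if_i\in F$, one has $\widetilde{\psi}_n^*(f)=\prod_i\widetilde{\psi}_n^*(f_i)^{a_i}$, and (as explained below) once each $\widetilde{\psi}_n^*(f_i)$ is shown to be a nonzero element of $k'[t]/(t^{n+1})$ whose $t$-order is determined locally on the fiber, the $\N_n$-valued order of the product is the capped sum $\min(n+1,\sum a_i\trop_n(\widetilde{\psi}_n)(f_i))$, hence also locally determined.

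To choose $n_0$, I would apply \autoref{elementofFaddstogetelementofP} to produce, for each $i\in\{1,\dots,r\}$, an element $f'_i\in F$ with $p_i:=f_i+f'_i\in P$, and set $n_0:=\max_i\langle w,p_i\rangle$. For $n\geq n_0$ and any $\widetilde{\psi}_n$ lying above $\theta_n(\varphi)$, the pullback $\widetilde{\psi}_n^*(p_i)$ equals $\theta_n(\varphi)^*(p_i)$, whose $t$-order is exactly $\langle w,p_i\rangle\leq n$; in particular it is a nonzero element of $k'[t]/(t^{n+1})$. Factoring this as $\widetilde{\psi}_n^*(f_i)\cdot\widetilde{\psi}_n^*(f'_i)$ and noting that the order of a product of two nonzero elements of $k'[t]/(t^{n+1})$ equals the sum of their orders whenever that sum is at most $n$ (and is infinite otherwise), I would deduce that on the fiber both factors are nonzero and
\[
\trop_n(\widetilde{\psi}_n)(f_i)+\trop_n(\widetilde{\psi}_n)(f'_i)=\langle w,p_i\rangle.
\]

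This identity is the crux of the argument and where I expect the main (though modest) technical step to lie. Combined with \autoref{uppersemicontinuouslemmafortrop}, which asserts that $\{\trop_n(f_i)\geq c\}$ and $\{\trop_n(f'_i)\geq \langle w,p_i\rangle-c\}$ are closed subsets of $\sL_n(\widetilde{X})$ for every integer $c$, the identity shows that intersecting with the fiber yields
\[
\{\trop_n(f_i)\leq c\}\cap\mathrm{fiber}=\{\trop_n(f'_i)\geq\langle w,p_i\rangle-c\}\cap\mathrm{fiber},
\]
so this set is closed in the fiber. Since the complementary set $\{\trop_n(f_i)\geq c+1\}\cap\mathrm{fiber}$ is also closed by upper semi-continuity, each level $\{\trop_n(f_i)=c\}$ is clopen in the fiber. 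Hence $\trop_n(-)(f_i)$ is locally constant on the fiber, and by the reduction of the first paragraph, so is $\trop_n$ itself. The only potentially subtle point is ensuring the order-of-product computation in $k'[t]/(t^{n+1})$ is set up cleanly; the rest is a direct combination of \autoref{elementofFaddstogetelementofP}, \autoref{uppersemicontinuouslemmafortrop}, and the complementary-closed-sets argument.
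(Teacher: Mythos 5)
Your proof is correct and takes essentially the same route as the paper: the same choice of $n_0$ via \autoref{elementofFaddstogetelementofP}, the same constant-finite-sum identity $\trop_n(\widetilde{\psi}_n)(f_i)+\trop_n(\widetilde{\psi}_n)(f'_i)=\langle w,f_i+f'_i\rangle$ on the fiber, and \autoref{uppersemicontinuouslemmafortrop}. The only differences are cosmetic: the paper finishes by showing the coordinate functions are constant on each irreducible component of the fiber, whereas you deduce directly that each level set is clopen via complementary closed sets, and you make explicit the reduction to the minimal generators of $F$ (via the capped-sum computation in $k'[t]/(t^{n+1})$) that the paper leaves implicit.
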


\begin{proof}
Let $f \in F$. We will show that there exists some $n_f \in \N$ such that for any $n \geq n_f$, any field extension $k'$ of $k$, and any $\varphi \in \sL(X)(k')$ with $\trop(\varphi) =w$, we have that the restriction of 
\[
	\sL_n(\widetilde{X}) \to \N_n : \widetilde{\psi}_n \mapsto (\trop_n(\widetilde{\psi}_n))(f)
\]
to the fiber $\sL_n(\widetilde{\pi})^{-1}(\theta_n(\varphi))$ is locally constant. 

By \autoref{elementofFaddstogetelementofP}, there exists some $f' \in F$ such that $f + f' \in P$. Set
\[
	n_f = \langle w, f + f' \rangle.
\]
Let $n \geq n_f$, let $k'$ be a field extension of $k$, and let $\varphi \in \sL(X)(k')$ with $\trop(\varphi)=w$. Since $\sL_n(\widetilde{\pi}): \sL_n(\widetilde{X}) \to \sL_n(X)$ is finite type, it is sufficient to show that on any irreducible component $C$ of the fiber $\sL_n(\widetilde{\pi})^{-1}(\theta_n(\varphi))$, the map
\[
	\alpha: C \to \N_n : \widetilde{\psi}_n \mapsto (\trop_n(\widetilde{\psi}_n))(f)
\]
is constant. By \autoref{uppersemicontinuouslemmafortrop}, $\alpha$ and the map	
\[
	\gamma: C \to \N_n: \widetilde{\psi}_n \mapsto (\trop_n(\widetilde{\psi}_n))(f')
\]
are upper semi-continuous. Also for all $\widetilde{\psi}_n \in C$
\[
	\alpha(\widetilde{\psi}_n) + \gamma(\widetilde{\psi}_n) = (\trop_n(\widetilde{\psi}_n))(f + f') = \langle w, f + f'\rangle_n,
\]
where $\langle w, f + f'\rangle_n$ is the image of $\langle w, f+f'\rangle$ in $\N_n$. Thus the sum of $\alpha$ and $\gamma$ is a constant function. Furthermore, because $n \geq \langle w, f + f' \rangle$, the sum of $\alpha$ and $\gamma$ is not equal to $\infty$. Therefore $\alpha$ is constant by upper semi-continuity of $\alpha$ and $\gamma$ and the fact that $C$ is irreducible.

Now the proposition is obtained by taking $n_0$ to be larger than all $n_f$ as $f$ vary over the minimal generators of $F$.
\end{proof}

For any $n \in \N$, any $\widetilde{w} \in \widetilde{\sigma} \cap \widetilde{N}$, any field extension $k'$ of $k$, and any $\varphi \in \sL(X)(k')$, let $C_n^{\widetilde{w}}(\varphi)$ denote the locus in $\sL_n(\widetilde{\pi})^{-1}(\theta_n(\varphi))$ where $\trop_n$ is equal to $\widetilde{w}_n$. We will be interested in the case where $C_n^{\widetilde{w}}(\varphi)$ is a union of connected components of $\sL_n(\widetilde{\pi})^{-1}(\theta_n(\varphi))$. In that case, we will give $C_n^{\widetilde{w}}(\varphi)$ its reduced scheme structure.

\begin{proposition}
\label{ComponentsOfFiberPiTildeGroupAction}
Let $w \in \sigma \cap N$. Then there exists some $n_1 \in \N$ such that for any $n \geq n_1$, any field extension $k'$ of $k$, any $\varphi \in \sL(X)(k')$ with $\trop(\varphi) = w$, and for any $\widetilde{w} \in \beta^{-1}(w)$, 
\begin{enumerate}
\item\label{DecomposeFiberIntoCnw} $C_n^{\widetilde{w}}(\varphi)$ is a union of connected components of $\sL_n(\widetilde{\pi})^{-1}(\theta_n(\varphi))$, and we have an isomorphism of schemes
\[
	\sL_n(\widetilde{\pi})^{-1}(\theta_n(\varphi))_{\red} \cong \bigsqcup_{\widetilde{w}' \in \beta^{-1}(w)} C_n^{\widetilde{w}'}(\varphi),
\]


\item\label{ComponentsOfFiberPiTildeInvariantUnderAction} $C_n^{\widetilde{w}}(\varphi)$ is invariant under the action of $G_n^w \otimes_k k'$ on $\sL_n(\widetilde{X}) \otimes_k k'$,

\item\label{TransitiveActionOnComponents} for each field extension $k''$ of $k'$, the action of $(G_n^w \otimes_k k')(k'')$ on $C_n^{\widetilde{w}}(\varphi)(k'')$ is transitive, and

\item\label{StabilizerOfActionOnComponents} for each field extension $k''$ of $k'$, the scheme-theoretic stabilizer of any $k''$-point of $C_n^{\widetilde{w}}(\varphi)$ under the $G_n^w \otimes_k k'$-action is equal to $H_n^{\widetilde{w}} \otimes_k k''$.

\end{enumerate}
\end{proposition}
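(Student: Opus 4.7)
The plan is to choose $n_1$ large enough to ensure three conditions: first, the conclusion of \autoref{tropislocallyconstantonfiber} so that $\trop_n$ is locally constant on each fiber; second, $n_1 \geq \langle w, f_i + f'_i\rangle$ for each minimal generator $f_i$ of $F$ and some $f'_i \in F$ with $f_i + f'_i \in P$, whose existence is guaranteed by \autoref{elementofFaddstogetelementofP}; and third, $n_1 \geq \langle \widetilde{w}, f_i\rangle$ for every $\widetilde{w} \in \beta^{-1}(w)$ and every $i$, which is possible since $\beta^{-1}(w)$ is finite by \autoref{fiberabovewisfiniteset}. Note that $f_1,\dots,f_r$ is a $\Z$-basis of $\widetilde{M}$ since $\widetilde{\sigma}$ is the positive orthant, so these bounds make sense and depend only on $w$ and $\sigma$, not on the particular arc $\varphi$.

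For part~\ref{DecomposeFiberIntoCnw}, local constancy from \autoref{tropislocallyconstantonfiber} shows each $C_n^{\widetilde{w}'}(\varphi)$ is clopen in the fiber, and distinct $\widetilde{w}' \in \beta^{-1}(w)$ produce distinct tropicalizations already on the basis (by the third bound on $n_1$). The nontrivial point is to show the loci cover the fiber on points: given $\widetilde{\psi}_n \in \sL_n(\widetilde{\pi})^{-1}(\theta_n(\varphi))(k'')$, the identity $\widetilde{\psi}_n^*(f_i)\widetilde{\psi}_n^*(f'_i) = \varphi^*(f_i+f'_i)\bmod t^{n+1}$ combined with the second bound on $n_1$ forces each $\trop_n(\widetilde{\psi}_n)(f_i)$ to be a finite nonnegative integer. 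Defining $\widetilde{w}' \in \widetilde{N}$ by $\langle \widetilde{w}', f_i\rangle := \trop_n(\widetilde{\psi}_n)(f_i)$ produces an element of $\widetilde{\sigma}\cap\widetilde{N}$, and matching $\trop_n(\widetilde{\psi}_n)(p) = \langle w, p\rangle$ against $\langle \beta(\widetilde{w}'), p\rangle$ for $p \in P$, together with the fact that $P$ generates $M$ as a group (since $\sigma$ is full-dimensional), yields $\widetilde{w}' \in \beta^{-1}(w)$.

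For part~\ref{ComponentsOfFiberPiTildeInvariantUnderAction}, the defining relation of $G_n^w$ gives $(g_n \cdot \widetilde{\psi}_n)^*(p) = g_n^*(p)\varphi^*(p) = \varphi^*(p) \bmod t^{n+1}$ after writing $\varphi^*(p) = t^{\langle w, p\rangle} \cdot (\mathrm{unit})$; since $g_n^*(f)$ is a unit, the action also preserves $\trop_n$. For part~\ref{TransitiveActionOnComponents}, given $\widetilde{\psi}_n, \widetilde{\psi}'_n \in C_n^{\widetilde{w}}(\varphi)(k'')$, the values $\widetilde{\psi}_n^*(f_i)$ and $\widetilde{\psi}'_n^*(f_i)$ share the same leading $t$-order $\langle \widetilde{w}, f_i\rangle$, so one solves $g^{(f_i)}\widetilde{\psi}_n^*(f_i) = \widetilde{\psi}'_n^*(f_i)$ for units $g^{(f_i)} \in (k''[t]/(t^{n+1}))^\times$ coefficient by coefficient. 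Extending $g_n^*(f_i) := g^{(f_i)}$ along the free basis produces $g_n \in \sL_n(\widetilde{T})(k'')$ with $g_n \cdot \widetilde{\psi}_n = \widetilde{\psi}'_n$, and rerunning the computation of part~\ref{ComponentsOfFiberPiTildeInvariantUnderAction} in reverse then forces $g_n \in G_n^w(k'')$. For part~\ref{StabilizerOfActionOnComponents}, an element $g_n \in G_n^w(R)$ fixes $\widetilde{\psi}_n$ if and only if $g_n^*(f)\widetilde{\psi}_n^*(f) = \widetilde{\psi}_n^*(f)$ in $R[t]/(t^{n+1})$ for all $f \in F$; factoring out $\widetilde{\psi}_n^*(f) = t^{\langle \widetilde{w}, f\rangle}\cdot(\mathrm{unit})$ reduces this to the defining equations of $H_n^{\widetilde{w}}$.

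The main obstacle is part~\ref{DecomposeFiberIntoCnw}: ensuring the tropicalization data of a jet descends to an actual lattice point of $\widetilde{\sigma} \cap \widetilde{N}$ lying over $w$. This hinges crucially on enforcing the second bound on $n_1$ via \autoref{elementofFaddstogetelementofP}, since without it the values $\trop_n(\widetilde{\psi}_n)(f_i)$ could equal $\infty \in \N_n$, and $\widetilde{w}'$ would fail to be well-defined.
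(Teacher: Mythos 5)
Your overall architecture matches the paper's proof: choose $n_1$ uniformly in $w$, use \autoref{tropislocallyconstantonfiber} to make each $C_n^{\widetilde{w}}(\varphi)$ clopen in the fiber, separate distinct $\widetilde{w}' \in \beta^{-1}(w)$ by bounding $\langle \widetilde{w}', f_i\rangle \leq n_1$, and prove (2)--(4) by the unit-multiplication computations on the monomials of $P$ and $F$; those parts are fine. The problem is the identification step in part~(1). After you construct $\widetilde{w}'$ from the orders $\trop_n(\widetilde{\psi}_n)(f_i)$, you claim $\beta(\widetilde{w}') = w$ by ``matching $\trop_n(\widetilde{\psi}_n)(p) = \langle w, p\rangle$ against $\langle \beta(\widetilde{w}'), p\rangle$ for $p \in P$'' and invoking that $P$ generates $M$. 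But $\trop_n$ takes values in $\N_n$: the fiber condition only gives $\trop_n(\widetilde{\psi}_n)(p) = \langle w, p\rangle_n$, and when $\langle w, p\rangle > n$ this says nothing beyond ``both exceed $n$.'' Your choice of $n_1$ does not rule this out: you bound $n_1$ by $\langle w, f_i + f'_i\rangle$ and by $\langle \widetilde{w}, f_i\rangle$ for $\widetilde{w} \in \beta^{-1}(w)$, but neither controls $\langle w, p_j\rangle$ for a generating set $p_1,\dots,p_s$ of $P$ (e.g.\ for the $A_1$-cone with $w$ on a ray, one Hilbert basis element pairs to twice your $n_1$). Worse, the $f'_i$ produced by \autoref{elementofFaddstogetelementofP} may all yield the same element $f_i + f'_i = p \in P$, so the elements of $P$ on which you do get genuine integer equality need not generate $M$. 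So the conclusion $\widetilde{w}' \in \beta^{-1}(w)$ is not justified by what you have established, and this is exactly the point where the uniform choice of $n_1$ matters.

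The repair is exactly the extra requirement the paper imposes: also demand $n_1 \geq \langle w, p_j\rangle$ for generators $p_1, \dots, p_s$ of the monoid $P$. Then for every such $p_j$ the truncation $\langle w, p_j\rangle_n$ is finite, so $\langle \beta(\widetilde{w}'), p_j\rangle_n = \langle w, p_j\rangle_n$ forces $\langle \beta(\widetilde{w}'), p_j\rangle = \langle w, p_j\rangle$ as integers, and since the $p_j$ generate $M$ as a group (here full-dimensionality of $\sigma$ enters), $\beta(\widetilde{w}') = w$. With that bound added, your argument goes through; in fact your use of \autoref{elementofFaddstogetelementofP} to show each $\trop_n(\widetilde{\psi}_n)(f_i)$ is finite makes explicit a coverage point that the paper's proof leaves implicit (the paper instead works with an arbitrary $\widetilde{w} \in \widetilde{\sigma} \cap \widetilde{N}$ whose truncation realizes the jet's $\trop_n$ and then uses the bound $n_1 \geq \langle w, p_j\rangle$ to force $\widetilde{w} \in \beta^{-1}(w)$). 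Parts (2), (3), (4) of your proposal coincide with the paper's arguments, up to the minor bookkeeping that invariance and the stabilizer computation should be phrased for $R$-points of the base-changed jet (reducedness of $C_n^{\widetilde{w}}(\varphi)$ lets one check invariance on field-valued points, as the paper notes).
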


\begin{remark}
\label{RemarkGnwActionOJetSchemeOfCover}
In the statement of \autoref{ComponentsOfFiberPiTildeGroupAction} above, the action of $G_n^w$ on $\sL_n(\widetilde{X})$ is the one induced by the inclusion $G_n^w \hookrightarrow \widetilde{T}$ and the functor $\sL_n$ applied to the toric action $\widetilde{T} \times_k \widetilde{X} \to \widetilde{X}$.
\end{remark}

\begin{proof}
Let $f_1, \dots, f_r$ be the minimal generators of $F$, and let $p_1, \dots, p_s$ be a set of generators for the semigroup $P$. Let $n_0 \in \N$ be as in the statement of \autoref{tropislocallyconstantonfiber}, and let $n_1 \geq n_0$ be such that $n_1 \geq \langle w, p_i \rangle$ for all $i \in \{1, \dots, s\}$ and $n_1 \geq \langle \widetilde{w}, f_i \rangle$ for all $i \in \{1, \dots, r\}$ and $\widetilde{w} \in \beta^{-1}(w)$. Note that we can choose such an $n_1$ by \autoref{fiberabovewisfiniteset}.

Let $n \geq n_1$, let $k'$ be a field extension of $k$, and let $\varphi \in \sL(X)(k')$ with $\trop(\varphi)= w$. We begin by proving the first part of the proposition.

\begin{enumerate}

\item By our choice of $n_0$ and \autoref{tropislocallyconstantonfiber}, $C_n^{\widetilde{w}}(\varphi)$ is a union of connected components of $\sL_n(\widetilde{\pi})^{-1}(\theta_n(\varphi))$, so it suffices to prove $\sL_n(\widetilde{\pi})^{-1}(\theta_n(\varphi))$ and $\bigsqcup_{\widetilde{w}' \in \beta^{-1}(w)} C_n^{\widetilde{w}'}(\varphi)$ are equal as sets.

Let $\widetilde{w} \in \widetilde{\sigma} \cap \widetilde{N}$. We first show that if $C_n^{\widetilde{w}}\neq\emptyset$, then $\widetilde{w} \in \beta^{-1}(w)$. If $\widetilde{\psi}_n \in \sL_n(\widetilde{\pi})^{-1}(\theta_n(\varphi))$, then for all $i \in \{1, \dots, s\}$,
\[
	(\trop_n(\widetilde{\psi}_n))(p_i) = \langle w, p_i \rangle_n,
\]
where $\langle w, p_i \rangle_n$ is the image of $\langle w, p_i \rangle$ in $\N_n$. Since $n \geq n_1 \geq \langle w, p_i \rangle$, this implies that if $\widetilde{\psi}_n \in C_n^{\widetilde{w}}$, then $\langle \widetilde{w}, p_i \rangle = \langle w, p_i \rangle$ for all $i \in \{1, \dots, s\}$, and so $\beta(\widetilde{w}) = w$. 

Having shown $C_n^{\widetilde{w}}(\varphi)=\emptyset$ whenever $\widetilde{w} \notin \beta^{-1}(w)$, we need only show that if $\widetilde{w}_1, \widetilde{w}_2$ are distinct elements of $\beta^{-1}(w)$, then $(\widetilde{w}_1)_n \neq (\widetilde{w}_2)_n$. This follows from the fact that for each $i \in \{1, \dots, r\}$ and $\widetilde{w} \in \beta^{-1}(w)$,
\[
	\langle \widetilde{w}, f_i \rangle \leq n_1 \leq n.
\]

\setcounter{ResumeEnumerate}{\value{enumi}}
\end{enumerate}

\noindent For the rest of this proof, let $\widetilde{w} \in \beta^{-1}(w)$ and set the following notation: 
for each field extension $k''$ of $k'$, let $\varphi_{n, k''} \in \sL_n(X)(k'')$ be the composition 
\[
	\Spec(k''[t]/(t^{n+1})) \to \Spec(k'[t]/(t^{n+1})) \xrightarrow{\theta_n(\varphi)} X,
\] 
where the map $\Spec(k''[t]/t^{n+1}) \to \Spec(k'[t]/t^{n+1})$ is given by the $k'$-algebra map $k'[t]/(t^{n+1}) \to k''[t]/(t^{n+1}): t \mapsto t$. Note that the $k''$-points of $C_n^{\widetilde{w}}(\varphi)$ are precisely those $\widetilde{\psi}_n: \Spec(k''[t]/(t^{n+1})) \to \widetilde{X}$ such that $\trop_n(\widetilde{\psi}_n) = \widetilde{w}_n$ and such that the composition $ \Spec(k''[t]/(t^{n+1})) \xrightarrow{\widetilde{\psi}_n} \widetilde{X} \to X$ is equal to $\varphi_{n,k''}$. We now prove the remaining parts of the proposition.

\begin{enumerate}
\setcounter{enumi}{\value{ResumeEnumerate}}

\item Since $C_n^{\widetilde{w}}(\varphi)$ is reduced by definition, it suffices to show that for each field extension $k''$ of $k'$, we have that $C_n^{\widetilde{w}}(\varphi)(k'')$ is invariant under the action of $G_n^w(k'')$ on $\sL_n(\widetilde{X})(k'')$.

Let $k''$ be a field extension of $k'$, let $\widetilde{\psi}_n \in C_n^{\widetilde{w}}(\varphi)(k'')$, and let $g_n \in G_n^w(k'')$. Then for all $f \in F$,
\[
	(g_n \cdot \widetilde{\psi}_n)^*(f) = g_n^*(f) \widetilde{\psi}_n^*(f)
\]
has the same $t$-order of vanishing as $\widetilde{\psi}_n^*(f)$ because $g_n^*(f) \in k''[t]/(t^{n+1})$ is a unit. Thus 
\[
	\trop_n(g_n \cdot \widetilde{\psi}_n) = \trop_n(\widetilde{\psi}_n) = \widetilde{w}_n.
\]
We also have that for all $p \in P$,
\[
	(g_n \cdot \widetilde{\psi}_n)^*(p) = g_n^*(p) \widetilde{\psi}_n^*(p) = g_n^*(p) \varphi_{n,k''}^*(p) = \varphi_{n,k''}^*(p),
\]
where the last equality follows from the definition of $G_n^w$ and the fact that $\trop(\varphi) = w$ implies that $\varphi_{n,k''}^*(p)$ is divisible by $t^{\langle w, p \rangle}$. Therefore the composition $\Spec(k''[t]/(t^{n+1})) \xrightarrow{g_n \cdot \widetilde{\psi}_n} \widetilde{X} \to X$ is equal to $\varphi_{n,k''}$. Thus
\[
	g_n \cdot \widetilde{\psi}_n \in C_n^{\widetilde{w}}(\varphi)(k'').
\]

\item Let $k''$ be a field extension of $k'$, and let $\widetilde{\psi}_n, \widetilde{\psi}'_n \in C_n^{\widetilde{w}}(\varphi)(k'')$. We will first show that there exists some $g_n \in \sL_n(\widetilde{T})(k'')$ such that $g_n \cdot \widetilde{\psi}_n = \widetilde{\psi}'_n$.

Let $f_1, \dots, f_r$ be the minimal generators of $F$. For each $i \in \{1, \dots, r\}$, we have that $\widetilde{\psi}_n^*(f_i)$ and $(\widetilde{\psi}'_n)^*(f_i)$ have the same $t$-order of vanishing because $\trop_n(\widetilde{\psi}_n) = \widetilde{w} = \trop_n(\widetilde{\psi}'_n)$. Thus there exists a unit $g^{(i)} \in k[t]/(t^{n+1})$ such that
\[
	g^{(i)} \widetilde{\psi}_n^*(f_i) = (\widetilde{\psi}'_n)^*(f_i).
\]
Letting $g_n \in \sL_n(\widetilde{T})(k'')$ be such that $g_n^*(f_i) = g^{(i)}$
for all $i \in \{1, \dots, r\}$,
\[
	g_n \cdot \widetilde{\psi}_n = \widetilde{\psi}'_n.
\]

Now it suffices to show that if $g_n \in \sL_n(\widetilde{T})(k'')$ and $\widetilde{\psi}_n$, $g_n \cdot \widetilde{\psi}_n \in C_n^{\widetilde{w}}(\varphi)(k'')$, then $g_n \in G_n^w(k'')$. For all $p \in P$,
\[
	g_n^*(p) \varphi_{n,k''}^*(p) = g_n^*(p) \widetilde{\psi}_n^*(p) = (g_n \cdot \widetilde{\psi}_n)^*(p) = \varphi_{n,k''}^*(p),
\]
so because $\varphi_{n,k''}^*(p)$ is a unit multiple of $t^{\langle w, p \rangle}$, we have that $g_n \in G_n^w(k'')$ by definition.

\item Let $k''$ be a field extension of $k'$, let $\widetilde{\psi}_n \in C_n^{\widetilde{w}}(\varphi)(k'')$, and for any $k''$-algebra $R$, let $\widetilde{\psi}_{n,R} \in C_n^{\widetilde{w}}(\varphi)(R)$ be the composition
\[
	\Spec(R[t]/(t^{n+1})) \to \Spec(k''[t]/(t^{n+1})) \xrightarrow{\widetilde{\psi}_n} \widetilde{X}.
\]
Let $g_n \in G_n^w(R)$. Then for each $f \in F$, we have that $\widetilde{\psi}_{n,R}^*(f) \in R[t]/(t^{n+1})$ is the product of a unit in $k''[t]/(t^{n+1})$ and $t^{\langle \widetilde{w}, f\rangle}$, so
\[
	g_n^*(f) \widetilde{\psi}_{n,R}^*(f) = \widetilde{\psi}_{n,R}^*(f) \iff g_n^*(f)t^{\langle \widetilde{w}, f\rangle} = t^{\langle \widetilde{w}, f\rangle}.
\]
Therefore $g_n$ is in the stabilizer of $\widetilde{\psi}_n$ if and only if $g_n \in H_n^{\widetilde{w}}(R)$.\qedhere
\end{enumerate}
\end{proof}

In the next proposition, we use \autoref{MainPropositionFibersOfTheMapOfArcs} and \autoref{ComponentsOfFiberPiTildeGroupAction} to control the reduced fibers of each $\sL_n(\pi): \sL_n(\cX) \to \sL_n(X)$.

\begin{proposition}
\label{fibersAsStackQuotientOfGroupQuotient}
Let $w \in \sigma \cap N$. Then there exists some $n_2 \in \N$ such that for any $n \geq n_2$, any field extension $k'$ of $k$, and any $\varphi \in \sL(X)(k')$ with $\trop(\varphi) = w$, we have
\[
	\sL_n(\pi)^{-1}(\theta_n(\varphi))_\red \cong \left(\bigsqcup_{\widetilde{w} \in \beta^{-1}(w)} [(G_n^w/\sL_n(G)) / H_n^{\widetilde{w}}]\right) \otimes_k k',
\]
where 
$H_n^{\widetilde{w}}$ acts on $G_n^w/\sL_n(G)$ via the group homomorphism $H_n^{\widetilde{w}} \hookrightarrow G_n^w \to G_n^w/\sL_n(G)$ and left multiplication.
\end{proposition}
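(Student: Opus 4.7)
The plan is to take $n_2 := n_1$ from \autoref{ComponentsOfFiberPiTildeGroupAction} and assemble its content with \autoref{JetSchemeOfQuotientStackIsQuotientOfJetSchemes}. Since $\cX = [\widetilde X/G]$, the latter yields $\sL_n(\cX) \cong [\sL_n(\widetilde X)/\sL_n(G)]$, and base-changing along $\theta_n(\varphi)\colon\Spec k' \to \sL_n(X)$ gives
\[
\sL_n(\pi)^{-1}(\theta_n(\varphi)) \cong [\sL_n(\widetilde\pi)^{-1}(\theta_n(\varphi)) / (\sL_n(G)\otimes_k k')]
\]
as stacks over $k'$. Since $G$ is diagonalizable, it is smooth in characteristic $0$, so each $\sL_n(G)$ is smooth as well (via the iterated extensions by $\mathfrak g$ from \autoref{jetSchemeOfSpecialGroupIsSpecial}). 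Because passing to the reduced closed substack commutes with quotienting by a smooth group, I obtain
\[
\sL_n(\pi)^{-1}(\theta_n(\varphi))_\red \cong [\sL_n(\widetilde\pi)^{-1}(\theta_n(\varphi))_\red / \sL_n(G)].
\]

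Next, I would apply \autoref{ComponentsOfFiberPiTildeGroupAction}(1) to decompose the reduced fiber as $\bigsqcup_{\widetilde w \in \beta^{-1}(w)} C_n^{\widetilde w}(\varphi)$. Since $\sL_n(G) \subset G_n^w$ by \autoref{JetSchemeOfGIsClosedSubgroupOfGnw}, and $G_n^w$ preserves each $C_n^{\widetilde w}(\varphi)$ by \autoref{ComponentsOfFiberPiTildeGroupAction}(2), the subgroup $\sL_n(G)$ also preserves each component. Hence the quotient stack splits as $\bigsqcup_{\widetilde w \in \beta^{-1}(w)} [C_n^{\widetilde w}(\varphi)/\sL_n(G)]$, reducing the theorem to producing, for each $\widetilde w \in \beta^{-1}(w)$, an isomorphism
\[
[C_n^{\widetilde w}(\varphi)/\sL_n(G)] \cong [(G_n^w/\sL_n(G))/H_n^{\widetilde w}] \otimes_k k'.
\]

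To produce this isomorphism I would exploit that $G_n^w$, $\sL_n(G)$, and $H_n^{\widetilde w}$ all lie inside the commutative group $\sL_n(\widetilde T)$, so quotients freely commute and $[(G_n^w/\sL_n(G))/H_n^{\widetilde w}] = [G_n^w/(H_n^{\widetilde w}\times\sL_n(G))]$. The surjectivity half of \autoref{MainPropositionFibersOfTheMapOfArcs} supplies an arc $\psi \in \sL(\cX)(k')$ with $\sL(\pi)(\psi)=\varphi$ and $\trop(\psi)=\widetilde w$, whose $n$-truncation $\psi_n$ is a $k'$-point of $[C_n^{\widetilde w}(\varphi)/\sL_n(G)]$ encoded by an $\sL_n(G)\otimes k'$-torsor $Q \to \Spec k'$ with an $\sL_n(G)$-equivariant map $\mu\colon Q \to C_n^{\widetilde w}(\varphi)$. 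Out of $(Q,\mu)$ I would build a morphism $f\colon G_n^w \otimes k' \to [C_n^{\widetilde w}(\varphi)/\sL_n(G)]$ sending $g$ to the pair $(G_n^w\otimes Q,\ (g,q)\mapsto g\cdot\mu(q))$, and then verify it is $(H_n^{\widetilde w}\times\sL_n(G))\otimes k'$-invariant: the $H_n^{\widetilde w}$-invariance uses crucially that $H_n^{\widetilde w}$ fixes every geometric point of $C_n^{\widetilde w}(\varphi)$ by \autoref{ComponentsOfFiberPiTildeGroupAction}(4), so $h\cdot\mu(q)=\mu(q)$, while the $\sL_n(G)$-invariance follows from commutativity inside $\sL_n(\widetilde T)$. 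This produces the desired morphism
\[
\overline f\colon [G_n^w/(H_n^{\widetilde w}\times\sL_n(G))] \otimes_k k' \longrightarrow [C_n^{\widetilde w}(\varphi)/\sL_n(G)],
\]
and to check $\overline f$ is an isomorphism I would base change to a finite extension $k''/k'$ trivializing $Q$, so that \autoref{ComponentsOfFiberPiTildeGroupAction}(3) and (4) force the induced orbit map $(G_n^w/H_n^{\widetilde w})\otimes k'' \to C_n^{\widetilde w}(\varphi)\otimes_{k'}k''$ to be a scheme isomorphism (using reducedness in characteristic $0$).

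The main obstacle will be the final descent step: constructing $\overline f$ directly over $k'$ even when $C_n^{\widetilde w}(\varphi)$ may fail to have a $k'$-rational point. The key idea is that although the naive orbit map requires a rational base point, using the \emph{stack}-valued $k'$-point $\psi_n$ in place of a rational point, together with the commutativity of $\sL_n(\widetilde T)$ to absorb the ambiguity into the $\sL_n(G)$-torsor structure, allows one to define $f$ canonically over $k'$; faithfully flat descent along $\Spec k'' \to \Spec k'$ then upgrades the isomorphism over $k''$ to the required isomorphism over $k'$.
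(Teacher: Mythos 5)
Your argument is correct in outline, but it handles the key difficulty---the possible absence of a $k'$-rational point of $C_n^{\widetilde w}(\varphi)$---by a genuinely different mechanism than the paper. The first three steps (the presentation $\sL_n(\cX)\cong[\sL_n(\widetilde X)/\sL_n(G)]$ from \autoref{JetSchemeOfQuotientStackIsQuotientOfJetSchemes}, compatibility of reduction with quotients by smooth groups, and the decomposition into the pieces $[C_n^{\widetilde w}(\varphi)/\sL_n(G)]$ via \autoref{ComponentsOfFiberPiTildeGroupAction}) coincide with the paper's proof. For the identification of each piece, the paper splits $G\cong T'\times G'$ into a torus and a finite group, invokes \autoref{powerofadditivegroup} (this is why it needs the larger threshold $n_2\geq 2\langle\widetilde w,f_i\rangle-1$) to see $\sL_n(G')\cap H_n^{\widetilde w}=\{1\}$, forms the scheme quotient $C_n^{\widetilde w}(\varphi)/\sL_n(G')$, lifts the $k'$-point supplied by \autoref{MainPropositionFibersOfTheMapOfArcs} to that scheme using specialness of $\sL_n(T')$, and then applies the Demazure--Gabriel orbit-map rigidity over $k'$ before quotienting back by $\sL_n(T')$. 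You avoid the splitting entirely: you use the torsor-plus-equivariant-map description of the stacky $k'$-point $\theta_n(\psi)$ to construct the comparison morphism $\overline f$ over $k'$ directly, exploiting commutativity of $\sL_n(\widetilde T)$ and the fact that $H_n^{\widetilde w}$ acts trivially on $C_n^{\widetilde w}(\varphi)$, and then check that $\overline f$ is an isomorphism after a base change trivializing the torsor (where the same orbit-map argument applies), concluding because being an isomorphism is fppf-local on the base. This buys a smaller threshold: $n_2=n_1$ suffices, since you never need $H_n^{\widetilde w}$ to be a vector group---for $n\geq n_1$ it lies in the kernel of $\sL_n(\widetilde T)\to\widetilde T$, hence is unipotent, so its intersection with any finite subgroup is automatically trivial in characteristic $0$. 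The cost is some bookkeeping the paper's scheme-level route avoids, and you should make two points explicit: (i) descending $f$ to $[G_n^w/(H_n^{\widetilde w}\times\sL_n(G))]$ requires genuine equivariance data satisfying the cocycle condition, not just pointwise invariance (your isomorphisms $q\mapsto a\cdot q$ of the torsor do supply it, by commutativity); and (ii) the identity $h\cdot\mu(q)=\mu(q)$ must hold for scheme-valued, not merely geometric, points---this follows since the action and projection maps $H_n^{\widetilde w}\times C_n^{\widetilde w}(\varphi)\to C_n^{\widetilde w}(\varphi)$ agree on all field-valued points and the source is reduced (characteristic $0$) with separated target, or directly from the fact that on the reduced scheme $C_n^{\widetilde w}(\varphi)$ the pullback of each $f\in F$ is $t^{\langle\widetilde w,f\rangle}$ times a unit. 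With those two verifications spelled out, your proof is complete and independent of \autoref{powerofadditivegroup}.
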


\begin{proof}
Let $f_1, \dots, f_r$ be the minimal generators of $F$. Let $n_1$ be as in the statement of \autoref{ComponentsOfFiberPiTildeGroupAction}, and let $n_2 \geq n_1$ be such that $n_2 \geq \max_{i=1}^r( 2\langle \widetilde{w}, f_i\rangle - 1)$ for all $\widetilde{w} \in \beta^{-1}(w)$. Note that we can choose such an $n_2$ by \autoref{fiberabovewisfiniteset}.

Let $n \geq n_2$, let $k'$ be a field extension of $k$, and let $\varphi \in \sL(X)(k')$ be such that $\trop(\varphi) = w$. By \autoref{JetSchemeOfQuotientStackIsQuotientOfJetSchemes}, there exists an isomorphism $\sL_n(\cX) \xrightarrow{\sim} [\sL_n(\widetilde{X}) / \sL_n(G)]$ such that the following diagram commutes:
\begin{center}
\begin{tikzcd}
\sL_n(\widetilde{X}) \arrow[r] \arrow[rd] \arrow[rr, bend left, "\sL_n(\widetilde{\pi})"] & \sL_n(\cX) \arrow[Isom,d] \arrow[r, "\sL_n(\pi)"] & \sL_n(X)\\
& \phantom{} [\sL_n(\widetilde{X}) / \sL_n(G)]
\end{tikzcd}
\end{center}
Therefore
\[
	\sL_n(\pi)^{-1}(\theta_n(\varphi)) \cong [\sL_n(\widetilde{\pi})^{-1}(\theta_n(\varphi)) / (\sL_n(G) \otimes_k k')],
\]
where $\sL_n(G) \otimes_k k'$ acts on $\sL_n(\widetilde{\pi})^{-1}(\theta_n(\varphi))$ by restriction of its action on $\sL_n(\widetilde{X}) \otimes_k k'$, which itself is the action induced by the inclusion $\sL_n(G) \hookrightarrow \sL_n(\widetilde{T})$ and the functor $\sL_n$ applied to the toric action $\widetilde{T} \times_k \widetilde{X} \to \widetilde{X}$. Thus by \autoref{JetSchemeOfGIsClosedSubgroupOfGnw} and \autoref{RemarkGnwActionOJetSchemeOfCover}, the action of $\sL_n(G) \otimes_k k'$ on $\sL_n(\widetilde{\pi})^{-1}(\theta_n(\varphi))$ is the restriction of the action on $\sL_n(\widetilde{X}) \otimes_k k'$ induced by the inclusion $\sL_n(G) \otimes_k k' \hookrightarrow G_n^w \otimes_k k'$ and the action of $G_n^w \otimes_k k'$ on $\sL_n(\widetilde{X}) \otimes_k k'$. Thus \autoref{ComponentsOfFiberPiTildeGroupAction}(\ref{ComponentsOfFiberPiTildeInvariantUnderAction}) implies that for all $\widetilde{w} \in \beta^{-1}(w)$, we have $C_n^{\widetilde{w}}(\varphi)$ is invariant under the $\sL_n(G) \otimes_k k'$ action, so by \autoref{ComponentsOfFiberPiTildeGroupAction}(\ref{DecomposeFiberIntoCnw}),
\[
	\sL_n(\pi)^{-1}(\theta_n(\varphi))_\red \cong \bigsqcup_{\widetilde{w} \in \beta^{-1}(w)} [C_n^{\widetilde{w}}(\varphi) / (\sL_n(G) \otimes_k k') ].
\]

Let $\widetilde{w} \in \beta^{-1}(w)$. It will be sufficient to prove that
\[
	[C_n^{\widetilde{w}}(\varphi) / (\sL_n(G) \otimes_k k') ] \cong [(G_n^w/\sL_n(G)) / H_n^{\widetilde{w}}] \otimes_k k'.
\]
We begin by establishing that $C_n^{\widetilde{w}}(\varphi) / (\sL_n(G') \otimes_k k')$ is an affine scheme with a $k'$-point. Since $G$ is a diagonalizable group scheme, we have $G\cong T' \times_k G'$ where $T'$ is a torus and $G'$ is a finite group. This yields an identification
\[
	\sL_n(G) \otimes_k k' \cong (\sL_n(T') \times_k \sL_n(G')) \otimes_k k'.
\]
Note that $\sL_n(G')\cong G'$ since $G'$ is a finite group. By \autoref{powerofadditivegroup}, our choice of $n_2$, and the fact that $k$ has characteristic 0, we then see
\[
	\{1\}=\sL_n(G') \cap H_n^{\widetilde{w}} \subset G_n^w.
\]
Thus \autoref{ComponentsOfFiberPiTildeGroupAction}(\ref{StabilizerOfActionOnComponents}) implies that $\sL_n(G') \otimes_k k'$ acts freely on $C_n^{\widetilde{w}}(\varphi)$. Note that $C_n^{\widetilde{w}}(\varphi)$ is affine because $\sL_n(\widetilde{X})$ and $\sL_n(X)$ are affine, so
\[
C_n^{\widetilde{w}}(\varphi) / (\sL_n(G') \otimes_k k')\longrightarrow [C_n^{\widetilde{w}}(\varphi) / (\sL_n(G) \otimes_k k') ]
\]
is a $(\sL_n(T') \otimes_k k')$-torsor and the source is an affine scheme. By \autoref{MainPropositionFibersOfTheMapOfArcs}, there exists some $\psi \in (\sL(\pi)^{-1}(\varphi))(k')$ with $\trop(\psi) = \widetilde{w}$, so
\[
	\theta_n(\psi) \in [C_n^{\widetilde{w}}(\varphi) / (\sL_n(G) \otimes_k k') ](k').
\]
Since $\sL_n(T')$ is a special group by \autoref{jetSchemeOfSpecialGroupIsSpecial}, $\theta_n(\psi)$ lifts to a $k'$-point of $C_n^{\widetilde{w}}(\varphi) / (\sL_n(G') \otimes_k k')$.  

Next, by \autoref{ComponentsOfFiberPiTildeGroupAction}(\ref{ComponentsOfFiberPiTildeInvariantUnderAction}, \ref{TransitiveActionOnComponents}, \ref{StabilizerOfActionOnComponents}), the group $(G_n^w / \sL_n(G')) \otimes_k k'$ acts transitively on $C_n^{\widetilde{w}}(\varphi) / (\sL_n(G') \otimes_k k')$ and each $k'$-point has stabilizer $(H_n^{\widetilde{w}} / (H_n^{\widetilde{w}} \cap \sL_n(G')) \otimes_k k'$, so \cite[III, 3, Proposition 5.2]{DemazureGabriel} gives an $(\sL_n(T') \otimes_k k')$-equivariant isomorphism
\begin{align*}
	C_n^{\widetilde{w}}(\varphi) / (\sL_n(G') \otimes_k k') &\cong \left( C_n^{\widetilde{w}}(\varphi) / (\sL_n(G') \otimes_k k') \right)_\red \\
	&\cong \left( \left(G_n^w / \sL_n(G')\right)/\left( H_n^{\widetilde{w}} / (H_n^{\widetilde{w}} \cap \sL_n(G'))\right) \right)\otimes_k k'\\
	&\cong [(G_n^w / H_n^{\widetilde{w}}) / \sL_n(G')] \otimes_k k',
\end{align*}
where the last isomorphism holds since $\sL_n(G') \cap H_n^{\widetilde{w}} = \{1\}$. Taking the quotient by $\sL_n(T') \otimes_k k'$, we obtain
\[
	[C_n^{\widetilde{w}}(\varphi) / (\sL_n(G) \otimes_k k') ] \cong [(G_n^w / H_n^{\widetilde{w}}) / \sL_n(G)] \otimes_k k' \cong [(G_n^w/\sL_n(G)) / H_n^{\widetilde{w}}] \otimes_k k'.\qedhere
\]
\end{proof}

We may now complete the proof of \autoref{fibersofhtemapsofjetsmainproposition}.

\begin{proof}[Proof of \autoref{fibersofhtemapsofjetsmainproposition}]
Let $w \in \sigma \cap N$, let $f_1, \dots, f_r$ be the minimal generators of $F$, let $n'_w$ and $j'_w$ be as in the statement of \autoref{GroupQuotientIsAffineSpace}, and let $n_2$ be as in the statement of \autoref{fibersAsStackQuotientOfGroupQuotient}. Recalling that $\beta^{-1}(w)$ is a finite set by \autoref{fiberabovewisfiniteset}, set
\[
	n_w = \max \{ n'_w, n_2, 2 \langle \widetilde{w}, f_i \rangle - 1 \mid i \in \{1, \dots, r\}, \widetilde{w} \in \beta^{-1}(w)  \},
\]
and
\[
	\Theta_w = \sum_{\widetilde{w} \in \beta^{-1}(w)} \bL^{j'_w - \sum_{i=1}^r \langle \widetilde{w}, f_i \rangle} \in K_0(\Stack_k),
\]
and for each $n \geq n_w$, set
\[
	\cF_n = \bigsqcup_{\widetilde{w} \in \beta^{-1}(w)} [(G_n^w/\sL_n(G)) / H_n^{\widetilde{w}}].
\]
We now finish proving each part of \autoref{fibersofhtemapsofjetsmainproposition} separately.
\begin{enumerate}

\item For all $n \geq n_w$,
\begin{align*}
	\e(\cF_n) &= \sum_{\widetilde{w} \in \beta^{-1}(w)} \e([(G_n^w/\sL_n(G)) / H_n^{\widetilde{w}}])\\
	&= \sum_{\widetilde{w} \in \beta^{-1}(w)} \e(G_n^w / \sL_n(G)) \bL^{-\sum_{i=1}^r \langle \widetilde{w}, f_i \rangle}
	= \sum_{\widetilde{w} \in \beta^{-1}(w)} \bL^{j'_w - \sum_{i=1}^r \langle \widetilde{w}, f_i \rangle}
	= \Theta_w,
\end{align*}
where the second equality follows from \autoref{powerofadditivegroup} and the fact that $\bG_a$ is a special group, and the third equality follows from \autoref{GroupQuotientIsAffineSpace}.

\item This is \autoref{fibersAsStackQuotientOfGroupQuotient}, i.e., it follows from our choice of $n_2$ and each $\cF_n$.\qedhere
\end{enumerate}
\end{proof}

\section{Gorenstein measure and toric varieties}\label{sectionGorensteinMeasureToricVariety}

Let $d \in \N$, let $N \cong \Z^d$ be a lattice, let $T = \Spec(k[N^*])$ be the algebraic torus with co-character lattice $N$, let $\sigma$ be a pointed rational cone in $N_\R$, and let $X$ be the affine $T$-toric variety associated to $\sigma$. We assume that $X$ is $\Q$-Gorenstein and let $m \in \Z_{>0}$ and $q \in N^*$ be such that if $v$ is the first lattice point of any ray of $\sigma$,
\[
	\langle v, q \rangle = m.
\]
Then $mK_X$ is Cartier, so we have the ideal sheaf $\sJ_{X,m}$ on $X$. Also note that any $\Q$-Gorenstein toric variety has log-terminal singularities \cite[Corollary 4.2]{Batyrev98}, so the Gorenstein measure $\mu^\Gor_X$ is well defined.

In this section, we prove \autoref{mainPropositionGorensteinMeasureToricVariety} below about the Gorenstein measure $\mu^\Gor_X$. In \autoref{sectionStringInvariantsAndToricArtinStacks}, we will use this theorem and \autoref{mainPropositionMotivicIntegralsCanonicalFantastacks} to compare $\mu^\Gor_X$ with the motivic measure $\mu_\cX$ of the canonical stack $\cX$ over $X$. 

Although we will only use \autoref{mainPropositionGorensteinMeasureToricVariety} in the case where $\sigma$ is $d$-dimensional, there is no need to make that assumption on $\sigma$ in this section.

\begin{theorem}
\label{mainPropositionGorensteinMeasureToricVariety}
Let $w \in \sigma \cap N \subset \Hom(\sigma^\vee \cap N^*, \N \cup \{\infty\})$.
\begin{enumerate}[(a)]

\item\label{GorensteinMeasureOfSubsetOfFiberOfTrop} The restriction of $\ord_{\sJ_{X,m}}$ to $\trop^{-1}(w) \subset \sL(X)$ is constant and not equal to infinity. In particular, there exists some $j_w \in \Z$ such that for any measurable subset $C \subset \trop^{-1}(w) \subset \sL(X)$,
\[
	\mu^\Gor_X(C) = (\bL^{1/m})^{j_w} \mu_X(C)  \in \widehat{\sM}_k[\bL^{1/m}].
\]

\item\label{GorensteinMeasureOfFiberOfTrop} The set $\trop^{-1}(w) \subset \sL(X)$ is measurable and
\[
	\mu^\Gor_X(\trop^{-1}(w)) = \bL^{-d}(\bL-1)^d (\bL^{1/m})^{- \langle w, q \rangle} \in \widehat{\sM}_k[\bL^{1/m}].
\]

\end{enumerate}
\end{theorem}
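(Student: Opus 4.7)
For part (a), I would compute $\sJ_{X,m}$ explicitly as a monomial ideal on $X$. Since $mK_X = -\mathrm{div}(\chi^q)$, the sheaf $\omega_{X,m}$ is trivialized by the rational section $\chi^q\omega_0^{\otimes m}$, where $\omega_0 = \bigwedge_i d\chi^{e_i^*}/\chi^{e_i^*}$ is the $T$-invariant top form associated to a $\Z$-basis $\{e_i^*\}$ of $M = N^*$. The sheaf $(\Omega_X^d)^{\otimes m}$ is generated as an $\cO_X$-module by $m$-fold tensor products of wedges $d\chi^{p_1}\wedge\cdots\wedge d\chi^{p_d}$ with $p_j$ among monoid generators of $P$, each of which equals $[p_1,\ldots,p_d]\chi^{p_1+\cdots+p_d}\omega_0$ for the lattice index $[p_1,\ldots,p_d] := \det[\langle e_i, p_j\rangle]$. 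Expressing the image of $(\Omega_X^d)^{\otimes m}$ in $\omega_{X,m}$ relative to the generator $\chi^q\omega_0^{\otimes m}$ identifies $\sJ_{X,m}$ as the monomial ideal generated by the characters $\chi^{m(\sum_{\ell,j}p_j^{(\ell)})-q}$ (the integer coefficients being irrelevant in characteristic zero). Since $\ord_t\varphi^*(\chi^u) = \langle w, u\rangle$ for any $\varphi\in\trop^{-1}(w)$ and any $u\in M$, $\ord_{\sJ_{X,m}}(\varphi)$ depends only on $w$; call this common value $j_w$. Explicitly, $j_w = mA_w - \langle w, q\rangle$, where $A_w := \min_{[p_1,\ldots,p_d]\neq 0}\langle w, p_1+\cdots+p_d\rangle$ with the minimum over $d$-tuples of monoid generators of $P$ with nonzero lattice index. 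The formula $\mu^\Gor_X(C) = (\bL^{1/m})^{j_w}\mu_X(C)$ then follows directly from the definition of $\mu^\Gor_X$ as a sum over level sets of $\ord_{\sJ_{X,m}}$.

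For part (b), I would directly compute $\mu_X(\trop^{-1}(w))$ from the cylinder definition and combine with (a). By \autoref{fiberOfTropIsCylinder}, $\trop^{-1}(w)\subset\sL(X)$ is a cylinder, so $\mu_X(\trop^{-1}(w)) = \e(\theta_n(\trop^{-1}(w)))\bL^{-(n+1)d}$ for $n$ sufficiently large. The natural bijection $\trop^{-1}(w)\xrightarrow{\sim}\sL(T)$ given by $\varphi\mapsto(u\mapsto\varphi^*(\chi^u)/t^{\langle w,u\rangle})$ induces a morphism $\sL_n(T)\to\theta_n(\trop^{-1}(w))$, which I would show, for $n$ large, is a piecewise trivial fibration with fiber $\bA^{A_w}$. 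Combined with $\e(\sL_n(T)) = (\bL-1)^d\bL^{nd}$, this gives $\e(\theta_n(\trop^{-1}(w))) = (\bL-1)^d\bL^{nd-A_w}$ and hence $\mu_X(\trop^{-1}(w)) = (\bL-1)^d\bL^{-d-A_w}$. Applying (a) and the identity $j_w - mA_w = -\langle w, q\rangle$, we conclude $\mu^\Gor_X(\trop^{-1}(w)) = (\bL^{1/m})^{j_w}\mu_X(\trop^{-1}(w)) = \bL^{-d}(\bL-1)^d(\bL^{1/m})^{-\langle w, q\rangle}$, as required.

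The main obstacle is verifying that $\sL_n(T)\to\theta_n(\trop^{-1}(w))$ is piecewise a trivial $\bA^{A_w}$-bundle. Care is required here because $\sL_n(X)$ can be singular or non-reduced over the singular locus of $X$, and one must distinguish between $n$-jets merely satisfying the tropicalization conditions and the toric relations modulo $t^{n+1}$, and those $n$-jets actually arising as truncations of arcs; the latter, which comprise $\theta_n(\trop^{-1}(w))$, are cut out by additional obstruction equations at degrees $t^{n+1}, t^{n+2},\ldots$ whose cumulative effect recovers the combinatorial quantity $A_w$. The combinatorics of $P$ relative to a basis of $M$ then controls how the top-degree torus coefficients of $\tilde u$ become forgotten or constrained under the shift $\varphi^*(\chi^u) = t^{\langle w,u\rangle}\tilde u(u)$ across the various monoid generators; an alternative route would be to reduce (b) to the smooth case via a toric desingularization $h\colon Y\to X$ and apply change of variables, using the explicit toric discrepancy $mK_{Y/X} = \sum_v(\langle v, q\rangle - m)D_v$.
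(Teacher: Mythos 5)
Your part (a) is essentially the paper's own argument: you trivialize $\omega_{X,m}$ by $\chi^q\cdot(\diff\log\chi^{e_1}\wedge\cdots\wedge\diff\log\chi^{e_d})^{\otimes m}$, express the generating sections of $(\Omega_X^d)^{\otimes m}$ against it, and conclude that $\sJ_{X,m}$ is a monomial ideal (cf.\ \autoref{GorensteinIdealIsGeneratedByMonomials}), so that $\ord_{\sJ_{X,m}}$ is constant, with finite value, on $\trop^{-1}(w)$; your value $j_w=mA_w-\langle w,q\rangle$ agrees with the paper's remark (the displayed generator ``$\chi^{m(\sum_{\ell,j}p_j^{(\ell)})-q}$'' has a stray factor of $m$, but this does not affect $j_w$). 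Part (a) is fine.

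Part (b), however, has a genuine gap exactly where you flag ``the main obstacle'': the assertion that, for $n\gg 0$, the morphism $\sL_n(T)\to\theta_n(\trop^{-1}(w))$, $\psi_n\mapsto\bigl(p\mapsto t^{\langle w,p\rangle}\psi_n^*(\chi^p)\bigr)$, is a piecewise trivial fibration with fiber $\bA^{A_w}$ is stated but never established, and it is the entire content of the computation. Two concrete ingredients are missing. First, to apply \autoref{piecewiseTrivialFibrationCriterion} you must know that every point of the constructible set $\theta_n(\trop^{-1}(w))$ lifts, over its own residue field, to an arc in $\trop^{-1}(w)$; this is \autoref{liftingJetsWithCorrectField} and holds only for $n$ large, and without it you cannot even identify the fibers. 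Second, granting a lift $\varphi=t^w\cdot\psi_0$, the fiber through $\psi_0$ is a coset of the subgroup $H_n=\{g_n\in\sL_n(T)\mid g_n^*(p)\,t^{\langle w,p\rangle}=t^{\langle w,p\rangle}\ \text{for all } p\in P\}$ (the group of \autoref{groupQuotientAsStabilizerOfJetInGoodModuliSpace}); the paper shows such a group is an affine space $\bA^{j'_w}$ for $n\gg0$ (\autoref{GroupQuotientIsAffineSpace}), but its dimension there is $j'_w=\sum_{\ell\geq1}\bigl(d-\rk\,\mathrm{span}\{p_i:\langle w,p_i\rangle<\ell\}\bigr)$, and identifying this with your $A_w=\min\{\langle w,p_{i_1}+\cdots+p_{i_d}\rangle\}$ over linearly independent $d$-tuples of generators is a genuine minimum-weight-basis (greedy/matroid) argument that your phrase ``cumulative effect recovers $A_w$'' does not supply. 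The claim is in fact true, and with it your arithmetic yields the stated formula, so the route is viable --- but as written the heart of (b) is unproven. Note that the paper avoids all of this: it treats $w=0$ directly, and for $w\neq0$ passes to the smooth toric modification $Y\cong\bA^1\times\bG_m^{d-1}$ given by the single ray $\R_{\geq0}w$, identifies $\ord_{\cO(-D)}^{-1}(\ell)$ with $\trop^{-1}(w)$ at the level of arcs (\autoref{bijectionOfFibersOfTropToricModificationUsingRay}), and applies the change of variables formula with discrepancy $\bigl(\tfrac{1}{\ell}\langle w,q\rangle-m\bigr)D$; your suggested fallback via a full toric desingularization would also work but is heavier than needed and still requires the analogous arc-level identification and discrepancy bookkeeping, which the proposal likewise leaves unverified.
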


\begin{remark}
Summing over $w \in \sigma \cap N$, \autoref{mainPropositionGorensteinMeasureToricVariety}(\ref{GorensteinMeasureOfFiberOfTrop}) gives Batyrev's formula \cite[Theorem 4.3]{Batyrev98} for the stringy Hodge--Deligne invariant of a toric variety. Furthermore,  \autoref{mainPropositionGorensteinMeasureToricVariety}(\ref{GorensteinMeasureOfFiberOfTrop}) appears to be a special case of \cite[Lemma 4.5]{BatyrevMoreau}. For the benefit of the reader, we include a short self-contained proof.
\end{remark}

\begin{remark}
When $w$ is an integer combination of lattice points on the rays of $\sigma$, we have that $\langle w, q \rangle$ is divisible by $m$, so in that case \autoref{mainPropositionGorensteinMeasureToricVariety}(\ref{GorensteinMeasureOfFiberOfTrop}) implies
\[
	\mu^\Gor_X(\trop^{-1}(w)) \in \widehat{\sM}_k.
\]
\end{remark}

\subsection{Gorenstein measure and monomial ideals} For the remainder of this section, let $M = N^*$, let $P = \sigma^\vee \cap M$, and for each $p \in P$, let $\chi^p \in k[P]$ be the monomial indexed by $p$. 

If $\sJ$ is a nonzero ideal sheaf on $X$ generated by monomials $\{\chi^{p_i}\}_i$, then for any $\varphi \in \sL(X)$ with $\trop(\varphi) = w$,
\[
	\ord_{\sJ}(\varphi) = \min_i \langle w, p_i \rangle \in \Z_{\geq 0}.
\]
Therefore to prove \autoref{mainPropositionGorensteinMeasureToricVariety}(\ref{GorensteinMeasureOfSubsetOfFiberOfTrop}), it is sufficient to show that the ideal $\sJ_{X,m}$ is generated by monomials.

For the remainder of this subsection, fix a basis $e_1, \dots, e_d$ for $M$, and for any $p_1, \dots, p_d \in P$, set
\[
	c(p_1, \dots, p_d) = \det( (a_{i, j})_{i, j}) \in \Z,
\]
where the $a_{i, j} \in \Z$ are such that $p_j = \sum_{i = 1}^d a_{i, j} e_i$ for all $j \in \{1, \dots, d\}$. For any $(p_{i,j})_{i \in \{1, \dots, m\},j \in \{1, \dots, d\}} \in P^{md}$, set
\[
	z((p_{i,j})_{i,j}) = \chi^{-q} \prod_{i = 1}^m c(p_{i,1}, \dots, p_{i,d}) \chi^{p_{i,1} + \dots + p_{i,d}} \in k[M].
\]

\begin{lemma}
Let $(p_{i,j})_{i ,j} \in P^{md}$. Then
\[
	z((p_{i,j})_{i,j}) \in k[P].
\]
\end{lemma}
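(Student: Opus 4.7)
The plan is to reduce to showing that a single lattice point lies in $P$. If any of the determinants $c(p_{i,1}, \dots, p_{i,d})$ vanishes, then $z((p_{i,j})_{i,j}) = 0$ and we are done. So I would assume all of them are nonzero; in this case $z((p_{i,j})_{i,j})$ is a nonzero scalar times $\chi^{u}$ where $u = \sum_{i,j} p_{i,j} - q \in M$, and it suffices to show $u \in \sigma^\vee$.

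To check $u \in \sigma^\vee$, I would verify $\langle v_\rho, u \rangle \geq 0$ for every first lattice point $v_\rho$ of every ray $\rho$ of $\sigma$. Since $\langle v_\rho, q \rangle = m$ by the $\Q$-Gorenstein hypothesis on the choice of $q$, this reduces to proving the inequality
\[
\sum_{i=1}^m \sum_{j=1}^d \langle v_\rho, p_{i,j} \rangle \;\geq\; m.
\]

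The key observation is that for each fixed $i$, the nonvanishing of $c(p_{i,1}, \dots, p_{i,d})$ means that $p_{i,1}, \dots, p_{i,d}$ are linearly independent in $M$, hence form a basis of $M_\Q$. Since $v_\rho$ is a nonzero element of $N$, the linear functional it defines on $M_\Q$ is not identically zero, so there must be some $j$ with $\langle v_\rho, p_{i,j}\rangle \neq 0$. Because $p_{i,j} \in P \subset \sigma^\vee$ and $v_\rho \in \sigma$, all pairings $\langle v_\rho, p_{i,j}\rangle$ are nonnegative integers, so the nonzero one is at least $1$. Hence $\sum_j \langle v_\rho, p_{i,j} \rangle \geq 1$ for each $i$, and summing over the $m$ values of $i$ yields the desired inequality. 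I do not anticipate any serious obstacle — this is essentially a direct verification using the defining property of $q$ together with the link between the determinant condition and spanning in $M_\Q$.
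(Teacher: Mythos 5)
Your proof is correct and is essentially the contrapositive of the paper's argument: the paper assumes the exponent $-q+\sum_{i,j}p_{i,j}$ fails to lie in $P$, finds a ray generator $v$ with negative pairing, and concludes from $\langle v,q\rangle=m$ that some block has all pairings zero and hence vanishing determinant, while you assume all determinants nonzero and run the same pairing estimate in the other direction. The ingredients (the defining property $\langle v,q\rangle=m$, nonnegativity of $\langle v,p_{i,j}\rangle$, and the determinant/spanning link) are identical, so this is the same approach.
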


\begin{proof}
If $-q + \sum_{ i = 1}^m \sum_{j =1}^d p_{i,j} \in P$, we are done, so we may assume that there exists some first lattice point $v$ of a ray of $\sigma$ such that $\langle v, -q + \sum_{ i = 1}^m \sum_{j =1}^d p_{i,j} \rangle < 0$. Then
\[
	0 > \langle v, -q + \sum_{ i = 1}^m \sum_{j =1}^d p_{i,j}\rangle = \sum_{i =1}^m \langle v,  -\frac{1}{m} q + \sum_{j =1}^d p_{i,j} \rangle = \sum_{i=1}^m \left(-1 + \sum_{j=1}^d \langle v, p_{i,j} \rangle \right),
\]
so for some $i \in \{1, \dots, m\}$, we have $\langle v, p_{i,j} \rangle = 0$ for all $j \in \{1, \dots, d\}$, so
\[
	c(p_{i,1}, \dots, p_{i,d}) = 0,
\]
which implies that $z((p_{i,j})_{i,j}) = 0 \in k[P]$.
\end{proof}

We now prove the next proposition, which as discussed, immediately implies \autoref{mainPropositionGorensteinMeasureToricVariety}(\ref{GorensteinMeasureOfSubsetOfFiberOfTrop}). Note that because $\sigma$ is pointed, $P^\gp = M$, so the $c(p_1, \dots, p_d)$ are not all equal to 0 and the $z((p_{i,j})_{i,j})$ are not all equal to 0.

\begin{proposition}
\label{GorensteinIdealIsGeneratedByMonomials}
The ideal $\sJ_{X,m}$ is generated by the set
\[
	\{ z((p_{i,j})_{i,j}) \mid (p_{i,j})_{i ,j} \in P^{md}\}.
\]
\end{proposition}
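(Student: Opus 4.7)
My plan is to identify an explicit global generator of the line bundle $\omega_{X,m}$ and then compute the image of the natural map $(\Omega_X^d)^{\otimes m}\to\omega_{X,m}$ in terms of that generator; the generators of $\sJ_{X,m}$ can then be read off directly.

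Set $\omega:=\bigwedge_{i=1}^d\frac{d\chi^{e_i}}{\chi^{e_i}}$, the $T$-invariant rational $d$-form on $X$ that trivializes $\Omega^d$ on the torus. Since $X$ is affine and $mK_X$ is Cartier, the reflexive sheaf $\omega_{X,m}\cong\cO_X(mK_X)$ is free of rank one. I claim $\Omega:=\chi^q\,\omega^{\otimes m}$ is a global generator. Using the toric identity $K_X=-\sum_\rho D_\rho$, the rational section $\omega^{\otimes m}$ of $\omega_{X,m}$ has divisor $mK_X$, while $\mathrm{div}(\chi^q)=\sum_\rho\langle v_\rho,q\rangle D_\rho=-mK_X$ by the choice of $q$. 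Hence $\Omega$ has trivial divisor and is nowhere vanishing; note that $q\in P$ since $\langle v,q\rangle\ge 0$ for every $v\in\sigma$.

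Next, I compute the image of a basic section of $(\Omega_X^d)^{\otimes m}$. Since $k[P]$ is generated as a $k$-algebra by the monomials $\chi^p$ with $p\in P$, the $k[P]$-module $\Omega_X^d$ is generated by wedges $d\chi^{p_1}\wedge\cdots\wedge d\chi^{p_d}$ with $p_j\in P$, and hence $(\Omega_X^d)^{\otimes m}$ is generated by $m$-fold tensor products of these. Writing $p_j=\sum_i a_{i,j}e_i$ and applying the identity $d\chi^{p_j}=\chi^{p_j}\sum_i a_{i,j}\frac{d\chi^{e_i}}{\chi^{e_i}}$, a direct computation yields
\[
d\chi^{p_1}\wedge\cdots\wedge d\chi^{p_d}=c(p_1,\ldots,p_d)\,\chi^{p_1+\cdots+p_d}\,\omega,
\]
and therefore, using $\omega^{\otimes m}=\chi^{-q}\Omega$,
\[
\bigotimes_{i=1}^m\bigl(d\chi^{p_{i,1}}\wedge\cdots\wedge d\chi^{p_{i,d}}\bigr)=z\bigl((p_{i,j})_{i,j}\bigr)\cdot\Omega.
\]

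Since the image of $(\Omega_X^d)^{\otimes m}\to\omega_{X,m}$ is the $k[P]$-submodule generated by the images of these basic tensors, and $\omega_{X,m}=k[P]\cdot\Omega$, this image equals $J\cdot\Omega$, where $J\subseteq k[P]$ is the ideal generated by $\{z((p_{i,j})_{i,j}):(p_{i,j})\in P^{md}\}$. By the defining property of $\sJ_{X,m}$ the image also equals $\sJ_{X,m}\cdot\Omega$, so $\sJ_{X,m}=J$, as claimed. The only nontrivial step is the verification that $\Omega$ generates $\omega_{X,m}$; this is precisely where the $\Q$-Gorenstein hypothesis enters, through the existence of $q$. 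Everything else is a mechanical computation with logarithmic differentials of monomials.
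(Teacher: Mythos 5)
Your proof is correct and follows essentially the same route as the paper's: the same log-differential identity $\diff\chi^{p_1}\wedge\cdots\wedge\diff\chi^{p_d}=c(p_1,\dots,p_d)\,\chi^{p_1+\cdots+p_d}\,\diff\log\chi^{e_1}\wedge\cdots\wedge\diff\log\chi^{e_d}$, the same generating set for $(\Omega_X^d)^{\otimes m}$, and the same generator $\chi^q\cdot(\diff\log\chi^{e_1}\wedge\cdots\wedge\diff\log\chi^{e_d})^{\otimes m}$ of $\omega_{X,m}$. The only real difference is that you justify, via the toric divisor computation, why this element generates $\omega_{X,m}$, a fact the paper asserts without proof.
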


\begin{proof}
Since $k[P]$ is generated over $k$ by the set $\{\chi^p\mid p\in P\}$, we see that $\Gamma(X,\Omega^d_X)$ is generated by the elements $\chi^{p_1}\wedge\dots\wedge\chi^{p_d}$ as $p_1,\dots,p_d$ range over elements of $P$. So, $\Gamma(X,(\Omega_X^d)^{\otimes m})$ is generated by the set
\[
	\{ \bigotimes_{i=1}^m \diff \chi^{p_{i,1}} \wedge \dots \wedge \diff \chi^{p_{i, d}} \mid (p_{i,j})_{i ,j} \in P^{md}\}.
\]

We next show that the global sections of $\omega_{X,m}$ are generated by
\[
	\chi^q \cdot \left( \diff\log \chi^{e_1} \wedge \dots \wedge \diff \log \chi^{e_d} \right)^{\otimes m};
\]
this is essentially given by \cite[Proposition 8.2.9]{CoxLittleSchenck}, however, they only state the result for $\omega_X$ instead of $\omega_{X,m}$. The proof for all $m$ works identically. To see this, let $\iota: X_\sm \hookrightarrow X$ be the inclusion of the smooth locus. By \cite[Theorem 8.2.3]{CoxLittleSchenck},
\[
\omega_{X,m} = \iota_*( (\Omega_{X_\sm}^{\dim X})^{\otimes m} )\cong\iota_*\cO_{X_\sm}(-m\sum_\rho D_\rho)=\cO_X(-m\sum_\rho D_\rho),
\]
where the sum runs over all $\rho\in\Sigma(1)$ and $D_\rho$ denotes the corresponding torus-invariant divisor. Implicit in \cite[Theorem 8.2.3]{CoxLittleSchenck} is an identification of $\Omega_{X_\sm}^{\dim X}$ with a subsheaf of $\cO_{X_\sm}$; this identification comes from the maps (8.1.3) and (8.1.5) of \cite{CoxLittleSchenck} and can be described as follows. We have an inclusion of $\Omega_{X_\sm}^{\dim X}$ into the logarithmic differentials $\Omega_{X_\sm}^{\dim X}(\log D)$ and the latter is isomorphic to $\cO_{X_\sm}$ via the map 
\[
\cO_{X_\sm}\xrightarrow{\cong}\Omega_{X_\sm}^{\dim X}(\log D)
\]
\[
f\mapsto f\cdot \diff\log \chi^{e_1} \wedge \dots \wedge \diff \log \chi^{e_d}.
\]
For each $\rho\in\Sigma(1)$, let $v_\rho$ denote the first lattice point on the ray $\rho$. Having established that $\omega_{X,m}\cong\cO_X(-m\sum_\rho D_\rho)$, \cite[Proposition 4.3.2]{CoxLittleSchenck} tells us $\Gamma(X,\omega_{X,m})$ is generated over $k$ by the sections of the form $\chi^p \cdot \left( \diff\log \chi^{e_1} \wedge \dots \wedge \diff \log \chi^{e_d} \right)^{\otimes m}$ for $p\in M$ such that $\langle p,v_\rho\rangle\geq m$ for all $\rho\in\Sigma(1)$. This condition on the inner product implies $p-q\in P$ and so $\Gamma(X,\omega_{X,m})$ is generated over $k[P]$ by
\[
	\chi^q \cdot \left( \diff\log \chi^{e_1} \wedge \dots \wedge \diff \log \chi^{e_d} \right)^{\otimes m}.
\]

Lastly, for any $p_1, \dots, p_d \in P$,
\begin{align*}
	\diff \chi^{p_1} \wedge \dots \wedge \diff \chi^{p_d} &= \chi^{p_1+\dots + p_d } \cdot \diff\log \chi^{p_1} \wedge \dots \wedge \diff \log \chi^{p_d}\\
	&= c(p_1, \dots, p_d) \chi^{p_1+\dots + p_d } \cdot \diff\log \chi^{e_1} \wedge \dots \wedge \diff \log \chi^{e_d}.
\end{align*}
Thus for any $(p_{i,j})_{i ,j} \in P^{md}$,
\[
	\bigotimes_{i=1}^m \diff \chi^{p_{i,1}} \wedge \dots \wedge \diff \chi^{p_{i, d}} = z((p_{i,j})_{i,j}) \cdot \chi^q \cdot \left( \diff\log \chi^{e_1} \wedge \dots \wedge \diff \log \chi^{e_d} \right)^{\otimes m}.
\]
The proposition then follows from the definition of $\sJ_{X,m}$.
\end{proof}

\begin{remark}
\autoref{GorensteinIdealIsGeneratedByMonomials} implies that if $\varphi \in \sL(X)$ with $\trop(\varphi)=w$, then
\[
	\ord_{\sJ_{X,m}}(\varphi) = - \langle w, q \rangle + \min_{\substack{p_1, \dots, p_d \in P\\ c(p_1, \dots, p_d) \neq 0}} m \langle w, p_1 + \dots + p_d \rangle,
\]
so if $w$ is an integer combination of lattice points on the rays of $\sigma$, then $\ord_{\sJ_{X,m}}(\varphi)$ is divisible by $m$ and $\mu^\Gor_X(C) \in \widehat{\sM}_k$ for any measurable subset $C \subset \trop^{-1}(w)$.
\end{remark}

\subsection{Gorenstein measure and toric modifications}
\label{subsec:gor-toric-mod}
In this subsection, we complete the proof of \autoref{mainPropositionGorensteinMeasureToricVariety}(\ref{GorensteinMeasureOfFiberOfTrop}). We first handle the case where $w = 0$.

\begin{proposition}
We have
\[
	\mu^\Gor_X(\trop^{-1}(0)) = \bL^{-d}(\bL - 1)^d.
\]
\end{proposition}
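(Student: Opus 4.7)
My plan is to identify $\trop^{-1}(0)$ with $\sL(T)$ and then directly compute.

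First, I would observe that for $\varphi \in \sL(X)(k')$, the condition $\trop(\varphi) = 0$ says $\ord_t \varphi^*(\chi^p) = 0$ for all $p \in P$, i.e.\ $\varphi^*(\chi^p)$ is a unit in $k'\llbracket t\rrbracket$ for every $p$. Equivalently, the $0$-jet $\theta_0(\varphi)$ lies in the open torus $T \subset X$, and hence (since $T$ is open in $X$ and the arc is supported on a local scheme) $\varphi$ factors through $T$. Thus
\[
  \trop^{-1}(0) \;=\; \theta_0^{-1}(T) \;=\; \sL(T)\subset \sL(X),
\]
which in particular is a cylinder.

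Next I would apply part (\ref{GorensteinMeasureOfSubsetOfFiberOfTrop}) of \autoref{mainPropositionGorensteinMeasureToricVariety} with $w = 0$ to reduce to the ordinary motivic measure. The constant $j_0$ can be read off from \autoref{GorensteinIdealIsGeneratedByMonomials}: the ideal $\sJ_{X,m}$ is generated by elements $z((p_{i,j})_{i,j}) \in k[P]$ which are (nonzero) scalar multiples of monomials $\chi^{-q + \sum_{i,j} p_{i,j}}$, and choosing any tuple with $c(p_1,\dots,p_d)\neq 0$ and with $-q+\sum p_{i,j} \in P$ (possible since $\sigma$ is pointed) gives
\[
  \ord_{\sJ_{X,m}}(\varphi) \;=\; \min \langle 0, -q + \textstyle\sum p_{i,j}\rangle \;=\; 0
\]
for every $\varphi \in \trop^{-1}(0)$. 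Therefore $j_0 = 0$ and
\[
  \mu^\Gor_X(\trop^{-1}(0)) \;=\; \mu_X(\sL(T)).
\]

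Finally, I would compute $\mu_X(\sL(T))$ directly from the definition. Since $T$ is smooth, the truncations $\theta_n\colon\sL(T)\to\sL_n(T)$ are surjective, so $\theta_n(\sL(T)) = \sL_n(T)$. Using $T\cong\bG_m^d$ and the standard identification $\sL_n(\bG_m)\cong \bG_m\times\bA^n$, we get $\e(\sL_n(T)) = (\bL-1)^d\bL^{nd}$, hence
\[
  \mu_X(\sL(T)) \;=\; \lim_{n\to\infty}(\bL-1)^d\bL^{nd}\bL^{-(n+1)d} \;=\; \bL^{-d}(\bL-1)^d,
\]
which is the desired formula. No step here looks like a serious obstacle; the only mild care needed is the identification $\trop^{-1}(0)=\sL(T)$ and the vanishing $j_0=0$, both of which are essentially bookkeeping with the preceding results.
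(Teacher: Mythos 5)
Your proof is correct and follows essentially the same route as the paper: identify $\trop^{-1}(0)=\sL(T)$, invoke \autoref{mainPropositionGorensteinMeasureToricVariety}(\ref{GorensteinMeasureOfSubsetOfFiberOfTrop}) to reduce to $\mu_X$, and compute the measure of $\sL(T)$ using smoothness of $T$. The only cosmetic differences are that you extract $j_0=0$ from the monomial generators of $\sJ_{X,m}$ (the paper just notes $T$ is smooth) and compute $\mu_X(\sL(T))$ directly from the definition rather than quoting $\mu_T(\sL(T))=\bL^{-\dim T}\e(T)$.
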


\begin{proof}
If $\varphi \in \sL(X)$, then $\trop(\varphi) = 0$ if and only if $\varphi^*(p)$ is a unit for all $p \in P$, which occurs if and only if $\varphi \in \sL(T)$. Thus
\[
	\trop^{-1}(0) = \sL(T) \subset \sL(X),
\]
and because $T$ is smooth, we have
\[
	\mu^\Gor_X(\trop^{-1}(0)) = \mu_T(\sL(T)) = \bL^{-\dim T} \e(T) = \bL^{-d}(\bL-1)^d
\]
where the first equality is given by \autoref{mainPropositionGorensteinMeasureToricVariety}(\ref{GorensteinMeasureOfSubsetOfFiberOfTrop}).
\end{proof}

We now only need to prove \autoref{mainPropositionGorensteinMeasureToricVariety}(\ref{GorensteinMeasureOfFiberOfTrop}) in the case where $w \neq 0$. For the remainder of this section, fix $w \in \sigma \cap N$, assume that $w \neq 0$, and let $\ell \in \Z_{>0}$ be such that $(1/\ell)w$ is the first lattice point of the ray $\tau:=\R_{\geq 0} w$. We will compute $\mu_X(\trop^{-1}(w))$ by applying the change of variables formula to a certain toric modification of $X$. Let $Y\cong\bA^1\times\bG_m^{d-1}$ be the affine $T$-toric variety whose fan is given by $\tau$, let $D$ be the (irreducible) boundary divisor of $Y$, and let $\rho: Y \to X$ be the toric morphism induced by the identity map $N \to N$. It is standard to compute the relative canonical divisor of such a birational toric morphism. In this case,
\[
	mK_{Y} - \rho^* (mK_X) = \left(\frac{1}{\ell}\langle w, q \rangle - m\right)D.
\]

For the remainder of this section, let $\cO(-D)$ be the ideal sheaf of $D$ in $Y$.

\begin{proposition}
\label{bijectionOfFibersOfTropToricModificationUsingRay}
The map $\sL(\rho): \sL(Y) \to \sL(X)$ induces a bijection 
\[
	(\ord_{\cO(-D)}^{-1}(\ell))(k') \to (\trop^{-1}(w))(k')
\]
for every field extension $k'$ of $k$.
\end{proposition}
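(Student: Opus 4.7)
The plan is to identify $\ord_{\cO(-D)}^{-1}(\ell)$ on the $Y$-side with the tropical fiber $\trop^{-1}(w) \subset \sL(Y)$, and then match this via $\sL(\rho)$ with the tropical fiber on $X$. First I would choose $u \in M$ with $\langle (1/\ell)w, u \rangle = 1$ (possible because $(1/\ell)w$ is primitive in $N$) and a complementary splitting $M = \Z u \oplus (\tau^\perp \cap M)$, so that $\tau^\vee \cap M = \N u \oplus (\tau^\perp \cap M)$ and $z := \chi^u \in k[\tau^\vee \cap M]$ is a local equation for $D$. For any arc $\psi \in \sL(Y)(k')$ one then has $\ord_{\cO(-D)}(\psi) = \ord_t \psi^*(z) = \langle \trop(\psi), u \rangle$, while $\trop(\psi) \in \tau \cap N = \N \cdot (1/\ell)w$, so $\ord_{\cO(-D)}(\psi) = \ell$ is equivalent to $\trop(\psi) = w$.

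Next I would verify well-definedness and injectivity of the map on $k'$-points. Since the toric morphism $\rho$ is induced by the identity on $N$, its underlying monoid map is the inclusion $\tau \cap N \hookrightarrow \sigma \cap N$, so \autoref{l:trop-basic-properties}(\ref{l:trop-basic-properties::image}) sends $\trop^{-1}(w) \subset \sL(Y)$ into $\trop^{-1}(w) \subset \sL(X)$, giving well-definedness. For injectivity, I would use that $\rho$ restricts to an isomorphism $Y \setminus D = \bG_m^d \xrightarrow{\sim} T \subset X$: two lifts of a given $\varphi$ must agree after restriction to the generic point $\Spec k'\llparenthesis t \rrparenthesis$, and since $Y$ is affine (hence separated) they must coincide as arcs.

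Surjectivity will be the main content, and I would construct the lift explicitly. Given $\varphi \in \trop^{-1}(w)(k')$ with $w \neq 0$, each $\varphi^*(p)$ for $p \in P$ has finite $t$-order $\langle w, p \rangle$ and so is a unit in $k'\llparenthesis t \rrparenthesis$; thus the generic arc $\eta$ of $\varphi$ factors as $\Spec k'\llparenthesis t \rrparenthesis \to T \hookrightarrow X$, yielding a group homomorphism $\eta^* \colon M \to k'\llparenthesis t \rrparenthesis^\times$ using that $P^\gp = M$ (from normality of $X$). The composition $\ord_t \circ \eta^* \colon M \to \Z$ lies in $N$ and restricts to $w$ on $P$, hence equals $w$ on all of $M$. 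In particular $\ord_t \eta^*(f) = \langle w, f \rangle \geq 0$ for every $f \in \tau^\vee \cap M$, so $\eta$ extends uniquely to an arc $\psi \colon \Spec k'\llbracket t \rrbracket \to Y$; by separatedness of $X$ this $\psi$ satisfies $\sL(\rho)(\psi) = \varphi$, and evaluating at $u$ gives $\ord_{\cO(-D)}(\psi) = \langle w, u \rangle = \ell$. The hard part will be precisely this extension step, which hinges on $w \neq 0$ (so $\varphi$ lands generically in the torus) and on normality of $X$ (so $P^\gp = M$); everything else is formal given the preceding tropicalization lemma and separatedness of $Y$.
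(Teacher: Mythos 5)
Your proof is correct, but it takes a different route from the paper's. The paper's proof treats the bijectivity between torus-generic arcs of $Y$ and arcs of $X$ whose tropicalization lies on the ray $\R_{\geq 0}w$ as standard (``by construction''), and then reduces everything to the set-theoretic identity $\sL(\rho)^{-1}(\trop^{-1}(w)) = \ord_{\cO(-D)}^{-1}(\ell)$, which it deduces from \autoref{l:trop-basic-properties}(2); the only real work is verifying that lemma's hypothesis — for every $u \in \tau^\vee \cap M$ there is $u'$ with $u+u' \in \sigma^\vee \cap M$ — via the quotient $M \to M/(w^\perp \cap M) \cong \Z$. You instead prove the bijection from scratch: you identify $\ord_{\cO(-D)}^{-1}(\ell)$ with the tropical fiber $\trop^{-1}(w)$ on the $Y$-side using the splitting $M = \Z u \oplus (w^\perp \cap M)$, get well-definedness from \autoref{l:trop-basic-properties}(1), injectivity from separatedness of $Y$ together with $\rho$ being an isomorphism over $T$, and surjectivity by explicitly extending the generic arc $\eta\colon \Spec k'\llparenthesis t \rrparenthesis \to T$ to $Y$, using that $\ord_t \circ \eta^*$ equals $w$ on all of $M$ and hence is nonnegative on $\tau^\vee \cap M$. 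Your argument is self-contained and entirely avoids the $u+u'$ saturation check, since the needed positivity is immediate from $\tau^\vee = \{m \mid \langle w, m\rangle \geq 0\}$; the paper's is shorter because the valuative-criterion content is absorbed into the cited lemma and the ``by construction'' step. One small imprecision: the assertion that $\trop(\psi) \in \tau \cap N$ does not hold for arcs contained in $D$ (some values are $\infty$); it holds exactly when $\ord_{\cO(-D)}(\psi)$ is finite, which is the only case relevant to your equivalence, so the conclusion stands.
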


\begin{proof}
Let $k'$ be a field extension of $k$. By construction, $\sL(\rho)$ induces a bijection $(\sL(Y) \setminus \sL(Y \setminus T))(k') \to (\trop^{-1}(N \cap \R_{\geq 0} w))(k')$. Therefore it is sufficient to show that
\[
	\sL(\rho)^{-1}((\trop^{-1}(w))(k')) = (\ord_{\cO(-D)}^{-1}(\ell))(k').
\]
By \autoref{l:trop-basic-properties}(\ref{l:trop-basic-properties::inverse-image}), it is enough to show that if $u\in\tau^\vee\cap M$, there exists $u'\in\tau^\vee\cap M$ with $u+u'\in\sigma^\vee\cap M$. Consider the quotient map $\eta\colon M\to M/(w^\perp\cap M)\cong\Z$. Since $\sigma^\vee\subset\tau^\vee$, we see $\eta(\sigma^\vee\cap M)\subset\eta(\tau^\vee\cap M)=\N$. So, for any $u\in\tau^\vee\cap M$, there exists $n\in\Z_{>0}$ such that $n\eta(u)\in\eta(\sigma^\vee\cap M)$, i.e.~for some choice of $u''\in w^\perp\cap M$, letting $u'=u''+(n-1)u$, we have $u+u'\in\sigma^\vee\cap M$.
%
\end{proof}

The next proposition completes the proof of \autoref{mainPropositionGorensteinMeasureToricVariety}(\ref{GorensteinMeasureOfFiberOfTrop}).

\begin{proposition}
We have
\[
	\mu^\Gor_X(\trop^{-1}(w)) = \bL^{-d}(\bL-1)^d (\bL^{1/m})^{- \langle w, q \rangle}.
\]
\end{proposition}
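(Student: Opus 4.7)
The plan is to apply the change of variables formula for the Gorenstein measure to the partial toric modification $\rho: Y \to X$, using the bijection from \autoref{bijectionOfFibersOfTropToricModificationUsingRay}. Since $\rho$ is not proper in general, I will first embed $Y$ as a torus-invariant open subscheme of a proper smooth toric resolution $\widetilde{\rho}: \widetilde{Y} \to X$ whose fan $\widetilde{\Sigma}$ refines $\sigma$ and contains $\tau$ as a ray (which exists by standard toric resolution of singularities). Denef-Loeser's change of variables for the Gorenstein measure under $\widetilde{\rho}$ then yields
\[
\mu^\Gor_X(\trop^{-1}(w)) = \int_{\sL(\widetilde{\rho})^{-1}(\trop^{-1}(w))} (\bL^{1/m})^{-\ord_{\widetilde{E}_0}}\,d\mu_{\widetilde{Y}},
\]
where $\widetilde{E}_0 := mK_{\widetilde{Y}} - \widetilde{\rho}^*(mK_X)$; the exponent $-\ord_{\widetilde{E}_0}$ arises by combining the usual Jacobian weight $\bL^{-\ord_{\mathrm{jac}_{\widetilde{\rho}}}}$ with $(\bL^{1/m})^{\ord_{\sJ_{X,m}}\circ\sL(\widetilde{\rho})}$ via the identity $\widetilde{\rho}^*\sJ_{X,m} = \mathrm{jac}_{\widetilde{\rho}}^{\,m}\cdot\cO_{\widetilde{Y}}(\widetilde{E}_0)$, which follows from the definition of $\sJ_{X,m}$ and the isomorphism $\widetilde{\rho}^*\omega_{X,m}\cong\omega_{\widetilde{Y},m}(-\widetilde{E}_0)$.

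Because $w$ lies in the interior of the ray $\tau$, every arc of $\widetilde{Y}$ with tropicalization $w$ has closed point in the unique orbit corresponding to the smallest cone of $\widetilde{\Sigma}$ containing $w$, namely $O_\tau \subset U_\tau = Y$. Combined with \autoref{bijectionOfFibersOfTropToricModificationUsingRay}, this shows
\[
\sL(\widetilde{\rho})^{-1}(\trop^{-1}(w)) = \ord_{\cO(-D)}^{-1}(\ell) \subset \sL(Y) \subset \sL(\widetilde{Y}),
\]
and on $Y$, $\widetilde{E}_0$ restricts to $E_0 = (\tfrac{1}{\ell}\langle w,q\rangle - m)D$, so $\ord_{\widetilde{E}_0}$ takes the constant value $\ell\cdot(\tfrac{1}{\ell}\langle w,q\rangle - m) = \langle w,q\rangle - m\ell$. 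Thus the integral collapses to $(\bL^{1/m})^{m\ell - \langle w,q\rangle}\cdot\mu_Y(\ord_{\cO(-D)}^{-1}(\ell))$.

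The remaining measure is computed directly. Since $Y \cong \bA^1_k \times \bG_{m,k}^{d-1}$ with $D$ the $\bA^1$-boundary, $\ord_{\cO(-D)}^{-1}(\ell)$ decomposes as the cylinder of arcs of $\bA^1$ with $t$-order exactly $\ell$ (motivic measure $(\bL-1)\bL^{-\ell-1}$) times the full arc spaces $\sL(\bG_m)^{d-1}$ (each of measure $(\bL-1)\bL^{-1}$), giving $\mu_Y(\ord_{\cO(-D)}^{-1}(\ell)) = (\bL-1)^d\bL^{-\ell-d}$. Substituting and using $(\bL^{1/m})^{m\ell} = \bL^\ell$,
\[
\mu^\Gor_X(\trop^{-1}(w)) = (\bL^{1/m})^{m\ell - \langle w,q\rangle}(\bL-1)^d\bL^{-\ell-d} = \bL^{-d}(\bL-1)^d(\bL^{1/m})^{-\langle w,q\rangle}.
\]

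The main obstacle is identifying $\sL(\widetilde{\rho})^{-1}(\trop^{-1}(w))$ correctly inside $\sL(\widetilde{Y})$: one must rule out contributions from arcs whose closed point lies outside the chart $U_\tau$, which follows from the compatibility of tropicalization with the orbit-cone correspondence on a toric variety. An alternative that bypasses the compactification is to verify the change of variables comparison directly at the jet level on the explicit cylinders involved, using \autoref{bijectionOfFibersOfTropToricModificationUsingRay} together with the toric formulas for $\sJ_{X,m}$ from \autoref{GorensteinIdealIsGeneratedByMonomials} and an analogous explicit description of $\mathrm{jac}_\rho$.
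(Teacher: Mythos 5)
Your argument is correct and follows essentially the same route as the paper: a change of variables along the toric modification extracting the ray $\tau$, the arc-level bijection of \autoref{bijectionOfFibersOfTropToricModificationUsingRay}, the discrepancy $\left(\tfrac{1}{\ell}\langle w,q\rangle - m\right)D$, and the measure of the cylinder $\ord_{\cO(-D)}^{-1}(\ell)$. The only real difference is your detour through a proper toric resolution $\widetilde{Y}$, which is harmless but unnecessary: the paper applies the change-of-variables theorem of Chambert-Loir--Nicaise--Sebag directly to the non-proper map $\rho\colon Y \to X$, since that formulation requires only injectivity of $\sL(\rho)$ on the relevant measurable set (supplied by the bijection), not properness.
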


\begin{proof}
By \autoref{mainPropositionGorensteinMeasureToricVariety}(\ref{GorensteinMeasureOfSubsetOfFiberOfTrop}), there exists some $j_w \in  \Z$ such that $\ord_{\sJ_{X,m}}$ is equal to $j_w$ on $\trop^{-1}(w)$. By \autoref{bijectionOfFibersOfTropToricModificationUsingRay}, we also have that $\ord_{\sJ_{X,m}} \circ \sL(\rho)$ is equal to $j_w$ on $\ord_{\cO(-D)}^{-1}(\ell)$. Thus \cite[Chapter 7 Proposition 3.2.5]{ChambertLoirNicaiseSebag} implies that on $\ord_{\cO(-D)}^{-1}(\ell)$,
\[
	 - \ordjac_\rho\ =\ -\frac{1}{m}\ord_{\sJ_{X,m}} \circ \sL(\rho) -\frac{1}{m}\left(\frac{1}{\ell}\langle w, q \rangle - m\right) \ord_{\cO(-D)}\ =\ -\frac{j_w}{m} - \frac{1}{m}\langle w, q \rangle + \ell,
\]
where $\ordjac_\rho: \sL(Y) \to \N \cup \{\infty\}$ denotes the order function of the jacobian ideal of $\rho$. Therefore,
\begin{align*}
	\mu^\Gor_X(\trop^{-1}(w)) &\ = \int_{\trop^{-1}(w)} (\bL^{1/m})^{\ord_{\sJ_{X,m}}} \diff\mu_X
	\ =\ (\bL^{1/m})^{j_w} \mu_X(\trop^{-1}(w))\\
	& =\ (\bL^{1/m})^{j_w}\int_{\ord_{\cO(-D)}^{-1}(\ell)} \bL^{-\ordjac_\rho}\diff\mu_Y
	\ =\  (\bL^{1/m})^{j_w}\int_{\ord_{\cO(-D)}^{-1}(\ell)} \bL^{-j_w/m - \langle w, q \rangle/m + \ell}\diff\mu_Y\\
	&= (\bL^{1/m})^{-\langle w, q \rangle} \bL^\ell \mu_Y(\ord_{\cO(-D)}^{-1}(\ell))
	\ =\ \bL^{-d}(\bL-1)^d (\bL^{1/m})^{- \langle w, q \rangle},
\end{align*}
where the third equality is due to \autoref{bijectionOfFibersOfTropToricModificationUsingRay} and the motivic change of variables formula (see for example \cite[Chapter 6 Theorem 4.3.1]{ChambertLoirNicaiseSebag}), and the final equality follows from
\[
	\mu_Y(\ord_{\cO(-D)}^{-1}(\ell)) = \e(D)(\bL-1)\bL^{-d-\ell} = (\bL-1)^d\bL^{-d-\ell},
\]
which is a consequence of \cite[Chapter 7 Lemma 3.3.3]{ChambertLoirNicaiseSebag}.
\end{proof}

\section{Motivic measure and canonical stacks}\label{sectionMotivicMeasureToricStack}

Let $d \in \N$, let $N \cong \Z^d$ be a lattice, let $T = \Spec(k[N^*])$ be the algebraic torus with co-character lattice $N$, let $\sigma$ be a pointed rational cone in $N_\R$, let $X$ be the affine $T$-toric variety associated to $\sigma$, let $\cX$ be the canonical stack over $X$, and let $\pi: \cX \to X$ be the canonical map. We assume that $\sigma$ is $d$-dimensional and use the notation listed in \autoref{notationforcanonicalfantastackforfulldimensionalcone}. We assume that $X$ is $\Q$-Gorenstein and let $m \in \Z_{>0}$ and $q \in P$ be such that if $v$ is the first lattice point of any ray of $\sigma$,
\[
	\langle v, q \rangle = m.
\]

In this section, we prove the following theorem about the motivic measure $\mu_\cX$ which mirrors \autoref{mainPropositionGorensteinMeasureToricVariety} for $\mu^\Gor_X$. In \autoref{sectionStringInvariantsAndToricArtinStacks}, we will combine these two theorems to compare the measures $\mu_\cX$ and $\mu^\Gor_X$.

\begin{theorem}
\label{mainPropositionMotivicIntegralsCanonicalFantastacks}
Let $w \in \sigma \cap N \subset \Hom(P, \N \cup \{\infty\})$.
\begin{enumerate}[(a)]

\item\label{stackMeasureOfPieceOfFiberOfTrop} If $C \subset \trop^{-1}(w) \subset \sL(X)$ is measurable, then $\sL(\pi)^{-1}(C)$ is a measurable subset of $|\sL(\cX)|$. Furthermore, there exists $\Theta_w \in \widehat{\sM}_k$ such that for any measurable subset $C \subset \trop^{-1}(w) \subset \sL(X)$,
\[
	\mu_\cX(\sL(\pi)^{-1}(C)) = \Theta_w \mu_X(C).
\]

\item\label{stackMeasureOfFiberOfTrop} The set $\sL(\pi)^{-1}(\trop^{-1}(w)) \subset |\sL(\cX)|$ is measurable and
\[
	\mu_{\cX}(\sL(\pi)^{-1}(\trop^{-1}(w))) = (\# \beta^{-1}(w)) \bL^{-d}(\bL-1)^d (\bL^{1/m})^{- \langle w, q \rangle}.
\]

\end{enumerate}
\end{theorem}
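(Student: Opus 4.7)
The plan is to reduce part~(a) to the cylinder case via \autoref{fibersofhtemapsofjetsmainproposition} and the piecewise trivial fibration criterion, extend from cylinders to measurable sets by cylindrical approximation, and then deduce (b) by specializing (a) to $C = \trop^{-1}(w)$ and invoking \autoref{mainPropositionGorensteinMeasureToricVariety}.

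Let $\Theta_w \in \widehat{\sM}_k$ be the image of the class from \autoref{fibersofhtemapsofjetsmainproposition}. By \autoref{fiberOfTropIsCylinder}, for $N \geq \max_i \langle w, p_i\rangle$ the set $\trop^{-1}(w)$ equals the cylinder $\theta_N^{-1}(D'_N)$, where $D'_N \subset \sL_N(X)$ is the constructible locus of $N$-jets $\xi_N$ with $\xi_N^*(p_i)$ of $t$-order exactly $\langle w, p_i\rangle$ for each generator $p_i$ of $P$. A direct inspection reveals that the proofs of \autoref{ComponentsOfFiberPiTildeGroupAction} and \autoref{fibersAsStackQuotientOfGroupQuotient} use only the tropicalization condition $\varphi_{n,k''}^*(p) = \mathrm{unit}\cdot t^{\langle w,p\rangle}$ on the relevant $n$-jet, not the fact that it arises from an arc. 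Hence the conclusion of \autoref{fibersofhtemapsofjetsmainproposition} extends: for $n \geq n_w$ and every $\xi_n \in D'_n$, $\sL_n(\pi)^{-1}(\xi_n)_\red \cong \cF_n \otimes_k k(\xi_n)$. By \autoref{piecewiseTrivialFibrationCriterion} and \autoref{remarkPiecewiseTrivialFibrationGivesProductOfClasses}, $\e(\sL_n(\pi)^{-1}(E_n)) = \Theta_w\,\e(E_n)$ in $K_0(\Stack_k)$ for every constructible $E_n \subset D'_n$.

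Given a cylinder $C = \theta_n^{-1}(C_n) \subset \trop^{-1}(w)$ with $n \geq \max(N, n_w)$, smoothness of $\cX$ makes the truncation $|\sL(\cX)| \to |\sL_m(\cX)|$ surjective, so $\theta_m(\sL(\pi)^{-1}(C)) = \sL_m(\pi)^{-1}((\theta^m_n)^{-1}(C_n))$ for $m \geq n$. Since $(\theta^m_n)^{-1}(C_n) \subset D'_m$, applying the piecewise trivial fibration at level $m \geq n_w$ yields $\e(\theta_m(\sL(\pi)^{-1}(C))) = \Theta_w\,\e((\theta^m_n)^{-1}(C_n))$. Dividing by $\bL^{(m+1)d}$ and taking $m \to \infty$ (using that arcs in $\trop^{-1}(w)$ hit the smooth torus generically, so the resulting limit computes $\mu_X(C)$) gives $\mu_\cX(\sL(\pi)^{-1}(C)) = \Theta_w\mu_X(C)$. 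To extend to measurable $C \subset \trop^{-1}(w)$, I would take a cylindrical $\varepsilon$-approximation of $C$, replace each approximating cylinder by its intersection with $\trop^{-1}(w)$, and pull back by $\sL(\pi)$ to produce a cylindrical $\Vert\Theta_w\Vert\varepsilon$-approximation of $\sL(\pi)^{-1}(C)$, yielding both measurability and the formula.

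For part~(b), specialize (a) to $C = \trop^{-1}(w)$ and combine with \autoref{mainPropositionGorensteinMeasureToricVariety}(a,b) to write $\mu_X(\trop^{-1}(w)) = \bL^{-d}(\bL-1)^d(\bL^{1/m})^{-j_w - \langle w, q\rangle}$, where $j_w$ is the constant value of $\ord_{\sJ_{X,m}}$ on $\trop^{-1}(w)$. It remains to verify $\Theta_w(\bL^{1/m})^{-j_w} = \#\beta^{-1}(w)$. Any $\widetilde w \in \beta^{-1}(w)$ writes $w = \sum_i \widetilde n_i v_i$ with $\widetilde n_i \in \N$, and pairing with $q$ gives $\sum_i \langle \widetilde w, f_i\rangle = \sum_i \widetilde n_i = \langle w, q\rangle/m$, which is independent of $\widetilde w$. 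Using the explicit formula $\Theta_w = \sum_{\widetilde w \in \beta^{-1}(w)} \bL^{j'_w - \sum_i \langle \widetilde w, f_i\rangle}$ from \autoref{fibersofhtemapsofjetsmainproposition}, this reduces to the combinatorial identity $m j'_w = j_w + \langle w, q\rangle$. The hard part will be verifying this identity by comparing the dimension count for the affine space $V$ appearing in the proof of \autoref{GroupQuotientIsAffineSpace} with the minimum over monomial generators of $\sJ_{X,m}$ produced by \autoref{GorensteinIdealIsGeneratedByMonomials}; the $\Q$-Gorenstein condition $\langle v_i, q\rangle = m$ is precisely what makes the two sides match.
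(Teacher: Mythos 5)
Your part (a) breaks at the claimed extension of \autoref{fibersofhtemapsofjetsmainproposition} from truncations of arcs in $\trop^{-1}(w)$ to \emph{all} jets in $D'_n$. The ``direct inspection'' is not accurate: the proof of \autoref{fibersAsStackQuotientOfGroupQuotient} invokes \autoref{MainPropositionFibersOfTheMapOfArcs} to produce a $k'$-point of $C_n^{\widetilde{w}}(\varphi)/(\sL_n(G')\otimes_k k')$, and that step genuinely uses an arc $\varphi$ defined over $k'$ with $\trop(\varphi)=w$, not just the order conditions on the $n$-jet. Worse, the extended statement is false: over a jet in $D'_n$ that does not lift to an arc of $X$, the fiber of $\sL_n(\pi)$ is \emph{empty} (any object of the fiber gives, after a finite extension trivializing the torsor, a jet of the smooth scheme $\widetilde{X}$, which lifts to an arc of $\widetilde{X}$ and pushes down to an arc of $X$ truncating to the given jet). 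Such jets exist: for the quadric cone $X=\Spec k[x,y,z,u]/(xu-yz)$ with Cox coordinates $x\mapsto x_1x_4$, $y\mapsto x_2x_4$, $z\mapsto x_1x_3$, $u\mapsto x_2x_3$, the jet $x=y=z=t^a$, $u=t^a+t^{n+1-a}$ (with $1\le a$, $2a\le n$) lies in $D'_n$, but writing any lift as $x_i=t^{\widetilde{a}_i}e_i$ with $e_i$ units, the identity $(e_1e_4)(e_2e_3)=(e_2e_4)(e_1e_3)$ forces $t^{n+1-2a}\equiv 0 \bmod t^{n+1-a}$, a contradiction, over every field extension; so the fiber is empty while $\cF_n\neq\emptyset$. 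Consequently $\e(\sL_m(\pi)^{-1}((\theta^m_n)^{-1}(C_n)))\neq\Theta_w\,\e((\theta^m_n)^{-1}(C_n))$ in general (non-liftable jets contribute only to the right side), and your limit step also fails: $\lim_m\e((\theta^m_n)^{-1}(C_n))\bL^{-(m+1)d}$ is not $\mu_X(C)$, since $\mu_X$ is defined through $\e(\theta_m(C))$ and ``arcs hit the torus generically'' does not identify the two. The missing ingredient is exactly the paper's \autoref{liftingJetsWithCorrectField}: for $n$ large, every $k'$-point of $\theta_n(\trop^{-1}(w))$ lifts to a $k'$-arc in $\trop^{-1}(w)$, so the piecewise trivial fibration with fiber $\cF_n$ is available precisely over $\theta_m(C)$, and one then uses $\theta_m(\sL(\pi)^{-1}(C))=\sL_m(\pi)^{-1}(\theta_m(C))$ (\autoref{imageOfPreimageOfCylinderMapFromSmoothStack}). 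Your cylinder-to-measurable approximation step is fine once the cylinder case is repaired.

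Part (b) as you propose it is also not a proof but a plan, and it inverts the paper's logic. The paper establishes (b) independently of the Gorenstein computation: it presents $\cX\cong[\widehat{X}/\widetilde{T}]$ with $\widetilde{T}$ a torus (\autoref{rewriteFantastackAsQuotientByAlgebraicTorus}), identifies $\sL(\rho)^{-1}(\sL(\pi)^{-1}(\trop^{-1}(w)))$ as the disjoint union over $\widetilde{w}\in\beta^{-1}(w)$ of loci cut out by divisorial order conditions on the smooth variety $\widehat{X}$ (\autoref{pushoutPreimageOfTropFiber}), computes each piece explicitly, uses $m(f_1+\cdots+f_r)=q$ (\autoref{QGorensteinsumofgeneratorsisinvariant}), and descends via \autoref{theoremMainMotivicIntegrationForQuotientStacks}; the relation $\Theta_w=(\#\beta^{-1}(w))(\bL^{1/m})^{j_w}$ is then \emph{deduced} afterwards by comparing (a), (b) and \autoref{mainPropositionGorensteinMeasureToricVariety} (this is how \autoref{stackMeasureAndGorensteinMeasureFullDimensionalCone} proceeds), rather than being an input. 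Your route instead requires the identity $mj'_w=j_w+\langle w,q\rangle$ proved directly by comparing $\dim V$ from \autoref{GroupQuotientIsAffineSpace} with the monomial description of $\sJ_{X,m}$ from \autoref{GorensteinIdealIsGeneratedByMonomials}; the identity is true, but you explicitly defer its verification, so (b) remains unproved in your write-up. Either supply that combinatorial computation or follow the paper's direct computation on $\widehat{X}$.
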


\begin{remark}
A-priori we only have that
\[
	(\# \beta^{-1}(w)) \bL^{-d}(\bL-1)^d (\bL^{1/m})^{- \langle w, q \rangle} \in \widehat{\sM}_k[\bL^{1/m}] \supset \widehat{\sM_k},
\]
but by \autoref{QGorensteinsumofgeneratorsisinvariant} below, we have either $\beta^{-1}(w) = \emptyset$ or $\langle w, q \rangle$ is divisible by $m$, so
\[
	(\# \beta^{-1}(w)) \bL^{-d}(\bL-1)^d (\bL^{1/m})^{- \langle w, q \rangle} \in \widehat{\sM}_k.
\]
\end{remark}

Before we prove \autoref{mainPropositionMotivicIntegralsCanonicalFantastacks}, we show that it implies the next proposition.

\begin{proposition}
\label{summingUpStackMeasureOverFibersOfTrop}
Let $C \subset \sL(X)$ be a measurable subset. Then $\sL(\pi)^{-1}(C)$ is a measurable subset of $|\sL(\cX)|$ and
\[
	\mu_\cX(\sL(\pi)^{-1}(C)) = \sum_{w \in \sigma \cap N} \mu_\cX(\sL(\pi)^{-1}(\trop^{-1}(w) \cap C)).
\]
\end{proposition}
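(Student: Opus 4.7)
The plan is to partition $\sL(X)$ by tropicalization, handle the boundary piece as a measure-zero set, and then apply countable additivity (\autoref{measureOfCountableDisjointUnion}) to sum the remaining pieces. I would first verify the set-theoretic identity
\[
\sL(X) \;=\; \sL(\partial X) \;\sqcup\; \bigsqcup_{w \in \sigma \cap N} \trop^{-1}(w),
\]
where $\partial X := X \setminus T$ denotes the toric boundary. If $\trop(\varphi)(p) < \infty$ for every $p \in P$, then $\trop(\varphi)\colon P \to \N$ extends to $M = P^{\gp} \to \Z$, giving a lattice point that pairs non-negatively with all of $P$ and hence lies in $\sigma \cap N$. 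Conversely, since $\sigma$ is pointed and full-dimensional, for every $p \in P \setminus \{0\}$ the character $\chi^p \in k[P]$ is nowhere vanishing on $T$, so $V(\chi^p) \subseteq \partial X$; thus if $\trop(\varphi)(p) = \infty$ for some $p \neq 0$, then $\varphi$ factors through $\partial X$.

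Next, I would observe that $\mathcal{Z} := \pi^{-1}(\partial X)$ is a proper closed substack of $\cX$ (proper because $\pi$ is an isomorphism over $T \neq \emptyset$), and an arc $\psi \in |\sL(\cX)|$ factors through $\mathcal{Z}$ iff $\sL(\pi)(\psi) \in \sL(\partial X)$. Hence the image of $|\sL(\mathcal{Z})|$ in $|\sL(\cX)|$ equals $\sL(\pi)^{-1}(\sL(\partial X))$, which by \autoref{closedSubstackGivesNegligibeSet} is measurable of measure zero. Then \autoref{subsetOfNegligibleIsNegligible} shows that $\sL(\pi)^{-1}(C \cap \sL(\partial X))$ is also measurable with measure zero.

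For each $w \in \sigma \cap N$, $\trop^{-1}(w)$ is a cylinder by \autoref{fiberOfTropIsCylinder}, so $C_w := C \cap \trop^{-1}(w)$ is a measurable subset of $\sL(X)$, and part (a) of \autoref{mainPropositionMotivicIntegralsCanonicalFantastacks} yields measurability of $\sL(\pi)^{-1}(C_w)$ in $|\sL(\cX)|$. Enumerating the countable set $\sigma \cap N$ arbitrarily, to invoke \autoref{measureOfCountableDisjointUnion} I need $\mu_\cX(\sL(\pi)^{-1}(C_w)) \to 0$. Combining \autoref{measureOfSubsetHasSmallerSize} with part (b) of \autoref{mainPropositionMotivicIntegralsCanonicalFantastacks},
\[
\|\mu_\cX(\sL(\pi)^{-1}(C_w))\| \;\leq\; \|\mu_\cX(\sL(\pi)^{-1}(\trop^{-1}(w)))\| \;\leq\; \exp(-\langle w, q\rangle / m),
\]
using that $\#\beta^{-1}(w) \in \Z$ has norm at most $1$ and that $\bL^{-d}(\bL-1)^d = \e(\bG_m^d)\bL^{-d}$ has dimension $0$. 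Because $\langle v, q \rangle = m > 0$ on every ray generator $v$ of $\sigma$ and $\sigma$ is pointed full-dimensional, $q$ lies in the relative interior of $\sigma^\vee$, so $\{w \in \sigma \cap N \mid \langle w, q\rangle \leq B\}$ is finite for each $B$, which supplies the required decay.

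Finally, applying \autoref{measureOfCountableDisjointUnion} to the disjoint union $\bigsqcup_w \sL(\pi)^{-1}(C_w)$ together with the vanishing boundary piece yields measurability of $\sL(\pi)^{-1}(C)$ and the claimed formula. I expect the main technical point to be the decay estimate on $\|\mu_\cX(\sL(\pi)^{-1}(C_w))\|$—specifically, transferring the bound from $\trop^{-1}(w)$ (controlled by part (b)) to the subset $C_w$ via the monotonicity of the norm under inclusion; once that is in place, the rest is routine assembly.
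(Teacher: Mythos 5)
Your proposal is correct and follows essentially the same route as the paper's proof: decompose $\sL(\pi)^{-1}(C)$ into the pieces $\sL(\pi)^{-1}(\trop^{-1}(w)\cap C)$, bound their measures via \autoref{measureOfSubsetHasSmallerSize} and \autoref{mainPropositionMotivicIntegralsCanonicalFantastacks}(\ref{stackMeasureOfFiberOfTrop}) to get the decay needed for \autoref{measureOfCountableDisjointUnion}, and kill the leftover piece using \autoref{closedSubstackGivesNegligibeSet} and \autoref{subsetOfNegligibleIsNegligible}. The only differences are cosmetic: you make the boundary piece explicit as $\sL(\pi)^{-1}(C\cap\sL(\partial X))$ via the partition of $\sL(X)$ by $\trop$ (and justify the finiteness of $\{w:\langle w,q\rangle\le B\}$), whereas the paper simply takes the set-theoretic complement of the union and observes it lies in the arcs of the proper closed substack over $X\setminus T$.
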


\begin{remark}
By \autoref{fiberOfTropIsCylinder} and \autoref{mainPropositionMotivicIntegralsCanonicalFantastacks}(\ref{stackMeasureOfPieceOfFiberOfTrop}), each $\sL(\pi)^{-1}(\trop^{-1}(w) \cap C)$ in the statement of \autoref{summingUpStackMeasureOverFibersOfTrop} is a measurable subset of $|\sL(\cX)|$.
\end{remark}

\begin{proof}
Set
\begin{align*}
	\cC &= \sL(\pi)^{-1}(C),\\
	\cC^{(0)} &= \bigcup_{w \in \sigma \cap N} \sL(\pi)^{-1}(\trop^{-1}(w) \cap C),\\
	\cC^{(\infty)} &= \cC \setminus \cC^{(0)}.
\end{align*}
For each $w \in \sigma \cap N$,
\begin{align*}
	\Vert \mu_\cX(\sL(\pi)^{-1}(\trop^{-1}(w) \cap C)) \Vert &\leq \Vert \mu_\cX(\sL(\pi)^{-1}(\trop^{-1}(w))) \Vert\\
	&\leq \Vert \bL^{-d}(\bL-1)^d \Vert \exp(-\langle w, q \rangle/m),
\end{align*}
where the first inequality is by \autoref{measureOfSubsetHasSmallerSize} and the second inequality is by \autoref{mainPropositionMotivicIntegralsCanonicalFantastacks}(\ref{stackMeasureOfFiberOfTrop}). For each $\varepsilon \in \R_{>0}$ there are only finitely many $w \in \sigma \cap N$ with $\exp(-\langle w, q \rangle / m) \geq \varepsilon$, so \autoref{measureOfCountableDisjointUnion} implies that $\cC^{(0)}$ is measurable and
\[
	\mu_\cX(\cC^{(0)}) =  \sum_{w \in \sigma \cap N} \mu_\cX(\sL(\pi)^{-1}(\trop^{-1}(w) \cap C)).
\]
By \autoref{subsetOfNegligibleIsNegligible} and \autoref{closedSubstackGivesNegligibeSet}, the set $\cC^{(\infty)}$ is measurable and
\[
	\mu_\cX(\cC^{(\infty)}) = 0.
\]
Therefore by \autoref{measureOfCountableDisjointUnion}, the set $\cC = \cC^{(0)} \sqcup \cC^{(\infty)}$ is measurable and
\[
	\mu_\cX(\cC) = \mu_\cX(\cC^{(0)}) = \sum_{w \in \sigma \cap N} \mu_\cX(\sL(\pi)^{-1}(\trop^{-1}(w) \cap C)).\qedhere
\]
\end{proof}

We will use the remainder of this section to prove \autoref{mainPropositionMotivicIntegralsCanonicalFantastacks}.

\subsection{Canonical stacks and preimages of measurable subsets}

In this subsection, we will prove \autoref{mainPropositionMotivicIntegralsCanonicalFantastacks}(\ref{stackMeasureOfPieceOfFiberOfTrop}). We begin with a couple lemmas.

\begin{lemma}
\label{liftingJetsWithCorrectField}
Let $Y$ be an irreducible finite type scheme over $k$ with smooth locus $Y_\sm \subset Y$, and let $C \subset \sL(Y)$ be a cylinder such that $C \cap \sL(Y \setminus Y_\sm) = \emptyset$. 

Then there exists some $n_C \in \N$ that satisfies the following. For any field extension $k'$ of $k$, any $n \geq n_C$, and any $\varphi_n \in \sL_n(Y)(k')$ with image in $\theta_n(C)$, there exists some $\varphi \in \sL(Y)(k')$ with image in $C$ such that $\theta_n(\varphi) = \varphi_n$.
\end{lemma}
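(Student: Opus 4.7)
The plan is to reduce the lemma to a Hensel-type lifting statement for jets landing in the smooth locus to a bounded order. Let $\mathscr{I} \subset \cO_Y$ denote the ideal sheaf cutting out $(Y \setminus Y_\sm)_{\red}$, and write $\ord_{\mathscr{I}} \colon \sL(Y) \to \N \cup \{\infty\}$ for the associated order function; then $\ord_{\mathscr{I}}^{-1}(\infty) = \sL((Y\setminus Y_\sm)_{\red})$. Write $C = (\theta_m)^{-1}(C_m)$ for some $m \in \N$ and some constructible $C_m \subset \sL_m(Y)$.

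The first step is to show that $\ord_{\mathscr{I}}$ is bounded on $C$. The sets $D_j := C \cap \{\ord_{\mathscr{I}} > j\}$ form a decreasing sequence of cylinders whose intersection $C \cap \sL((Y\setminus Y_\sm)_{\red})$ is empty by hypothesis. The standard compactness property for truncations of the thin subset $\sL((Y\setminus Y_\sm)_{\red})$ (see, e.g., the arguments in the proofs of \cite[Chapter 6 Proposition 2.3.1 and Lemma 2.1.5]{ChambertLoirNicaiseSebag}) then forces some $D_j$ to be empty, producing an integer $e$ with $\ord_{\mathscr{I}} \leq e$ on all of $C$.

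Second, I would apply the classical jet-lifting lemma: for any $n \geq 2e$, any field extension $k'$ of $k$, and any $\varphi_n \in \sL_n(Y)(k')$ with $\ord_{\mathscr{I}}(\varphi_n) \leq e$, there exists an arc $\varphi \in \sL(Y)(k')$ with $\theta_n(\varphi) = \varphi_n$; this is the Hensel-type lifting that underlies Denef--Loeser's change of variables formula (cf.~\cite[Chapter 4 Lemma 3.4.2]{ChambertLoirNicaiseSebag}). Set $n_C := \max(m, 2e)$. For $n \geq n_C$ and $\varphi_n \in \sL_n(Y)(k')$ whose underlying point lies in $\theta_n(C)$, the bound $\ord_{\mathscr{I}}(\varphi_n) \leq e$ (inherited from any arc in $C$ truncating to the same underlying point, since $\ord_{\mathscr{I}}$ depends only on the underlying point) allows the lifting lemma to produce $\varphi \in \sL(Y)(k')$ with $\theta_n(\varphi) = \varphi_n$. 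Finally, $\theta_m(\varphi) = \theta^n_m(\varphi_n)$ has underlying point in $\theta^n_m(\theta_n(C)) \subset C_m$, so $\varphi \in C$.

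The main obstacle is the first step, where one must carefully justify the compactness argument that bounds $\ord_{\mathscr{I}}$ on a cylinder disjoint from $\sL((Y\setminus Y_\sm)_{\red})$. The second step, by contrast, is an essentially off-the-shelf application of formal smoothness of $Y_\sm$, and the final bookkeeping $\theta_m(\varphi) \in C_m$ is immediate from the cylinder structure $C = (\theta_m)^{-1}(C_m)$ together with $n_C \geq m$.
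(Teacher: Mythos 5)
Your overall strategy is the same as the paper's: bound a singularity-order function on $C$ using quasi-compactness of cylinders (the paper covers $C$ by the cylinders $\ordjac_Y^{-1}(n)$ and extracts a finite subcover; your nested-intersection argument with the $D_j$ is equivalent), then invoke the Hensel-type lifting statement from \cite{ChambertLoirNicaiseSebag}, and finally use the cylinder structure of $C$ to see the lifted arc lands in $C$. That last bookkeeping step and the "order depends only on the underlying point" observation are fine.

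There is, however, a genuine (though local) flaw in your second step: the lifting lemma you quote is stated for the wrong ideal. The classical Hensel/Tougeron-type statement controls jets in terms of the order of the \emph{Jacobian} ideal (this is exactly the function $\ordjac_Y$ the paper takes from \cite[Chapter 5]{ChambertLoirNicaiseSebag}), not the order of the radical ideal $\mathscr{I}$ of the reduced singular locus, and with $\mathscr{I}$ and the threshold $n \geq 2e$ your claimed lemma is false. Concretely, take $Y = \{y^2 = x^3\} \subset \bA^2_k$, so $\mathscr{I} = (x,y)$, and consider the $2$-jet $\varphi_2$ given by $x(t) = t$, $y(t) = 0$ in $k[t]/(t^3)$: it satisfies the equation mod $t^3$ and has $\ord_{\mathscr{I}}(\varphi_2) = 1$, so $e = 1$ and $n = 2 \geq 2e$, yet no arc lifts it, since any lift would have $\ord_t(x^3) = 3$ but $\ord_t(y^2) \geq 6$. (The Jacobian order of this jet is $2$, so the genuine lemma, which would demand $n \geq 4$, is not contradicted.) The repair is routine and does not change your architecture: either run your compactness argument directly with $\ordjac_Y$ — note $\ordjac_Y^{-1}(\infty) \cap C \subset \sL(Y\setminus Y_\sm) \cap C = \emptyset$, so the same nested-cylinder argument bounds $\ordjac_Y$ on $C$ — as the paper does, or keep $\mathscr{I}$ but insert the Noetherian comparison $\mathscr{I}^a \subset \sJ\mathrm{ac}_Y$ for some $a \geq 1$ and replace $2e$ by the corresponding bound $c \cdot a e$ demanded by the actual lifting lemma before setting $n_C$.
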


\begin{proof}
By \cite[Chapter 5 Proposition 1.3.2(a) and Proposition 2.3.4]{ChambertLoirNicaiseSebag}, there exists a function $\ordjac_Y: \sL(Y) \to \N \cup \{\infty\}$ and some $c \in \Z_{>0}$ such that
\begin{itemize}

\item for every $n \in \N$, the set $\ordjac_Y^{-1}(n) \subset \sL(Y)$ is a cylinder,

\item the image of $\sL(Y) \setminus \sL(Y \setminus Y_\sm)$ under $\ordjac_Y$ is contained in $\N$, and

\item for every $n \in \N$, field extension $k'$ of $k$, and $\varphi_n \in \sL_n(Y)(k')$ whose image in $\sL_n(Y)$ is contained in $\theta_{n}( \ordjac_Y^{-1}(n') )$ for some $n' \leq n/c$, there exists some $\varphi \in \sL(Y)(k')$ with $\theta_n(\varphi) = \varphi_n$.

\end{itemize}
Because $C \cap \sL(Y \setminus Y_\sm) = \emptyset$, the collection $\{\ordjac_Y^{-1}(n)\}_{n \in \N}$ is a cover of the cylinder $C$ by cylinders. Thus by quasi-compactness of the constructible topology of $\sL(Y)$ (see e.g. \cite[Appendix Theorem 1.2.4(a)]{ChambertLoirNicaiseSebag}), there exists some $n'_C \in \N$ such that $C \subset \bigcup_{n = 0}^{n'_C} \ordjac_Y^{-1}(n)$. Let $n_C \in \N$ be such that $n_C \geq cn'_C$ and such that $C$ is the preimage under $\theta_{n_C}$ of a constructible subset of $\sL_{n_C}(Y)$.

Now let $k'$ be a field extension of $k$, let $n \geq n_C$, and let $\varphi_n \in \sL_n(Y)(k')$ have image in $\theta_n(C)$. Then the image of $\varphi_n$ is contained in $\theta_n(\ordjac_Y^{-1}(n'))$ for some $n' \leq n'_C \leq n_C/c \leq n/c$, so there exists some $\varphi \in \sL(Y)(k')$ with $\theta_n(\varphi) = \varphi_n$. Because $C$ is the preimage of a subset of $\sL_n(Y)$, the arc $\varphi$ has image in $C$.
\end{proof}

\begin{lemma}
\label{imageOfPreimageOfCylinderMapFromSmoothStack}
Let $Y$ be a finite type scheme over $k$, let $\cY$ be a smooth Artin stack over $k$, let $\xi: \cY \to Y$ be a morphism, let $C \subset \sL(Y)$ be a cylinder, and set $\cC = \sL(\xi)^{-1}(C) \subset | \sL(\cY) |$.

Then $\cC$ is a cylinder and there exists some $n_0 \in \N$ such that for all $n \geq n_0$,
\[
	\theta_n(\cC) = \sL_n(\xi)^{-1}(\theta_n(C)).
\]
\end{lemma}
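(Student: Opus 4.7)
The plan is to take $n_0 = n_1$, where $n_1$ is any level at which $C$ is cut out as a cylinder, and to verify both the cylindricity of $\cC$ and the claimed set equality at levels $n \geq n_1$. The key technical input will be that jets of the smooth Artin stack $\cY$ lift to arcs after a field extension, so that the standard cylinder argument for schemes carries over.

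First, I would write $C = \theta_{n_1}^{-1}(C_{n_1})$ for some $n_1 \in \N$ and some constructible $C_{n_1} \subseteq \sL_{n_1}(Y)$. The naturality relation $\theta_{n_1} \circ \sL(\xi) = \sL_{n_1}(\xi) \circ \theta_{n_1}$ then gives $\cC = \theta_{n_1}^{-1}(\sL_{n_1}(\xi)^{-1}(C_{n_1}))$. Since continuous maps of topological spaces pull locally closed subsets back to locally closed subsets, $\sL_{n_1}(\xi)^{-1}(C_{n_1})$ is constructible in $|\sL_{n_1}(\cY)|$, so $\cC$ is a cylinder.

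Setting $n_0 = n_1$, the inclusion $\theta_n(\cC) \subseteq \sL_n(\xi)^{-1}(\theta_n(C))$ for $n \geq n_0$ is immediate from $\sL_n(\xi) \circ \theta_n = \theta_n \circ \sL(\xi)$. For the reverse inclusion, I would start with $\psi_n \in \sL_n(\cY)(k')$ satisfying $\sL_n(\xi)(\psi_n) = \theta_n(\varphi)$ in $|\sL_n(Y)|$ for some $\varphi \in C$, and use smoothness of $\cY$ to lift $\psi_n$ to an arc $\psi \in \sL(\cY)(k'')$ over a field extension $k''/k'$ with $\theta_n(\psi) = (\psi_n)_{k''}$. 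Once $\psi$ is produced, the diagram chase
\[
	\theta_{n_1}(\sL(\xi)(\psi)) = \sL_{n_1}(\xi)(\theta^n_{n_1}(\psi_n)) = \theta^n_{n_1}(\sL_n(\xi)(\psi_n)) = \theta^n_{n_1}(\theta_n(\varphi)) = \theta_{n_1}(\varphi) \in C_{n_1}
\]
shows $\sL(\xi)(\psi) \in C$, whence $\psi_n$ lies in $\theta_n(\cC)$.

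The main technical obstacle is the lifting step, since $Y$ may be singular and $\xi$ is only a morphism, so I cannot use any property of $\xi$ to arrange $\sL(\xi)(\psi) = \varphi$; instead I must lift $\psi_n$ using smoothness of $\cY$ alone and verify compatibility with $C$ only afterwards. Concretely, I would choose a smooth atlas $U \to \cY$ by a smooth scheme, lift $\psi_n$ to a jet of $U$ over a field extension using smoothness and surjectivity of $\sL_n(U) \to \sL_n(\cY)$ (cf.~\cite[Proposition 3.5(v), Theorem 3.7(iii)]{Rydh2011}), extend to an arc of $U$ by smoothness of $U$, and push forward to $\cY$. This is why the statement succeeds only at levels $n \geq n_1$: the a posteriori compatibility check sees $\psi$ through its $n_1$-truncation, which is all $C$ depends on.
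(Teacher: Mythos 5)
Your proof is correct and follows essentially the same route as the paper's: you take $n_0$ to be the level defining the cylinder $C$, get cylindricity of $\cC$ by pulling back the constructible set $C_{n_1}$, and prove the reverse inclusion by lifting a jet of the smooth stack $\cY$ to an arc and checking membership in $C$ only through its level-$n_1$ truncation, which is exactly the paper's observation that $(\theta_n)^{-1}(\theta_n(C)) = C$ for $n \geq n_0$. The only difference is that you spell out the jet-to-arc lifting via a smooth atlas with a possible field extension (harmless, since everything is checked on topological points), a step the paper leaves implicit.
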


\begin{proof}
We first note that for all $n \in \N$, we have an obvious inclusion
\[
	\theta_n(\cC) \subset \sL_n(\xi)^{-1}(\theta_n(C)).
\]
Because $C$ is a cylinder, there exists some $n_0 \in \N$ and some constructible subset $C_{n_0} \subset \sL_{n_0}(Y)$ such that $C = (\theta_{n_0})^{-1}(C_{n_0})$. Then
\[
	\cC = \sL(\xi)^{-1}( (\theta_{n_0})^{-1}(C_{n_0}) ) = (\theta_{n_0})^{-1}( \sL_n(\xi)^{-1}(C_{n_0}) )
\]
is a cylinder because $\sL_n(\xi)^{-1}(C_{n_0})$ is a constructible subset of $|\sL_n(\cY)|$. We will finish this proof by showing that for any $n \geq n_0$, we have $\theta_n(\cC) \supset \sL_n(\xi)^{-1}(\theta_n(C))$.

Let $n \geq n_0$ and $\varphi_n \in \sL_n(\xi)^{-1}(\theta_n(C))$. Because $\cY$ is smooth, there exists some $\varphi \in |\sL(\cY)|$ such that $\theta_n(\varphi) = \varphi_n$. Then $\theta_n(\sL(\xi)(\varphi)) = \sL_n(\xi)( \varphi_n ) \in \theta_n(C)$, so $\sL(\xi)(\varphi) \in (\theta_n)^{-1}(\theta_n(C))$. But $(\theta_n)^{-1}(\theta_n(C)) = C$ because $C = (\theta_n)^{-1}( (\theta^n_{n_0})^{-1}(C_{n_0}) )$ is the preimage of a subset of $\sL_n(Y)$. Thus $\sL(\xi)(\varphi) \in C$, which implies $\varphi \in \cC$ and $\varphi_n \in \theta_n(\cC)$.
\end{proof}

We may now prove the special case of \autoref{mainPropositionMotivicIntegralsCanonicalFantastacks}(\ref{stackMeasureOfPieceOfFiberOfTrop}) where $C$ is a cylinder.

\begin{proposition}
\label{motivicIntegralsFantastacksCylinderCase}
Let $w \in \sigma \cap N$. If $C \subset \trop^{-1}(w) \subset \sL(X)$ is a cylinder, then $\sL(\pi)^{-1}(C)\subset |\sL(\cX)|$ is a cylinder. Furthermore, there exists some $\Theta_w \in \widehat{\sM}_k$ such that for any cylinder $C \subset \trop^{-1}(w) \subset \sL(X)$,
\[
	\mu_\cX(\sL(\pi)^{-1}(C)) = \Theta_w \mu_X(C).
\]
\end{proposition}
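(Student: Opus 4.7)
The plan is to show that, for $n$ sufficiently large, the restriction $\sL_n(\pi)\colon\theta_n(\cC)\to\theta_n(C)$ (with $\cC:=\sL(\pi)^{-1}(C)$) is a piecewise trivial fibration whose fiber class is $\Theta_w$, and then pass to the motivic limit. First, since $\cX$ is smooth and $X$ is finite type over $k$, \autoref{imageOfPreimageOfCylinderMapFromSmoothStack} applied to $\pi$ produces some $n_0\in\N$ such that $\cC$ is a cylinder and $\theta_n(\cC)=\sL_n(\pi)^{-1}(\theta_n(C))$ for all $n\geq n_0$.

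Second, I would invoke \autoref{liftingJetsWithCorrectField} on the cylinder $C\subset\sL(X)$. The hypothesis $C\cap\sL(X\setminus X_{\sm})=\emptyset$ follows from the crucial tropical observation that any $\varphi\in\trop^{-1}(w)$ satisfies $\varphi^*(p)=t^{\langle w,p\rangle}u$ with $u\in k'\llbracket t\rrbracket^\times$, and in particular is nonzero for every $p\in P$; thus the generic point of $\varphi$ lies in $T\subset X_{\sm}$. This yields some $n_C\in\N$ such that, for $n\geq n_C$, every $\varphi_n\in\theta_n(C)(k')$ lifts to some $\varphi\in C(k')$ over the \emph{same} field with $\theta_n(\varphi)=\varphi_n$, and in particular with $\trop(\varphi)=w$. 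Applying \autoref{fibersofhtemapsofjetsmainproposition} to such lifts, we obtain constants $n_w\in\N$ and $\Theta_w\in K_0(\Stack_k)$ together with stacks $\cF_n$ satisfying $\e(\cF_n)=\Theta_w$ and
\[
	\sL_n(\pi)^{-1}(z)_{\red}\cong\cF_n\otimes_k k(z)
\]
as $k(z)$-stacks, for every $z\in\theta_n(C)$ and every $n\geq\max\{n_0,n_C,n_w\}$.

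Third, endowing $\theta_n(C)$ and $\theta_n(\cC)$ with their reduced structures, \autoref{piecewiseTrivialFibrationCriterion} upgrades the pointwise identifications above to the statement that $\sL_n(\pi)\colon\theta_n(\cC)\to\theta_n(C)$ is a piecewise trivial fibration with fiber $\cF_n$. By \autoref{remarkPiecewiseTrivialFibrationGivesProductOfClasses} this gives $\e(\theta_n(\cC))=\Theta_w\cdot\e(\theta_n(C))$ in $K_0(\Stack_k)$, and since $\dim\cX=\dim\widetilde{X}-\dim G=r-(r-d)=d=\dim X$, dividing by $\bL^{(n+1)d}$ and taking $n\to\infty$ yields
\[
	\mu_\cX(\cC)=\Theta_w\cdot\mu_X(C)
\]
in $\widehat{\sM}_k$, with $\Theta_w$ interpreted via the canonical map $K_0(\Stack_k)\to\widehat{\sM}_k$.

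The main obstacle is the hypothesis check enabling \autoref{liftingJetsWithCorrectField}: without lifting jets in $\theta_n(C)$ to arcs in $C$ over the \emph{same} residue field, the fiber description of \autoref{fibersofhtemapsofjetsmainproposition} (available a priori only at truncations of arcs with $\trop=w$) cannot be transported to arbitrary points of $\theta_n(C)$ in the form required by \autoref{piecewiseTrivialFibrationCriterion}. The tropical identity that $\varphi^*(p)$ vanishes to finite order $\langle w,p\rangle$ for every $p\in P$ is what ensures arcs in $\trop^{-1}(w)$ never lie in the singular locus, and is the geometric crux of the argument.
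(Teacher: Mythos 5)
Your argument is correct and is essentially the paper's own proof: lift jets in $\theta_n(C)$ to arcs with $\trop=w$ via \autoref{liftingJetsWithCorrectField} (whose smooth-locus hypothesis holds exactly for the reason you give), feed the resulting field-compatible lifts into \autoref{fibersofhtemapsofjetsmainproposition}, upgrade the pointwise fiber identifications to a piecewise trivial fibration via \autoref{piecewiseTrivialFibrationCriterion} and \autoref{remarkPiecewiseTrivialFibrationGivesProductOfClasses} (together with \autoref{existenceOfClassOfConstructibleSubsetOfStack} to handle the constructible, not locally closed, sets $\theta_n(C)$ and $\theta_n(\cC)$), and pass to the limit using \autoref{imageOfPreimageOfCylinderMapFromSmoothStack}. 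The only cosmetic difference is that the paper applies the lifting lemma once to the cylinder $\trop^{-1}(w)$, making the threshold uniform in $C$, whereas you apply it to $C$ itself; this is harmless since $\Theta_w$ still depends only on $w$.
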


\begin{proof}
Let $n_w, \Theta_w, \{\cF_n\}_{n \geq n_w}$ be as in the statement of \autoref{fibersofhtemapsofjetsmainproposition}, and let $n_{\trop^{-1}(w)}$ be as in the statement of \autoref{liftingJetsWithCorrectField} (with $Y = X$ and $C = \trop^{-1}(w)$). We show that if $n \geq \max\{n_w, n_{\trop^{-1}(w)}\}$ and $C_n \subset \theta_n(\trop^{-1}(w))$ is constructible, then
\[
	\e(\sL_n(\pi)^{-1}(C_n)) = \Theta_w \e(C_n).
\]

Let $n \geq \max\{n_w, n_{\trop^{-1}(w)}\}$, let $k'$ be a field extension of $k$, and let $\varphi_n \in \sL_n(X)(k')$ with image in $\theta_n(\trop^{-1}(w))$. Then by our choice of $n_{\trop^{-1}(w)}$, there exists some $\varphi \in \sL(X)(k')$ such that $\trop(\varphi) = w$ and $\theta_n(\varphi) = \varphi_n$. Then by our choice of $n_w$ and $\cF_n$,
\[
	\sL_n(\pi)^{-1}(\varphi_n)_\red \cong \cF_n \otimes_k k'.
\]
Therefore \autoref{existenceOfClassOfConstructibleSubsetOfStack}, \autoref{remarkPiecewiseTrivialFibrationGivesProductOfClasses}, and \autoref{piecewiseTrivialFibrationCriterion} imply that for any constructible subset $C_n \subset \theta_n(\trop^{-1}(w))$,
\[
	\e(\sL_n(\pi)^{-1}(C_n)) = \e(\cF_n)\e(C_n) = \Theta_w \e(C_n),
\]
where the second equality holds by our choice of $n_w, \Theta_w, \cF_n$.

Now let $C \subset \trop^{-1}(w)$ be a cylinder, and let $\cC = \sL(\pi)^{-1}(C) \subset |\sL(\cX)|$. Then $\cC$ is a cylinder by \autoref{imageOfPreimageOfCylinderMapFromSmoothStack}. Let $n_0$ be as in the statement of \autoref{imageOfPreimageOfCylinderMapFromSmoothStack} (with $\xi = \pi$). Then for any $n \geq \max\{n_w, n_{\trop^{-1}(w)}, n_0\}$,
\[
	\e(\theta_n( \cC )) = \e(\sL_n(\pi)^{-1}(\theta_n(C))) = \Theta_w \e(\theta_n(C)),
\]
where the first equality holds by our choice of $n_0$. Therefore
\begin{align*}
	\mu_\cX( \cC ) &= \lim_{n \to \infty} \e(\theta_n(\cC))\bL^{-(n+1)\dim\cX}\\
	&= \Theta_w \lim_{n \to \infty} \e(\theta_n(C))\bL^{-(n+1)\dim X}\\
	&= \Theta_w \mu_X(C).\qedhere
\end{align*}
\end{proof}

Now we may complete the proof of \autoref{mainPropositionMotivicIntegralsCanonicalFantastacks}(\ref{stackMeasureOfPieceOfFiberOfTrop}) in general.

\begin{proof}[Proof of  \autoref{mainPropositionMotivicIntegralsCanonicalFantastacks}(\ref{stackMeasureOfPieceOfFiberOfTrop})]
Let $w \in \sigma \cap N$, let $\Theta_w$ be as in the statement of \autoref{motivicIntegralsFantastacksCylinderCase}, and let $C \subset \trop^{-1}(w)$ be a measurable subset of $\sL(X)$.

For any $\varepsilon \in \R_{>0}$ and any cylindrical $\varepsilon$-approximation $(C^{(0)}, (C^{(i)})_{i \in I})$ of $C$, \autoref{motivicIntegralsFantastacksCylinderCase} implies that $(\sL(\pi)^{-1}(C^{(0)}),(\sL(\pi)^{-1}(C^{(i)}))_{i \in I})$ is a cylindrical $\varepsilon \Vert \Theta_w \Vert$-approximation of $\sL(\pi)^{-1}(C)$. Therefore $\sL(\pi)^{-1}(C)$ is measurable, and by another application of \autoref{motivicIntegralsFantastacksCylinderCase},
\[
	\mu_\cX(\sL(\pi)^{-1}(C)) = \Theta_w \mu_X(C).\qedhere
\]
\end{proof}

\subsection{Quotient by an algebraic torus}
In order to apply \autoref{theoremMainMotivicIntegrationForQuotientStacks} to prove \autoref{mainPropositionMotivicIntegralsCanonicalFantastacks}(\ref{stackMeasureOfFiberOfTrop}), we must rewrite our fantastack as the quotient by a torus. The following result provides an explicit way to do so.

\begin{proposition}
\label{rewriteFantastackAsQuotientByAlgebraicTorus}
Let $\widehat{N} := \widetilde{N} \oplus N$ and $\widehat{\sigma} = \widetilde{\sigma} \times \{0\}$. Letting $\widehat{X}$ be the toric variety associated to $\widehat{\sigma}$, there is a $\widetilde{T}$-action on $\widehat{X}$ and an isomorphism $[\widehat{X} / \widetilde{T}] \xrightarrow{\sim} \cX$ such that $\widehat{X} \to [\widehat{X} / \widetilde{T}] \xrightarrow{\sim} \cX \xrightarrow{\pi} X$ is the toric morphism induced by $\nu\oplus\id\colon\widehat{N} \to N$.
\end{proposition}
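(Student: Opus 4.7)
The plan is to identify $\widehat{X}$ explicitly, define the $\widetilde{T}$-action as the restriction of the toric action of a natural supergroup, and then verify the quotient stack is $\cX$ by exhibiting a $G$-torsor.

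First, I would compute $\widehat{X}$ in coordinates. Since $\widehat{\sigma} = \widetilde{\sigma}\times\{0\}$ sits in $\widehat{N}_\R = \widetilde{N}_\R\oplus N_\R$, its dual is $\widehat{\sigma}^\vee = \widetilde{\sigma}^\vee\oplus M_\R$, so $\widehat{\sigma}^\vee\cap\widehat{M} = F\oplus M$. Hence
\[
\widehat{X} = \Spec(k[F\oplus M]) \cong \widetilde{X}\times_k T,
\]
and the torus of $\widehat{X}$ is $\widehat{T} = \widetilde{T}\times T$. The toric morphism $\widehat{X}\to X$ induced by $\nu\oplus\id$ restricts on tori to $(\widetilde{t},t)\mapsto \widetilde{\pi}(\widetilde{t})\,t$, so it is surjective with kernel equal to the image of the embedding $\widetilde{T}\hookrightarrow\widehat{T}$ given by $\widetilde{t}\mapsto(\widetilde{t},\widetilde{\pi}(\widetilde{t})^{-1})$. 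I take this as the definition of the $\widetilde{T}$-action on $\widehat{X}$, so that under $\widehat{X}\cong\widetilde{X}\times T$ it becomes $\widetilde{t}\cdot(\widetilde{x},t) = (\widetilde{t}\cdot\widetilde{x},\widetilde{\pi}(\widetilde{t})^{-1}t)$.

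Next, to construct the isomorphism $[\widehat{X}/\widetilde{T}]\xrightarrow{\sim}\cX$, I would consider the closed immersion $\iota\colon\widetilde{X}\hookrightarrow\widehat{X}$, $\widetilde{x}\mapsto(\widetilde{x},1)$. For $g\in G = \ker(\widetilde{\pi})$ one has $g\cdot(\widetilde{x},1) = (g\cdot\widetilde{x},1)$, so $\iota$ is $G$-equivariant along $G\hookrightarrow\widetilde{T}$, inducing a morphism $\Phi\colon[\widetilde{X}/G]\to[\widehat{X}/\widetilde{T}]$. To show $\Phi$ is an isomorphism, it suffices to check that the composition $\widetilde{X}\xrightarrow{\iota}\widehat{X}\to[\widehat{X}/\widetilde{T}]$ is a $G$-torsor. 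Base-changing along the smooth cover $\widehat{X}\to[\widehat{X}/\widetilde{T}]$ produces the morphism
\[
\widetilde{T}\times\widetilde{X}\longrightarrow\widehat{X},\qquad (\widetilde{t},\widetilde{x})\longmapsto\bigl(\widetilde{t}\cdot\widetilde{x},\,\widetilde{\pi}(\widetilde{t})^{-1}\bigr),
\]
whose fiber over $(\widetilde{x}_0,t_0)$ is $\{(\widetilde{t},\widetilde{t}^{-1}\widetilde{x}_0)\mid\widetilde{\pi}(\widetilde{t}) = t_0^{-1}\}$, a $G$-torsor because $\widetilde{\pi}\colon\widetilde{T}\to T$ is one. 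By smooth descent of $G$-torsors, $\widetilde{X}\to[\widehat{X}/\widetilde{T}]$ is itself a $G$-torsor, which identifies $[\widehat{X}/\widetilde{T}]$ with $[\widetilde{X}/G] = \cX$ via $\Phi$.

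Finally, I would verify the composition $\widehat{X}\to[\widehat{X}/\widetilde{T}]\xrightarrow{\sim}\cX\xrightarrow{\pi}X$ agrees with the toric morphism induced by $\nu\oplus\id$. By construction this composition is $\widetilde{\pi}\circ\mathrm{pr}_{\widetilde{X}}$ composed with the $T$-translation by the second coordinate, i.e.\ $(\widetilde{x},t)\mapsto\widetilde{\pi}(\widetilde{x})\cdot t$ in $X$; on character lattices this reads $p\mapsto(\nu^*(p),p)\in\widetilde{M}\oplus M$, matching $\nu\oplus\id$ exactly.

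The only delicate point is the torsor verification in the middle step, where one must keep careful track of the twisted embedding $\widetilde{T}\hookrightarrow\widehat{T}$ and its interaction with the toric action; once that is in place, the remaining checks are formal consequences of the definitions.
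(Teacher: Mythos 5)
Your argument is correct, and it takes a genuinely different route from the paper. The paper stays entirely within the stacky-fan formalism: it observes that $(\widehat{\sigma},\widehat{\nu})$ with $\widehat{\nu}=\nu\oplus\id$ is a stacky fan with $\cok(\widehat{\nu}^*)\cong\widetilde{M}$, so that $G_{\widehat{\nu}}=\widetilde{T}$ and $\cX_{\widehat{\sigma},\widehat{\nu}}=[\widehat{X}/\widetilde{T}]$, and then deduces the isomorphism $\cX\xrightarrow{\sim}[\widehat{X}/\widetilde{T}]$ from a morphism of stacky fans together with \cite[Lemma B.17]{GeraschenkoSatriano1}; compatibility with $\pi$ is then automatic from the commutative diagram of fans. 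You instead give a self-contained argument: you identify $\widehat{X}\cong\widetilde{X}\times_k T$, take the $\widetilde{T}$-action given by the twisted embedding $\widetilde{t}\mapsto(\widetilde{t},\widetilde{\pi}(\widetilde{t})^{-1})$ (which is exactly the kernel $G_{\widehat{\nu}}$ of $\widehat{T}\to T$, so your action agrees with the paper's), and verify directly that $\widetilde{X}\xrightarrow{\iota}\widehat{X}\to[\widehat{X}/\widetilde{T}]$ is a $G$-torsor by smooth descent along the cover $\widehat{X}\to[\widehat{X}/\widetilde{T}]$, which identifies $[\widetilde{X}/G]$ with $[\widehat{X}/\widetilde{T}]$; the final compatibility with $\nu\oplus\id$ is a short computation. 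Your approach buys explicitness (the action, the torsor, and the map $(\widetilde{x},t)\mapsto\widetilde{\pi}(\widetilde{x})\cdot t$ are all written in coordinates, with no appeal to the toric-stack isomorphism lemma), at the cost of being longer and not exhibiting the isomorphism as one of toric stacks, which the statement does not require. Two small points to tighten: the torsor property of $\widetilde{T}\times\widetilde{X}\to\widehat{X}$ should be checked as a morphism rather than fiber-by-fiber, which is immediate after the coordinate change $(\widetilde{t},\widetilde{x})\mapsto(\widetilde{t},\widetilde{t}\cdot\widetilde{x})$ exhibiting it as a base change of the $G$-torsor $\widetilde{T}\to T$; and in the last step the phrase ``by construction'' is hiding a short argument, e.g.\ that both candidate maps $\widehat{X}\to X$ are $\widetilde{T}$-invariant, restrict to $\widetilde{\pi}$ along $\iota$, and hence agree after precomposition with the faithfully flat map $\widetilde{T}\times\widetilde{X}\to\widehat{X}$. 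Neither issue is a gap; both are routine to complete.
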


\begin{proof}
For ease of notation, let $\widehat{\nu}=\nu\oplus\id$ and consider the stacky fan $(\widehat{\sigma},\widehat{\nu})$. One computes that $\cok(\widehat{\nu}^*)=\widetilde{M}$ and hence $G_{\widehat{\nu}}=\widetilde{T}$. As a result, $\cX_{\widehat{\sigma},\widehat{\nu}}=[\widehat{X} / \widetilde{T}]$ for an appropriate $\widetilde{T}$-action on $\widehat{X}$.

Next note that $\cX$ is, by definition, the toric stack $\cX_{\widetilde{\sigma},\nu}$, and consider the following commutative diagram, where the vertical maps are stacky fans and the horizontal maps are morphisms between the stacky fans.
\begin{center}
\begin{tikzcd}
\widetilde{\sigma} \arrow[r] & \widehat{\sigma} \arrow[r] & \sigma\\[-15pt]
\widetilde{N} \arrow[r] \arrow[d, "\nu"] & \widehat{N} \arrow[r, "\widehat{\nu}"] \arrow[d, "\widehat{\nu}"] & N \arrow[d, equal]\\
N \arrow[r, equal] & N \arrow[r, equal] & N
\end{tikzcd}
\end{center}
This induces morphisms $\cX\to [\widehat{X} / \widetilde{T}]\to X$ of toric stacks, and the composite is the morphism $\pi$. By \cite[Lemma B.17]{GeraschenkoSatriano1}, the former map is an isomorphism of toric stacks; this is the inverse of our desired isomorphism $[\widehat{X} / \widetilde{T}] \xrightarrow{\sim} \cX$. Lastly, we see that the toric morphism $\widehat{X}\to [\widehat{X} / \widetilde{T}]\to X$ is induced by the rightmost map in the top row of the above diagram, namely $\widehat{\nu}\colon\widehat{N} \to N$.
\end{proof}

\subsection{Canonical stacks and preimages of co-characters} We end this section by proving \autoref{mainPropositionMotivicIntegralsCanonicalFantastacks}(\ref{stackMeasureOfFiberOfTrop}).

For the remainder of this section, let $r = \rk \widetilde{N}$, let $v_1, \dots, v_r$ be the first lattice points of the rays of $\sigma$, let $e_1, \dots, e_r$ be the generators of $\widetilde{N}$ indexed by the rays $\R_{\geq 0} v_1, \dots, \R_{\geq 0} v_r$, so
\[
	\beta(e_i) = v_i
\]
for all $i \in \{1, \dots, r\}$, and let $f_1, \dots, f_r$ be the basis of $\widetilde{M}$ dual to $e_1, \dots, e_r$. Thus $f_1, \dots, f_r$ are the minimal generators of the monoid $F$.

\begin{lemma}
\label{QGorensteinsumofgeneratorsisinvariant}
We have the equality
\[
	m(f_1 + \dots + f_r) = q.
\]
\end{lemma}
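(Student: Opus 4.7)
The plan is to verify the claimed identity by pairing both sides against the basis $e_1, \dots, e_r$ of $\widetilde{N}$. Since $\widetilde{M}$ is the dual of $\widetilde{N}$, an element of $\widetilde{M}$ is determined by its values on $e_1, \dots, e_r$, so it suffices to check that $\langle e_i, m(f_1+\dots+f_r)\rangle = \langle e_i, q\rangle$ for every $i \in \{1, \dots, r\}$, where on the right we are using the inclusion $P \hookrightarrow F$ given by dualizing $\beta$ to regard $q$ as an element of $\widetilde{M}$.

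First I would compute the left-hand side: since $f_1, \dots, f_r$ is the dual basis to $e_1, \dots, e_r$, we have $\langle e_i, f_j\rangle = \delta_{ij}$, and hence $\langle e_i, m(f_1+\dots+f_r)\rangle = m$. Next I would compute the right-hand side: by the definition of the inclusion $P \hookrightarrow F$ (dual to $\beta$) together with $\beta(e_i)=v_i$, we have $\langle e_i, q\rangle = \langle \beta(e_i), q\rangle = \langle v_i, q\rangle$, which equals $m$ by the defining property of $q$ (the $\Q$-Gorenstein condition stated at the beginning of the section).

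The two sides agree on every basis vector $e_i$, so they are equal in $\widetilde{M}$, proving the lemma. There is no real obstacle here; the statement is essentially unwinding the definitions of $q$, the dual basis $\{f_i\}$, and the embedding $P \hookrightarrow F$ coming from $\beta$.
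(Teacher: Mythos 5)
Your proposal is correct and is essentially the same argument as the paper's: both proofs rest on the observation that since $P \hookrightarrow F$ is dual to $\beta$, one has $\langle e_i, q \rangle = \langle v_i, q \rangle = m$, after which the identity follows from the dual-basis relation between $\{e_i\}$ and $\{f_i\}$ (the paper expands $q$ in the basis $f_1,\dots,f_r$, which is the same computation as your check on each $e_i$).
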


\begin{proof}
Because the inclusion $P \hookrightarrow F$ is dual to $\beta$,
\[
	\langle e_i, q \rangle = m
\]
for all $i \in \{1, \dots, r\}$. Then
\[
	   m(f_1 + \dots + f_r) = \langle e_1, q \rangle f_1 + \dots + \langle e_r, q \rangle f_r = q.\qedhere
\]
\end{proof}

For the remainder of this section, let $\widehat{N}$, $\widehat{\sigma}$, and $\widehat{X}$ be as in \autoref{rewriteFantastackAsQuotientByAlgebraicTorus}. Let $\rho: \widehat{X} \to \cX$ be the composition $\widehat{X} \to [ \widehat{X} / \widetilde{T} ] \xrightarrow{\sim} \cX$, where the $\widetilde{T}$-action on $\widehat{X}$ and the isomorphism $[ \widehat{X} / \widetilde{T} ] \xrightarrow{\sim} \cX$ are as in the statement of \autoref{rewriteFantastackAsQuotientByAlgebraicTorus}, let $D_1, \dots, D_r$ be the divisors of $\widehat{X}$ indexed by $e_1 \oplus 0, \dots, e_r\oplus 0 \in \widetilde{N} \oplus N = \widehat{N}$, respectively, and let $\cO(-D_1), \dots, \cO(-D_r)$ be the ideal sheaves on $\widehat{X}$ of $D_1, \dots, D_r$, respectively. Note that for all $i \in \{1, \dots, r\}$, the ideal $\cO(-D_i)$ is generated by the monomial in $k[F \oplus M]$ indexed by $f_i \oplus 0$.

\begin{proposition}
\label{pushoutPreimageOfTropFiber}
Let $w \in \sigma \cap N$. Then
\[
	\sL(\rho)^{-1} \left( \sL(\pi)^{-1}(\trop^{-1}(w)) \right) = \bigcup_{\widetilde{w} \in \beta^{-1}(w)} \left( \bigcap_{i =1}^r \ord_{\cO(-D_i)}^{-1}( \langle \widetilde{w}, f_i \rangle ) \right).
\]
\end{proposition}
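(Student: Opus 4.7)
The plan is to reduce the claim to an application of \autoref{l:trop-basic-properties}(\ref{l:trop-basic-properties::inverse-image}) applied to the toric morphism $\widehat{\pi} := \pi \circ \rho : \widehat{X} \to X$, which by \autoref{rewriteFantastackAsQuotientByAlgebraicTorus} is induced by $\widehat{\nu} := \nu \oplus \id : \widehat{N} \to N$. First I would rewrite
\[
\sL(\rho)^{-1}(\sL(\pi)^{-1}(\trop^{-1}(w))) = \sL(\widehat{\pi})^{-1}(\trop^{-1}(w)),
\]
noting that $\widehat{\sigma} = \widetilde{\sigma} \times \{0\}$ is pointed in $\widehat{N}_\R$ with dual monoid $\widehat{\sigma}^\vee \cap \widehat{M} = F \oplus M$, so the tropicalization machinery of \autoref{def:tropicalization-of-arcs} applies to arcs of $\widehat{X}$.

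Next I would verify the hypothesis of \autoref{l:trop-basic-properties}(\ref{l:trop-basic-properties::inverse-image}) for $\widehat{\pi}$: the dual map $\widehat{\nu}^* : M \to F \oplus M$ sends $p \mapsto (p, p)$, where the first $p$ denotes its image in $F$ under the inclusion $P \hookrightarrow F$. Given $(f, p) \in F \oplus M$, \autoref{elementofFaddstogetelementofP} provides $f' \in F$ with $p'' := f + f' \in P$; then $p' := p'' - p \in M$ gives $(f + f', p + p') = (p'', p'') = \widehat{\nu}^*(p'')$. The lemma then yields
\[
\sL(\widehat{\pi})^{-1}(\trop^{-1}(w)) = \bigcup_{\widehat{w} \in \widehat{\beta}^{-1}(w)} \trop^{-1}(\widehat{w}),
\]
where $\widehat{\beta} : \widehat{\sigma} \cap \widehat{N} \to \sigma \cap N$ is induced by $\widehat{\nu}$. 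Since $\widehat{\sigma} \cap \widehat{N} = (\widetilde{\sigma} \cap \widetilde{N}) \times \{0\}$ and $\widehat{\beta}(\widetilde{w}, 0) = \beta(\widetilde{w})$, this union is indexed by $\widetilde{w} \in \beta^{-1}(w)$ via $\widehat{w} = (\widetilde{w}, 0)$.

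Finally I would identify $\trop^{-1}((\widetilde{w}, 0))$ with $\bigcap_{i=1}^r \ord_{\cO(-D_i)}^{-1}(\langle \widetilde{w}, f_i \rangle)$. Since $\widetilde{X} \cong \bA_k^r$ is the affine toric variety of the positive orthant with rays $e_1, \ldots, e_r$, the ideal of the toric divisor dual to $e_i$ in $\widetilde{X}$ is the principal ideal $(\chi^{f_i})$; pulling back to $\widehat{X} \cong \widetilde{X} \times T$ gives $\cO(-D_i) = (\chi^{(f_i, 0)})$, hence $\ord_{\cO(-D_i)}(\widehat{\varphi}) = \trop(\widehat{\varphi})(f_i, 0)$ for every arc $\widehat{\varphi}$ of $\widehat{X}$. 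Moreover, every arc of $\widehat{X}$ automatically satisfies $\trop(\widehat{\varphi})(0, p) = 0$ for all $p \in M$, because $\chi^{(0, p)}$ is a unit in $k[\widehat{X}]$. Combining these observations, $\trop(\widehat{\varphi}) = (\widetilde{w}, 0)$ if and only if $\ord_{\cO(-D_i)}(\widehat{\varphi}) = \langle \widetilde{w}, f_i \rangle$ for each $i$, which completes the proof. The principal bookkeeping task is keeping the splitting $\widehat{N} = \widetilde{N} \oplus N$ and the various dualities straight; I do not expect a deeper obstacle.
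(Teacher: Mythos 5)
Your proof is correct and follows essentially the same route as the paper's: rewrite $\sL(\rho)^{-1}(\sL(\pi)^{-1}(\cdot))$ as $\sL(\pi\circ\rho)^{-1}(\cdot)$ for the toric morphism $\widehat{X}\to X$ induced by $\nu\oplus\id$, apply \autoref{l:trop-basic-properties}(\ref{l:trop-basic-properties::inverse-image}) after verifying its hypothesis via \autoref{elementofFaddstogetelementofP}, and identify $\trop^{-1}((\widetilde{w},0))$ with $\bigcap_{i=1}^r \ord_{\cO(-D_i)}^{-1}(\langle\widetilde{w},f_i\rangle)$. If anything, your hypothesis check is more careful than the paper's, which asserts $\widehat{\sigma}^\vee\cap\widehat{N}^*=F\oplus 0$ with dual map $p\mapsto(p,0)$, whereas (as you correctly compute) it is $F\oplus M$ with $p\mapsto(p,p)$, and your choice of $p'=p''-p$ handles the extra $M$-coordinate that the paper's terse wording glosses over.
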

\begin{proof}
By \autoref{fiberOfTropIsCylinder} and \autoref{l:trop-basic-properties}(\ref{l:trop-basic-properties::inverse-image}), it is enough to show that for any $\widehat{f}\in\widehat{\sigma}^\vee\cap\widehat{N}^*$, there exists $\widehat{f}'\in\widehat{\sigma}^\vee\cap\widehat{N}^*$ such that $\widehat{f}+\widehat{f}'$ is in the image of $\sigma^\vee\cap M$. By definition of $\widehat{\nu}$, the map
\[
P=\sigma^\vee\cap M\to\widehat{\sigma}^\vee\cap\widehat{N}^*=(\widetilde{\sigma}^\vee\cap\widetilde{M})\oplus 0=F\oplus 0
\]
is precisely $p\mapsto(p,0)$. The result then follows from \autoref{elementofFaddstogetelementofP}.
\end{proof}

We may now complete the proof of \autoref{mainPropositionMotivicIntegralsCanonicalFantastacks}(\ref{stackMeasureOfFiberOfTrop}).

\begin{proof}[Proof of \autoref{mainPropositionMotivicIntegralsCanonicalFantastacks}(\ref{stackMeasureOfFiberOfTrop})]
Let $w \in \sigma \cap N$, and set 
\[
	\widehat{C} = \sL(\rho)^{-1} \left( \sL(\pi)^{-1}(\trop^{-1}(w)) \right) \subset \sL(\widehat{X}).
\]
Then
\begin{align*}
	\mu_{\widehat{X}} (\widehat{C}) &= \mu_{\widehat{X}}\left( \bigcup_{\widetilde{w} \in \beta^{-1}(w)} \left( \bigcap_{i =1}^r \ord_{\cO(-D_i)}^{-1}( \langle \widetilde{w}, f_i \rangle ) \right) \right)\\
	&= \sum_{\widetilde{w} \in \beta^{-1}(w)} \mu_{\widehat{X}} \left( \bigcap_{i =1}^r \ord_{\cO(-D_i)}^{-1}( \langle \widetilde{w}, f_i \rangle ) \right)\\
	&= \sum_{\widetilde{w} \in \beta^{-1}(w)} (\bL-1)^{r+d} \bL^{-(r+d)-\sum_{i =1}^r \langle \widetilde{w}, f_i\rangle}\\
	&=\sum_{\widetilde{w} \in \beta^{-1}(w)} (\bL-1)^{r+d} \bL^{-(r+d) - \langle w, q \rangle/m}\\
	&= (\#\beta^{-1}(w))(\bL-1)^{r+d} \bL^{-(r+d) - \langle w, q \rangle/m},
\end{align*}
where the first equality is by \autoref{pushoutPreimageOfTropFiber}, the second equality is by \autoref{fiberabovewisfiniteset} and the fact that the union in the first line is disjoint, the third equality is by \cite[Chapter 7 Lemma 3.3.3]{ChambertLoirNicaiseSebag} and the definition of $\widehat{X}$ and $D_1, \dots, D_r$, and the fourth equality is by \autoref{QGorensteinsumofgeneratorsisinvariant}.

The set $\sL(\pi)^{-1}(\trop^{-1}(w)) \subset |\sL(\cX)|$ is a cylinder by \autoref{fiberOfTropIsCylinder} and \autoref{motivicIntegralsFantastacksCylinderCase}. Then by \autoref{theoremMainMotivicIntegrationForQuotientStacks},
\begin{align*}
	\mu_\cX( \sL(\pi)^{-1}(\trop^{-1}(w))) &= \mu_{\widehat{X}} (\widehat{C}) \e(\widetilde{T})^{-1} \bL^{\dim \widetilde{T}}\\
	&= (\#\beta^{-1}(w))(\bL-1)^{d} \bL^{-d - \langle w, q \rangle/m}.\qedhere
\end{align*}
\end{proof}

\section{Stringy invariants and toric Artin stacks: proof of \autoref{mainTheoremStackMeasureAndGorensteinMeasure}}
\label{sectionStringInvariantsAndToricArtinStacks}

We complete the proof of \autoref{mainTheoremStackMeasureAndGorensteinMeasure} in this section. Let $d \in \N$, let $N \cong \Z^d$ be a lattice, and let $T = \Spec(k[N^*])$ be the algebraic torus with co-character lattice $N$. We recall the following lemma, whose proof is standard.

\begin{lemma}
\label{GorensteinIsFaceOfGorenstein}
Let $\sigma$ be a pointed rational cone in $N_\R$, and assume that the affine $T$-toric variety associated to $\sigma$ is $\Q$-Gorenstein. Then there exists a $d$-dimensional pointed rational cone $\overline{\sigma}$ in $N_\R$ such that $\sigma$ is a face of $\overline{\sigma}$ and the $T$-toric variety associeted to $\overline{\sigma}$ is $\Q$-Gorenstein.
\end{lemma}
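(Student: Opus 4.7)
The plan is to build $\overline{\sigma}$ by adjoining $d - \dim\sigma$ carefully chosen new rays to $\sigma$ while simultaneously extending the rational linear functional that witnesses the $\Q$-Gorenstein condition on $\sigma$ to one that witnesses the $\Q$-Gorenstein condition on $\overline{\sigma}$. If $\sigma$ is already $d$-dimensional I take $\overline{\sigma}=\sigma$, so I assume $e := \dim\sigma < d$. The starting point is the standard combinatorial characterization: $X_\sigma$ is $\Q$-Gorenstein if and only if there exists a rational linear functional $q$ on $\mathrm{span}_\R(\sigma)$ taking the value $1$ on every primitive lattice generator of a ray of $\sigma$. This $q$ is the piece of data I need to extend.

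For the construction, I would set $V := \mathrm{span}_\R(\sigma) \subsetneq N_\R$ and $N_V := N \cap V$, which is a saturated sublattice of $N$. Pick a direct-sum complement $N = N_V \oplus N'$ and a $\Z$-basis $w_1, \dots, w_{d-e}$ of $N'$, so each $w_j$ is automatically primitive in $N$. Define $\overline{q} \in N^* \otimes_\Z \Q$ to be the unique extension of $q$ to $N_\R = V \oplus N'_\R$ satisfying $\langle w_j, \overline{q}\rangle = 1$ for each $j$, and set
\[
	\overline{\sigma} := \mathrm{cone}(v_1, \dots, v_r, w_1, \dots, w_{d-e}),
\]
where $v_1, \dots, v_r$ are the primitive ray generators of $\sigma$.

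The verification then has three steps. Full-dimensionality and rationality of $\overline{\sigma}$ are immediate from construction. Pointedness follows because every nonzero element of $\overline{\sigma}$ is a nontrivial non-negative combination of generators each pairing to $1$ with $\overline{q}$, hence pairs strictly positively with $\overline{q}$. To see that $\sigma$ is a face of $\overline{\sigma}$, I would introduce the auxiliary functional $\ell \in N^* \otimes_\Z \Q$ characterized by $\ell|_V = 0$ and $\langle w_j, \ell\rangle = 1$; then $\ell \in \overline{\sigma}^\vee$ and $\overline{\sigma} \cap \ell^\perp = \sigma$, exhibiting $\sigma$ as the face cut out by $\ell$. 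Finally, $\overline{q}$ witnesses $\Q$-Gorenstein-ness of $X_{\overline{\sigma}}$ because the primitive ray generators of $\overline{\sigma}$ are exactly $v_1, \dots, v_r, w_1, \dots, w_{d-e}$, each pairing to $1$ with $\overline{q}$.

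The only mild subtlety, and the step worth being careful about, is ensuring that the $w_j$ really are the primitive lattice points of the new rays of $\overline{\sigma}$; this is why I take a $\Z$-basis of a complement to the \emph{saturated} sublattice $N_V$ rather than just a rational basis of some complementary $\R$-subspace. Everything else is a mechanical combinatorial verification.
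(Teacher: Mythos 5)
Your construction is correct, and it fills in a proof that the paper itself omits: the authors simply state \autoref{GorensteinIsFaceOfGorenstein} as standard without giving an argument, and what you write is the natural argument they have in mind (extend the rational functional $q$ witnessing $\Q$-Gorensteinness of $X_\sigma$ across a lattice complement of the saturated sublattice $N\cap\mathrm{span}_\R(\sigma)$, and cone off with a $\Z$-basis of that complement). All the key points check out: choosing a complement of the saturated sublattice guarantees the $w_j$ are primitive, $\overline{q}$ forces pointedness, and the functional $\ell$ exhibits $\sigma$ as a face. One small remark: you do not actually need the rays of $\overline{\sigma}$ to be \emph{exactly} the rays through $v_1,\dots,v_r,w_1,\dots,w_{d-e}$; since every extreme ray of a finitely generated cone passes through one of the listed generators, the primitive generators of the rays of $\overline{\sigma}$ lie in $\{v_i\}\cup\{w_j\}$, and all of these pair to $1$ with $\overline{q}$, which already gives the $\Q$-Gorenstein condition (the stronger ``exactly'' claim is also true, since $\sigma$ being a face makes its rays extreme in $\overline{\sigma}$ and projection along $V$ handles the $w_j$, but it is not needed).
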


\begin{remark}
\label{remarkReducingToAffineAndFullDimensionalCone}
By \autoref{rmk:mainTheoremStackMeasureAndGorensteinMeasure-more-general}, \autoref{canonicalFantastackOpenInvariantSubvariet}, \autoref{measureOfCountableDisjointUnion}, \autoref{stackMeasurablesFormBooleanAlgebra}, \autoref{stackMeasureOpenSubstack}, and \autoref{GorensteinIsFaceOfGorenstein}, to prove \autoref{mainTheoremStackMeasureAndGorensteinMeasure}, it is sufficient to prove the special case where $\cX$ is the canonical stack over an affine $T$-toric variety defined by a $d$-dimensional cone in $N_\R$.
\end{remark}

Let $\sigma$ be a $d$-dimensional pointed rational cone in $N_\R$, let $X$ be the affine $T$-toric variety associated to $\sigma$, let $\cX$ be the canonical stack over $X$, let $\pi: \cX \to X$ be the canonical map, and assume that $X$ is $\Q$-Gorenstein. We will use the notation listed in \autoref{notationforcanonicalfantastackforfulldimensionalcone}.

\begin{proposition}
\label{unionOfFibersOfTrop}
Let $W \subset \sigma \cap N$. Then
\[
	\bigcup_{w \in W} \trop^{-1}(w)
\]
is a measurable subset of $\sL(X)$.
\end{proposition}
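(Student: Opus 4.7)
The plan is to apply countable additivity of the motivic measure, using \autoref{measureOfCountableDisjointUnion} (applied with $\cX = X$ viewed as a trivial quotient stack). Each $\trop^{-1}(w)$ is a cylinder by \autoref{fiberOfTropIsCylinder}, hence measurable, and the sets $\{\trop^{-1}(w)\}_w$ are pairwise disjoint, so it suffices to show $\Vert\mu_X(\trop^{-1}(w))\Vert\to 0$ as $w$ varies over $\sigma\cap N$. Combining parts (\ref{GorensteinMeasureOfSubsetOfFiberOfTrop}) and (\ref{GorensteinMeasureOfFiberOfTrop}) of \autoref{mainPropositionGorensteinMeasureToricVariety}, and letting $j_w$ denote the constant value of $\ord_{\sJ_{X,m}}$ on $\trop^{-1}(w)$, we obtain
\[
	\mu_X(\trop^{-1}(w)) \;=\; \bL^{-d}(\bL-1)^d\,\bL^{-(j_w+\langle w,q\rangle)/m},
\]
the exponent being a nonnegative integer by the formula in the remark after \autoref{GorensteinIdealIsGeneratedByMonomials}. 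Thus $\Vert\mu_X(\trop^{-1}(w))\Vert=\exp(-(j_w+\langle w,q\rangle)/m)$, and the task reduces to showing $j_w+\langle w,q\rangle\to\infty$ as $w$ varies.

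The main combinatorial input is the claim that for any linearly independent $p_1,\ldots,p_d\in P$, the sum $p_1+\dots+p_d$ lies in the topological interior $\sigma^{\vee\circ}$ of $\sigma^\vee$. Indeed, it is in the relative interior of the $d$-dimensional simplicial subcone $\R_{\geq 0}p_1+\dots+\R_{\geq 0}p_d\subset\sigma^\vee$, and this relative interior is an open subset of $\R^d$ sitting inside $\sigma^\vee$, hence inside $\sigma^{\vee\circ}$. Since $\sigma$ is pointed and $d$-dimensional, for any such $u\in\sigma^{\vee\circ}$ the slice $\{w\in\sigma_\R:\langle w,u\rangle\leq N\}$ is compact, so contains only finitely many elements of $\sigma\cap N$.

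To conclude, noetherianity of $X$ allows me, together with \autoref{GorensteinIdealIsGeneratedByMonomials}, to choose a \emph{finite} collection $S$ of linearly independent tuples $(p_1,\ldots,p_d)\in P^d$ whose associated elements $z((p_i)_i)$ generate $\sJ_{X,m}$. The formula in the remark following \autoref{GorensteinIdealIsGeneratedByMonomials} then yields
\[
	j_w+\langle w,q\rangle \;=\; m\min_{(p_i)\in S}\bigl\langle w,\textstyle\sum_i p_i\bigr\rangle,
\]
and the finiteness of $S$ combined with the previous paragraph shows that $\{w\in\sigma\cap N:j_w+\langle w,q\rangle\leq mN\}$ is a finite union of finite sets for every $N$. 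Hence $j_w+\langle w,q\rangle\to\infty$, giving the desired decay. The main obstacle is the combinatorial interior claim; the sketch above handles it, and the rest of the proof is essentially bookkeeping once the right finite set of generators is fixed.
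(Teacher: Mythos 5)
Your proof is correct in substance, but it takes a genuinely different and much heavier route than the paper's. The paper's argument is soft: $\sL(X)$ is the disjoint union of $\sL(X\setminus T)$ and the fibers $\trop^{-1}(w)$, all of which are measurable (the fibers are cylinders by \autoref{fiberOfTropIsCylinder}); since the total space is measurable, for every $\varepsilon>0$ only finitely many $w\in\sigma\cap N$ can have $\Vert\mu_X(\trop^{-1}(w))\Vert\geq\varepsilon$, and then any subunion $\bigcup_{w\in W}\trop^{-1}(w)$ is measurable by countable additivity — no computation of the individual measures and no use of the $\Q$-Gorenstein hypothesis is needed. You instead compute $\mu_X(\trop^{-1}(w))=\bL^{-d}(\bL-1)^d\bL^{-(j_w+\langle w,q\rangle)/m}$ from both parts of \autoref{mainPropositionGorensteinMeasureToricVariety} (thereby invoking the section's $\Q$-Gorenstein standing hypothesis, and implicitly the freeness of $\widehat{\sM}_k[\bL^{1/m}]$ over $\widehat{\sM}_k$ to transfer the equality and norm bound back to $\widehat{\sM}_k$, a manipulation the paper itself performs in \autoref{summingUpStackMeasureOverFibersOfTrop} and \autoref{stackMeasureAndGorensteinMeasureFullDimensionalCone}), and you prove decay combinatorially. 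That works, with one small repair: it is not clear that the ``diagonal'' elements $z$ attached to tuples $(p_1,\dots,p_d)\in P^d$ generate $\sJ_{X,m}$, so instead choose, by noetherianity, finitely many generators $z((p_{i,j})_{i,j})$ with $(p_{i,j})\in P^{md}$ as in \autoref{GorensteinIdealIsGeneratedByMonomials}; any nonzero such generator has each row linearly independent, hence $\sum_{i,j}p_{i,j}$ is a sum of $m$ points interior to $\sigma^\vee$ and is itself interior, and your compact-slice argument (which correctly uses that $\sigma$ is pointed and $d$-dimensional) applies verbatim to give $j_w+\langle w,q\rangle=\min\langle w,\sum_{i,j}p_{i,j}\rangle\to\infty$. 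In exchange for the extra work, your approach yields an explicit, effective decay rate for $\mu_X(\trop^{-1}(w))$; the paper's approach buys brevity and applies without the Gorenstein hypothesis.
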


\begin{proof}
We have that $\sL(X \setminus T)$ is a measurable subset of $\sL(X)$ because $X \setminus T$ is a closed subscheme of $X$. For each $w \in \sigma \cap N$, the set $\trop^{-1}(w)$ is a measurable subset of $\sL(X)$ by \autoref{fiberOfTropIsCylinder}. Also
\[
	\sL(X \setminus T) \cup \bigcup_{w \in \sigma \cap N} \trop^{-1}(w) = \sL(X)
\]
is measurable and the above union is disjoint, so for any $\varepsilon \in \R_{>0}$, there are only finitely many $w \in \sigma \cap N$ such that $\Vert\mu_X(\trop^{-1}(w))\Vert \geq \varepsilon$. Thus $\bigcup_{w \in W} \trop^{-1}(w)$ is a measurable subset of $\sL(X)$.
\end{proof}

\begin{proposition}
\label{separatednessFunctionMeasurableFibersFullDimensionalCone}
The function $\sep_\pi: \sL(X) \to \N$ has measurable fibers.
\end{proposition}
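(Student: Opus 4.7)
The plan is to apply Theorem \ref{MainPropositionFibersOfTheMapOfArcs} on each tropicalization stratum, together with the fact that the boundary locus $\sL(X \setminus T)$ contributes only a measure-zero set on which the value of $\sep_\pi$ is irrelevant for measurability. First I would observe that one has the set-theoretic decomposition
\[
	\sL(X) = \sL(X \setminus T) \sqcup \bigsqcup_{w \in \sigma \cap N} \trop^{-1}(w),
\]
since an arc $\varphi$ satisfies $\trop(\varphi)(p) < \infty$ for every $p \in P$ precisely when $\varphi^*(p) \neq 0$ for all $p$, which is equivalent to the generic point of the arc landing in $T$, i.e., $\varphi \notin \sL(X \setminus T)$.

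Next, by Theorem \ref{MainPropositionFibersOfTheMapOfArcs}, for any field extension $k'$ of $k$ and any $\varphi \in \trop^{-1}(w)(k')$ with $w \in \sigma \cap N$, the set $\overline{(\sL(\pi)^{-1}(\varphi))}(k')$ has exactly $\#\beta^{-1}(w)$ elements, which is finite by \autoref{fiberabovewisfiniteset}. Hence $\sep_\pi$ is constant on $\trop^{-1}(w)$ with value $\#\beta^{-1}(w)$, so
\[
	\sep_\pi^{-1}(n) \cap \big(\sL(X) \setminus \sL(X \setminus T)\big) = \bigcup_{\substack{w \in \sigma \cap N \\ \#\beta^{-1}(w) = n}} \trop^{-1}(w),
\]
which is measurable by \autoref{unionOfFibersOfTrop}.

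Finally, since $X \setminus T$ is a proper closed subscheme of $X$, the set $\sL(X \setminus T) \subset \sL(X)$ has motivic measure zero (by the scheme analogue of \autoref{closedSubstackGivesNegligibeSet}, which is \cite[Chapter 6 Proposition 2.3.1]{ChambertLoirNicaiseSebag}). Therefore by the scheme analogue of \autoref{subsetOfNegligibleIsNegligible} (namely \cite[Chapter 6 Corollary 3.5.5(a)]{ChambertLoirNicaiseSebag}), every subset of $\sL(X \setminus T)$ is measurable of measure zero, and in particular so is $\sep_\pi^{-1}(n) \cap \sL(X \setminus T)$. Combining the two pieces via the scheme analogue of \autoref{stackMeasurablesFormBooleanAlgebra} yields the measurability of $\sep_\pi^{-1}(n)$ for every $n \in \N$. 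There is no serious obstacle: all the substantive technical work has already been carried out in Theorem \ref{MainPropositionFibersOfTheMapOfArcs} and \autoref{unionOfFibersOfTrop}; the only point requiring care is to correctly identify the complement of $\bigsqcup_w \trop^{-1}(w)$ with $\sL(X \setminus T)$ so that one has a clean place to absorb the arcs on which $\sep_\pi$ is not controlled by the combinatorics.
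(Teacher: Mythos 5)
Your proof is correct and follows essentially the same route as the paper: decompose $\sL(X)$ into the tropicalization strata plus $\sL(X \setminus T)$, use \autoref{MainPropositionFibersOfTheMapOfArcs} to see $\sep_\pi \equiv \#\beta^{-1}(w)$ on $\trop^{-1}(w)$, invoke \autoref{unionOfFibersOfTrop} for the union of strata, and absorb the boundary arcs using $\mu_X(\sL(X\setminus T)) = 0$. No gaps.
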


\begin{proof}
\autoref{MainPropositionFibersOfTheMapOfArcs} implies that for any $\varphi \in \sL(X)$ with $\trop(\varphi) \in \sigma \cap N$,
\[
	\sep_\pi(\varphi) = \#\beta^{-1}(\trop(\varphi)).
\]
Thus noting that each $\beta^{-1}(w)$ is finite, we have that for any $n \in \N$,
\[
	\sep_\pi^{-1}(n) = \left( \sep_\pi^{-1}(n) \cap \sL(X \setminus T) \right) \cup \bigcup_{\substack{w \in \sigma \cap N\\ \#\beta^{-1}(w) = n}} \trop^{-1}(w)
\]
is measurable by \autoref{unionOfFibersOfTrop} and the fact that $\mu_X(\sL(X \setminus T)) = 0$, which implies that any subset of $\sL(X \setminus T)$ is a measurable subset of $\sL(X)$.
\end{proof}

For the remainder of this section, we will use that by \autoref{separatednessFunctionMeasurableFibersFullDimensionalCone}, the integral $\int_C \sep_\pi \diff\mu^\Gor_X$ is well defined for any measurable subset $C \subset \sL(X)$.

We end this section with the next proposition, which along with \autoref{remarkReducingToAffineAndFullDimensionalCone} and \autoref{separatednessFunctionMeasurableFibersFullDimensionalCone}, implies \autoref{mainTheoremStackMeasureAndGorensteinMeasure}.

\begin{proposition}
\label{stackMeasureAndGorensteinMeasureFullDimensionalCone}
Let $C$ be a measurable subset of $\sL(X)$. Then $\sL(\pi)^{-1}(C)$ is a measurable subset of $|\sL(\cX)|$ and
\[
	\mu_\cX(\sL(\pi)^{-1}(C)) = \int_C \sep_\pi \diff\mu^\Gor_X.
\]
\end{proposition}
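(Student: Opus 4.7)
The plan is to reduce to the combinatorial pieces $C_w := C \cap \trop^{-1}(w)$ for $w \in \sigma \cap N$, verify the formula on each piece using the explicit content of \autoref{mainPropositionGorensteinMeasureToricVariety} and \autoref{mainPropositionMotivicIntegralsCanonicalFantastacks}, and then sum via countable additivity. First, I would decompose
\[
\sL(X) = \sL(X \setminus T) \sqcup \bigsqcup_{w \in \sigma \cap N} \trop^{-1}(w),
\]
and handle $C \cap \sL(X \setminus T)$ separately: it has $\mu_X$-measure zero, hence also $\mu^\Gor_X$-measure zero, and its preimage under $\sL(\pi)$ lies in $|\sL(\pi^{-1}(X \setminus T))|$, which is a closed substack of $|\sL(\cX)|$ and has $\mu_\cX$-measure zero by \autoref{closedSubstackGivesNegligibeSet}. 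So both sides of the desired formula vanish on this part.

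Next, fix $w \in \sigma \cap N$. The set $C_w$ is measurable in $\sL(X)$ because $\trop^{-1}(w)$ is a cylinder by \autoref{fiberOfTropIsCylinder} and measurable sets form a Boolean algebra. Then \autoref{mainPropositionMotivicIntegralsCanonicalFantastacks}(\ref{stackMeasureOfPieceOfFiberOfTrop}) shows $\sL(\pi)^{-1}(C_w)$ is measurable in $|\sL(\cX)|$ and $\mu_\cX(\sL(\pi)^{-1}(C_w)) = \Theta_w \mu_X(C_w)$ for a single constant $\Theta_w \in \widehat{\sM}_k$. I would pin down $\Theta_w$ by applying the identity to $C_w = \trop^{-1}(w)$ itself: combining \autoref{mainPropositionMotivicIntegralsCanonicalFantastacks}(\ref{stackMeasureOfFiberOfTrop}) with \autoref{mainPropositionGorensteinMeasureToricVariety}(\ref{GorensteinMeasureOfFiberOfTrop}) yields
\[
\mu_\cX(\sL(\pi)^{-1}(\trop^{-1}(w))) = \#\beta^{-1}(w) \cdot \mu^\Gor_X(\trop^{-1}(w)),
\]
and \autoref{mainPropositionGorensteinMeasureToricVariety}(\ref{GorensteinMeasureOfSubsetOfFiberOfTrop}) gives $\mu^\Gor_X(\trop^{-1}(w)) = (\bL^{1/m})^{j_w}\mu_X(\trop^{-1}(w))$ with $\mu_X(\trop^{-1}(w)) \neq 0$. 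Hence $\Theta_w = \#\beta^{-1}(w) \cdot (\bL^{1/m})^{j_w}$. Using \autoref{MainPropositionFibersOfTheMapOfArcs}, the function $\sep_\pi$ is constantly $\#\beta^{-1}(w)$ on $\trop^{-1}(w)$, so for arbitrary $C_w \subset \trop^{-1}(w)$,
\[
\int_{C_w} \sep_\pi \diff\mu^\Gor_X = \#\beta^{-1}(w) \cdot \mu^\Gor_X(C_w) = \#\beta^{-1}(w) \cdot (\bL^{1/m})^{j_w} \mu_X(C_w) = \Theta_w \mu_X(C_w) = \mu_\cX(\sL(\pi)^{-1}(C_w)),
\]
which is the formula restricted to $C_w$.

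Finally, I would assemble the global identity. Since $q$ lies in the interior of $\sigma^\vee$ (as $\langle v_i,q\rangle = m > 0$ for every ray), for each $\varepsilon > 0$ only finitely many $w \in \sigma \cap N$ satisfy $\langle w,q\rangle \leq -m\log\varepsilon$; combined with the fact that $\#\beta^{-1}(w) \in \Z_{>0}$ has norm $1$ in $\widehat{\sM}_k$, one has
\[
\|\mu_\cX(\sL(\pi)^{-1}(C_w))\| \leq \|\mu_\cX(\sL(\pi)^{-1}(\trop^{-1}(w)))\| = \exp(-\langle w,q\rangle/m) \xrightarrow{w} 0,
\]
and the analogous bound holds for $\int_{C_w}\sep_\pi\diff\mu^\Gor_X$. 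Therefore \autoref{measureOfCountableDisjointUnion} (combined with \autoref{subsetOfNegligibleIsNegligible} for the $\sL(X \setminus T)$ piece) implies $\sL(\pi)^{-1}(C) = \sL(\pi)^{-1}(C \cap \sL(X \setminus T)) \sqcup \bigsqcup_w \sL(\pi)^{-1}(C_w)$ is measurable with $\mu_\cX$-measure equal to the sum over $w$, while \autoref{countableAdditivityOfGorensteinMeasure} lets us sum the Gorenstein side termwise. The per-piece identity established above then gives the proposition. The principal technical step is the explicit identification of $\Theta_w$ by pinning down both sides on the full fiber $\trop^{-1}(w)$; this is where the combinatorial input of \autoref{MainPropositionFibersOfTheMapOfArcs} (the count $\sep_\pi = \#\beta^{-1}(w)$) and the explicit Gorenstein formula of \autoref{mainPropositionGorensteinMeasureToricVariety} have to match up exactly with the stack-measure formula of \autoref{mainPropositionMotivicIntegralsCanonicalFantastacks}.
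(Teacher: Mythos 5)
Your proof is correct and follows essentially the same route as the paper: decompose $C$ along the fibers of $\trop$ (plus the negligible piece over $X\setminus T$), pin down $\Theta_w$ by evaluating both measures on the full fiber $\trop^{-1}(w)$ via \autoref{mainPropositionGorensteinMeasureToricVariety} and \autoref{mainPropositionMotivicIntegralsCanonicalFantastacks}, identify $\sep_\pi=\#\beta^{-1}(w)$ there by \autoref{MainPropositionFibersOfTheMapOfArcs}, and reassemble by countable additivity — the paper merely packages the reassembly step as \autoref{summingUpStackMeasureOverFibersOfTrop}. One small caveat: to cancel $\mu_X(\trop^{-1}(w))$ and conclude $\Theta_w=\#\beta^{-1}(w)(\bL^{1/m})^{j_w}$ you need that element to be a \emph{unit} of $\widehat{\sM}_k[\bL^{1/m}]$ (which follows from the explicit value $(\bL^{1/m})^{-j_w}\bL^{-d}(\bL-1)^d(\bL^{1/m})^{-\langle w,q\rangle}$, since $\bL$ and $\bL-1$ are invertible in $\widehat{\sM}_k$), not merely that it is nonzero, as the ring is not known to be a domain.
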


\begin{proof}
By \autoref{countableAdditivityOfGorensteinMeasure}, \autoref{summingUpStackMeasureOverFibersOfTrop}, and the fact that
\[
	\mu_X\left(\sL(X) \setminus  \bigcup_{w \in \sigma \cap N} \trop^{-1}(w) \right) = \mu_X \left(\sL(X \setminus T)\right) = 0,
\]
it is enough to prove the statement for each $\trop^{-1}(w)\cap C$. In other words, we may fix $w$ and assume $C\subset \trop^{-1}(w)$.

We first note that $\sL(\pi)^{-1}(C) \subset |\sL(\cX)|$ is measurable by \autoref{mainPropositionMotivicIntegralsCanonicalFantastacks}(\ref{stackMeasureOfPieceOfFiberOfTrop}). Let $m \in \Z_{>0}$ and $q \in P$ be such that $\langle v, q \rangle = m$ for any first lattice point $v$ of a ray of $\sigma$. Let $j_w$ be as in the statement of \autoref{mainPropositionGorensteinMeasureToricVariety}(\ref{GorensteinMeasureOfSubsetOfFiberOfTrop}), and let $\Theta_w$ be as in the statement of \autoref{mainPropositionMotivicIntegralsCanonicalFantastacks}(\ref{stackMeasureOfPieceOfFiberOfTrop}). By \autoref{mainPropositionGorensteinMeasureToricVariety}(\ref{GorensteinMeasureOfFiberOfTrop}) and our choice of $j_w$,
\[
	(\bL^{1/m})^{j_w} \mu_X(\trop^{-1}(w)) = \mu^\Gor_X(\trop^{-1}(w)) = \bL^{-d}(\bL-1)^d (\bL^{1/m})^{- \langle w, q \rangle}.
\]
In particular, $\mu_X(\trop^{-1}(w))$ is a unit in $\widehat{\sM_k}[\bL^{1/m}]$. Then by the above equality, \autoref{mainPropositionMotivicIntegralsCanonicalFantastacks}(\ref{stackMeasureOfFiberOfTrop}), and our choice of $\Theta_w$,
\begin{align*}
	(\#\beta^{-1}(w))(\bL^{1/m})^{j_w} \mu_X(\trop^{-1}(w)) &= (\#\beta^{-1}(w))\bL^{-d}(\bL-1)^d (\bL^{1/m})^{- \langle w, q \rangle}\\
	&= \mu_\cX(\sL(\pi)^{-1}(\trop^{-1}(w)))\\
	&= \Theta_w \mu_X(\trop^{-1}(w)),
\end{align*}
so
\[
	\Theta_w = (\#\beta^{-1}(w))(\bL^{1/m})^{j_w}.
\]
Therefore by our choice of $\Theta_w$ and $j_w$,
\begin{align*}
	\mu_\cX(\sL(\pi)^{-1}(C)) &= \Theta_w\mu_X(C)\\
	&= (\#\beta^{-1}(w))(\bL^{1/m})^{j_w} \mu_X(C)\\
	&= (\#\beta^{-1}(w)) \mu^\Gor_X(C)\\
	&= \int_C \sep_\pi \diff\mu^\Gor_X,
\end{align*}
where the last equality is by \autoref{MainPropositionFibersOfTheMapOfArcs}.
\end{proof}

\section{Fantastacks with special stabilizers: proof of \autoref{mainTheoremLiftingToUntwistedArcsSpecialStabilizers}}\label{sectionFantastacksSpecialStabilizsers}

The goal of this section is to prove \autoref{theoremFantastackConnectedStabilizers} below, which characterizes when a fantastack has only special stabilizers, and then to use this characterization to prove \autoref{mainTheoremLiftingToUntwistedArcsSpecialStabilizers}. For simplicity, we only state the criterion \autoref{theoremFantastackConnectedStabilizers} in the case where the good moduli space is affine (and defined by a full dimensional cone).

Throughout this section let $d \in \N$, let $N \cong \Z^d$ be a lattice, and let $T = \Spec(k[N^*])$ be the algebraic torus with co-character lattice $N$.

\begin{theorem}
\label{theoremFantastackConnectedStabilizers}
Let $\cX=\cF_{\sigma,\nu}$ be a fantastack with dense torus $T$ and keep the notation listed in \autoref{def:fantastack}. Then the following are equivalent.


\begin{enumerate}[(i)]

\item\label{theoremFantastackConnectedStabilizers::i} The stabilizers of $\cX$ are all special groups.

\item\label{theoremFantastackConnectedStabilizers::ii} For all $I \subset \{1, \dots, r\}$, the set $\{\nu(e_i) \mid i \in I\}$ is linearly independent if and only if it can be extended to a basis for $N$.

\item\label{theoremFantastackConnectedStabilizers::iii} For some $n \in \N$, we have $\cX\cong[\bA^r_k/\bG_m^n]$ where $\bG_m^n$ acts on $\bA^r_k$ with weights $w_1,\dots,w_r\in\Z^n$ such that for all $I \subset \{1, \dots, r\}$, the set $\{w_i \mid i \in I\}$ is linearly independent if and only if it can be extended to a basis for $\Z^n$.

\end{enumerate}
\end{theorem}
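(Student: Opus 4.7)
The plan is first to recognize that every stabilizer of $\cX$ is a subgroup of the diagonalizable group $G_\nu\subset\widetilde T$, and hence itself diagonalizable. Since a diagonalizable group is special if and only if it is connected, if and only if its character lattice is torsion-free, condition (i) becomes a torsion-freeness statement for the character lattice of each stabilizer. The stabilizer at the orbit indexed by $\tilde\tau=\mathrm{cone}(e_i:i\in I)\in\widetilde\Sigma$ is $G_\nu\cap\widetilde T_{\tilde\tau}$, and Cartier duality applied to the two surjections $\widetilde M\twoheadrightarrow \widetilde M/\widetilde M(\tilde\tau)$ and $\widetilde M\twoheadrightarrow \widetilde M/\nu^*(M)$ identifies its character group with
\[
\widetilde M/(\widetilde M(\tilde\tau)+\nu^*(M)),
\]
where $\widetilde M(\tilde\tau)=\tilde\tau^\perp\cap\widetilde M=\Z\{f_j:j\notin I\}$ for the basis $\{f_i\}$ of $\widetilde M$ dual to $\{e_i\}$. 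Since $\widetilde M/\widetilde M(\tilde\tau)\cong\Z^I$ and the composite $M\to\Z^I$ is exactly the dual of $\nu|_I\colon\Z^I\to N$, $e_i\mapsto v_i$, the long exact $\mathrm{Ext}^1$ sequence associated with $\Z^I\to N\to N/\Z\{v_i:i\in I\}\to 0$ shows that this quotient is torsion-free if and only if $\Z\{v_i:i\in I\}$ is saturated in $N$.

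In the affine setting, $\widetilde\Sigma$ is the face fan of $\mathrm{cone}(e_1,\dots,e_r)$, so \emph{every} subset $I\subset\{1,\dots,r\}$ corresponds to a cone of $\widetilde\Sigma$. Hence (i) is equivalent to the statement
\[
(\star)\quad\text{for every } I\subset\{1,\dots,r\},\ \Z\{v_i:i\in I\}\ \text{is saturated in}\ N.
\]
The direction $(\star)\Rightarrow$ (ii) is immediate, since a linearly independent set generating a saturated subgroup is a direct summand of $N$ and hence extends to a basis. For (ii)$\Rightarrow(\star)$, given any $I$, pick a maximal linearly independent subset $I'\subset I$; by (ii), $\Z\{v_i:i\in I'\}$ is a direct summand of $N$, so $\Q\{v_i:i\in I'\}\cap N=\Z\{v_i:i\in I'\}$. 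Since every $v_j$ with $j\in I$ lies in $\Q\{v_i:i\in I'\}$, we conclude $\Z\{v_i:i\in I\}=\Z\{v_i:i\in I'\}$, which is saturated.

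For the equivalence with (iii): applying $(\star)$ with $I=\{1,\dots,r\}$ shows $\Z\{v_1,\dots,v_r\}$, which already has finite index in $N$, equals $N$, so $\nu$ is surjective and $G_\nu\cong\bG_m^{r-d}$ is a torus. The natural presentation $\cX=[\bA^r/G_\nu]$ then exhibits $\cX$ as in (iii), with weights $w_i$ equal to the images of $f_i$ in $\widetilde M/\nu^*(M)\cong\Z^{r-d}$. Conversely, given any presentation as in (iii), the stabilizer at a point of $\bA^r$ whose coordinates indexed by $J$ vanish is $\bigcap_{i\notin J}\ker(\bG_m^n\xrightarrow{w_i}\bG_m)$, which by Cartier duality has character group $\Z^n/\Z\{w_i:i\notin J\}$; hence all stabilizers are connected if and only if $\Z\{w_i:i\in I\}$ is saturated in $\Z^n$ for every $I\subset\{1,\dots,r\}$, which in turn is equivalent (by the same maximal-linearly-independent-subset argument as above) to the partial-basis condition in (iii). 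The main technical content of the proof is the single duality step identifying torsion-freeness of $\widetilde M/(\widetilde M(\tilde\tau)+\nu^*(M))$ with saturation of $\Z\{v_i:i\in I\}$; once that is established, the remaining arguments are routine lattice-theoretic bookkeeping.
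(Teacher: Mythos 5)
Your proof is correct, and it takes a genuinely different route from the paper's. The paper first reduces to the case where $\nu$ is surjective, by noting that the stabilizer of the origin is the Cartier dual of $A=\cok(\nu^*)$, so any of the three conditions forces $A$ torsion-free; it then invokes a standalone lemma characterizing when $[\bA^r_k/\bG_m^n]$ has special stabilizers (this is the same computation as your ``conversely'' paragraph), and finally relates the fan data in (ii) to the weight data in (iii) by a $3\times 3$ diagram chase identifying $\cok(\nu|_{\Z^S})$ with $\ext^1(Q_S,\Z)$ for $Q_S=A/\alpha((\Z^{S'})^*)$. You instead compute all stabilizers at once in the original presentation $[\bA^r_k/G_\nu]$ via the orbit--cone correspondence, as $G_\nu\cap\widetilde{T}_{\tilde\tau}$ with character group $\cok((\nu|_{\Z^I})^*)$, and a single duality step converts torsion-freeness into saturation of $\Z\{v_i\mid i\in I\}$; this gives (i)$\Leftrightarrow$(ii) directly, with no preliminary reduction to connected $G_\nu$, and (iii) then follows because the saturation condition for $I=\{1,\dots,r\}$ forces $\nu$ surjective, so $G_\nu$ is a torus. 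Your route buys a uniform treatment of possibly disconnected $G_\nu$ and avoids the paper's diagram chase; the paper's route buys a reusable torus-quotient criterion stated independently of fantastacks and a direct lattice-theoretic comparison of (ii) and (iii). Two small points of exposition, neither a gap: your appeal to ``the long exact $\ext^1$ sequence'' of the merely right-exact sequence $\Z^I\to N\to N/L\to 0$ (with $L=\Z\{v_i\mid i\in I\}$) should be run through the short exact sequences $0\to L\to N\to N/L\to 0$ and $0\to\ker\to\Z^I\to L\to 0$, which show the torsion of $\cok(M\to(\Z^I)^*)$ is exactly $(N/L)_{\tor}$; and in the forward direction of (iii) you should state explicitly that the weight condition for the exhibited presentation $[\bA^r_k/G_\nu]$ follows from (i) combined with the biconditional you prove in the ``conversely'' paragraph.
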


The following is an immediate consequence of \autoref{theoremFantastackConnectedStabilizers}, noting that taking canonical stack is compatible with taking products of toric varieties.

\begin{corollary}
\label{corollaryCanonicalFantastackConnectedStabilizers}
Let $\sigma$ be a pointed rational cone in $N_\R$, and let $\cX$ be the canonical stack over $X_\sigma$. 
If $v_1, \dots, v_r \in N$ are the first lattice points of the rays of $\sigma$, then the following are equivalent.
\begin{enumerate}[(i)]
\item The stabilizers of $\cX$ are all special groups.
\item for all $I \subset \{1, \dots, r \}$, the set $\{v_i \mid i \in I\}$ is linearly independent if and only if it can be extended to a basis for $N$.
\end{enumerate}
\end{corollary}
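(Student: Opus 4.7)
The plan is to reduce to the case of a full-dimensional cone and then quote \autoref{theoremFantastackConnectedStabilizers}. First I would decompose $N = N_\sigma \oplus N'$, where $N_\sigma := (\R \sigma) \cap N$ is the saturated sublattice spanned by $\sigma$ and $N'$ is a direct complement. This gives a toric isomorphism $X_\sigma \cong X_{\sigma,N_\sigma} \times T_{N'}$, where $X_{\sigma,N_\sigma}$ is the affine toric variety attached to $\sigma$ viewed inside $N_\sigma$ (so $\sigma$ is now full-dimensional) and $T_{N'} := \Spec(k[(N')^*])$.

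Next I would show that the canonical stack decomposes compatibly, namely $\cX \cong \cF \times T_{N'}$, where $\cF := \cF_{\sigma,\nu}$ is the canonical fantastack of $X_{\sigma,N_\sigma}$ with $\nu\colon \Z^r \to N_\sigma$ defined by $\nu(e_i) = v_i$. This is a direct unpacking of \autoref{def:canonical-stack}: with $\nu'\colon \Z^r \oplus N' \to N$ as in that definition, one has $(\nu')^* = \nu^* \oplus \mathrm{id}_{(N')^*}$, so $G_{\nu'} \cong G_\nu$ and this group acts trivially on the $T_{N'}$-factor of $X_{\widetilde{\Sigma}} \cong \bA^r \times T_{N'}$. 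Since $T_{N'}$ has trivial stabilizers, the stabilizers of $\cX$ and of $\cF$ agree; in particular, $\cX$ has only special stabilizers iff $\cF$ does.

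Then I would apply \autoref{theoremFantastackConnectedStabilizers} to $\cF$. Its hypothesis that the dense torus is present is automatic, since each $v_i$ is a nonzero ray generator (cf.\ \autoref{remarkOtherFantastacks}). The equivalence (\ref{theoremFantastackConnectedStabilizers::i})$\Leftrightarrow$(\ref{theoremFantastackConnectedStabilizers::ii}) then says: $\cF$ has special stabilizers iff for every $I \subseteq \{1,\dots,r\}$, the set $\{v_i \mid i \in I\}$ is $\Z$-linearly independent iff it extends to a $\Z$-basis of $N_\sigma$.

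Finally I would translate the condition from $N_\sigma$ to $N$. For a subset $S \subseteq N_\sigma$, extending to a $\Z$-basis of $N_\sigma$ is equivalent to extending to a $\Z$-basis of $N$: one direction is obtained by adjoining a basis of $N'$, and for the converse, if $S$ extends to a basis of $N$ then $\langle S \rangle$ is a direct summand of $N$, hence the short exact sequence $0 \to N_\sigma/\langle S \rangle \to N/\langle S \rangle \to N' \to 0$ splits and its middle term is free, so $N_\sigma/\langle S \rangle$ is free as well. There is no serious obstacle here; the only thing to handle carefully is the identification $\cX \cong \cF \times T_{N'}$ at the level of stacky fans, and the straightforward but slightly fussy lattice-theoretic equivalence in the last step.
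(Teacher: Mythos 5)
Your proposal is correct and follows essentially the same route as the paper, which treats the corollary as an immediate consequence of \autoref{theoremFantastackConnectedStabilizers} via the compatibility of canonical stacks with the product decomposition $X_\sigma \cong X_{\sigma,N_\sigma}\times T_{N'}$; you have simply spelled out the identification $\cX\cong\cF\times T_{N'}$ and the (correct) lattice-theoretic translation between bases of $N_\sigma$ and of $N$ that the paper leaves implicit.
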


Before proving \autoref{theoremFantastackConnectedStabilizers}, we use \autoref{corollaryCanonicalFantastackConnectedStabilizers} to prove \autoref{mainTheoremLiftingToUntwistedArcsSpecialStabilizers}.

\begin{proof}[Proof of \autoref{mainTheoremLiftingToUntwistedArcsSpecialStabilizers}]
Let $X$ be a toric variety over $k$, let $\pi: \cX \to X$ be its canonical stack (see \autoref{remarkOtherFantastacks}), and assume that the stabilizers of $\cX$ are all special groups. By \autoref{canonicalFantastackOpenInvariantSubvariet} and the definition of $\sep_\pi$, we may assume that $X$ is the affine $T$-toric variety defined by a pointed rational cone $\sigma$ in $N_\R$. It is easy to check, for example by using \autoref{corollaryCanonicalFantastackConnectedStabilizers}, that because the stabilizers of $\cX$ are all special groups, the cone $\sigma$ is a face of a $d$-dimensional pointed rational cone in $N_\R$ whose associated toric variety has a canonical stack with only special stabilizers. Therefore we may assume that $\sigma$ is $d$-dimensional and use the notation listed in \autoref{notationforcanonicalfantastackforfulldimensionalcone}. Then by \autoref{fiberabovewisfiniteset}, \autoref{MainPropositionFibersOfTheMapOfArcs}, and the fact that
\[
	\mu_X\left( \sL(X) \setminus \bigcup_{w \in \sigma \cap N} \trop^{-1}(w) \right) = \mu_X\left( \sL(X \setminus T) \right) = 0,
\]
it is sufficient to show that $\beta$ is surjective.

Let $w \in \sigma \cap N$. We will use \autoref{corollaryCanonicalFantastackConnectedStabilizers} to show that $w$ is in the image of $\beta$. Let $\Sigma$ be a simplicial subdivision of $\sigma$ whose rays are all rays of $\sigma$, and let $\sigma_w \in \Sigma$ be a cone containing $w$. By \autoref{corollaryCanonicalFantastackConnectedStabilizers}, the cone $\sigma_w$ is unimodular, so $w$ is a positive integer combination of first lattice points of rays of $\sigma$. Therefore $w$ is in the image of $\beta$ by the definition of $\beta$, and we are done.
\end{proof}

The remainder of this paper will be used to prove \autoref{theoremFantastackConnectedStabilizers}.

\subsection{A combinatorial criterion for special stabilizers}

We start with some preliminary results, the first of which is a standard fact.

\begin{lemma}
\label{l:dual-coker}
If $A$ and $B$ are finite rank lattices and
\[
0\to A\xrightarrow{f} B\to C\to 0
\]
is a short exact sequence, then $\cok(f^*)$ is finite. Moreover, $C$ is torsion-free if and only if $f^*$ is surjective.
\end{lemma}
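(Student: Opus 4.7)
The plan is to apply $\Hom(-,\Z)$ to the short exact sequence and read off both statements from the resulting long exact sequence together with the structure theorem for finitely generated abelian groups. Since $B$ is a finite rank lattice, it is free, hence projective, so $\ext^1(B,\Z) = 0$. This gives the four-term exact sequence
\[
0 \to C^* \to B^* \xrightarrow{f^*} A^* \to \ext^1(C,\Z) \to 0,
\]
so $\cok(f^*) \cong \ext^1(C,\Z)$.

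Next I would use that $C$ is a finitely generated abelian group, so $C \cong \Z^s \oplus T$ where $T$ is the (finite) torsion subgroup. Since $\ext^1(\Z,\Z) = 0$ and $\ext^1(\Z/n\Z,\Z) \cong \Z/n\Z$, we get $\ext^1(C,\Z) \cong T$. In particular $\cok(f^*)$ is a finite group, proving the first claim, and $\cok(f^*) = 0$ if and only if $T = 0$, i.e.\ if and only if $C$ is torsion-free, proving the second claim.

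There is no real obstacle here; the argument is entirely standard homological algebra, and the only mild subtlety is remembering that $\ext^1(B,\Z)$ vanishes because $B$ is free of finite rank. If one preferred to avoid $\ext^1$ altogether, an equally short alternative would be: tensor with $\Q$ to see that $f^*$ is rationally surjective (hence $\cok(f^*)$ is finite), and then observe that $C$ is torsion-free iff $f(A) \subset B$ is saturated iff the sequence splits, in which case $B \cong A \oplus C$ and $f^*$ is the projection $A^* \oplus C^* \twoheadrightarrow A^*$; conversely a splitting can be constructed from any $\Z$-linear section of $f^*$, which exists precisely when $f^*$ is surjective.
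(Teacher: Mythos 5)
Your argument is correct and is essentially the paper's own proof: both apply $\Hom(-,\Z)$ to the given sequence, use freeness of $B$ to get the four-term exact sequence identifying $\cok(f^*)$ with $\ext^1(C,\Z)$, and then observe that $\ext^1(C,\Z)$ is (isomorphic to) the finite torsion subgroup of $C$. The only cosmetic difference is that you invoke the structure theorem to compute $\ext^1(C,\Z)$, while the paper gets the same identification by applying $\Hom(-,\Z)$ to $0\to C_{\tor}\to C\to \overline{C}\to 0$.
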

\begin{proof}
Letting $C_{\tor}$ be the torsion part of $C$, applying $\Hom(-,\Z)$ to the short exact sequence
\[
0\to C_{\tor}\to C\to \overline{C}\to 0,
\]
we see $\ext^1(C,\Z)\cong\ext^1(C_{\tor},\Z)$, which is finite. Then from the exact sequence
\[
0\to C^*\to B^*\xrightarrow{f^*} A^*\to \ext^1(C,\Z)\to 0,
\]
we see $\cok(f^*)$ is finite and that $f^*$ is surjective if and only if $\ext^1(C,\Z)=0$ if and only if $C$ is a lattice.
\end{proof}

\begin{lemma}
\label{l:tf-quot-Z-basis}
Let $A$ be a lattice and suppose $v_1,\dots,v_r\in A$ span $A_\Q$. Then the following conditions are equivalent:
\begin{enumerate}
\item\label{l:tf-quot-Z-basis::tf} $A/\sum_{i\in S}\Z v_i$ is torsion-free for all $S\subseteq\{1,\dots,r\}$,
\item\label{l:tf-quot-Z-basis::Z-basis} for every $S\subseteq\{1,\dots,r\}$, if $\{v_i\mid i\in S\}$ is a $\Q$-basis for $A_\Q$, then it is a $\Z$-basis for $A$.
\end{enumerate}
\end{lemma}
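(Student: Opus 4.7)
The plan is to prove each direction by producing an explicit $\Z$-basis of $A$ compatible with the given family, then reading off the quotient.

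For (\ref{l:tf-quot-Z-basis::tf}) $\Rightarrow$ (\ref{l:tf-quot-Z-basis::Z-basis}), I would take $S\subseteq\{1,\dots,r\}$ such that $\{v_i\mid i\in S\}$ is a $\Q$-basis of $A_\Q$. The sublattice $L:=\sum_{i\in S}\Z v_i$ then has full rank in $A$, so $A/L$ is a finite abelian group. Hypothesis (\ref{l:tf-quot-Z-basis::tf}) forces $A/L=0$, whence $L=A$; since the $v_i$ for $i\in S$ are $\Q$-linearly independent, they form a $\Z$-basis of $A$.

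For (\ref{l:tf-quot-Z-basis::Z-basis}) $\Rightarrow$ (\ref{l:tf-quot-Z-basis::tf}), fix an arbitrary $S\subseteq\{1,\dots,r\}$ and choose a maximal $\Q$-linearly independent subset $S'\subseteq S$. Since $v_1,\dots,v_r$ span $A_\Q$, I can extend $\{v_i\mid i\in S'\}$ to a $\Q$-basis $\{v_i\mid i\in S'\cup T\}$ of $A_\Q$ by adjoining some $T\subseteq\{1,\dots,r\}\setminus S'$. By hypothesis (\ref{l:tf-quot-Z-basis::Z-basis}) this extended set is actually a $\Z$-basis of $A$. The key observation is then: for each $i\in S\setminus S'$, maximality of $S'$ gives $v_i\in\mathrm{span}_\Q\{v_j\mid j\in S'\}$, and uniqueness of expansion in the $\Z$-basis $\{v_j\mid j\in S'\cup T\}$ (which is a fortiori unique over $\Q$) forces the $T$-coefficients to vanish, so $v_i\in\sum_{j\in S'}\Z v_j$. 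Hence $\sum_{i\in S}\Z v_i=\sum_{i\in S'}\Z v_i$, and
\[
A\Big/\sum_{i\in S}\Z v_i \;=\; A\Big/\sum_{i\in S'}\Z v_i \;\cong\; \bigoplus_{j\in T}\Z v_j,
\]
which is torsion-free.

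I do not expect any real obstacle: the only subtle point is the passage from a $\Q$-basis of $\mathrm{span}_\Q\{v_i\mid i\in S\}$ to a $\Q$-basis of the whole $A_\Q$ using the remaining $v_j$'s, which is immediate from the hypothesis that $v_1,\dots,v_r$ span $A_\Q$. Everything else is bookkeeping with a $\Z$-basis.
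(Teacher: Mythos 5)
Your proposal is correct and follows essentially the same route as the paper: the forward direction uses that a full-rank sublattice with torsion-free (hence trivial, being finite) quotient is all of $A$, and the converse direction picks a maximal independent subset $S'\subseteq S$, extends it to a $\Q$-basis drawn from the $v_j$, invokes condition (\ref{l:tf-quot-Z-basis::Z-basis}) to make it a $\Z$-basis, and compares expansions to conclude $\sum_{i\in S}\Z v_i=\sum_{i\in S'}\Z v_i$, exactly as in the paper's argument (the paper's saturation $L_S^{\sat}$ is just a reformulation of your finiteness step). No gaps.
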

\begin{proof}
To ease notation, let $L_S:=\sum_{i\in S}\Z v_i$ and $L^{\sat}_S:=A\cap(L_S)_\Q$ for all $S\subseteq\{1,\dots,r\}$. Note that $A/L^{\sat}_S$ is torsion-free, so $L^{\sat}_S$ is a direct summand of $A$. It follows $A/L_S$ is torsion-free if and only if $L^{\sat}_S/L_S$ is torsion-free, and since $L^{\sat}_S/L_S$ is finite, we see that condition (\ref{l:tf-quot-Z-basis::tf}) is equivalent to $L_S=L^{\sat}_S$.

Now, suppose that condition (\ref{l:tf-quot-Z-basis::tf}) holds and let $S\subseteq\{1,\dots,r\}$ such that $\{v_i\mid i\in S\}$ is a $\Q$-basis for $A_\Q$. Then $L_S=L^{\sat}_S=A$, so $\{v_i\mid i\in S\}$ is a $\Z$-basis for $A$.

Conversely, suppose condition (\ref{l:tf-quot-Z-basis::Z-basis}) holds, and let $S\subseteq\{1,\dots,r\}$ be any subset. Choose $S'\subseteq S$ such that $\{v_i\mid i\in S'\}$ form a $\Q$-basis for $(L_S)_\Q$. Since the $\Q$-span of $v_1,\dots,v_r$ is $A_\Q$, we can choose $S''\subseteq\{1,\dots,r\}\setminus S$ such that $\{v_i\mid i\in S'\cup S''\}$ form a $\Q$-basis for $A_\Q$. It follows that $\{v_i\mid i\in S'\cup S''\}$ is a $\Z$-basis for $A$, and hence $A\cong L_{S'}\oplus L_{S''}$.

To show condition (\ref{l:tf-quot-Z-basis::tf}) holds, i.e., $A/L_S$ is torsion-free, it thus suffices to show $L_{S'}=L_S$. To see why this equality holds, let $j\in S$ and write $v_j=\sum_{i\in S'}a'_iv_i+\sum_{i\in S''}a''_iv_i$ with $a'_i,a''_i\in\Z$. On the other hand, by definition of $S'$, we can write $v_j=\sum_{i\in S'}b'_iv_i$ with $b'_i\in\Q$. Equating our two expressions and using the fact that $\{v_i\mid i\in S'\cup S''\}$ is a $\Q$-basis for $A_\Q$, we see $a''_i=0$ for all $i\in S''$. So, $v_j\in L_{S'}$.
\end{proof}

\begin{lemma}
\label{l:torus-action-connected-stabs}
Let $\cX=[\bA^r_k/\bG_m^n]$ where $\bG_m^n$ acts with weights $w_1,\dots,w_r\in\Z^n$ which span $\Q^n$.\footnote{Note that we do not assume $\cX$ is a fantastack here.} Then $\cX$ has special stabilizers if and only if for every $S\subseteq\{1,\dots,r\}$, if $\{w_i\mid i\in S\}$ is a $\Q$-basis for $\Q^n$, then it is a $\Z$-basis for $\Z^n$.
\end{lemma}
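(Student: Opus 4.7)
The plan is to reduce to a calculation of stabilizers of geometric points and then invoke \autoref{l:tf-quot-Z-basis}. First, for any subset $S \subseteq \{1,\dots,r\}$, consider the point $x_S \in \bA^r_k$ with $x_i = 1$ for $i \in S$ and $x_i = 0$ otherwise. The stabilizer $\mathrm{Stab}(x_S) \subseteq \bG_m^n$ is the closed subgroup cut out by the equations $t^{w_i} = 1$ for $i \in S$, i.e., the diagonalizable group whose character lattice is $\Z^n / \sum_{i \in S} \Z w_i$. Conversely, every geometric point of $\bA^r_k$ is in the same $\bG_m^n$-orbit as some $x_S$, so the stabilizers of $\cX$ are (up to conjugation, which is trivial since $\bG_m^n$ is abelian) exactly the groups $\mathrm{Stab}(x_S)$ for $S \subseteq \{1,\dots,r\}$.

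Next, I would use the standard fact that a diagonalizable group $D(A) = \Spec k[A]$ is special if and only if $A$ is torsion-free, i.e., if and only if $D(A)$ is a torus. The nontrivial direction is that if $A$ has torsion, then $D(A)$ contains some $\mu_m$ with $m>1$ as a direct factor, and $\mu_m$ is not special (over any reasonable base, $\mu_m$-torsors classify $m$-th roots of line bundles and are not Zariski locally trivial in general); conversely, tori are special by Hilbert 90, and extensions of special groups by special groups are special. Combining this with the previous paragraph, $\cX$ has all special stabilizers if and only if $\Z^n / \sum_{i \in S} \Z w_i$ is torsion-free for every $S \subseteq \{1,\dots,r\}$.

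Finally, I would apply \autoref{l:tf-quot-Z-basis} with $A = \Z^n$ and $v_i = w_i$ (noting that the hypothesis that the $w_i$ span $\Q^n$ is exactly what \autoref{l:tf-quot-Z-basis} requires). The equivalence of conditions (\ref{l:tf-quot-Z-basis::tf}) and (\ref{l:tf-quot-Z-basis::Z-basis}) there translates the torsion-freeness of all the $\Z^n / \sum_{i \in S} \Z w_i$ into the desired combinatorial statement: every $S$ such that $\{w_i \mid i \in S\}$ is a $\Q$-basis of $\Q^n$ gives a $\Z$-basis of $\Z^n$.

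The main obstacle is the clean identification of stabilizers and the (probably well-known to the authors) lemma that a diagonalizable group is special iff it is a torus; once these are in place, the result follows immediately from \autoref{l:tf-quot-Z-basis}. The stabilizer computation itself is routine torus-action bookkeeping.
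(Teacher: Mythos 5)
Your argument is correct and takes essentially the same route as the paper: you identify each stabilizer as the diagonalizable subgroup of $\bG_m^n$ whose character group is $\Z^n/\sum_{i\in S}\Z w_i$ for $S$ the support of the point, observe that such a subgroup is special if and only if it is connected, i.e.\ that quotient is torsion-free, and conclude with \autoref{l:tf-quot-Z-basis}, exactly as the paper does (it merely packages the stabilizer computation via Cartier duality). One harmless slip: two points with the same support need not lie in the same $\bG_m^n$-orbit (e.g.\ $\bG_m$ acting on $\bA^2_k$ with weights $1,1$), but this is immaterial since your computation already shows the stabilizer depends only on the support, which is all you use.
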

\begin{proof}
Since the stabilizers of $\cX$ are subgroups of $\bG_m^n$, they are special if and only if they are connected. Let $w_i=(a_{i1},\dots,a_{in})\in\Z^n$. Given a point $x=(x_1,\dots,x_r)\in\bA^r_k$, let $S_x=\{i\mid x_i\neq0\}$. Then the stabilizer $G_x$ of $x$ is the set of $(\lambda_1,\dots,\lambda_n)\in\bG_m^n$ such that $\prod_{j=1}^n\lambda_j^{a_{ij}}=1$ for all $i\in S_x$. In other words, we have a short exact sequence
\[
1\to G_x\to \bG_m^n\xrightarrow{\varphi} \bG_m^{S_x}\to1
\]
where for $i\in S_x$, the $i$-th coordinate of $\varphi(\lambda_1,\dots,\lambda_n)$ is given by $\prod_{j=1}^n\lambda_j^{a_{ij}}$. Taking Cartier duals $D(-):=\Hom(-,\bG_m)$, we see $D(G_x)$ is the cokernel of the map $\Z^{S_x}\to\Z^n$ sending the $i$-th standard basis vector to $w_i$. By Cartier duality, $G_x$ is connected if and only if $D(G_x)$ is torsion-free. We have thus shown that $\cX$ has connected stabilizers if and only if $\Z^n/\sum_{i\in S}\Z w_i$ is torsion-free for all subsets $S\subseteq\{1,\dots,r\}$. \autoref{l:tf-quot-Z-basis} then finishes the proof.
\end{proof}

We now turn to \autoref{theoremFantastackConnectedStabilizers}.

\begin{proof}[{Proof of \autoref{theoremFantastackConnectedStabilizers}}]
The map $\nu\colon\Z^r\to N$ has finite cokernel, or equivalently $\nu^*$ is injective. So, we have a short exact sequence
\[
\xymatrix{
0\ar[r] & M\ar[r]^-{\nu^*} & (\Z^r)^*\ar[r]^-{\alpha} & A\ar[r] & 0.
}
\]
By the construction of fantastacks, we have $\cX=[\bA^r_k/G]$, where $G$ is the Cartier dual of $A$. Note that $G$ is the stabilizer of the origin and it is connected if and only if $A$ is torsion-free. By \autoref{l:dual-coker}, this is equivalent to surjectivity of $\nu$.

So, we may now assume $\nu$ is surjective and, in light of \autoref{l:torus-action-connected-stabs}, we need only establish the equivalence of conditions (\ref{theoremFantastackConnectedStabilizers::ii}) and (\ref{theoremFantastackConnectedStabilizers::iii}). By \autoref{l:tf-quot-Z-basis}, condition (\ref{theoremFantastackConnectedStabilizers::ii}) holds if and only if $\cok(\nu|_{\Z^S})$ is torsion-free for all subsets $S\subseteq\{1,\dots,r\}$. Letting $e_i^*\in(\Z^r)^*$ denote the dual linear functional, notice that $\alpha(e_i^*)$ is the $i$-th weight for the $G$-action on $\bA^r_k$. Given any subset $S\subseteq\{1,\dots,r\}$, we let $S'$ denote the complement of $S$. We have a natural inclusion $(\Z^{S'})^*\subseteq(\Z^r)^*$ with cokernel $(\Z^S)^*$. Another application of \autoref{l:tf-quot-Z-basis} 
shows that condition (\ref{theoremFantastackConnectedStabilizers::iii}) holds if and only if $Q_S:=A/\alpha((\Z^{S'})^*)$ is torsion-free for all $S$. We show that this latter condition is equivalent to $\cok(\nu|_{\Z^S})$ being torsion-free for all subsets $S\subseteq\{1,\dots,r\}$.

Consider the diagram
\[
\xymatrix{
& 0 & 0 & 0 & \\
0\ar[r] & B\ar[r]\ar[u] & (\Z^S)^*\ar[r]\ar[u] & Q_S\ar[r]\ar[u] & 0\\
0\ar[r] & M\ar[r]^-{\nu^*}\ar[u] & (\Z^r)^*\ar[r]^-{\alpha}\ar[u]^-{\pi} & A\ar[r]\ar[u] & 0\\
0\ar[r] & C\ar[r]\ar[u] & (\Z^{S'})^*\ar[r]\ar[u] & \alpha((\Z^{S'})^*)\ar[r]\ar[u] & 0\\
& 0\ar[u] & 0\ar[u] & 0\ar[u] & 
}
\]
where $B$ is the image of $\pi\circ\nu^*$ and $C=(\Z^{S'})^*\cap\ker\alpha$; in particular all rows and columns are exact. Note that all $\Z$-modules in the above diagram are torsion-free with the possible exception of $Q_S$. Applying $\Hom(-,\Z)$, we have the diagram
\[
\xymatrix{
0\ar[r] & Q_S^*\ar[r]\ar[d] & \Z^S\ar[r]^-{\nu|_{\Z^S}}\ar[d]^-{\pi^*} & B^*\ar[r]\ar[d] & \ext^1(Q_S,\Z)\ar[r] & 0\\
0\ar[r] & A^*\ar[r]^-{\alpha^*} & \Z^r\ar[r]^-{\nu} & N\ar[r] & 0 & 
}
\]
where all rows are exact and all vertical maps are injective. We see then that
\[
\cok(\nu|_{\Z^S})=\ext^1(Q_S,\Z).
\]
Letting $Q_{S,\tor}\subseteq Q_S$ denote the torsion part, we have $\ext^1(Q_S,\Z)=\ext^1(Q_{S,\tor},\Z)$, which is finite, so $\cok(\nu|_{\Z^S})$ is torsion-free if and only if $\ext^1(Q_S,\Z)=0$ if and only if $Q_S$ is torsion-free.
\end{proof}

\section{Quotients by $\mathrm{SL}_2$ and cones over Grassmannians}\label{quotientsbySL2andconesoverGrassmannians}

Until now, we have only considered stacks with abelian stabilizers. In this section, we verify \autoref{conjectureGorensteinMeasureAndStackMeasure} and \autoref{conjectureSpecialStabilizersLiftArcs} for non-trivial examples where the stabilizers are non-abelian. Our primary running example in this section is a stack $\cX=[\bA^8/\SL_2]$ whose good moduli space is the affine cone over the Grassmannian $\Gr(2,4)$ with respect to the Pl\"ucker embedding; we will additionally verify \autoref{conjectureSpecialStabilizersLiftArcs} and answer \autoref{questionMuXsepInfinity} for a stack $\cX=[\bA^{2r}/\SL_2]$ whose good moduli space is the affine cone over the Grassmannian $\Gr(2,r)$. The cases we consider are particularly interesting since, unlike fantastacks, we show here that $\mu_X(\sep_\pi^{-1}(\infty)) \neq 0$.

We begin by setting up some notation. Throughout this section fix some $r \in \Z_{>1}$ and let $[r]=\{1,\dots,r\}$. Let $\widetilde{X}=\bA^{2\times r}$ which we think of as $2\times r$ matrices with coordinates
\[
	\begin{pmatrix} y_{1,1} & \hdots & y_{1,r} \\ y_{2,1} & \hdots & y_{2,r} \end{pmatrix}.
\]
There is an $\SL_2$-action on $\widetilde{X}$ given by left multiplication. The invariant functions are generated by the $2\times 2$ minors, and so the quotient $X=\widetilde{X}/\SL_2\subset \bA^{[r] \choose 2}_k$ is the affine cone over the Grassmannian $\Gr(2, r)$ with respect to the Pl\"{u}cker embedding of $\Gr(2,r)$. 
Letting $\{x_{\ell,m}\mid 1\leq\ell < m\leq r\}$ be the coordinates on $\bA^{[r] \choose 2}_k$, the quotient map $\widetilde{X}\to X$ sends each $x_{\ell,m}|_X$ to the $(\ell, m)$th minor of the matrix above. Set $\cX = [ \widetilde{X} / \SL_2 ]$, let $\rho: \widetilde{X} \to \cX$ be the quotient map, and let $\pi\colon\cX\to X$ be the good moduli space map. Note that all stabilizers of $\cX$ are special: the zero matrix has stabilizer $\SL_2$, a full rank matrix has trivial stabilizer, and a rank $1$ matrix has stabilizer $\bG_a$.

\begin{remark}
\label{SL2ExampleExceptionalLocusCodimension2}
When $r \geq 3$, the exceptional locus of $\pi$ has codimension at least 2: it is straightforward to check that if $U \subset X$ is the intersection of $X$ with the complement $\bG_m^{[n] \choose 2}$ of the coordinate axes of $\bA_k^{[n] \choose 2}$, then $\pi|_{\pi^{-1}(U)}: \pi^{-1}(U)  \to U$ is an isomorphism, and when $r \geq 3$, the complement of $\pi^{-1}(U)$ in $\cX$ has codimension at least 2.
\end{remark}

The next proposition verifies \autoref{conjectureGorensteinMeasureAndStackMeasure} for an infinite collection of pairs $(\cC, C)$ such that the $C$ cover $\sL(X)$ up to  measure $0$. Our collection of $(\cC,C)$ involve non-abelian stabilizers in a non-trivial way by the fourth bullet point below. Furthermore, for every $C$ we prove \autoref{conjectureGorensteinMeasureAndStackMeasure} for multiple different choices of $\cC$, thereby illustrating the flexibility of the conjecture in choosing $\cC$.

\begin{proposition}
\label{Conjecture1forGr24}
Let $r\in\{3,4\}$ and $\mathcal{S} \subset \mathcal{S}'$ be the collections of measurable subsets of $|\sL(\cX)|\times\sL(X)$ given in \autoref{def:infinite-collection-nontriv-nonab}. Then
\begin{itemize}
\item $\mathcal{S}$ is an infinite set,
\item for all $(\cC, C) \in \mathcal{S}'$, the measurable sets $\cC$ and $C$ both have nonzero measure, 
\item the complement of $\bigcup_{(\cC,C)\in\mathcal{S}'}C$ has measure $0$,
\item for all $(\cC,C)\in\mathcal{S}$, every $\psi\in\cC$ has special point mapping to the point with $\SL_2$ stabilizer,
\item $C= \sL(\pi)(\cC)$ for all $(\cC,C)\in\mathcal{S}'$, and
\item \autoref{conjectureGorensteinMeasureAndStackMeasure} holds for all $(\cC,C)\in\mathcal{S}'$, i.e.,
\[
	\mu_\cX(\cC) = \mu_\cX(\cC \cap \sL(\pi)^{-1}(C)) =  \int_C \sep_{\pi, \cC} \diff\mu^\Gor_X.
\]
\end{itemize}
\end{proposition}

The next two propositions verify \autoref{conjectureSpecialStabilizersLiftArcs} and answer \autoref{questionMuXsepInfinity}.

\begin{proposition}
\label{Conjecture2forGr2r}
For $r\geq2$, \autoref{conjectureSpecialStabilizersLiftArcs} holds for $\pi\colon\cX\to X$.
\end{proposition}

\begin{proposition}
\label{QuestionforGr2r}
For $r\geq2$, $\mu_X(\sep_\pi^{-1}(\infty)) \neq 0$, answering \autoref{questionMuXsepInfinity} for $\pi\colon\cX\to X$.
\end{proposition}

For any $\bw \in (\N \cup \{\infty\})^{[r] \choose 2}$, let $C^{(\bw)} \subset \sL(X)$ denote the subset of arcs whose prescribed vanishing orders with respect to the Pl\"{u}cker coordinate are given by $\bw$. More precisely,
\[
	C^{(\bw)} = \bigcap_{\ell< m \in [r]} \ord_{x_{\ell,m} |_X}^{-1}(w_{\ell,m}) \subset \sL(X).
\]
Note that if $\bw \in \N^{[r] \choose 2}$, then $C^{(\bw)}$ is a cylinder.

\begin{remark}
\label{SL2ExampleRemarkAboutNegligibleSubsetsOfX}
If $\bw \in( (\N \cup \{\infty\})^{[r] \choose 2} \setminus \N^{[r] \choose 2})$, then $C^{(\bw)}$ is a measurable set with $\mu_X(C^{(\bw)}) = 0$.
\end{remark} 

With this notation, we now prove \autoref{Conjecture2forGr2r} and \autoref{QuestionforGr2r}.

\begin{proof}[{Proof of \autoref{Conjecture2forGr2r}}]
It suffices to show that away from a set of measure zero, every arc $\varphi\in\sL(X)(k')$ lifts to an arc in $\sL(\cX)(k')$ where $k'$ is an extension field of $k$. By \autoref{SL2ExampleRemarkAboutNegligibleSubsetsOfX}, we may assume $\varphi\in C^{(\bw)}$ with $\bw =(w_{i,j}) \in \N^{[r] \choose 2}$. Without loss of generality, we may additionally assume $w_{1,2}=\min(w_{i,j})$. Let the map on coordinate rings induced by $\varphi\colon\Spec(k'\llbracket t \rrbracket)\to X$ send $x_{i,j}|_X$ to $g_{i,j}\in k'\llbracket t \rrbracket$. Let $\widetilde\psi\in\sL(\widetilde{X})$ be the arc given by the matrix
\begin{equation}\label{matricesProofConjecture2forGr2r}
	\begin{pmatrix} g_{1,2} & 0 & -g_{2,3} & -g_{2,4} & \hdots & -g_{2,r} \\ 0 & 1 & g_{1,3}g_{1,2}^{-1} & g_{1,4}g_{1,2}^{-1} & \hdots & g_{1,r}g_{1,2}^{-1} \end{pmatrix}.
\end{equation}
Note that all $g_{i,j}$ are non-zero since $\bw \in \N^{[r] \choose 2}$, and all entries of the matrix are in $k'\llbracket t \rrbracket$ since $w_{1,2}=\min(w_{i,j})$. Note further that the $(i,j)$th minor of the matrix is precisely $g_{i,j}$; this is clear when $i\in\{1,2\}$ so the Pl\"{u}cker relations ensure this remains true for all $(i,j)$. As a result, $\widetilde\psi$ is a lift of $\varphi$, and hence $\psi=\sL(\rho)(\widetilde\psi)\in\sL(\cX)$ is a lift of $\varphi$.
\end{proof}

\begin{proof}[{Proof of \autoref{QuestionforGr2r}}]
As in the proof of \autoref{Conjecture2forGr2r}, it suffices to show that $\sep_\pi$ is infinite on $C^{(\bw)}$ under the assumption that $\bw =(w_{i,j}) \in \N^{[r] \choose 2}$ and $1\leq w_{1,2}=\min(w_{i,j})$. Again let $\varphi\in C^{(\bw)}$ and let $\varphi\colon\Spec(k'\llbracket t \rrbracket)\to X$ send $x_{i,j}|_X$ to $g_{i,j}\in k'\llbracket t \rrbracket$. Then for any $h\in k'\llbracket t \rrbracket$, we obtain a lift $\widetilde\psi_h\in\sL(\widetilde{X})$ of $\varphi$, where $\widetilde\psi_h$ is the arc given by the matrix
\[
	\begin{pmatrix} g_{1,2} & 0 & -g_{2,3} & -g_{2,4} & \hdots & -g_{2,r} \\ h & 1 & (g_{1,3}-hg_{2,3})g_{1,2}^{-1} & (g_{1,4}-hg_{2,4})g_{1,2}^{-1} & \hdots & (g_{1,r}-hg_{2,r})g_{1,2}^{-1} \end{pmatrix}.\]
Notice that for any $h,h'\in k'\llbracket t \rrbracket$ satisfying $h'-h$ is not divisible by $t^{w_{1,2}}$, there is no $A\in\SL_2(k'\llbracket t \rrbracket)$ for which $A\widetilde\psi_h=\widetilde\psi_{h'}$. Indeed, the unique $A\in\SL_2(k'\llparenthesis t \rrparenthesis)$ with 
\[
	A\begin{pmatrix} g_{1,2} & 0  \\ h & 1\end{pmatrix}=\begin{pmatrix} g_{1,2} & 0  \\ h' & 1\end{pmatrix}
\]
is given by 
\[
	A=\begin{pmatrix} 1 & 0  \\ (h'-h)g_{1,2}^{-1} & 1\end{pmatrix}
\]
which is not in $\SL_2(k'\llbracket t \rrbracket)$ by the assumption on $h'-h$. Noting that $w_{1,2} \geq 1$, it follows that $\sep_\pi$ is infinite on $C^{(\bw)}$.
\end{proof}

There are two features of the proof of \autoref{Conjecture2forGr2r} that we wish to highlight. First, for the matrices in (\ref{matricesProofConjecture2forGr2r}) to define arcs of $\sL(\cX)$, we did not need $w_{1,2}=\min(w_{i,j})$, but rather $w_{1,2} \leq w_{1,3}, \dots, w_{1, r}, w_{2,3}, \dots, w_{2,r}$. Second, for any $i\leq w_{1,2}$ we have many more lifts of $\varphi$ given by matrices of the form
\[
	\begin{pmatrix} g_{1,2}t^{-i} & 0 & -g_{2,3}t^{-i} & -g_{2,4}t^{-i} & \hdots & -g_{2,r}t^{-i} \\ 0 & t^i & g_{1,3}g_{1,2}^{-1}t^i & g_{1,4}g_{1,2}^{-1}t^i & \hdots & g_{1,r}g_{1,2}^{-1}t^i \end{pmatrix}.
\]

With this as motivation, we introduce the following sets. For any $i, w_{1,2} \in \N$ and $\bw_1 = (w_{1,3}, \dots, w_{1,r}), \bw_2 = (w_{2,3}, \dots, w_{2, r}) \in \N^{r-2}$ satisfying
\begin{align*}\label{ConditionOnValuationsForSL2Example}
	2i \leq w_{1,2} \leq w_{1,3}, \dots, w_{1, r}, w_{2,3}, \dots, w_{2,r}, \tag{$\star$}
\end{align*}
let $Z^{(i, w_{1,2}, \bw_1, \bw_2)}$ be the subset of $\sL(\widetilde{X})$ whose $k'$ valued points, for any extension $k'$ of $k$, are the $2 \times r$ matrices of the form
\[
	\begin{pmatrix} f_{1,1} & 0 & f_{1,3} & f_{1,4} & \hdots &  f_{1,r} \\ 0 & t^i & f_{2,3} & f_{2,4} & \hdots & f_{2,r} \end{pmatrix},
\]
where $f_{1,1}, f_{1,3}, \dots, f_{1,r}, f_{2,3}, \dots, f_{2,r} \in k'\llbracket t \rrbracket$ satisfies
\begin{align*}
	\ord_t(f_{1,1}) &= w_{1,2} - i,\\
	\ord_t(f_{1,j}) &= w_{2,j} - i, &\text{for all $j = 3, \dots, r$},\\
	\ord_t(f_{2,j}) &= w_{1,j} - w_{1,2} + i, &\text{for all $j = 3, \dots, r$}.
\end{align*}
Note that the condition (\ref{ConditionOnValuationsForSL2Example}) guarantees that these vanishing orders are nonnegative. For any $i, w_{1,2} \in \N$ and $\bw_1, \bw_2 \in \N^{r-2}$ satisfying (\ref{ConditionOnValuationsForSL2Example}), set 
\[
	\cC^{(i, w_{1,2}, \bw_1, \bw_2)} = \sL(\rho)(Z^{(i, w_{1,2}, \bw_1, \bw_2)}) \subset |\sL(\cX)|
\]
and set
\[
	C^{(w_{1,2}, \bw_1, \bw_2)} = \bigcup_{\bw} C^{(\bw)},
\]
where $\bw$ varies over all elements of $(\N \cup \{\infty\})^{[r] \choose 2}$ whose $(\ell, m)$th entry is equal to $w_{\ell, m}$ for all $(\ell, m) = (1,2), (1,3), \dots, (1,r), (2,3), \dots, (2,r)$.

Lastly, although (for ease of notation) we have chosen to define all of our sets above with $w_{1,2}$ playing a special role, by symmetry of the Pl\"{u}cker coordinates, we obtain many analogous sets as follows. Note that there is a natural $S_r$-action on $\cX$ given by permuting columns of the matrices in $\widetilde{X}$; by functoriality, this descends to an $S_r$-action on the good moduli space $X$. Let
\[
	\cC^{(i, w_{1,2}, \bw_1, \bw_2)}_\sigma = \sigma(\cC^{(i, w_{1,2}, \bw_1, \bw_2)}) \subset |\sL(\cX)|\quad\textrm{and}\quad C^{(w_{1,2}, \bw_1, \bw_2)}_\sigma=\sigma(C^{(w_{1,2}, \bw_1, \bw_2)})\subset\sL(X).
\]

\begin{definition}
\label{def:infinite-collection-nontriv-nonab}
Let $\mathcal{S}'$ be the collection of pairs
\[
(\cC^{(i, w_{1,2}, \bw_1, \bw_2)}_\sigma,C^{(w_{1,2}, \bw_1, \bw_2)}_\sigma)
\]
with $i, w_{1,2} \in \N$ and $\bw_1, \bw_2 \in \N^{r-2}$ satisfying (\ref{ConditionOnValuationsForSL2Example}), and $\sigma\in S_r$. Let $\mathcal{S}\subset\mathcal{S}'$ be the subset consisting of pairs where $i>0$.

\end{definition}

\begin{remark}
\label{rmk:SL2specialPoint}
By definition, for every $(\cC,C)\in\mathcal{S}$ and $\psi\in\cC$, the matrix corresponding to $\psi$ specializes to the $0$ matrix. As a result, the special point of $\psi$ maps to the point in $\cX$ with $\SL_2$ stabilizer, justifying the fourth bullet point of \autoref{Conjecture1forGr24}.
\end{remark}

\begin{remark}
\label{rmk:reduceToSigmaId}
Since the $S_r$-actions on $\sL(\cX)$ and $\sL(X)$ are measure-preserving, to prove \autoref{conjectureGorensteinMeasureAndStackMeasure} for $(\cC^{(i, w_{1,2}, \bw_1, \bw_2)}_\sigma,C^{(w_{1,2}, \bw_1, \bw_2)}_\sigma)$, it is enough to prove the conjecture for $(\cC^{(i, w_{1,2}, \bw_1, \bw_2)},C^{(w_{1,2}, \bw_1, \bw_2)})$.
\end{remark}

The remainder of this section is concerned with the proof of \autoref{Conjecture1forGr24}. To compute $\mu^{\Gor}_X(C^{(w_{1,2}, \bw_1, \bw_2)})$, it is enough to compute $\mu^{\Gor}_X(C^{(\bw)})$, which is done in the next proposition. This is the only result in this section where we impose that $r\in\{3,4\}$ as opposed to $r\geq2$.

\begin{proposition}
\label{SL2ExampleComputationOfGorensteinMeasure}
\begin{enumerate}[(a)]

\item Suppose $r = 3$, and let $\bw = (w_{1,2}, w_{1,3}, w_{2,3}) \in \N^{[r] \choose 2}$. Then $X$ is smooth, and
\[
	\mu^{\Gor}_X(C^{(\bw)}) = (\bL-1)^3\bL^{-3-w_{1,2} - w_{1,3} - w_{2,3}}.
\]

\item Suppose $r = 4$, let $\bw = (w_{1,2}, w_{3,4}, w_{1,3}, w_{2,4}, w_{1,4}, w_{2,3}) \in \N^{[r] \choose 2}$, and set 
\begin{align*}
	m &= \min(w_{1,2} + w_{3,4}, w_{1,3} + w_{2,4}, w_{1,4} + w_{2,3}),\\
	j &= -5+ m - w_{1,2} - w_{3,4} - w_{1,3} - w_{2,4} - w_{1,4} - w_{2,3}.
\end{align*}
Then $X$ has log-terminal singularities, and
\[
	\mu^{\Gor}_X(C^{(\bw)}) = \begin{cases} 0, & \text{exactly one of $w_{1,2} + w_{3,4}, w_{1,3} + w_{2,4}, w_{1,4} + w_{2,3}$ equals $m$} \\ (\bL-1)^5 \bL^{j}, & \text{exactly two of $w_{1,2} + w_{3,4}, w_{1,3} + w_{2,4}, w_{1,4} + w_{2,3}$ equal $m$} \\ (\bL-1)^4(\bL-2) \bL^{j}, & \text{all three of $w_{1,2} + w_{3,4}, w_{1,3} + w_{2,4}, w_{1,4} + w_{2,3}$ equal $m$} \end{cases}
\]

\end{enumerate}
\end{proposition}

\begin{proof}
\begin{enumerate}[(a)]

\item This is immediate from the fact that $X = \bA_k^{[3] \choose 2}$.

\item We only sketch the proof, as the details are somewhat extensive but do not involve any novel ideas. We have $X$ is defined in $\bA_k^{[4] \choose 2}$ by the vanishing of a single polynomial $x_{1,2}x_{3,4} - x_{1,3}x_{2,4} + x_{1,4}x_{2,3}$. This polynomial is non-degenerate with respect to its Newton polyhedron, so the claim follows from standard techniques: a certain toric modification of $\bA_k^{[4] \choose 2}$ gives a resolution of $X$, see e.g., \cite{SchepersVeys}, especially the discrepancy computation in the proof of \cite[Proposition 2.3]{SchepersVeys}. Note that to get our claim from these techniques, one needs to verify that $\e(X \cap \bG_m^{[4] \choose 2}) = (\bL-1)^3(\bL-2)$, which for example, follows from the relationship between $X \cap \bG_m^{[4] \choose 2}$ and the realization space of the rank 2 uniform matroid on 4 elements (see e.g., \cite[Proposition 9.7]{Katz}), which has class $\bL-2$.\qedhere
\end{enumerate}
\end{proof}

We next turn to the computation of $\mu_\cX(\cC^{(i, w_{1,2}, \bw_1, \bw_2)})$. This is the technical heart of this section.

\begin{proposition}
\label{SL2ExampleVolumeOfStackCylinders}
Let $i, w_{1,2} \in \N$ and $\bw_1 = (w_{1,3}, \dots, w_{1,r})$, $\bw_2 = (w_{2,3}, \dots, w_{2, r})$ satisfy $($\ref{ConditionOnValuationsForSL2Example}$)$. Then $\cC^{(i, w_{1,2}, \bw_1, \bw_2)}$ is a cylinder in $|\sL_n(\cX)|$ and
\[
	\mu_\cX(\cC^{(i, w_{1,2}, \bw_1, \bw_2)}) = (\bL-1)^{2r-3} \bL^{-(2r-3)+(r-4)w_{1,2}-(w_{1,3} + \dots + w_{1,r} + w_{2,3} + \dots + w_{2,r})}.
\]
\end{proposition}

\begin{proof}
To ease notation, set $Z = Z^{(i, w_{1,2}, \bw_1, \bw_2)}$ and $\cC = \cC^{(i, w_{1,2}, \bw_1, \bw_2)}$. For any $n \in \N$, let $Z_n$ be the locally closed subscheme of $\sL_n(\widetilde{X})$ whose $A$ valued points, for any $k$-algebra $A$, are the $2 \times r$ matrices of the form
\[
	\begin{pmatrix} u_{1,1} t^{w_{1,2} - i} & 0 & u_{1,3} t^{w_{2,3} - i} & u_{1,4} t^{w_{2,4} - i} & \hdots &  u_{1,r} t^{w_{2,r} - i} \\ 0 & t^i & u_{2,3}t^{w_{1,3} - w_{1,2} + i} & u_{2,4}t^{w_{1,4} - w_{1,2} + i} & \hdots & u_{2,r}t^{w_{1,r} - w_{1,2} + i} \end{pmatrix},
\]
where $u_{1,1}, u_{1,3}, \dots, u_{1, r}, u_{2,3}, \dots, u_{2, r}$ are all units in $A[t]/(t^{n+1})$, so by construction
\[
	Z_n = \theta_n(Z).
\]
Thus for any $n \in \N$,
\begin{equation}\label{EquationTruncationOfStackSetIsImageOfTruncationOfCoverSet}
	\theta_n(\cC) = \sL_n(\rho)(Z_n).
\end{equation}

Now set $n_0 = \max(w_{1,2}, i, w_{2,3}, \dots, w_{2, r}, w_{1,3} + i, \dots, w_{1,r} + i)$. We will show that
\[
	\cC = \theta_{n_0}^{-1}\left( \sL_{n_0}(\rho)(Z_{n_0}) \right).
\]
One inclusion is clear from (\ref{EquationTruncationOfStackSetIsImageOfTruncationOfCoverSet}).
To show the other inclusion, let $k'$ be a field extension of $k$ and $\psi$ be a $k'$-point of $\theta_{n_0}^{-1}\left( \sL_{n_0}(\rho)(Z_{n_0}) \right)$. For the sake of showing the desired inclusion we may extend $k'$, so we may assume there exists a $k'$-point $\widetilde{\psi}_{n_0}$ of $Z_{n_0}$ such that $\sL_n(\rho)(\widetilde{\psi}_{n_0}) \cong \theta_{n_0}(\psi)$. Because $\rho: \widetilde{X} \to \cX$ is smooth, there exists a $k'$-point $\widetilde{\psi}$ of $\sL(\widetilde{X})$ such that $\theta_{n_0}(\widetilde{\psi}) = \widetilde{\psi}_{n_0}$ and $\sL(\rho)(\widetilde{\psi}) \cong \psi$. We want to show that there exists some $g \in \SL_2(k'\llbracket t \rrbracket)$ such that $g \cdot \widetilde{\psi} \in Z$, as this would imply that $\psi \cong \sL(\rho)(\widetilde{\psi}) \cong \sL(\rho)(g \cdot \widetilde{\psi}) \in \sL(\rho)(Z) = \cC$. By our choice of $n_0$ and the fact that $\theta_{n_0}(\widetilde{\psi}) \in Z_{n_0}$, the arc $\tilde{\psi}$ is equal to
\[
	\begin{pmatrix} f_{1,1} & h_{1,2}t^{n_0+1} & f_{1,3} & f_{1,4} & \hdots &  f_{1,r} \\ h_{2,1}t^{n_0+1} & t^i + h_{2,2}t^{n_0+1} & f_{2,3} & f_{2,4} & \hdots & f_{2,r} \end{pmatrix},
\]
for some $h_{1,2}, h_{2,1}, h_{2,2} \in k'\llbracket t \rrbracket$ and some $f_{1,1}, f_{1,3}, \dots, f_{1,r}, f_{2,3}, \dots, f_{2,r} \in k'\llbracket t \rrbracket$ satisfying
\begin{align*}
	\ord_t(f_{1,1}) &= w_{1,2} - i,\\
	\ord_t(f_{1,j}) &= w_{2,j} - i, &\text{for all $j = 3, \dots, r$},\\
	\ord_t(f_{2,j}) &= w_{1,j} - w_{1,2} + i, &\text{for all $j = 3, \dots, r$}.
\end{align*}
Let $h \in k\llbracket t \rrbracket$ be the $(1,2)$th minor of $\widetilde{\psi}$. We have $h \neq 0$ because $h$ and $f_{1,1}t^i$  have the same image in $k'[t]/(t^{n_0+1})$ and the latter image is nonzero by $n_0 \geq w_{1,2}$. Thus there exists some $g \in \SL_2(k'\llparenthesis t \rrparenthesis)$ such that
\[
	g \cdot \begin{pmatrix} f_{1,1} & h_{1,2}t^{n_0+1} \\ h_{2,1}t^{n_0+1} & t^i + h_{2,2}t^{n_0+1} \end{pmatrix} = \begin{pmatrix} ht^{-i} & 0 \\ 0 & t^i \end{pmatrix}.
\]
Then
\[
	g^{-1} = \begin{pmatrix} f_{1,1} & h_{1,2}t^{n_0+1} \\ h_{2,1}t^{n_0+1} & t^i + h_{2,2}t^{n_0+1} \end{pmatrix} \begin{pmatrix} ht^{-i} & 0 \\ 0 & t^i \end{pmatrix}^{-1} = \begin{pmatrix} f_{1,1}t^ih^{-1} & h_{1,2}t^{n_0+1-i} \\ h_{2,1}t^{n_0+1+i}h^{-1} & t^i + h_{2,2}t^{n_0+1-i} \end{pmatrix}.
\]
Because $h$ and $f_{1,1}t^i$ have the same image in $k'[t]/(t^{n_0+1})$ and $n_0 \geq w_{1,2} =\ord_t(f_{1,1}t^i)$, we have that $\ord_t(h) = w_{1,2}$. Together with the fact that $n_0 \geq i$, this implies that the entries of $g^{-1}$ are all elements of $k'\llbracket t \rrbracket$, so $g \in \SL_2(k'\llbracket t \rrbracket)$. We will now show that $g \cdot \widetilde{\psi} \in Z$. We see that $g \cdot \widetilde{\psi}$ is equal to
\[
	\begin{pmatrix} ht^{-i} & 0 & h_{1,3} & h_{1,4} & \hdots &  h_{1,r} \\ 0 & t^i & h_{2,3} & h_{2,4} & \hdots & h_{2,r} \end{pmatrix}
\]
for some $h_{1,3}, \dots, h_{1,r}, h_{2,3}, \dots, h_{2,r} \in k'\llbracket t \rrbracket$, and we have already shown that $\ord_t(ht^{-i}) = w_{1,2} - i$. For any $j = 3, \dots, r$ let $q_{1,j} \in k'\llbracket t \rrbracket$ (resp. $q_{2,j} \in k'\llbracket t \rrbracket$) be the $(1,j)$th (resp. $(2,j)$th) minor of $\psi$. Then for any $j = 3, \dots, r$ we have that $-f_{1,j}t^{i}$ and $q_{2,j}$ (resp. $f_{1,1}f_{2,j}$ and $q_{1,j}$) have the same image in $k'[t]/(t^{n_0+1})$, so because $n_0 \geq w_{2,j} = \ord_t(-f_{1,j}t^i)$ (resp. $n_0 \geq w_{1,j} = \ord_t(f_{1,1}f_{2,j})$), we have $\ord_t(q_{2,j}) = w_{2,j}$ (resp. $\ord_t(q_{1,j}) = w_{1,j}$). Because $g \in \SL_2(k'\llbracket t \rrbracket)$, the matrices $\widetilde{\psi}$ and $g \cdot \widetilde{\psi}$ have the same minors, so
\begin{align*}
	\ord_t(h_{1,j}) &= \ord_t(-q_{2,j}t^{-i}) = w_{2,j} - i, &\text{for all $j = 3, \dots, r$},\\
	\ord_t(h_{2,j}) &= \ord_t(q_{1,j}h^{-1}t^i) =w_{1,j} - w_{1,2} + i, &\text{for all $j = 3, \dots, r$}.
\end{align*}
We have thus verified that $g \cdot \widetilde{\psi} \in Z$, so $\psi \in \cC$, and thus we have finished showing
\[
	\cC= \theta_{n_0}^{-1}\left( \sL_{n_0}(\rho)(Z_{n_0}) \right).
\]
Because $\sL_{n_0}(\rho)(Z_{n_0})$ is a constructible subset of $\sL_n(\cX)$ by Chevalley's Theorem for Artin stacks \cite[Theorem 5.2]{HallRydh2017}, this implies that $\cC$ is a cylinder. We will use the remainder of this proof to compute $\mu_\cX(\cC)$.

Set $n_1 = \max(n_0, 2i-1)$. For any $n \geq n_1$, let $H_n$ be the closed subscheme of $\sL_n(\bA^{2 \times 2}_k)$ whose $A$-valued points, for any $k$-algebra $A$, are the 2 by 2 matrices of the form
\[
	\begin{pmatrix} 1+g_{1,1}t^{n+1-i} & g_{1,2}t^{n+1-i} \\ g_{2,1}t^{n+1-w_{1,2}+i} & 1-g_{1,1}t^{n+1-i} \\  \end{pmatrix}
\]
for some $g_{1,2}, g_{2,1}, g_{2,2} \in A[t]/(t^{n+1})$. By our choice of $n_1$,
\[
	\det \begin{pmatrix} 1+g_{1,1}t^{n+1-i} & g_{1,2}t^{n+1-i} \\ g_{2,1}t^{n+1-w_{1,2}+i} & 1-g_{1,1}t^{n+1-i} \\  \end{pmatrix} = 1,
\]
and
\begin{align*}
	\begin{pmatrix} 1+g_{1,1}t^{n+1-i} & g_{1,2}t^{n+1-i} \\ g_{2,1}t^{n+1-w_{1,2}+i} & 1-g_{1,1}t^{n+1-i} \\  \end{pmatrix}& \begin{pmatrix} 1+g'_{1,1}t^{n+1-i} & g'_{1,2}t^{n+1-i} \\ g'_{2,1}t^{n+1-w_{1,2}+i} & 1-g'_{1,1}t^{n+1-i} \\  \end{pmatrix} \\
	&= \begin{pmatrix} 1+(g_{1,1}+g'_{1,1})t^{n+1-i} & (g_{1,2}+g'_{1,2})t^{n+1-i} \\ (g_{2,1}+g'_{2,1})t^{n+1-w_{1,2}+i} & 1-(g_{1,1}+g'_{1,1})t^{n+1-i} \\  \end{pmatrix},
\end{align*}
so the inclusion $H_n \hookrightarrow \sL_n(\bA^{2 \times 2}_k)$ factors through a closed immersion $H_n \hookrightarrow \sL_n(\SL_2)$ that gives $H_n$ the structure of a closed subgroup of $\sL_n(\SL_2)$, and $H_n \cong \bG_a^{w_{1,2} + i}$ as algebraic groups.

For any $n \in \N$, set $\widetilde{C}_n = \sL_n(\rho)^{-1}\left(\theta_n(\cC)\right) \subset \sL_n(\widetilde{X})$. We will show that for $n \geq n_1$,
\begin{equation}\label{ComputingClassOfTildeCinExampleSL}
	\e(\widetilde{C}_n) = \e(Z_n)\e(\sL_n(\SL_2))\e(H_n)^{-1} = \e(Z_n)\e(\sL_n(\SL_2))\bL^{-w_{1,2}-i} \in K_0(\Stack_k).
\end{equation}
By (\ref{EquationTruncationOfStackSetIsImageOfTruncationOfCoverSet}) and \autoref{JetSchemeOfQuotientStackIsQuotientOfJetSchemes}, $\widetilde{C}_n$ is equal to the image of the morphism

\[
	\sL_n(\SL_2) \times_k Z_n \to \sL_n(\widetilde{X})
\]
induced by the action of $\sL_n(\SL_2)$ on $\sL_n(\widetilde{X})$. Thus to show (\ref{ComputingClassOfTildeCinExampleSL}), it is sufficient to show that for any field extension $k'$ of $k$ and any $\widetilde{\psi}_n \in \widetilde{C}_n(k')$, the fiber of $\sL_n(\SL_2) \times_k Z_n \to \sL_n(\widetilde{X})$ over $\widetilde{\psi}_n$ is isomorphic to $H_n \otimes_k k'$. Because $H_n$ is special and thus $H_n$-torsors over $k'$ are trivial, it is sufficient to show that for any field extension $k'$ of $k$ and $\widetilde{\psi}_n \in Z_n(k')$,
\begin{itemize}

\item the stabilizer of $\widetilde{\psi}_n$ under the $\sL_n(\SL_2)$ action is $H_n \otimes_k k'$, and

\item if $g_n \in \sL_n(\SL_2)(k')$ is such that $g_n \cdot \widetilde{\psi}_n \in Z_n(k')$ then $g_n \cdot \widetilde{\psi}_n = \widetilde{\psi}_n$.

\end{itemize}
To show both of these items, it is sufficient to show that for any $g_n \in \sL_n(\SL_2)(k')$,
\[
	g_n \cdot \widetilde{\psi}_n \in Z_n(k') \implies g_n \in H_n(k') \implies g_n \cdot \widetilde{\psi}_n = \widetilde{\psi}_n,
\]
where we note that because $k$ has characteristic 0, we only need to show that $H_n \otimes_k k'$ and the stabilizer of $\widetilde{\psi}_n$ have the same underlying subset of $\sL_n(\SL_2) \otimes_k k'$. Write
\[
	\widetilde{\psi}_n = \begin{pmatrix} u_{1,1} t^{w_{1,2} - i} & 0 & \hdots & u_{1,j} t^{w_{2,j} - i} & \hdots \\ 0 & t^i & \hdots & u_{2,j}t^{w_{1,j} - w_{1,2} + i} & \hdots \end{pmatrix},
\]
and write
\[
	g_n = \begin{pmatrix} a & b \\ c & d \\  \end{pmatrix}.
\]
with $a,b,c,d \in k'[t]/(t^{n+1})$. Then
\begin{equation}\label{ExampleSLstabilizerMatrixMultiplicationFormula}
	g_n \cdot \widetilde{\psi}_n = \begin{pmatrix} au_{1,1} t^{w_{1,2} - i} & bt^i & \hdots & au_{1,j}t^{w_{2,j}-i} + bu_{2,j}t^{w_{1,j}-w_{1,2}+i} & \hdots \\ cu_{1,1}t^{w_{1,2}-i} & dt^i & \hdots & cu_{1,j}t^{w_{2,j}-i} + du_{2,j}t^{w_{1,j}-w_{1,2}+i} & \hdots \end{pmatrix}.
\end{equation}
We start with the first above implication, so suppose $g_n \cdot \widetilde{\psi}_n \in Z_n(k')$. Then $bt^i = cu_{1,1}t^{w_{1,2}-i} = 0$ and $dt^i = t^i$, so $b = g_{1,2}t^{n+1-i}$, $c = g_{2,1}t^{n+1 - w_{1,2}+i}$, $d = 1 - g_{1,1}t^{n+1-i}$ for some units $g_{1,2}, g_{2,1}, g_{1,1}$ in $k'[t]/(t^{n+1})$. By our choice of $n_1$,
\[
	1 = \det g_n = a(1 - g_{1,1}t^{n+1-i}) - g_{1,2}t^{n+1-i}g_{2,1}t^{n+1 - w_{1,2}+i} = a(1 - g_{1,1}t^{n+1-i}),
\]
so
\[
	a = (1 - g_{1,1}t^{n+1-i})^{-1} = 1 + g_{1,1}t^{n+1-i},
\]
and we have $g_n \in H_n(k')$. The second above implication is a straightforward application of (\ref{ExampleSLstabilizerMatrixMultiplicationFormula}) and (\ref{ConditionOnValuationsForSL2Example}). We have therefore finished showing that $\e(\widetilde{C}_n) = \e(Z_n)\e(\sL_n(\SL_2))\bL^{-w_{1,2}-i}$.

Now we can complete the computation of $\mu_\cX(\cC)$. Set
\[
	s = w_{1,3} + \dots + w_{1,r} + w_{2,3} + \dots + w_{2,r}.
\]
By \autoref{JetSchemeOfQuotientStackIsQuotientOfJetSchemes} and the fact that $\sL_n(\SL_2)$ is a special group, for any $n \geq n_1$
\[
	\e(\theta_n(\cC)) = \e(\widetilde{C}_n) \e(\sL_n(\SL_2))^{-1} = \e(Z_n)\bL^{-w_{1,2}-i} =  (\bL-1)^{2r-3}\bL^{n(2r-3) + (r-4)w_{1,2} - s},
\]
where the last equality is by construction of $Z_n$. Therefore noting that $\dim\cX = \dim(\bA^{2 \times r}_k) - \dim\SL_2 = 2r-3$,
\[
	\mu_{\cX}(\cC) = \lim_{n \to \infty} \e(\theta_n(\cC)) \bL^{-(n+1)\dim\cX} = (\bL-1)^{2r-3} \bL^{-(2r-3)+(r-4)w_{1,2}-s}.\qedhere
\]
\end{proof}

We next compute the value of $\sep_\pi$ on our sets of interest.

\begin{proposition}
\label{SL2ExampleOneToOne}
Let $i, w_{1,2} \in \N$ and $\bw_1, \bw_2 \in \N^{r-2}$ satisfy $($\ref{ConditionOnValuationsForSL2Example}$)$. Then the image of $\cC^{(i, w_{1,2}, \bw_1, \bw_2)}$ under $\sL(\pi)$ is equal to $C^{(w_{1,2}, \bw_1, \bw_2)}$, and $\sep_{\pi, \cC^{(i, w_{1,2}, \bw_1, \bw_2)}}$ is equal to 1 on all of $C^{(w_{1,2}, \bw_1, \bw_2)}$.
\end{proposition}

\begin{proof}
The image of $\cC^{(i, w_{1,2}, \bw_1, \bw_2)}$ is contained in $C^{(w_{1,2}, \bw_1, \bw_2)}$ by construction of $Z^{(i, w_{1,2}, \bw_1, \bw_2)}$. We now only need to show that $\sep_{\pi, \cC^{(i, w_{1,2}, \bw_1, \bw_2)}}$ is equal to 1 on all of $C^{(w_{1,2}, \bw_1, \bw_2)}$. Let $k'$ be a field extension of $k$ and let $\varphi \in C^{(w_{1,2}, \bw_1, \bw_2)}(k')$. Then for any $\{\ell, m\} \in {[r] \choose 2}$, let $h_{\ell, m} \in k'\llbracket t \rrbracket$ be such that $\varphi: \Spec(k'\llbracket t \rrbracket) \to X$ pulls back $x_{\ell,m}|_X$ to $h_{\ell,m}$. Then $\varphi$ is the image of the element $\widetilde{\psi}$ of $Z^{(i, w_{1,2}, \bw_1, \bw_2)}(k')$ given by the matrix
\[
	\begin{pmatrix} h_{1,2}t^{-i} & 0 & -h_{2,3}t^{-i} & -h_{2, 4}t^{-i} & \hdots & -h_{2, r}t^{-i} \\ 0 & t^i  & h_{1,3} h_{1,2}^{-1} t^i & h_{1,4} h_{1,2}^{-1} t^i & \hdots & h_{1,r} h_{1,2}^{-1} t^i\end{pmatrix},
\]
so $\sep_{\pi, \cC^{(i, w_{1,2}, \bw_1, \bw_2)}} \geq 1$ on all of $C^{(w_{1,2}, \bw_1, \bw_2)}$. Let $\psi \in \sL(\cX)(k')$ be the image of $\widetilde{\psi}$. To finish this proof, we only need to show that if $\psi' \in \sL(\cX)(k')$ has class in $|\sL(\cX)|$ contained in $\cC^{(i, w_{1,2}, \bw_1, \bw_2)}$ and satisfies $\sL(\pi)(\psi') = \varphi$, then $\psi' \cong \psi$. Because $\SL_2$ is a special group and thus has only trivial torsors over $\Spec(k'\llbracket t \rrbracket)$, there exists $\widetilde{\psi}' \in \sL(\widetilde{X})(k')$ whose image is isomorphic to $\psi'$. By construction, there exists some field extension $k''$ of $k'$ and $g \in \SL_2(k''\llbracket t \rrbracket)$ such that $g \cdot \widetilde{\psi}'_{k''} \in Z^{(i, w_{1,2}, \bw_1, \bw_2)}(k'')$, where $\widetilde{\psi}'_{k''}$ is the composition of $\Spec(k''\llbracket t \rrbracket) \to \Spec(k' \llbracket t \rrbracket)$ with $\widetilde{\psi}': \Spec(k'\llbracket t \rrbracket) \to \widetilde{X}$. By construction of $Z^{(i, w_{1,2}, \bw_1, \bw_2)}$, the entries of $g \cdot \widetilde{\psi}'_{k''}$ are all determined by its minors and thus $g \cdot \widetilde{\psi}'_{k''}$ is equal to the composition of $\Spec(k''\llbracket t \rrbracket) \to \Spec(k' \llbracket t \rrbracket)$ with $\widetilde{\psi}: \Spec(k'\llbracket t \rrbracket) \to \widetilde{X}$. Because $\widetilde{\psi}$ has nonzero (1,2) minor, this implies $g \in \SL_2(k'\llbracket t \rrbracket)$, so $g \cdot \widetilde{\psi'} = \widetilde{\psi}$ and $\psi' \cong \psi$ as desired.
\end{proof}

\autoref{SL2ExampleExceptionalLocusCodimension2} and \autoref{SL2ExampleOneToOne} tell us that if $r \geq 3$, $X$ has log-terminal singularities, and $i, w_{1,2} \in \N$ and $\bw_1, \bw_2 \in \N^{r-2}$ satisfy (\ref{ConditionOnValuationsForSL2Example}), then \autoref{conjectureGorensteinMeasureAndStackMeasure} implies that $\mu_\cX(\cC^{(i, w_{1,2}, \bw_1, \bw_2)}) = \mu^{\Gor}_X(C^{(w_{1,2}, \bw_1, \bw_2)})$. For $r \in \{3,4\}$, we verify this unconditionally by combining the above results, thereby proving \autoref{Conjecture1forGr24}.


\begin{proof}[{Proof of \autoref{Conjecture1forGr24}}]
Let $r \in \{3,4\}$. It is clear that $\mathcal{S}$ is an infinite set. The third and fourth bullet points are justified by \autoref{SL2ExampleRemarkAboutNegligibleSubsetsOfX} and \autoref{rmk:SL2specialPoint}. Next, a straightforward computation using \autoref{SL2ExampleRemarkAboutNegligibleSubsetsOfX}, \autoref{SL2ExampleComputationOfGorensteinMeasure}, and \autoref{SL2ExampleVolumeOfStackCylinders} shows
\[
0 \neq \mu_\cX(\cC^{(i, w_{1,2}, \bw_1, \bw_2)}) = \mu^{\Gor}_X(C^{(w_{1,2}, \bw_1, \bw_2)}).
\]
By \autoref{rmk:reduceToSigmaId} and \autoref{SL2ExampleOneToOne}, this implies
\[
0 \neq \mu_\cX(\cC) = \mu_\cX(\cC \cap \sL(\pi)^{-1}(C)) =  \int_C \sep_{\pi, \cC} \diff\mu^\Gor_X
\]
for all $(\cC,C)\in\mathcal{S}'$.
\end{proof}

\bibliographystyle{alpha}
\bibliography{SITAS}

\begin{thebibliography}{{Eke}09}

\bibitem[Alp13]{Alper}
Jarod Alper.
\newblock Good moduli spaces for {A}rtin stacks.
\newblock {\em Ann. Inst. Fourier (Grenoble)}, 63(6):2349--2402, 2013.

\bibitem[Bal08]{Balwe2008}
Chetan~T. Balwe.
\newblock {\em Geometric motivic integration on {A}rtin {N}-stacks}.
\newblock ProQuest LLC, Ann Arbor, MI, 2008.
\newblock Thesis (Ph.D.)--University of Pittsburgh.

\bibitem[Bal15]{Balwe2015}
Chetan Balwe.
\newblock {$p$}-adic and motivic measure on {A}rtin {$n$}-stacks.
\newblock {\em Canad. J. Math.}, 67(6):1219--1246, 2015.

\bibitem[Bat98]{Batyrev98}
Victor~V. Batyrev.
\newblock Stringy {H}odge numbers of varieties with {G}orenstein canonical
  singularities.
\newblock In {\em Integrable systems and algebraic geometry ({K}obe/{K}yoto,
  1997)}, pages 1--32. World Sci. Publ., River Edge, NJ, 1998.

\bibitem[Bat99]{Batyrev99}
Victor~V. Batyrev.
\newblock Non-{A}rchimedean integrals and stringy {E}uler numbers of
  log-terminal pairs.
\newblock {\em J. Eur. Math. Soc. (JEMS)}, 1(1):5--33, 1999.

\bibitem[Bha16]{Bhatt}
Bhargav Bhatt.
\newblock Algebraization and {T}annaka duality.
\newblock {\em Camb. J. Math.}, 4(4):403--461, 2016.

\bibitem[BM13]{BatyrevMoreau}
Victor Batyrev and Anne Moreau.
\newblock The arc space of horospherical varieties and motivic integration.
\newblock {\em Compos. Math.}, 149(8):1327--1352, 2013.

\bibitem[CLNS18]{ChambertLoirNicaiseSebag}
Antoine Chambert-Loir, Johannes Nicaise, and Julien Sebag.
\newblock {\em Motivic integration}, volume 325 of {\em Progress in
  Mathematics}.
\newblock Birkh\"{a}user/Springer, New York, 2018.

\bibitem[CLS11]{CoxLittleSchenck}
David~A. Cox, John~B. Little, and Henry~K. Schenck.
\newblock {\em Toric varieties}, volume 124 of {\em Graduate Studies in
  Mathematics}.
\newblock American Mathematical Society, Providence, RI, 2011.

\bibitem[CR04]{ChenRuan}
Weimin Chen and Yongbin Ruan.
\newblock A new cohomology theory of orbifold.
\newblock {\em Comm. Math. Phys.}, 248(1):1--31, 2004.

\bibitem[DG70]{DemazureGabriel}
Michel Demazure and Pierre Gabriel.
\newblock {\em Groupes alg\'{e}briques. {T}ome {I}: {G}\'{e}om\'{e}trie
  alg\'{e}brique, g\'{e}n\'{e}ralit\'{e}s, groupes commutatifs}.
\newblock Masson \& Cie, \'{E}diteur, Paris; North-Holland Publishing Co.,
  Amsterdam, 1970.
\newblock Avec un appendice {{\i}t Corps de classes local} par Michiel
  Hazewinkel.

\bibitem[DL02]{DenefLoeser}
Jan Denef and Fran\c{c}ois Loeser.
\newblock Motivic integration, quotient singularities and the {M}c{K}ay
  correspondence.
\newblock {\em Compositio Math.}, 131(3):267--290, 2002.

\bibitem[{Eke}09]{Ekedahl}
Torsten {Ekedahl}.
\newblock {The Grothendieck group of algebraic stacks}.
\newblock {\em arXiv e-prints}, page arXiv:0903.3143, March 2009.

\bibitem[GS15a]{GeraschenkoSatriano1}
Anton Geraschenko and Matthew Satriano.
\newblock Toric stacks {I}: {T}he theory of stacky fans.
\newblock {\em Trans. Amer. Math. Soc.}, 367(2):1033--1071, 2015.

\bibitem[GS15b]{GeraschenkoSatriano2}
Anton Geraschenko and Matthew Satriano.
\newblock Toric stacks {II}: {I}ntrinsic characterization of toric stacks.
\newblock {\em Trans. Amer. Math. Soc.}, 367(2):1073--1094, 2015.

\bibitem[HR17]{HallRydh2017}
Jack Hall and David Rydh.
\newblock The telescope conjecture for algebraic stacks.
\newblock {\em J. Topol.}, 10(3):776--794, 2017.

\bibitem[Joy07]{Joyce}
Dominic Joyce.
\newblock Motivic invariants of {A}rtin stacks and `stack functions'.
\newblock {\em Q. J. Math.}, 58(3):345--392, 2007.

\bibitem[Kat16]{Katz}
Eric Katz.
\newblock Matroid theory for algebraic geometers.
\newblock In {\em Nonarchimedean and tropical geometry}, Simons Symp., pages
  435--517. Springer, [Cham], 2016.

\bibitem[Kon95]{Kontsevich}
Maxim Kontsevich.
\newblock String cohomology, December 1995.
\newblock Lecture at {Orsay}.

\bibitem[Kre99]{Kresch}
Andrew Kresch.
\newblock Cycle groups for {A}rtin stacks.
\newblock {\em Invent. Math.}, 138(3):495--536, 1999.

\bibitem[Mil80]{MilneEtCoh}
James~S. Milne.
\newblock {\em \'{E}tale cohomology}, volume~33 of {\em Princeton Mathematical
  Series}.
\newblock Princeton University Press, Princeton, N.J., 1980.

\bibitem[Ols06]{Olsson2006}
Martin~C. Olsson.
\newblock Deformation theory of representable morphisms of algebraic stacks.
\newblock {\em Math. Z.}, 253(1):25--62, 2006.

\bibitem[Rei92]{Reid}
Miles Reid.
\newblock The {M}c{K}ay correspondence and the physicists' {E}uler number,
  1992.
\newblock Lecture at University of Utah and MSRI.

\bibitem[Ryd11]{Rydh2011}
David Rydh.
\newblock Representability of {H}ilbert schemes and {H}ilbert stacks of points.
\newblock {\em Comm. Algebra}, 39(7):2632--2646, 2011.

\bibitem[Ryd15]{Rydh2015}
David Rydh.
\newblock Noetherian approximation of algebraic spaces and stacks.
\newblock {\em J. Algebra}, 422:105--147, 2015.

\bibitem[Sat13]{Satriano2013}
Matthew Satriano.
\newblock Canonical {A}rtin stacks over log smooth schemes.
\newblock {\em Math. Z.}, 274(3-4):779--804, 2013.

\bibitem[{Sta}21]{stacks-project}
The {Stacks project authors}.
\newblock The stacks project.
\newblock \url{https://stacks.math.columbia.edu}, 2021.

\bibitem[SV09]{SchepersVeys}
Jan Schepers and Willem Veys.
\newblock Stringy {$E$}-functions of hypersurfaces and of {B}rieskorn
  singularities.
\newblock {\em Adv. Geom.}, 9(2):199--217, 2009.

\bibitem[Tal14]{Talpo}
Mattia Talpo.
\newblock {\em Infinite root stacks of logarithmic schemes and moduli of
  parabolic sheaves}.
\newblock PhD thesis, Scuola Normale Superiore di Pisa, 2014.

\bibitem[Yas04]{Yasuda2004}
Takehiko Yasuda.
\newblock Twisted jets, motivic measures and orbifold cohomology.
\newblock {\em Compos. Math.}, 140(2):396--422, 2004.

\bibitem[Yas06]{Yasuda2006}
Takehiko Yasuda.
\newblock Motivic integration over {D}eligne-{M}umford stacks.
\newblock {\em Adv. Math.}, 207(2):707--761, 2006.

\bibitem[{Yas}19]{Yasuda2019}
Takehiko {Yasuda}.
\newblock {Motivic integration over wild Deligne-Mumford stacks}.
\newblock {\em arXiv e-prints}, page arXiv:1908.02932, August 2019.

\end{thebibliography}

\end{document}